\newcommand{\TeXmacs}{T\kern-.1667em\lower.5ex\hbox{E}\kern-.125emX\kern-.1em\lower.5ex\hbox{\textsc{m\kern-.05ema\kern-.125emc\kern-.05ems}}}
\newcommand{\cdummy}{\cdot}
\newcommand{\mathD}{\mathrm{D}}
\newcommand{\mathd}{\mathrm{d}}
\newcommand{\nocomma}{}
\newcommand{\nosymbol}{}
\newcommand{\tmem}[1]{{\em #1\/}}
\newcommand{\tmop}[1]{\ensuremath{\operatorname{#1}}}
\newcommand{\tmtextit}[1]{{\itshape{#1}}}
\newenvironment{enumeratenumeric}{\begin{enumerate}[1.] }{\end{enumerate}}
\newenvironment{enumerateroman}{\begin{enumerate}[i.] }{\end{enumerate}}
\newtheorem{theorem}{Theorem}[section]
\newtheorem{corollary}[theorem]{Corollary}
\newtheorem{lemma}[theorem]{Lemma}
\newtheorem{definition}[theorem]{Definition}
\theoremstyle{remark}
\newtheorem{remark}[theorem]{Remark}
\newcommand{\R}{\mathbb{R}}
\newcommand{\N}{\mathbb{N}}
\newcommand{\Z}{\mathbb{Z}}
\newcommand{\E}{\mathbb{E}}
\renewcommand{\P}{\mathbb{P}}
\newcommand{\Lcal}{\mathcal{L}}
\renewcommand{\le}{\leqslant}
\newcommand{\CA}{\ensuremath{\mathscr{A}}}
\newcommand{\CB}{\ensuremath{\mathscr{B}}}
\newcommand{\CC}{\ensuremath{\mathscr{C}}}
\newcommand{\CD}{\ensuremath{\mathscr{D}}}
\newcommand{\CF}{\ensuremath{\mathscr{F}}}
\newcommand{\CS}{\ensuremath{\mathscr{S}}}
\newcommand{\lpara}{\,\mathord{\prec}\,}
\newcommand{\rpara}{\,\mathord{\succ}\,}
\newcommand{\reso}{\,\mathord{\circ}\,}
\newcommand{\rparacurl}{\, \mathord{\succcurlyeq} \,}
\newcommand{\mpara}{\,\mathord{\prec\!\!\!\prec}\,}
\begin{document}



\title{Paracontrolled distributions and singular PDEs}

\author{Massimiliano Gubinelli \\ CEREMADE \& CNRS UMR 7534, \\ Universit{\'e} Paris-Dauphine \\ and Institut~Universitaire de France \and 
Peter Imkeller \\ Institut f\"ur Mathematik, \\ Humboldt-Universit\"at zu Berlin \and
Nicolas Perkowski \\ CEREMADE \& CNRS UMR 7534,\\ Universit{\'e} Paris-Dauphine}

\date{}


\maketitle

\begin{abstract}
   We introduce an approach to study certain singular PDEs which is based on  techniques from paradifferential calculus and on ideas from the theory of controlled rough paths. We illustrate its applicability on some model problems like differential equations driven by fractional Brownian motion, a fractional Burgers type SPDE driven by space-time white noise, and a non-linear version of the parabolic Anderson model with a white noise potential.
\end{abstract}




\section{Introduction}

In this paper we introduce the notion of \emph{paracontrolled distribution} and show how to use it to give a meaning to and solve partial differential equations involving non-linear operations on generalized functions.  More precisely, we combine the idea of \emph{controlled paths}, introduced  in~\cite{Gubinelli2004}, with the \emph{paraproduct} and the related paradifferential calculus introduced by Bony~\cite{Bony1981}, in order to develop a non-linear theory for a certain class of distributions.

The approach presented here works for generalized functions defined on an index set of arbitrary dimension and constitutes a flexible and lightweight generalization of Lyons' rough path theory~\cite{Lyons1998}. In particular it allows to handle problems involving singular stochastic PDEs which were substantially out of reach with previously known methods.

In order to set the stage for our analysis let us list some of the problems which are amenable to be analyzed in the paracontrolled framework:
\begin{enumerate}
  \item The \emph{rough} differential equation (\textsc{rde}) driven by an $n$--dimensional Gaussian process $X$: 
  \[
     \partial_t u(t) = F(u(t)) \partial_t X(t),
  \]
  where $F\colon\R^{d} \rightarrow \Lcal(\R^n, \R^d)$ is a smooth vector-field . Typically, $X$ will be a Brownian motion or a fractional Brownian motion with Hurst exponent $H\in(0,1)$. The paracontrolled analysis works up to $H>1/3$.  While we do not have any substantial new results for this problem, it is a useful pedagogical example on which we can easily describe our approach.
  
  \item Generalizations of Hairer's Burgers-like SPDE (\textsc{burgers}):
  \[
     L u = G(u) \partial_x u + \xi.
  \]
  Here $u\colon \R_+ \times \mathbb{T} \rightarrow \R^n$, where $\mathbb{T} = (\R/2\pi \Z)$ denotes the torus, $L = \partial_t + (-\Delta)^{\sigma}$, where $-(-\Delta)^\sigma$ is the fractional Laplacian with periodic boundary conditions and we will take $\sigma > 5/6$, and $\xi$ is a space-time white noise with values in $\R^n$. Moreover, $G\colon \R^n \rightarrow \Lcal (\R^n, \R^n)$ is a smooth field of linear transformations.
  
  \item A non-linear generalization of the parabolic Anderson model (\textsc{pam}):
  \[
     L u = F(u) \diamond \xi,
  \]
  where $u \colon \R_+ \times \mathbb{T}^2 \rightarrow \R$, $L = \partial_t - \Delta$ is the parabolic operator corresponding to the heat equation, and where $\xi$ is a random potential which is sampled according to the law of the white noise on  $\mathbb{T}^2$ and is therefore independent of the time variable. We allow for a general smooth function $F\colon \R \rightarrow \R$, the linear case $F(u)=u$ corresponding to the standard parabolic Anderson model. The symbol $\diamond$ stands for a renormalized product which is necessary to have a well defined problem. 
  
  \item The one-dimensional periodic Kardar--Parisi--Zhang equation (\textsc{kpz}):
  \[
   L h = ``(\partial_x h)^2" + \xi,
  \]
    where $u \colon \R_+ \times \mathbb{T} \rightarrow \R$, $L = \partial_t - \Delta$, and where $\xi$ is a space-time white noise. Here $``(\partial_x h)^2"$ denotes the necessity of an additive renormalization in the definition of the square of the distribution $\partial_x h$.

  \item The three-dimensional, periodic,  stochastic quantization equation for the $(\phi)^4_3$ euclidean quantum field (\textsc{sq}):
  \[
   L \phi = ``\frac{\lambda}{4!} (\phi)^3" + \xi,
  \]
    where $\phi \colon \R_+ \times \mathbb{T}^3 \rightarrow \R$, $L = \partial_t - \Delta$, $\xi$ is a space-time white noise, and where $``(\phi)^3"$ denotes a suitable renormalization of a cubic polynomial of $\phi$ and $\lambda$ is the coupling constant of the scalar theory.
  
\end{enumerate}

In this paper we will consider in detail the three cases~\textsc{rde}, \textsc{burgers}, \textsc{pam}. In all cases we will exhibit a space of paracontrolled distributions where the equations are well posed (in a suitable sense), and admit at least a local in time solution which is unique.
The three-dimensional stochastic quantization equation \textsc{sq} is studied by R.~Catellier and K.~Chouk in~\cite{Catellier2013} by applying the paracontrolled technique. The paracontrolled analysis of \textsc{kpz}  will be presented elsewhere~\cite{Gubinelli-KPZ-2013}.

\bigskip

The kind of results which will be obtained below can be exemplified by the following statement for \textsc{rde}s. Below $\CC^\alpha = B^\alpha_{\infty, \infty}$ stands for the H\"older-Besov space of index $\alpha$ on $\mathbb{R}$. Given two distributions $f\in \CC^\alpha$ and $g\in \CC^\beta$ with $\alpha+\beta >0$ we can always consider a certain distribution $f \reso g$ which is obtained via a bilinear operation of $f,g$ and which belongs to $\CC^{\alpha+\beta}$.
\begin{theorem}
Let $\xi:[0,1]\to \mathbb{R}^n$ be a continuous function and $F:\mathbb{R}^d\to \mathcal{L}(\mathbb{R}^n, \mathbb{R}^d)$ be a family of  smooth vector-fields. Let $u:[0,1]\to \mathbb{R}^d$ be a 
 solution of the Cauchy problem
\[
\partial_t u(t) = F(u(t)) \xi(t),\qquad
u(0)=u_0,
\]
where $u_0 \in \R^d$. Let $\vartheta$ be a solution to $\partial_t \vartheta = \xi$ and let $R\xi=(\xi,\vartheta\reso \xi)$. Then for all $\alpha \in (1/3,1)$ there exists a continuous map $\Psi:\mathbb{R}^d\times\CC^{\alpha-1}\times\CC^{2\alpha-1}\to \CC^{\alpha}$ such that $u=\Psi(u_0,R\xi)$ for all $\xi\in C([0,1];\mathbb{R}^d)$.
\end{theorem}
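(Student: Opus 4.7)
The plan is to recast the ODE as a fixed point problem in a space of \emph{paracontrolled distributions}, imitating the controlled-path strategy of \cite{Gubinelli2004} but using Bony's paraproduct instead of integration against a rough path. Writing $\prec$ and $\circ$ for the paraproduct and resonant term, so that $fg = f\prec g + f\succ g + f\circ g$, the basic heuristic is that a solution $u$ should, on small scales, be modulated by $\vartheta$ with derivative $F(u)$. This motivates the ansatz
\[
u = F(u)\prec \vartheta + u^{\sharp}, \qquad u^{\sharp}\in \CC^{2\alpha},
\]
which is well-defined whenever $F(u)\in\CC^{\alpha}$ and $\vartheta\in\CC^{\alpha}$.

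First I would analyze the right-hand side of the equation via paraproducts:
\[
F(u)\,\xi = F(u)\prec\xi + F(u)\succ\xi + F(u)\circ\xi.
\]
The first two terms are continuous bilinear maps $\CC^{\alpha}\times\CC^{\alpha-1}\to \CC^{\alpha-1}$ by standard Bony estimates and pose no difficulty. The problematic piece is the resonant product $F(u)\circ\xi$, whose a priori regularity $2\alpha-1$ can be negative. The key step is a commutator lemma: for suitable $f,g,h$ with $\beta+\gamma<0$ but $\alpha+\beta+\gamma>0$, the trilinear map $C(f,g,h) := (f\prec g)\circ h - f\,(g\circ h)$ extends continuously to $\CC^{\alpha}\times\CC^{\beta}\times\CC^{\gamma}\to\CC^{\alpha+\beta+\gamma}$. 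Applying this with $f=F(u)$, $g=\vartheta$, $h=\xi$ and using the paracontrolled ansatz yields
\[
F(u)\circ\xi = F(u)\,(\vartheta\circ\xi) + C(F(u),\vartheta,\xi) + u^{\sharp}\circ\xi,
\]
so the only ill-defined classical product has been reduced to the \emph{postulated} data $\vartheta\circ\xi\in\CC^{2\alpha-1}$, while all remaining terms are continuous functionals of $(u_0,\xi,\vartheta\circ\xi)$ and of the paracontrolled pair $(F(u),u^{\sharp})$. A chain-rule-type lemma for paracontrolled distributions composed with a smooth $F$, giving $F(u) = F'(u)\prec \vartheta + \text{(nicer remainder)}$ with derivative $F'(u)F(u)$, then furnishes a closed expression for the resonant product.

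Next I would set up the fixed-point map. Given a candidate paracontrolled pair $(w,w')$ with $w'=F(w)$, define $(\tilde u,\tilde u')$ by $\tilde u'=F(w)$ and
\[
\tilde u(t) = u_0 + \int_0^t \bigl(F(w)\prec\xi + F(w)\succ\xi + F(w)\circ\xi\bigr)(s)\,\mathd s,
\]
where the last summand is defined via the formula derived above. Schauder-type estimates for the time integral show that $\tilde u$ admits a paracontrolled expansion of the same form, with remainder bounded in $\CC^{2\alpha}$ by a polynomial in the norms of the enhanced noise $R\xi=(\xi,\vartheta\circ\xi)$. For a sufficiently small time horizon $T$, a standard argument shows that this map is a contraction on a ball in the paracontrolled space, giving a unique local solution; since the equation is essentially an ODE driven by $\xi$, local solutions can be patched by iterating on $[0,1]$ with controlled growth, producing a global solution $u$ and thus a map $\Psi:\R^d\times\CC^{\alpha-1}\times\CC^{2\alpha-1}\to\CC^{\alpha}$.

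Continuity of $\Psi$ follows because every operation in the construction is continuous in its arguments, including the enhanced noise input. Finally, consistency: when $\xi$ is smooth, all products are classical, the paracontrolled resonant term coincides with the ordinary product $F(u)\xi$, and so the fixed point agrees with the classical ODE solution, giving $u=\Psi(u_0,R\xi)$ for every continuous $\xi$. I expect the main obstacle to be the commutator lemma together with the composition/chain-rule estimate for $F$ applied to paracontrolled distributions; once those are in place, the fixed-point argument is essentially a variation on Picard iteration.
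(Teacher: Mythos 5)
Your proposal is correct and follows essentially the same route as the paper: the paracontrolled ansatz $u = F(u)\prec\vartheta+u^{\sharp}$, the commutator lemma for $(f\prec g)\circ h - f(g\circ h)$, the paralinearization (chain-rule) lemma for $F$, reduction of the resonant term to the postulated datum $\vartheta\circ\xi$, a local solution made global by patching, and continuity of every operation in the enhanced data $(u_0,\xi,\vartheta\circ\xi)$. Two cosmetic remarks: your displayed identity for $F(u)\circ\xi$ omits the $F'(u)$ factors and the remainder $\Pi_{F}(u,\xi)$ that the chain-rule lemma you invoke actually produces (the correct expression is $F'(u)F(u)(\vartheta\circ\xi)+F'(u)C(F(u),\vartheta,\xi)+F'(u)(u^{\sharp}\circ\xi)+\Pi_{F}(u,\xi)$), and where you obtain smallness from a short time horizon the paper instead localizes with a cutoff and rescales time so that the effective vector field $\lambda^{\alpha}F$ is small, which is what guarantees an interval length independent of $u_0$ for the patching step.
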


In particular, this theorem provides a natural way of extending the solution map to data $\xi$ which are merely distributions in $\CC^{\alpha-1}$. It suffices to approximate $\xi$ by a sequence of smooth functions $(\xi^n)$ converging to $\xi$ in $\CC^{\alpha-1}$, and to prove that the ``lifted" sequence $(R\xi^n)$ converges to some limit in $ \CC^{\alpha-1}\times\CC^{2\alpha-1}$. The uniqueness of this limit is not guaranteed however, and each possible limit will give rise to a different notion of solution to the \textsc{rde}, just like in standard rough path theory. 

The space $\mathcal{X}$ obtained by taking the closure in  $\CC^{\alpha-1}\times\CC^{2\alpha-1}$ of the set of all elements of the form $R\xi$ for smooth $\xi$ replaces the space of (geometric) rough paths, and the above theorem is a partial restatement of Lyons' continuity result: namely that the (It\^o) solution map $\Psi$, going from data to solution of the differential equation, is a continuous map from the rough path space $\mathcal{X}$ to $\CC^\alpha$. The space $\mathcal{X}$ is fibered over $\CC^{\alpha-1}$. It allows us to equip the driving distribution with enough information to control the continuity of the solution map to our \textsc{rde} problem -- and as we will see below, also the continuity of the solution maps to suitable PDEs. In various contexts the space $\mathcal{X}$ can take different forms, and in general it does not seem to have the rich geometrical and algebraic structure of standard rough paths. 

The verification that suitable approximations $(\xi^n)$ are such that their lifts $(R\xi^n)$  converge in $ \CC^{\alpha-1}\times\CC^{2\alpha-1}$ depends on the particular form of $\xi$. In the case of $\xi$ being a Gaussian stochastic process (like in all our examples above), this verification is the result of almost sure convergence results for elements in a fixed chaos of an underlying Gaussian process, and the proofs rely on elementary arguments on Gaussian random variables.   

Even in the case of \textsc{rde}s, the paracontrolled analysis leads to some interesting insights. For example, we have that a more general equation of the form
$$
\partial_t u(t) = F(u(t)) \xi(t) + F'(u(t))F(u(t))\eta(t) ,\qquad
u(0)=u_0,
$$
where $\eta \in C([0,1];\mathbb{R}^n \times \mathbb{R}^n)$,
has a solution map which depends continuously on $(\xi, \vartheta\reso \xi +\eta)\in\CC^{\alpha-1}\times \CC^{2\alpha-1}$. The remarkable fact here is that the solution map depends only on the combination $\vartheta\reso \xi +\eta$ and not on each term separately. Such structural features of the solution map, which can be easily seen using the paracontrolled analysis, are very important in situations where renormalizations are needed, as for example in the \textsc{pam} model. In the \textsc{rde} context we can simply remark that setting $\eta = -\vartheta\reso \xi$, the solution map becomes a continuous function of $\xi \in \CC^{\alpha-1}$, without any further requirement on the bilinear object $\vartheta\reso \xi$. Thus, the equation
$$
\partial_t u(t) = F(u(t)) \xi(t) - F'(u(t))F(u(t))(\vartheta\reso \xi)(t) ,\qquad
u(0)=u_0,
$$
can be readily extended to any $\xi\in\CC^{\alpha-1}$ by continuity. 

We should however point out a limitation of our approach: While in rough path theory one can deal with more irregular paths than $\vartheta \in \CC^{\alpha}$ for $\alpha > 1/3$ and in fact $\alpha>0$ can be chosen arbitrarily close to $0$ as long as sufficiently many iterated integrals of $\vartheta$ are given, with paracontrolled distributions we are currently only able to perform ``first order expansions'' and are therefore restricted to the case $\alpha>1/3$.

\bigskip

We remark that, even if only quite implicitly, paraproducts  have been already exploited in the rough path context in the work of Unterberger on the renormalization of rough paths~\cite{Unterberger2010a, Unterberger2010}, where it is referred to as ``Fourier normal-ordering'', and in the related work of Nualart and Tindel~{\cite{Nualart2011}}.

In this paper we construct weak solutions for the SPDEs under consideration. For an approach using mild solutions see~\cite{Perkowski2014Thesis}. See also~\cite{Gubinelli2013}, where we use the decomposition of continuous functions in a certain wavelet series and similar ideas as developed below, in order to give a new and relatively elementary approach to rough path integration.

\begin{remark}
Various versions of this paper have been available as online preprints since October 2012. Since also the content changed slightly from iteration to iteration, this might cause some confusion. We therefore point out the main differences between the first and the current version:
\begin{itemize}
   \item[--] We changed the notation, writing $f \lpara g$ rather than $\pi_<(f,g)$ and similarly for $\rpara$ and $\reso$. In the first version, we defined ``controlled distributions'', while by now we prefer the terminology ``paracontrolled distributions''.
   \item[--] We now work with weak solutions, rather than mild solutions as in the first version. In particular, the paracontrolled ansatz (see e.g.~\eqref{eq:paracontrolled ansatz}) is new. This has the advantage that we no longer need to control the commutator between heat kernel and paraproduct, but the disadvantage that we need to consider a modified paraproduct when solving \textsc{pam} (see~\eqref{eq:modpara}).
   \item[--] Section~\ref{sec:ODE} on \textsc{rde}s is new. 
   \item[--] The ``conditional global existence result'' for \textsc{pam} (see Theorem~\ref{theorem:pam existence uniqueness}) is new.
   \item[--] We have included Section~\ref{sec:regularity} which is a first attempt at creating a link between paracontrolled distributions and Hairer's regularity structures.
\end{itemize}
\end{remark}

\paragraph{Relevant literature.}

Before going into the details, let us describe the context of our work. Consider for example the \textsc{rde} problem above. 
Schwartz' theory of distributions gives a robust framework for defining linear operations on irregular generalized functions. But when trying to handle non-linear operations, we quickly run into problems. For example, in Schwartz' theory, it is not possible to define the product $F(u) \partial_t X(t)$ in the case where $X$ is the sample path of a Brownian motion. The standard analysis of this difficulty goes as follows: $X$ is an $\alpha$--H\"older continuous process for any $\alpha<1/2$, but not better. The solution $u$ has to have the same regularity, which is transferred to $F(u)$ if $F$ is smooth. In this situation, the product $F(u)\partial_t X$ corresponds to the product of an $\alpha$--H\"older continuous function with the distribution $\partial_t X$ which is of order $\alpha-1$. A well known result of analysis (see Section~\ref{sec:bony} below) tells us that a necessary condition for this product to be well defined is that the sum of the orders is positive, that is $2\alpha-1>0$, which is barely violated in the Brownian setting.
This is the classical problem which motivated It\^o's theory of stochastic integrals.

It\^{o}'s integral has however quite stringent structural requirements: an ``arrow of time'' (i.e. a filtration and adapted integrands), a probability measure (it is defined as $L^2$--limit), and $L^2$--orthogonal increments of the integrator (the integrator needs to be a (semi-) martingale).

If one or several of these assumptions are violated, then Lyons' rough path integral~\cite{Lyons1998, Lyons2002, Lyons2007, Friz2010} can be an effective alternative. For example, it allows to construct pathwise  integrals for, among other processes, fractional Brownian motion, which is not a semimartingale.


In the last years, several other works applied rough path techniques to SPDEs. 
But they all relied on special features of the problem at hand in order to apply the integration theory provided by the rough path machinery. 

A first series of works attempts to deal with ``time"-like irregularities by adapting the standard rough path approach:

\begin{itemize}
\item[--]
Deya, Gubinelli, Lejay, and Tindel~\cite{Gubinelli2006,Gubinelli2012a,Deya2012} deal with SPDEs of the form
\begin{align*}
   L u (t, x) = \sigma ( u ( t, x ) ) \eta ( t,x),
\end{align*}
where $x \in \mathbb{T}$, $L = \partial_t - \Delta$, the noise $\eta$ is a space-time Gaussian distribution (for example white in time and colored in space), and $\sigma$ is some non-linear coefficient. They interpret this as an evolution equation (in time), taking values in a space of functions (with respect to the space variable). They extend the rough path machinery to handle the convolution integrals that appear when applying the heat flow to the noise.

\item[--]
Friz, Caruana, Diehl, and Oberhauser~\cite{Caruana2009,Caruana2011,Friz2011b,Diehl2012} deal with fully non-linear stochastic PDEs with a special structure. Among others, of the form
\begin{align*}
   \partial_t u ( t, x ) = F ( u, \partial_{x} u, \partial_{x}^2 u) + \sigma( t, x) \partial_{x} u( t, x) \eta(t),
\end{align*}
where the spatial index $x$ can be multidimensional, but the noise $\eta$ only depends on time. Such an SPDE can be reinterpreted as a standard PDE with random coefficients via a change of variables involving the flow of the stochastic characteristics associated to $\sigma$. This flow is handled using usual rough path results for \textsc{rde}s.

\item[--]
Teichmann~\cite{Teichmann2011} studies semilinear SPDEs of the form
\begin{align*}
   (\partial_t - A) u(t,x) = \sigma(u)(t,x) \eta(t,x),
\end{align*}
where $A$ is a suitable linear operator, in general unbounded, and $\sigma$ is a general non-linear operation on the unknown $u$ which however should satisfy some restrictive conditions. The SPDE is transformed into an SDE with bounded coefficients by applying a  transformation based on the group generated by $A$ on a suitable space.
\end{itemize}

The ``arrow of time'' condition of It\^o's integral is typically violated if the index is a spatial variable and not a temporal variable. Another series of works applied rough path integrals to deal with situations involving irregularities in the ``space" directions:

\begin{itemize}
\item[--]
Bessaih, Gubinelli, and Russo~\cite{Bessaih2005} and Brzezniak, Gubinelli, and Neklyudov \cite{Brzezniak2010} consider the vortex filament equation which describes the (approximate) motion of a closed vortex line $x(t,\cdot)\in C(\mathbb{T},\mathbb{R}^3)$ in an incompressible three-dimensional fluid:
\[
\partial_t x(t,\sigma) = u^{x(t,\cdot)}(x(t,\sigma)),\qquad u^{x(t,\cdot)}(y) = \int_{\mathbb{T}} K(y-x(t,\sigma)) \partial_\sigma x(t,\sigma) \mathd \sigma,
\]
where $K:\mathbb{R}^3 \to \mathcal{L}(\mathbb{R}^3,\mathbb{R}^3)$ is a smooth antisymmetric field of linear transformations of $\mathbb{R}^3$. In the modeling of turbulence it is interesting to study this equation with initial condition $x(0,\cdot)$ sampled according to the law of the three-dimensional Brownian bridge. In this case, the regularity of $x(t,\sigma)$ with respect to $\sigma$ is no better than Brownian for any positive time, and thus the integral in the definition of the velocity field $u^{x(t,\cdot)}$ is not well defined. Rough path theory allows to make sense of this integral and then of the equation.

\item[--]
Hairer, Maas, and Weber~\cite{Hairer2011, Hairer2013, Hairer2013b, Hairer2012} build on the insight of Hairer that rough path theory allows to make sense of SPDEs which are ill-defined in standard function spaces due to spatial irregularities. Hairer and Weber~\cite{Hairer2013} extend the \textsc{burgers} type SPDE that we presented above to the case of multiplicative noise. Hairer, Maas, and Weber~\cite{Hairer2012} study approximations to this equation, where they discretize the spatial derivative as $\partial_x u(t,x) \simeq 1/\varepsilon (u(t,x+\varepsilon) - u(t,x))$. They show that in the limit $\varepsilon \rightarrow 0$, the approximation may introduce a Stratonovich type correction term to the equation. Finally, Hairer~\cite{Hairer2013b} uses this approach to  define and solve for the first time the Kardar--Parisi--Zhang (KPZ) equation, an SPDE of one spatial index variable that describes the random growth of an interface. The KPZ equation was introduced by Kardar, Parisi, and Zhang~\cite{Kardar1986}, and prior to Hairer's work it could only be solved by applying a spatial transform (the Cole-Hopf transform) which had the effect of linearizing the equation.
\end{itemize}

\paragraph{Alternative approaches.}

In all the papers cited above, the intrinsic one-dimen\-sional nature of rough path theory severely limits possible improvements or applications to other contexts. To the best of our knowledge, the first attempt to remove these limitations is the still unpublished work by Chouk and Gubinelli~\cite{Chouk2013}, extending rough path theory to handle (fractional) Brownian sheets (Gaussian two-parameter stochastic processes akin to (fractional) Brownian motion).

In the recent paper~\cite{Hairer2013a}, Hairer has introduced a theory of \emph{regularity structures} with the aim of giving a more general and versatile notion of regularity. Hairer's theory is also inspired by the theory of controlled rough paths, and it can also be considered a generalization of it to functions of a multidimensional index variable. The crucial insight is that the regularity of the solution to an equation driven by -- say -- Gaussian space-time white noise should not be described in the classical way. Usually we say that a function is smooth if it can be approximated around every point by a polynomial of a given degree (the Taylor polynomial). Since the solution to an SPDE does not look like a polynomial at all, this is not the correct way of describing its regularity. We rather expect that the solution locally looks like the driving noise (more precisely like the noise convoluted with the Green kernel of the linear part of the equation; so in the case of \textsc{rde}s the time integral of the white noise, i.e. Brownian motion). Therefore, in Hairer's theory a function is called smooth if it can locally be well approximated by this convolution (and higher order terms depending on the noise). Hairer's notion of smoothness induces a natural topology in which the solutions to semilinear SPDEs depend continuously on the driving signal. This approach is very general, and allows to handle more complicated problems than the ones we are currently able to treat in the paracontrolled approach. If there is a merit in our approach, then its relative simplicity, the fact that it seems to be very adaptable so that it can be easily modified to treat problems with a different structure, and that we make the connection between harmonic analysis and rough paths.

\paragraph{Plan of the paper.}

Section~\ref{sec:paracontrolled} develops the calculus of paracontrolled distributions. In Section~\ref{sec:ODE} we solve ordinary differential equations driven by suitable Gaussian processes such as the fractional Brownian motion with Hurst index $H>1/3$. In Section~\ref{sec:burgers} we solve a fractional Burgers type equation driven by white noise, and in Section~\ref{sec:PAM} we study a non-linear version of the parabolic Anderson model. 
In \ref{sec:para} we recall the main concepts of Littlewood-Paley theory and of Bony's paraproduct,\ref{sec:convolution commutators} contains a commutator estimate between paraproduct and time integral, and in\ref{sec:paralin mod} we prove a modified version of the paralinearization theorem. We stress the fact that this paper is mostly self-contained, and in particular we will not need any results from rough path theory and just basic elements of the theory of Besov spaces.

\paragraph{Notation and conventions.}

Throughout the paper, we use the notation $a \lesssim b$ if there exists a constant $c>0$, independent of the variables under consideration, such that $a \leqslant c \cdot b$, and we write $a \simeq b$ if $a \lesssim b$ and $b \lesssim a$. If we want to emphasize the dependence of $c$ on the variable $x$, then we write $a(x) \lesssim_{x} b(x)$. For index variables $i$ and $j$ of Littlewood-Paley decompositions (see below) we write $i \lesssim j$ if $2^i \lesssim 2^j$, so in other words if there exists $N \in \mathbb{N}$, independent of $j$, such that $i \leqslant j +N$, and we write $i \sim j$ if $i \lesssim j$ and $j \lesssim i$.

An \tmtextit{annulus} is a set of the form $\CA = \{ x \in \mathbb{R}^{d} :a \leqslant | x | \leqslant b \}$ for some $0<a<b$. A {\tmem{ball}} is a set of the form $\CB = \{ x \in \mathbb{R}^{d} : | x | \leqslant b \}$. $\mathbb{T}= \R / (2\pi \Z)$ denotes the torus.

The H\"older-Besov space $B^\alpha_{\infty, \infty}(\R^d, \R^n)$ for $\alpha \in \R$ will be denoted by $\CC^\alpha$, equipped with the norm $\lVert \cdot \rVert_{\alpha}= \lVert \cdot \rVert_{B^\alpha_{\infty,\infty}}$. The local space $\CC^\alpha_\mathrm{loc}$ consists of all $u$ which satisfy $\varphi u \in \CC^\alpha$ for every infinitely differentiable $\varphi$ of compact support. 
Given $k \in \mathbb{N}$ and Banach spaces $X_{1} , \ldots ,X_{k}$ and $Y$, we write $\Lcal^{k} ( X_{1}\times \ldots \times X_{k} ,Y )$ for the space of $k$-linear maps from $X_{1} \times \ldots \times X_{k}$ to $Y$. 
For $T>0$ we write $C_{T} Y=C ( [ 0,T ] ,Y )$ for the space of continuous maps from $[ 0,T ]$ to $Y$, equipped with the supremum norm $\lVert \cdummy \rVert_{C_{T} Y}$. If $\alpha \in ( 0,1 )$, then we also define $C^{\alpha}_{T} Y$ as the space of $\alpha$-H{\"o}lder continuous functions from $[ 0,T ]$ to $Y$, endowed with the seminorm
\[
   \| f \|_{C^{\alpha}_{T} Y} = \sup_{0 \leqslant s<t \leqslant T} \frac{\| f( t ) -f ( s ) \|_{Y}}{| t-s |^{\alpha}}.
\]
If $f$ is a map from $A \subset \R$ to the linear space $Y$, then we write $f_{s,t} = f(t) - f(s)$. 
For  $f \in L^p(\mathbb{T})$ we write $\Vert  f(x) \Vert ^p_{L^p_x (\mathbb{T} )} = \int_{\mathbb{T}} | f ( x )|^p \mathd x$.

For a multi-index $\mu = ( \mu_{1} , \ldots , \mu_{d} ) \in \mathbb{N}^{d}$ we write $| \mu | = \mu_{1} + \ldots + \mu_{d}$ and $\partial^{\mu} = \partial^{| \mu |} / \partial_{x_{1}}^{\mu_{1}} \cdots \partial_{x_{d}}^{\mu_{d}}$. 
$\mathD F$ or $F'$ denote the total derivative of $F$. For $k \in \mathbb{N}$ we denote by $\mathD^{k} F$ the $k$-th order derivative of $F$. For $\alpha>0$, $C^{\alpha}_{b} =C^{\alpha}_{b} (\mathbb{R}^{d} ,\mathbb{R}^{n} )$ is the space of $\lfloor \alpha \rfloor$ times continuously differentiable functions, bounded with bounded partial derivatives, and with $(\alpha - \lfloor \alpha \rfloor)$--H\"older continuous partial derivatives of order $\lfloor \alpha \rfloor$, equipped with its usual norm $\lVert \cdot \rVert_{C^\alpha_b}$. 
We also write $\partial_{x}$ for the partial derivative in direction $x$, and if $F\colon \mathbb{R} \times\mathbb{R}^{d} \rightarrow \mathbb{R}^{n}$, then we write $\mathD_{x} F(t,x)$ for its spatial derivative in the point $( t,x ) \in \mathbb{R} \times \mathbb{R}^{d}$.

The space of real valued infinitely differentiable functions of compact support is denoted by $\CD ( \mathbb{R}^{d} )$ or $\CD$. The space of Schwartz functions is denoted by $\CS ( \mathbb{R}^{d} )$ or $\CS$. Its dual, the space of tempered distributions, is $\CS' ( \mathbb{R}^{d} )$ or $\CS'$. If $u$ is a vector of $n$ tempered distributions on $\mathbb{R}^{d}$, then we write $u \in \CS' (\mathbb{R}^{d} ,\mathbb{R}^{n} )$. The Fourier transform is defined with the normalization
\[
   \CF u ( z ) = \hat{u} ( z ) = \int_{\mathbb{R}^{d}} e^{- \iota \langle z,x \rangle} u ( x ) \mathd x,
\]
so that the inverse Fourier transform is given by $\CF^{-1} u ( z ) = ( 2 \pi)^{-d} \CF u ( -z )$. If $\varphi$ is a smooth function, such that $\varphi$ and all its partial derivatives are at most of polynomial growth at infinity, then we define the Fourier multiplier $\varphi ( \mathD )$ by $\varphi (\mathD ) u= \CF^{-1} ( \varphi \CF u)$ for any $u \in \CS'$. More generally, we define $\varphi ( \mathD ) u$ by this formula whenever the right hand side makes sense. The scaling operator $\Lambda$ on $\CS'$ is defined for $\lambda >0$ by $\Lambda_\lambda u = u(\lambda\cdot)$.

Throughout the paper, $(\chi, \rho)$ will denote a dyadic partition of unity, and $(\Delta_j)_{j \geqslant -1}$ will denote the Littlewood-Paley blocks associated to this partition of unity, i.e. $\Delta_{-1}= \chi(\mathD)$ and $\Delta_j = \rho(2^{-j} \mathD)$ for $j \geqslant 0$. We will often write $\rho_j$, by which we mean $\chi$ if $j=-1$, and we mean $\rho(2^{-j} \cdot)$ if $j \geqslant 0$. We also use the notation $S_j = \sum_{i < j} \Delta_i$.

\section{Paracontrolled calculus}\label{sec:paracontrolled}

\subsection{Bony's paraproduct}\label{sec:bony}

Paraproducts are bilinear operations introduced by Bony~\cite{Bony1981} in order to linearize a class of non-linear PDE problems. In this section we will introduce paraproducts to the extent of our needs. We will be using the Littlewood-Paley theory of Besov spaces. The reader can peruse \ref{sec:para}, where we summarize the basic elements of Besov space theory and Littlewood-Paley decompositions which will be needed in the remainder of the paper.

 One of the simplest situations where paraproducts appear naturally is in the analysis of the product of two Besov distributions. 
In general, the product $f g$ of two distributions $f \in \CC^{\alpha}$ and $g \in \CC^{\beta}$ is not well defined unless $\alpha + \beta >0$. In terms of Littlewood--Paley blocks, the product $f g$ can be (at least formally) decomposed as
\[
   fg= \sum_{j \geqslant -1} \sum_{i \geqslant -1} \Delta_{i} f \Delta_{j} g=f \lpara g+f \rpara g+f \reso g.
\]
Here $f \lpara g$ is the part of the double sum with $i<j-1$, and $f \rpara g$ is the part with $i>j+1$, and $f \reso g$ is the ``diagonal'' part, where $|i-j | \leqslant 1$. More precisely, we define
\[
   f \lpara g= g\rpara f = \sum_{j \geqslant -1} \sum_{i=-1}^{j-2} \Delta_{i} f \Delta_{j} g \qquad \text{and}\qquad
   f \reso g= \sum_{| i-j | \leqslant 1} \Delta_{i} f \Delta_{j} g.
\]
We also introduce the notation
\[
   f \rparacurl g=f \rpara g+f \reso g.
\]
This decomposition  behaves nicely with respect to Littlewood--Paley theory. Of course, it depends on the dyadic partition of unity used to define the blocks $\Delta_{j}$, and also on the particular choice of the pairs $( i,j )$ in the diagonal part. Our choice of taking all $(i,j)$ with $| i-j| \leqslant 1$ into the diagonal part corresponds to property \ref{def:dyadic partition3}. in the definition of dyadic partition of unity in \ref{sec:para},  where we assumed that $\tmop{supp} ( \rho ( 2^{-i} \cdummy ) ) \cap \tmop{supp} ( \rho ( 2^{-j} \cdummy ) ) = \emptyset$ for $| i-j | >1$. This means that every term in the series
\[
   f \lpara g = \sum_{j \geqslant -1} \sum_{i=-1}^{j-2} \Delta_{i} f \Delta_{j} g= \sum_{j \geqslant -1} S_{j-1} f \Delta_{j} g
\]
has a Fourier transform which is supported in a suitable annulus, and of course the same holds true for $f \rpara g$. On the other side, every term in the diagonal part $f \reso g$ has a Fourier transform that is supported in a ball. We call $f \lpara g$ and $f \rpara g$ \tmtextit{paraproducts}, and $f \reso g$ the \tmtextit{resonant} term.

Bony's crucial observation is that $f \lpara g$ (and thus $f \rpara g$) is always a well-defined distribution. In particular, if $\alpha >0$ and $\beta \in \mathbb{R}$, then $( f,g ) \mapsto f \lpara g$ is a bounded bilinear operator from $\CC^{\alpha} \times \CC^{\beta}$ to $\CC^{\beta}$. Heuristically, $f \lpara g$ behaves at large frequencies like $g$ (and thus retains the same regularity), and $f$ provides only a modulation of $g$ at larger scales. The only difficulty in defining $fg$ for arbitrary distributions lies in handling the diagonal term $f \reso g$. The basic result about these bilinear operations is given by the following estimates.

\begin{lemma}[Paraproduct estimates, \cite{Bony1981}]\label{thm:paraproduct}
   For any $\beta \in \mathbb{R}$ we have
   \begin{equation} \label{eq:para-1}
       \| f \lpara g \|_{\beta} \lesssim_{\beta} \| f \|_{L^{\infty}} \| g \|_{\beta} ,
   \end{equation}
   and for $\alpha <0$ furthermore
   \begin{equation}\label{eq:para-2}
      \| f \lpara g \|_{\alpha + \beta} \lesssim_{\alpha , \beta} \| f \|_{\alpha} \| g \|_{\beta}.
   \end{equation}
   For $\alpha + \beta >0$ we have
   \begin{equation}\label{eq:para-3}
      \| f \reso g \|_{\alpha + \beta} \lesssim_{\alpha , \beta} \| f \|_{\alpha} \| g \|_{\beta}.
    \end{equation}
\end{lemma}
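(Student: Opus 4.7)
The plan is to prove all three estimates by the standard Littlewood--Paley / Bernstein strategy: decompose each object into a series of spectrally localised pieces, bound each piece in $L^\infty$, and then invoke the summation lemmas for sequences whose Fourier transforms live in annuli or balls. Specifically, I will use the two facts (both available in Appendix~\ref{sec:para} from the stated Littlewood--Paley machinery): (i) if $(u_j)_{j\ge-1}$ satisfies $\operatorname{supp}\widehat{u_j}\subset 2^j\CA$ for a fixed annulus $\CA$, then $\|\sum_j u_j\|_\gamma\lesssim \sup_j 2^{j\gamma}\|u_j\|_{L^\infty}$ for every $\gamma\in\R$; (ii) if instead $\operatorname{supp}\widehat{u_j}\subset 2^j\CB$ for a fixed ball $\CB$, then the same bound holds provided $\gamma>0$. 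The spectral support properties recorded just above the lemma (terms of $f\prec g$ are supported in an annulus $2^j\CA$; terms of $f\circ g$ are supported in a ball $2^j\CB$) make exactly these lemmas applicable.

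For \eqref{eq:para-1}, I write $f\prec g=\sum_{j\ge-1}u_j$ with $u_j=S_{j-1}f\,\Delta_j g$; since $\widehat{u_j}$ is supported in an annulus $2^j\CA$, by (i) it suffices to bound $\|u_j\|_{L^\infty}\le\|S_{j-1}f\|_{L^\infty}\|\Delta_j g\|_{L^\infty}\lesssim\|f\|_{L^\infty}\cdot 2^{-j\beta}\|g\|_\beta$, using that $S_{j-1}$ is a convolution with an $L^1$-normalised kernel so is bounded on $L^\infty$ uniformly in $j$. For \eqref{eq:para-2}, the same decomposition works, but now I estimate
\[
\|S_{j-1}f\|_{L^\infty}\le\sum_{i\le j-2}\|\Delta_i f\|_{L^\infty}\lesssim\|f\|_\alpha\sum_{i\le j-2}2^{-i\alpha}\lesssim 2^{-j\alpha}\|f\|_\alpha,
\]
where the geometric summation uses $\alpha<0$; combining with $\|\Delta_j g\|_{L^\infty}\lesssim 2^{-j\beta}\|g\|_\beta$ gives $\|u_j\|_{L^\infty}\lesssim 2^{-j(\alpha+\beta)}\|f\|_\alpha\|g\|_\beta$, which by (i) yields \eqref{eq:para-2}.

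For \eqref{eq:para-3}, I regroup the resonant sum by the larger index and set $v_j=\Delta_j g\sum_{|i-j|\le 1}\Delta_i f$. Each $v_j$ now has Fourier support in a \emph{ball} of radius $\sim 2^j$, and
\[
\|v_j\|_{L^\infty}\lesssim\|\Delta_j g\|_{L^\infty}\sum_{|i-j|\le1}\|\Delta_i f\|_{L^\infty}\lesssim 2^{-j(\alpha+\beta)}\|f\|_\alpha\|g\|_\beta.
\]
Since $\alpha+\beta>0$, summation lemma (ii) applies and gives $\|f\circ g\|_{\alpha+\beta}\lesssim\|f\|_\alpha\|g\|_\beta$.

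The only step that is not quite automatic is the ball-versus-annulus dichotomy in the summation lemmas: absolute convergence of $\sum_j u_j$ (and the corresponding Besov estimate) is free for annulus-supported sequences because each frequency block $\Delta_k$ only sees finitely many $j$'s with $j\sim k$, but for ball-supported sequences high-frequency contributions from every $u_j$ with $j\ge k$ can pile up at frequency $2^k$, and controlling that geometric sum is precisely where the hypothesis $\alpha+\beta>0$ enters. This is the only genuine obstacle; once those two summation lemmas are in hand, the proofs of \eqref{eq:para-1}--\eqref{eq:para-3} reduce to the trivial $L^\infty$ bilinear estimates above.
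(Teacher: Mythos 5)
Your proof is correct and follows essentially the same route as the paper's: the same $L^\infty$ bounds on $S_{j-1}f\,\Delta_j g$ (with the geometric summation for $\alpha<0$) and the same appeal to the annulus/ball summation lemma (Lemma~\ref{lem: Besov regularity of series}), with the hypothesis $\alpha+\beta>0$ entering exactly where you say it does. The only (immaterial) difference is that in the resonant term you group the blocks around $\Delta_j g$ rather than $\Delta_j f$.
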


\begin{proof}
  Observe that there exists an annulus $\CA$ such that $S_{j-1} f \Delta_{j} g$ has Fourier transform supported in $2^{j} \CA$, and that for $f \in L^{\infty}$ we have
  \[
     \| S_{j-1} f \Delta_{j} g \|_{L^{\infty}} \leqslant \| S_{j-1} f  \|_{L^{\infty}} \| \Delta_{j} g \|_{L^{\infty}} \leqslant \| f \|_{L^{\infty}} 2^{-j \beta} \| g \|_{\beta}.
  \]
  On the other side, if $\alpha <0$ and $f \in \CC^{\alpha}$, then
  \begin{align*}
     \| S_{j-1} f \Delta_{j} g \|_{L^{\infty}} & \leqslant \sum_{i \leqslant j-2} \| \Delta_{i} f \|_{L^{\infty}} \| \Delta_{j} g \|_{L^{\infty}} \lesssim \| f \|_{\alpha} \| g \|_{\beta} \sum_{i \leqslant j-2} 2^{-i\alpha -j \beta} \\
     & \lesssim \| f \|_{\alpha} \| g \|_{\beta} 2^{-j ( \alpha + \beta )} .
  \end{align*}
  By Lemma~\ref{lem: Besov regularity of series}, we thus obtain~{\eqref{eq:para-1}} and~{\eqref{eq:para-2}}. To estimate $f \reso g$, observe that the term $u_{j} = \Delta_{j} f \sum_{i: | i-j | \leqslant 1} \Delta_{i} g$ has Fourier transform supported in a ball $2^{j} \CB$, and that
  \[
     \| u_{j} \|_{L^{\infty}} \lesssim \| \Delta_{j} f \|_{L^{\infty}} \sum_{i: | i-j | \leqslant 1} \| \Delta_{i} g \|_{L^{\infty}} \lesssim \| f \|_{\alpha} \| g \|_{\beta} 2^{- ( \alpha + \beta ) j}.
  \]
  So if $\alpha + \beta >0$, then we can apply the second part of Lemma~\ref{lem: Besov regularity of series} to obtain that $f \reso g= \sum_{j \geqslant -1} u_{j}$ is an element of $\CC^{\alpha + \beta}$ and that equation~{\eqref{eq:para-3}} holds.
\end{proof}

A natural corollary is that the product $fg$ of two elements $f \in \CC^{\alpha}$ and $g \in \CC^{\beta}$ is well defined as soon as $\alpha + \beta >0$, and that it belongs to $\CC^{\gamma}$, where $\gamma = \min \{\alpha , \beta\}$.

\subsection{Paracontrolled distributions and RDEs}\label{sec:paracontrolled ode}

Consider the \textsc{rde}
\begin{equation}\label{eq:rde-intro} 
  \partial_{t} u=F ( u ) \xi, \qquad u(0) = u_0,
\end{equation}
where $u_0 \in \R^d$, $u\colon \mathbb{R} \rightarrow \mathbb{R}^{d}$ is a continuous vector
valued function, $\partial_{t}$ is the time derivative, $\xi\colon \mathbb{R} \rightarrow \mathbb{R}^{n}$ is a vector valued distribution with values in
$\CC^{\alpha -1}$ for some $\alpha \in ( 1/3,1 )$, and
$F\colon \mathbb{R}^{d} \rightarrow \Lcal ( \mathbb{R}^{n} ,\mathbb{R}^{d})$ is a family of vector fields on $\mathbb{R}^{d}$. A natural approach is to
understand this equation as limit of the classical ODEs
\begin{equation}\label{eq:rde-reg-intro}
  \partial_{t} u^{\varepsilon} = F ( u^{\varepsilon} ) \xi^{\varepsilon}, \qquad u^\varepsilon(0) = u_0,
\end{equation}
for a family of smooth approximations $(\xi^{\varepsilon})$ of $\xi$ such that
$\xi^{\varepsilon} \rightarrow \xi$ in $\CC^{\alpha -1}$ as $\varepsilon \rightarrow 0$. In order to pass to the limit, we are looking for a
priori estimates on $u^{\varepsilon}$ which require only a control on the
$\CC^{\alpha -1}$ norm of $\xi$.

To avoid cumbersome notation, we will work at the level of
equation~{\eqref{eq:rde-intro}} for smooth $\xi$, where it should be understood that
our aim is to obtain a priori estimates for the solution, in order to safely pass
to the limit and extend the solution map to a larger class of data. The
natural regularity of $u$ is $\CC^{\alpha}$, since $u$ should gain one
derivative with respect to $F ( u ) \xi$, which will not behave better than
$\xi$, and will therefore be in $\CC^{\alpha -1}$.

We use the paraproduct decomposition to write the right hand side of {\eqref{eq:rde-intro}} as a
sum of the three terms
\begin{equation}\label{eq:para-rhs-intro}
  \underbrace{F ( u ) \lpara \xi}_{\alpha -1} + \underbrace{F ( u ) \reso  \xi}_{2 \alpha -1} + \underbrace{F ( u ) \rpara \xi}_{2 \alpha -1}
\end{equation}
(where the quantity indicated by the underbrace corresponds to the expected regularity of each term). Note however that unless $2 \alpha -1>0$, the resonant term
$F ( u ) \reso   \xi$ cannot be controlled using only the $\CC^{\alpha}$--norm
of $u$ and the $\CC^{\alpha -1}$--norm of $\xi$. If $F$ is at least in $C^2$, we can use a paralinearization result (see Lemma~\ref{lemma:para-taylor} below) to rewrite this term as
\begin{equation}\label{eq:rde expand Fu}
   F ( u ) \reso \xi =F' ( u )  ( u \reso \xi ) + \Pi_{F} ( u, \xi ),
\end{equation}
where the remainder $\Pi_{F} ( u, \xi )$ is well defined under the condition
$3 \alpha -1>0$, provided that $u \in \CC^{\alpha}$ and $\xi \in \CC^{\alpha -1}$. In this case it belongs to $\CC^{3 \alpha -1}$. The
difficulty is now localized in the linearized resonant product $u \reso \xi$. In order to control this term, we would like to exploit the fact that
the function $u$ is not a generic element of $\CC^{\alpha}$ but that it has a
specific structure, since its derivative $\partial_{t} u$ has to match the
paraproduct decomposition given in~\eqref{eq:para-rhs-intro}. Thus, we postulate
that the solution $u$ is given by the following \tmtextit{paracontrolled ansatz:}
\begin{equation}\label{eq:paracontrolled ansatz}
   u=u^{\vartheta} \lpara \vartheta +u^{\sharp},
\end{equation}
where $u^{\vartheta} , \vartheta \in \CC^{\alpha}$ and the remainder
$u^{\sharp}$ is in $\CC^{2 \alpha}$. This decomposition allows for a finer analysis
of the resonant term $u \reso \xi$. Indeed, we have
\begin{equation}\label{eq:rde expand resonant term}
   u \reso   \xi = ( u^{\vartheta} \lpara \vartheta ) \reso   \xi +u^{\sharp} \reso   \xi =u^{\vartheta} ( \vartheta \reso \xi ) +C ( u^{\vartheta} , \vartheta , \xi ) +u^{\sharp} \reso   \xi ,
\end{equation}
where the commutator is defined by $C ( u^{\vartheta} ,\vartheta , \xi ) = ( u^{\vartheta} \lpara \vartheta ) \reso \xi -u^{\vartheta}( \vartheta \reso \xi )$. 
Observe now that the term $u^{\sharp} \reso   \xi$
does not pose any further problem, as it is bounded in $\CC^{3 \alpha -1}$.
Moreover, we will show that the commutator is a bounded multilinear function of its arguments as
long as the sum of their regularities is strictly positive, see
Lemma~\ref{lemma:commutator} below. By assumption, we have $3 \alpha -1>0$,
and therefore $C ( u^{\vartheta} , \vartheta , \xi ) \in \CC^{3 \alpha -1}$.
The only problematic term which remains to be handled is thus $\vartheta \reso   \xi$. Here we need to make the assumption that $\vartheta \reso \xi \in \CC^{2 \alpha -1}$ in order for the product $u^{\vartheta} (\vartheta \reso   \xi )$ to be well defined. That assumption is not guaranteed
by the analytical estimates at hand, and it has to be added as a
further requirement. Granting this, we have obtained that the right hand side
of equation~{\eqref{eq:rde-intro}} is well defined and a continuous function of $(u,u^{\vartheta} ,u^{\sharp} , \vartheta , \xi , \vartheta \reso \xi )$.

The paracontrolled ansatz and the Leibniz rule for the paraproduct now imply
that~{\eqref{eq:rde-intro}} can be rewritten as
\[
   \partial_{t} u= \partial_{t} ( u^{\vartheta} \lpara \vartheta +u^{\sharp} ) = \partial_{t} u^{\vartheta} \lpara \vartheta +u^{\vartheta} \lpara \partial_{t} \vartheta + \partial_{t} u^{\sharp} =F ( u ) \lpara \xi +F ( u ) \reso   \xi +F ( u ) \rpara \xi.
\]
If we choose $\vartheta$ such that $\partial_{t} \vartheta = \xi$ 
and we set $u^{\vartheta} =F ( u )$, then we can
use~{\eqref{eq:rde expand Fu}} and~{\eqref{eq:rde expand resonant term}} to
obtain the following equation for the remainder $u^{\sharp}$:
\begin{align*}
  \partial_{t} u^{\sharp} & = F' ( u ) F ( u ) (\vartheta \reso \xi ) +F ( u ) \rpara \xi - ( \partial_{t} F ( u )) \lpara \vartheta \\
  &\quad +F' ( u ) C ( F ( u ) , \vartheta , \xi ) +F' ( u )  ( u^{\sharp} \reso \xi) + \Pi_{F} ( u, \xi ).
\end{align*}
Together with the equation $u=F ( u ) \lpara \vartheta +u^{\sharp}$, this completely describes the solution and allows us to obtain an a priori estimate on $u$ in terms of 
\[
   (u_0, \|\xi\|_{\alpha-1}, \|\vartheta \reso \xi\|_{2\alpha-1}).
\]
With this estimate at hand, it is now relatively straightforward to show that if $F \in C^3_b$, then $u$ depends continuously on the data $(u_0, \xi, \vartheta \reso \xi)$, so that we can pass to the limit in~\eqref{eq:rde-reg-intro} and make sense of the solution to~\eqref{eq:rde-intro} also for irregular $\xi \in \CC^{\alpha-1}$ as long as $\alpha > 1/3$.

\subsection{Commutator estimates and paralinearization}

In this section we prove some lemmas which will allow us to perform algebraic computations with the paraproduct and the resonant term, and thus to justify the analysis of the previous section.

\begin{lemma}[see also Lemma 2.97 of {\cite{Bahouri2011}}]\label{l:bahouri commutator}
  Let $f \in \CC^{\alpha}$ for $\alpha \in ( 0,1 )$, and let $g \in L^{\infty}$. For any $j \geqslant -1$ we have
  \[
     \| [ \Delta_{j} ,f ] g \|_{L^{\infty}} = \| \Delta_{j} ( f g ) -f \Delta_{j} g \|_{L^{\infty}} \lesssim 2^{- \alpha j} \|  f \|_{\alpha} \| g \|_{L^{\infty}} .
  \]
\end{lemma}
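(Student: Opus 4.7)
The plan is to realize $\Delta_j$ as a convolution operator and exploit the H\"older continuity of $f$ directly on the kernel representation. For $j \ge 0$, set $K = \CF^{-1} \rho$ and $K_j(x) = 2^{jd} K(2^j x)$, so that $\Delta_j h = K_j \ast h$; for $j = -1$ the same reasoning applies with $K = \CF^{-1}\chi$. Since $\int K_j = K_j \ast 1$ evaluates a product with a constant, the identity
\[
  [\Delta_j, f] g (x) = \int K_j(x-y) \bigl[ f(y) - f(x) \bigr] g(y) \, \mathd y
\]
follows by pulling the (constant in $y$) factor $f(x)$ inside the convolution defining $f(x) \Delta_j g(x) = \int K_j(x-y) f(x) g(y)\,\mathd y$.

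Next I would invoke the fact that, for $\alpha \in (0,1)$, the Besov space $\CC^\alpha = B^\alpha_{\infty,\infty}$ coincides with the classical H\"older space, so that $|f(y) - f(x)| \lesssim \|f\|_\alpha |y-x|^\alpha$. Together with the trivial bound $|g(y)| \le \|g\|_{L^\infty}$ this yields
\[
  \bigl| [\Delta_j, f] g(x) \bigr| \lesssim \|f\|_\alpha \|g\|_{L^\infty} \int |K_j(x-y)|\, |y-x|^\alpha\, \mathd y.
\]

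Finally, the scaling $K_j(z) = 2^{jd} K(2^j z)$ together with the change of variable $w = 2^j z$ gives
\[
  \int |K_j(z)|\, |z|^\alpha\, \mathd z = 2^{-j\alpha} \int |K(w)|\, |w|^\alpha\, \mathd w,
\]
and since $K$ is a Schwartz function the last integral is a finite constant independent of $j$. The case $j=-1$ is handled identically with the Schwartz kernel $\CF^{-1}\chi$ (the missing factor $2^{-j\alpha}$ is then just bounded by $2^\alpha$). Combining these yields the claimed estimate. I do not expect a serious obstacle here: the only mild subtlety is ensuring that the dyadic kernel $K_j$ has a sufficiently fast decaying moment of order $\alpha$, which is immediate from the Schwartz property.
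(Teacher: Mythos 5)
Your proof is correct and follows exactly the route the paper indicates for this lemma: writing $\Delta_j$ as a convolution operator, using the identification of $\CC^\alpha$ with the H\"older space for $\alpha\in(0,1)$, and the scaling of the $\alpha$-moment of the Schwartz kernel. The paper only sketches this argument in one sentence, and your write-up fills in precisely those details.
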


This commutator lemma is easily proven by writing $\Delta_{j} = \rho_{j} (\mathD )$ as a convolution operator, and by using the embedding of $\CC^{\alpha}$ in the space of H{\"o}lder continuous functions.

\begin{lemma}\label{lemma:paraprojection}
  Assume that $\alpha \in ( 0,1 )$ and $\beta \in \mathbb{R}$, and let $f \in \CC^{\alpha}$ and $g \in \CC^{\beta}$. Then
  \[
     \Delta_{j} ( f \lpara g ) =f \Delta_{j} g+R_{j} ( f,g ) ,
  \]
  for all $j \geqslant -1$, with a remainder $\| R_{j} ( f,g ) \|_{L^{\infty}} \lesssim 2^{-j ( \alpha + \beta )} \| f \|_{\alpha} \| g  \|_{\beta}$.
\end{lemma}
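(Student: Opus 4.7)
The plan is to expand $\Delta_j(f\prec g)$ using the series definition of the paraproduct and reorganize the sum to isolate $f\cdot\Delta_j g$ plus a controllable error. First I write
$$\Delta_j(f\prec g) = \sum_{k\geq -1}\Delta_j\bigl(S_{k-1}f\cdot\Delta_k g\bigr).$$
Each summand $S_{k-1}f\cdot\Delta_k g$ has Fourier transform supported in a dilated annulus $2^k\CA$ (with $\CA$ independent of $k$), so by the frequency localisation of $\Delta_j$ only indices $k$ with $|k-j|\leq N_0$ contribute, for some fixed $N_0$ depending on the partition of unity. On every surviving term I apply the algebraic decomposition
$$\Delta_j\bigl(S_{k-1}f\cdot\Delta_k g\bigr) = S_{k-1}f\cdot\Delta_j\Delta_k g + [\Delta_j,S_{k-1}f]\Delta_k g,$$
and split the resulting sum into two pieces.

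For the first piece I use that $\Delta_j\Delta_k=0$ unless $|j-k|\leq 1$ (a property of the dyadic partition of unity recalled in the appendix), so only three indices contribute. Writing $S_{k-1}f = f - (f-S_{k-1}f)$ and invoking the telescoping identity $\sum_k\Delta_j\Delta_k g = \Delta_j g$, I get
$$\sum_{k:|k-j|\leq 1}S_{k-1}f\cdot\Delta_j\Delta_k g = f\cdot\Delta_j g \;-\;\sum_{k:|k-j|\leq 1}(f-S_{k-1}f)\cdot\Delta_j\Delta_k g.$$
This exhibits $f\cdot\Delta_j g$, and what is left defines the remainder
$$R_j(f,g) = -\sum_{k:|k-j|\leq 1}(f-S_{k-1}f)\cdot\Delta_j\Delta_k g \;+\;\sum_{k:|k-j|\leq N_0}[\Delta_j,S_{k-1}f]\Delta_k g.$$

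The estimate of $R_j(f,g)$ in $L^\infty$ is then routine. For the first sum, since $\alpha\in(0,1)$ one has $\|f-S_{k-1}f\|_{L^\infty}\leq\sum_{i\geq k-1}\|\Delta_i f\|_{L^\infty}\lesssim 2^{-k\alpha}\|f\|_\alpha$, combined with $\|\Delta_j\Delta_k g\|_{L^\infty}\lesssim 2^{-k\beta}\|g\|_\beta$; summing the three contributions with $k\sim j$ yields $2^{-j(\alpha+\beta)}\|f\|_\alpha\|g\|_\beta$. For the second sum I apply Lemma \ref{l:bahouri commutator} with $f$ replaced by $S_{k-1}f$ (noting $\|S_{k-1}f\|_\alpha\lesssim\|f\|_\alpha$) to obtain $\|[\Delta_j,S_{k-1}f]\Delta_k g\|_{L^\infty}\lesssim 2^{-j\alpha}\|f\|_\alpha\cdot 2^{-k\beta}\|g\|_\beta$, and summing the $O(1)$ many $k\sim j$ gives the same bound. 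The proof is essentially bookkeeping; the only real step is spotting the algebraic rearrangement in the second paragraph, after which the $L^\infty$ bounds on Littlewood--Paley blocks and the previous commutator lemma close the argument.
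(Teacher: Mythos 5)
Your proof is correct and rests on the same two ingredients as the paper's: the frequency localisation of $S_{k-1}f\,\Delta_k g$ in an annulus $2^k\CA$, and the commutator estimate of Lemma~\ref{l:bahouri commutator}. The only difference is in how the error terms are organised. The paper writes $f\prec\Delta_i g=f\Delta_i g-f\succcurlyeq\Delta_i g$, commutes $\Delta_j$ past multiplication by the full $f$, and controls the $f\succcurlyeq\Delta_i g$ piece with the paraproduct estimates of Lemma~\ref{thm:paraproduct}; you instead keep the truncation $S_{k-1}f$ inside the commutator and pay for it with the tail term $(f-S_{k-1}f)\Delta_j\Delta_k g$, bounded by the geometric series $\sum_{i\geqslant k-1}2^{-i\alpha}\lesssim 2^{-k\alpha}$, which is where the hypothesis $\alpha>0$ enters in your version. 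Both routes are equally short; yours has the mild advantage of not invoking the paraproduct estimates at all, only the commutator lemma and elementary block bounds.
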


\begin{proof}
  Note that $f \lpara g= \sum_{i} f \lpara \Delta_{i} g$, and that there exists an annulus $\CA$ such that for all $i$ the Fourier  transform of $f \lpara \Delta_{i} g$ is supported in $2^{i} \CA$. Hence, we have $\Delta_{j} ( f \lpara \Delta_{i} g ) \neq 0$ only if $j \sim i$, which leads to
  \begin{align*}
     \Delta_{j} ( f \lpara g ) &= \sum_{i:i \sim j} \Delta_{j} ( f \lpara \Delta_{i} g ) = \sum_{i:i \sim j} \Delta_{j} ( f \Delta_{i} g ) -  \sum_{i:i \sim j} \Delta_{j} ( f \rparacurl \Delta_{i} g ) \\
     & = \sum_{i:i \sim j} f \Delta_{j} \Delta_{i} g- \sum_{i:i \sim j} [\Delta_{j} ,f ] \Delta_{i} g- \sum_{i:i \sim j} \Delta_{j} ( f \rparacurl \Delta_{i} g ),
  \end{align*}
  where we recall that $[ \Delta_{j} ,f ] \Delta_{i} g= \Delta_{j} ( f\Delta_{i} g ) -f \Delta_{j} \Delta_{i} g$ denotes the commutator. The sum over $i$ with $i \sim j$ can be chosen to encompass enough terms so that $\Delta_{j} g= \sum_{i:i \sim j} \Delta_{j} \Delta_{i} g$, and therefore we conclude that
  \[
     \| \Delta_{j} ( f \lpara g ) -f \Delta_{j} g \|_{L^{\infty}} \leqslant  \sum_{i:i \sim j} \| [ \Delta_{j} ,f ] \Delta_{i} g \|_{L^{\infty}} -  \sum_{i:i \sim j} \| \Delta_{j} ( f \rparacurl \Delta_{i} g )
     \|_{L^{\infty}}.
  \]
  We apply Lemma~\ref{l:bahouri commutator} to each term of the first sum, and the paraproduct estimates to each term of the second sum, to obtain
  \[
     \| \Delta_{j} ( f \lpara g ) -f \Delta_{j} g \|_{L^{\infty}} \lesssim  2^{-j ( \alpha + \beta )} \| f \|_{\alpha} \| g \|_{\beta} .
  \]
\end{proof}

Using this result, it is easy to prove our basic commutator lemma.

\begin{lemma}\label{lemma:commutator}
   Assume that $\alpha \in ( 0,1 )$ and $\beta , \gamma \in \mathbb{R}$ are such that $\alpha + \beta + \gamma >0$ and $\beta + \gamma <0$. Then there exists a bounded trilinear operator $C \in \Lcal^{3} \left( \CC^{\alpha}\! \times\! \CC^{\beta}\! \times\! \CC^{\alpha}, \CC^{\alpha + \beta + \gamma} \right)$ such that
  \[
     C ( f,g,h ) = ( ( f \lpara g ) \reso h ) -f ( g \reso h )
  \]
  whenever $f,g,h \in \CS$.
\end{lemma}

\begin{proof}
  Let $f,g,h \in \CS$ and write
  \[
     C ( f,g,h ) = ( ( f \lpara g ) \reso h ) -f ( g \reso h ) = \sum_{j,k  \geqslant -1} \sum_{i: | i-j | \leqslant 1} [ \Delta_{i} ( \Delta_{k} f  \lpara g ) \Delta_{j} h- \Delta_{k} f \Delta_{i} g \Delta_{j} h ].
  \]
  Observe that for fixed $k$, the term $\Delta_{k} f \lpara g$ has a Fourier transform supported outside of a ball $2^{k} \CB$. Thus, we have $\Delta_{i}( \Delta_{k} f \lpara g ) = \mathbf{1}_{i \gtrsim k} \Delta_{i} ( \Delta_{k}f \lpara g )$, and therefore we can apply Lemma~\ref{lemma:paraprojection} to obtain
  \begin{align}\label{eq:commutator pr1}\nonumber
     C ( f,g,h ) &= \sum_{j,k \geqslant -1} \sum_{i: | i-j | \leqslant 1} [\mathbf{1}_{i \gtrsim k} ( \Delta_{k} f \Delta_{i} g+R_{i} ( \Delta_{k} f,g ) ) \Delta_{j} h- \Delta_{k} f \Delta_{i} g \Delta_{j} h] \\
    &= \sum_{j,k \geqslant -1} \sum_{i: | i-j | \leqslant 1} [\mathbf{1}_{i \gtrsim k} R_{i} ( \Delta_{k} f,g ) \Delta_{j} h -  \mathbf{1}_{i \leqslant k-N} \Delta_{k} f \Delta_{i} g \Delta_{j} h]
  \end{align}
  for some fixed $N \in \mathbb{N}$. We treat the two sums separately. First
  observe that for fixed $k$, the term $\sum_{j \geqslant -1} \sum_{i: | i-j | \leqslant 1} \mathbf{1}_{i \leqslant k-N} \Delta_{k} f \Delta_{i} g \Delta_{j} h$ has a
  Fourier transform which is supported in a ball $2^{k} \CB$.
  Moreover,
  \begin{align*}
     \Bigg\| \sum_{j \geqslant -1} \sum_{i: | i-j | \leqslant 1} \mathbf{1}_{i \leqslant k-N} \Delta_{k} f \Delta_{i} g \Delta_{j} h \Bigg\|_{L^{\infty}} & \lesssim 2^{-k \alpha} \| f \|_{\alpha} \sum_{i=-1}^{k-N} 2^{-i ( \beta + \gamma)} \| g \|_{\beta} \| h \|_{\gamma} \\
     & \simeq 2^{-k ( \alpha + \beta + \gamma )} \| f \|_{\alpha} \| g \|_{\beta} \| h \|_{\gamma},
  \end{align*}
  where in the second step we used that $\beta + \gamma <0$. Since $\alpha +  \beta + \gamma >0$, the estimate for the second series
  in~{\eqref{eq:commutator pr1}} follows from Lemma~\ref{lem: Besov regularity of series}.
  
  For the first series, recall that $R_{i} ( \Delta_{k} f,g ) = \Delta_{i} ( \Delta_{k} f \lpara g ) - \Delta_{k} f \Delta_{i} g$. So for fixed $j$, the
  Fourier transform of $\sum_{k \geqslant -1} \sum_{i: | i-j | \leqslant 1} \mathbf{1}_{i \gtrsim k} R_{i} ( \Delta_{k} f,g ) \Delta_{j} h$ is
  supported in ball $2^{j} \CB$. Furthermore,
  Lemma~\ref{lemma:paraprojection} yields
  \begin{align*}
     \Bigg\| \sum_{k \geqslant -1} \sum_{i: | i-j | \leqslant 1} \mathbf{1}_{i \gtrsim k} R_{i} ( \Delta_{k} f,g ) \Delta_{j} h \Bigg\|_{L^{\infty}} & = \Bigg\| \sum_{i: | i-j | \leqslant 1} R_{i}  \Bigg( \sum_{k \lesssim i} \Delta_{k} f,g \Bigg) \Delta_{j} h  \Bigg\|_{L^{\infty}} \\
     & \lesssim \sum_{i: | i-j | \leqslant 1} 2^{-i ( \alpha + \beta )} \bigg\| \sum_{k \lesssim i} \Delta_{k} f \bigg\|_{\alpha} \| g \|_{\beta} 2^{-j \gamma} \| h \|_{\gamma} \\
     &\lesssim 2^{-j ( \alpha + \beta + \gamma )} \| f \|_{\alpha} \| g \|_{\beta} \| h \|_{\gamma},
  \end{align*}
  so that the claimed bound for $\| C ( f,g,h ) \|_{\alpha + \beta + \gamma}$
  follows from another application of Lemma~\ref{lem: Besov regularity of series}.
  
  Now we can extend $C$ to a bounded trilinear operator on the
  closure of the smooth functions in $\CC^{\alpha} \times \CC^{\beta} \times \CC^{\gamma}$. Unfortunately, this is a strict subset of
  $\CC^{\alpha} \times \CC^{\beta} \times \CC^{\gamma}$. But we obtain similar bounds for $C$ acting on $\CC^{\alpha'} \times \CC^{\beta'} \times \CC^{\gamma'}$ for $\alpha' \in ( 0,1 )$ and
  $\beta' , \gamma' \in \mathbb{R}$, such that $\alpha' < \alpha$, $\beta' < \beta$, $\gamma' < \gamma$, and $\alpha' + \beta'+ \gamma' >0$. Since
  $\CC^{\alpha} \times \CC^{\beta} \times \CC^{\gamma}$ is contained in the
  closure of the smooth functions in $\CC^{\alpha'} \times \CC^{\beta'} \times \CC^{\gamma'}$, we obtain the required extension of $C$ to $\CC^{\alpha} \times \CC^{\beta} \times \CC^{\gamma}$. Moreover, this argument also shows that for $(f,g,h) \in \CC^{\alpha} \times \CC^{\beta} \times \CC^{\gamma}$ we have
  \begin{align*}
    \| C(f,g,h)\|_{\alpha+\beta+\gamma} &= \limsup_{\alpha'\uparrow \alpha, \beta' \uparrow \beta, \gamma'\uparrow\gamma} \| C(f,g,h)\|_{\alpha'+\beta'+\gamma'} \lesssim \limsup_{\alpha'\uparrow \alpha, \beta' \uparrow \beta, \gamma'\uparrow\gamma} \|f\|_{\alpha'} \|g \|_{\beta'} \|h\|_{\gamma'}\\
    &= \|f\|_{\alpha} \|g \|_{\beta} \|h\|_{\gamma}.
  \end{align*}
  Alternatively, this last bound also follows from the Fatou property of Besov spaces, see Theorem~2.72 of \cite{Bahouri2011}.
\end{proof}

\begin{remark}
  The restriction $\beta + \gamma <0$ is not problematic. If $\beta + \gamma >0$, then $( f \lpara g ) \reso h$ can be treated with the usual paraproduct estimates, without the need of introducing the commutator. If $\beta + \gamma =0$, then we can apply the commutator estimate with $\gamma' < \gamma$
  sufficiently close to $\gamma$ such that $\alpha + \beta + \gamma' >0$.
  
  The restriction $\alpha < 1$ can be lifted, see~\cite{Gubinelli2015EBP}, but the price to pay is that then the commutator can only be controlled in $\CC^{\beta+\gamma}$ and not in $\CC^{\alpha+\beta+\gamma}$. Passing the threshold $\alpha = 1$ seems to be one of the key challenges in extending the paracontrolled approach to problems where one has to gain a lot of regularity, such as the three-dimensional version of \textsc{pam}, where the noise $\xi$ is in $\CC^{-3/2-\varepsilon}$, the solution $u$ is in $\CC^{1/2-\varepsilon}$, and thus the sum of the regularities of the factors $F(u)$ and $\xi$ is smaller than $-1$.
\end{remark}

Our next result is a simple paralinearization lemma for non-linear operators.

\begin{lemma}[see also~\cite{Bahouri2011}, Theorem 2.92]\label{lemma:paralinearization}
  Let $\alpha \in (0,1)$, $\beta \in (0,\alpha]$, and let $F \in C^{1+\beta/\alpha}_b$. There exists a locally bounded map $R_{F} \colon  \CC^{\alpha} \rightarrow \CC^{\alpha+\beta}$ such that
  \begin{equation}\label{eq:para-linearization}
    F ( f ) =F' ( f ) \lpara f+R_{F} ( f )
  \end{equation}
  for all $f \in \CC^{\alpha}$. More precisely, we have
  \[
     \| R_{F} ( f ) \|_{\alpha + \beta} \lesssim \|F\|_{C^{1+\beta/\alpha}_b} (1 + \| f \|_{\alpha}^{1+\beta/\alpha}).
  \]
  If $F \in C_b^{2+\beta/\alpha}$, then $R_{F}$ is locally Lipschitz continuous:
  \[
     \| R_{F} ( f ) -R_{F} ( g ) \|_{\alpha + \beta} \lesssim \|F\|_{C^{2+\beta/\alpha}_b} ( 1+ \| f \|_{\alpha} + \| g \|_{\alpha})^{1+\beta/\alpha} \| f - g \|_\alpha.
  \]
\end{lemma}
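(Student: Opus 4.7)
The plan is to estimate the Besov norm $\|R_F(f)\|_{\alpha+\beta}$ block-by-block: I would bound $\|\Delta_k R_F(f)\|_{L^\infty}$ for every $k\geq -1$ and then invoke the Littlewood--Paley characterisation of $\CC^{\alpha+\beta}$. The key idea is that $\Delta_k R_F(f) = \Delta_k F(f) - \Delta_k(F'(f)\prec f)$, and that in both pieces one can isolate a common leading term $F'(f)\Delta_k f$ that cancels upon subtraction, leaving a remainder of size $2^{-k(\alpha+\beta)}$.

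For $\Delta_k F(f)$ with $k\geq 0$, I would exploit that the kernel $K_k$ of $\Delta_k$ has vanishing integral, so that
\[
\Delta_k F(f)(x) = \int K_k(x-y)\bigl[F(f(y))-F(f(x))\bigr]\,dy.
\]
The first-order Taylor expansion of $F$ with H\"older remainder (valid because $F'$ is $(\beta/\alpha)$--H\"older, with $\beta/\alpha\in(0,1]$) yields
\[
F(f(y))-F(f(x)) = F'(f(x))(f(y)-f(x)) + O\bigl(\|F\|_{C^{1+\beta/\alpha}_b}\|f\|_\alpha^{1+\beta/\alpha}|x-y|^{\alpha+\beta}\bigr),
\]
and combining with the standard bound $\int|K_k(x-y)||x-y|^{\alpha+\beta}\,dy\lesssim 2^{-k(\alpha+\beta)}$ gives $\Delta_k F(f) = F'(f)\Delta_k f + E_k$ with the desired estimate on $\|E_k\|_{L^\infty}$. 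For the paraproduct block, Lemma~\ref{lemma:paraprojection} produces $\Delta_k(F'(f)\prec f) = F'(f)\Delta_k f + R_k(F'(f),f)$ with $\|R_k(F'(f),f)\|_{L^\infty}\lesssim 2^{-k(\alpha+\beta)}\|F'(f)\|_\beta\|f\|_\alpha$, provided I first check that $F'(f)\in\CC^\beta$ with $\|F'(f)\|_\beta\lesssim\|F\|_{C^{1+\beta/\alpha}_b}(1+\|f\|_\alpha^{\beta/\alpha})$; this follows directly by composition from the pointwise bound $|F'(f(x))-F'(f(y))|\lesssim\|F\|_{C^{1+\beta/\alpha}_b}\|f\|_\alpha^{\beta/\alpha}|x-y|^\beta$. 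Subtracting the two representations cancels $F'(f)\Delta_k f$, and together with a trivial $L^\infty$ bound for $k=-1$ this yields the first claimed estimate.

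The Lipschitz bound under $F\in C^{2+\beta/\alpha}_b$ is obtained by running the same block-by-block analysis on $R_F(f)-R_F(g)$. On the paraproduct side one splits
\[
F'(f)\prec f - F'(g)\prec g = F'(f)\prec(f-g) + (F'(f)-F'(g))\prec g,
\]
and on the $\Delta_k(F(f)-F(g))$ side one Taylor-expands to second order inside the kernel integral, which is legitimate since now $F''\in C^{\beta/\alpha}_b$. The main obstacle is extracting the correct polynomial factor $(1+\|f\|_\alpha+\|g\|_\alpha)^{1+\beta/\alpha}$ combined with a \emph{linear} dependence on $\|f-g\|_\alpha$: simply applying the first-order estimate to $G(h):=F(g+h)-F(g)$ at $h=f-g$ would instead give a factor $\|f-g\|_\alpha^{1+\beta/\alpha}$, which is useless as $f\to g$. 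To avoid this, I would use the identity $F'(f)-F'(g) = \bigl(\int_0^1 F''(g+\tau(f-g))\,d\tau\bigr)(f-g)$ to factor out $(f-g)$ linearly, bound the integral in $\CC^\beta$ via the H\"older continuity of $F''$, and then control $\|(F'(f)-F'(g))\prec g\|_{\alpha+\beta}$ and the corresponding second-order Taylor remainders by the paraproduct estimates, with careful polynomial bookkeeping in $\|f\|_\alpha$ and $\|g\|_\alpha$.
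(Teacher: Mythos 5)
Your argument is correct, and it reaches the same estimate by a mildly different route than the paper. The paper groups the two pieces into a single term $u_i=\Delta_i F(f)-S_{i-1}F'(f)\,\Delta_i f$, writes it as one double integral against the product kernel $K_i(x-y)K_{<i-1}(x-z)$ (using $\int K_i=0$ and $\int K_{<i-1}=1$), applies the first-order Taylor expansion with $\beta/\alpha$--H\"older remainder to get $\|u_i\|_{L^\infty}\lesssim 2^{-i(\alpha+\beta)}$, and then sums via the ball version of Lemma~\ref{lem: Besov regularity of series}. You instead pivot through the intermediate quantity $F'(f)\Delta_k f$: the same Taylor-against-zero-mean-kernel argument gives $\Delta_k F(f)=F'(f)\Delta_k f+E_k$, while Lemma~\ref{lemma:paraprojection} gives $\Delta_k(F'(f)\prec f)=F'(f)\Delta_k f+R_k(F'(f),f)$, and the leading terms cancel. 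The price of your version is the extra (easy but necessary, and you do supply it) verification that $F'(f)\in\CC^{\beta}$ with $\|F'(f)\|_{\beta}\lesssim\|F\|_{C^{1+\beta/\alpha}_b}(1+\|f\|_{\alpha}^{\beta/\alpha})$, which the paper's single-integral formulation sidesteps; the payoff is that you never need the spectral-support-in-a-ball summation lemma, since you bound $\|\Delta_k R_F(f)\|_{L^\infty}$ directly for every block. For the Lipschitz estimate the paper only says it is ``shown in the same way''; your outline correctly identifies the one real pitfall (a naive application of the first bound to $G(h)=F(g+h)-F(g)$ yields $\|f-g\|_{\alpha}^{1+\beta/\alpha}$ rather than a linear factor) and the standard fix via $F'(f)-F'(g)=\bigl(\int_0^1 F''(g+\tau(f-g))\,\mathd\tau\bigr)(f-g)$ together with a second-order Taylor expansion, so the proposal is sound.
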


\begin{proof}
   The difference $F(f) - F'(f)\lpara f$ is given by
   \[
      R_F(f) = F(f) - F'(f)\lpara f = \sum_{i \geqslant -1} [\Delta_i F(f) - S_{i-1} F'(f) \Delta_i f] = \sum_{i \geqslant -1} u_i,
   \]
   and every $u_i$ is spectrally supported in a ball $2^i \CB$. For $i<1$, we simply estimate $\| u_i\|_{L^\infty} \lesssim \|F\|_{C^1_b}(1+\|f\|_\alpha)$. For $i \geqslant 1$ we use the fact that $f$ is a bounded function to write the Littlewood-Paley projections as convolutions and obtain
  \begin{align*}
     &u_i ( x ) \\
     &\hspace{15pt} = \int K_{i} ( x-y ) K_{<i-1} ( x-z ) [F ( f ( y) ) - F'(f(z)) f(y)] \mathd y \mathd z \\
     &\hspace{15pt} = \int K_{i} ( x-y ) K_{<i-1} ( x-z ) [ F ( f ( y ) ) -F ( f ( z ) ) - F'(f(z)) (f(y) - f(z)) ] \mathd y \mathd z,
  \end{align*}
  where $K_{i} = \CF^{-1} \rho_{i}$, $K_{<i-1} = \sum_{j<i-1} K_{j}$,
  and where we used that $\int K_{i} ( y ) \mathd y= \rho_{i} ( 0 ) =0$ for $i \geqslant 0$ and $\int K_{<i-1} ( z ) \mathd z=1$ for $i \geqslant 1$. Now we can apply a first order Taylor expansion to $F$ and use the $\beta/\alpha$--H\"older continuity of $F'$ in combination with the $\alpha$--H\"older continuity of $f$, to deduce
  \begin{align*}
     |u_i(x)| & \lesssim \|F\|_{C^{1+\beta/\alpha}_b} \|f\|_\alpha^{1+\beta/\alpha} \int |K_{i} ( x-y ) K_{<i-1} ( x-z )| \times |z-y|^{\alpha+\beta} \mathd y \mathd z \\
     & \lesssim \|F\|_{C^{1+\beta/\alpha}_b} \|f\|_\alpha^{1+\beta/\alpha} 2^{-i(\alpha+\beta)}.
  \end{align*}
  Therefore, the estimate for $R_F(f)$ follows from Lemma~\ref{lem: Besov regularity of series}. The estimate for $R_F(f) - R_F(g)$ is shown in the same way.
%
\end{proof}

Let $g$ be a distribution belonging to $\CC^{\gamma}$ for some $\gamma <0$. Then
the map $f \mapsto f \reso g$ behaves, modulo smoother correction terms, like
a derivative operator:

\begin{lemma}\label{lemma:para-taylor}
  Let $\alpha \in ( 0,1 )$, $\beta \in (0,\alpha]$, $\gamma < 0$ be such that $\alpha+\beta+\gamma>0$ but $\alpha+\gamma<0$. Let $F \in C^{1+\beta/\alpha}_b$.
  Then there exists a locally bounded map $\Pi_{F} \colon \CC^{\alpha} \times \CC^{\gamma} \rightarrow \CC^{\alpha + \beta + \gamma}$ such that
  \begin{equation}\label{eq:para-taylor}
      F ( f ) \reso g=F' ( f ) ( f \reso g ) + \Pi_{F} ( f,g )
  \end{equation}
  for all $f \in \CC^{\alpha}$ and all smooth $g$. More precisely, we have
  \[
     \| \Pi_{F} ( f,g ) \|_{\alpha + \beta + \gamma} \lesssim \|F\|_{C^{1+\beta/\alpha}_b} (1+ \| f \|_{\alpha}^{1+\beta/\alpha} ) \| g \|_{\gamma}.
  \]
  If $F \in C^{2+\beta/\alpha}_b$, then $\Pi_{F}$ is locally Lipschitz continuous:
  \begin{align*}
     &\| \Pi_{F} ( f,g ) - \Pi_{F} ( u,v ) \|_{\alpha + \beta + \gamma} \\
     &\hspace{40pt} \lesssim \|F\|_{C^{2+\beta/\alpha}_b} (1 + (\|f\|_\alpha + \|u\|_\alpha)^{1+\beta/\alpha} + \|v\|_\gamma)  ( \| f-u \|_{\alpha} + \| g-v \|_{\gamma}).
  \end{align*}
\end{lemma}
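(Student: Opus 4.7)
The plan is to combine the paralinearization identity of Lemma~\ref{lemma:paralinearization} with the commutator estimate of Lemma~\ref{lemma:commutator}. Writing $F(f) = F'(f) \prec f + R_F(f)$ and taking the resonant product with $g$, we get
\[
   F(f) \circ g = \bigl( (F'(f) \prec f) \circ g \bigr) + R_F(f) \circ g = F'(f)(f \circ g) + C(F'(f), f, g) + R_F(f) \circ g,
\]
where $C$ is the commutator from Lemma~\ref{lemma:commutator}. This suggests defining
\[
   \Pi_F(f, g) := C(F'(f), f, g) + R_F(f) \circ g,
\]
so that the identity \eqref{eq:para-taylor} holds by construction. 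What remains is to verify the regularity of each piece.

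For the resonant remainder: Lemma~\ref{lemma:paralinearization} gives $R_F(f) \in \CC^{\alpha+\beta}$, and since $(\alpha+\beta) + \gamma > 0$ by hypothesis, the resonant product $R_F(f) \circ g$ is well-defined in $\CC^{\alpha+\beta+\gamma}$ by the paraproduct estimate~\eqref{eq:para-3}, with the required bound $\lesssim \|F\|_{C^{1+\beta/\alpha}_b}(1+\|f\|_\alpha^{1+\beta/\alpha})\|g\|_\gamma$. For the commutator: since $F' \in C^{\beta/\alpha}_b$ with $\beta/\alpha \in (0,1]$, a direct composition estimate yields $F'(f) \in \CC^{\beta}$ with $\|F'(f)\|_\beta \lesssim \|F\|_{C^{1+\beta/\alpha}_b}(1+\|f\|_\alpha^{\beta/\alpha})$. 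Then Lemma~\ref{lemma:commutator} applies to $C(F'(f), f, g)$ in the regularity triple $(\beta, \alpha, \gamma)$: we need the first regularity to lie in $(0,1)$ (which holds since $\beta \in (0,\alpha] \subset (0,1)$), the total sum $\alpha+\beta+\gamma$ to be positive (our hypothesis), and the sum of the last two, $\alpha + \gamma$, to be negative (our hypothesis). Hence $C(F'(f), f, g) \in \CC^{\alpha+\beta+\gamma}$ with the claimed bound.

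For the Lipschitz estimate under $F \in C^{2+\beta/\alpha}_b$, I would split
\[
   \Pi_F(f,g) - \Pi_F(u,v) = \bigl[ C(F'(f), f, g) - C(F'(u), u, v) \bigr] + \bigl[ R_F(f) \circ g - R_F(u) \circ v \bigr],
\]
and handle each piece by telescoping. The second bracket uses the Lipschitz part of Lemma~\ref{lemma:paralinearization} together with bilinearity of $\circ$. For the first bracket, I would write
\[
   C(F'(f), f, g) - C(F'(u), u, v) = C(F'(f)-F'(u), f, g) + C(F'(u), f-u, g) + C(F'(u), u, g-v)
\]
and apply trilinearity of $C$, bounding $\|F'(f) - F'(u)\|_\beta$ via a standard composition estimate that uses the $C^{1+\beta/\alpha}_b$ regularity of $F''$ (available because $F \in C^{2+\beta/\alpha}_b$). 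The main technical point, though essentially routine, is this composition/difference estimate for $F'$ on Besov-Hölder spaces, which is where one pays the extra derivative of $F$ to obtain Lipschitz dependence in $f$. Combining the individual bounds and absorbing powers of $\|f\|_\alpha + \|u\|_\alpha$ and $\|v\|_\gamma$ into the stated prefactor completes the argument.
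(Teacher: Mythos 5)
Your proposal is correct and follows exactly the same route as the paper: paralinearize $F(f) = F'(f)\prec f + R_F(f)$, take the resonant product with $g$, and set $\Pi_F(f,g) = C(F'(f),f,g) + R_F(f)\circ g$, with the bounds coming from Lemma~\ref{lemma:commutator} (applied to the triple $(\beta,\alpha,\gamma)$, for which your verification of the hypotheses is right) and Lemma~\ref{lemma:paralinearization}. The paper's proof is just a terser version of the same argument, leaving the bookkeeping for the bounds and the Lipschitz estimate implicit where you spell it out.
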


\begin{proof}
  Just use the paralinearization and commutator lemmas above to deduce that
  \begin{align*}
     \Pi(f,g) & = F(f) \reso g - F'(f) (f \reso g) = R_F(f) \reso g + (F'(f)\lpara f) \reso g - F'(f) (f \reso g) \\
     & = R_F(f) \reso g + C(F'(f),f,g),
  \end{align*}
  so that the claimed bounds easily follow from Lemma~\ref{lemma:commutator} and Lemma~\ref{lemma:paralinearization}.
%
\end{proof}

Besides this sort of chain rule, we also have a Leibniz rule for $f\mapsto f\reso g$:

\begin{lemma}\label{lem:para-taylor product}
  Let $\alpha \in ( 0,1 )$ and $\gamma <0$ be such that $2 \alpha + \gamma >0$ but $\alpha+\gamma<0$
  Then there exists a bounded trilinear operator
  $\Pi_{\times} \colon \CC^{\alpha} \times \CC^{\alpha} \times \CC^{\gamma} \rightarrow \CC^{2 \alpha + \gamma}$, such that
  \[
     ( f u ) \reso g=f (u \reso g ) +u ( f \reso g ) + \Pi_{\times} ( f,u,g )
  \]
  for all $f,u \in \CC^{\alpha} ( \mathbb{R} )$ and all smooth $g$.
\end{lemma}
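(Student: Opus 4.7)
The plan is to decompose the product $fu$ via Bony's paraproduct, apply the commutator lemma (Lemma~\ref{lemma:commutator}) to the two paraproduct pieces, and absorb the remaining diagonal piece, which is already regular enough, into the defining formula for $\Pi_\times$.

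First I would write
\[
   fu = f \prec u + f \succ u + f \circ u = f \prec u + u \prec f + f \circ u,
\]
and then apply $(\cdot)\circ g$ term by term. For the paraproduct terms, Lemma~\ref{lemma:commutator} gives
\[
   (f \prec u) \circ g = f\,(u \circ g) + C(f,u,g), \qquad (u \prec f) \circ g = u\,(f \circ g) + C(u,f,g),
\]
where both commutators are controlled provided $\alpha + \alpha + \gamma > 0$ (which is our hypothesis $2\alpha + \gamma > 0$) and $\alpha + \gamma < 0$ (also given). Thus $C(f,u,g), C(u,f,g) \in \CC^{2\alpha+\gamma}$ with trilinear bounds in $\|f\|_\alpha\|u\|_\alpha\|g\|_\gamma$.

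For the last piece $(f \circ u) \circ g$, I would use that $f \circ u \in \CC^{2\alpha}$ by the paraproduct estimate~\eqref{eq:para-3} (valid since $2\alpha > 0$), with $\|f\circ u\|_{2\alpha} \lesssim \|f\|_\alpha \|u\|_\alpha$. Then $2\alpha + \gamma > 0$ allows one more application of~\eqref{eq:para-3} to conclude that $(f\circ u) \circ g \in \CC^{2\alpha+\gamma}$ with norm $\lesssim \|f\|_\alpha \|u\|_\alpha \|g\|_\gamma$. Setting
\[
   \Pi_\times(f,u,g) := C(f,u,g) + C(u,f,g) + (f \circ u) \circ g
\]
then yields the identity in the statement, together with the required trilinear boundedness into $\CC^{2\alpha+\gamma}$.

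There is no real obstacle here; the only subtlety is that the identity is first established for smooth $g$ (and, if one wishes, for smooth $f,u$), and one then extends $\Pi_\times$ to $\CC^\alpha \times \CC^\alpha \times \CC^\gamma$ by density, exactly as in the last paragraph of the proof of Lemma~\ref{lemma:commutator}: one checks the estimate on a slightly worse triple of spaces $\CC^{\alpha'}\times \CC^{\alpha'}\times \CC^{\gamma'}$ with $2\alpha' + \gamma' > 0$ and $\alpha'+\gamma'<0$, so that smooth functions are dense in the source space, and extension by continuity gives a unique trilinear map with the claimed bound.
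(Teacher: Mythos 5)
Your proposal is correct and follows exactly the paper's argument: decompose $fu = f\prec u + u\prec f + f\circ u$, apply Lemma~\ref{lemma:commutator} to the two paraproduct pieces, and bound $(f\circ u)\circ g$ directly via~\eqref{eq:para-3}, arriving at the same formula $\Pi_\times(f,u,g) = C(f,u,g) + C(u,f,g) + (f\circ u)\circ g$. The extra details you supply (verification of the hypotheses of the commutator lemma and the density/extension step) are exactly what the paper leaves implicit.
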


\begin{proof}
  It suffices to note that $f u = f\lpara u + f \rpara u + f \reso u$, which leads to
  \[
     \Pi_{\times} ( f,u,g ) = ( f u ) \reso g - f (u \reso g ) +u ( f \reso g ) = C(f,u,g) + C(u,f,g) + (f\reso u)\reso g,
  \]
  so that the result follows from Lemma~\ref{lemma:commutator}.
\end{proof}

\section{Rough differential equations}\label{sec:ODE}

Let us now resume the analysis of Section~\ref{sec:paracontrolled ode}. We want to study the \textsc{rde}
\begin{equation}\label{eq:rde} 
  \partial_{t} u=F ( u ) \xi, \qquad u(0) = u_0,
\end{equation}
where $u_0 \in \R^d$, $u\colon \mathbb{R} \rightarrow \mathbb{R}^{d}$ is a continuous vector
valued function, $\xi\colon \mathbb{R} \rightarrow \mathbb{R}^{n}$ is a vector valued distribution with values in
$\CC^{\alpha -1}$ for some $\alpha \in ( 1/3,1 )$, and
$F\colon \mathbb{R}^{d} \rightarrow \Lcal ( \mathbb{R}^{n} ,\mathbb{R}^{d})$ is a family of vector fields on $\mathbb{R}^{d}$.

In order to obtain concrete estimates,
we have to localize the equation. Therefore, we introduce a smooth cut-off
function $\varphi$ with support on $[ -2,2 ]$, which is equal to 1 on $[ -1,1]$ 
and modify the equation as
\[
   \partial_{t} u= \varphi F ( u ) \xi, \qquad u(0) = u_0.
\]
In the regular setting, if $u$ is solution to this equation, it is also a
solution of the original equation on $[ -1,1 ]$, and thus it is sufficient to
study the last equation for local bounds. To avoid problems with the fact
that the paraproduct is a (mildly) non-local operation, we modify the
paracontrolled ansatz as follows:
\begin{equation}\label{eq:ode paracontrolled ansatz localized}
   u= \varphi ( F ( u )\lpara \vartheta ) +u^{\sharp}.
\end{equation}
If $F \in C^2_b$, an easy computation gives
\begin{align*}
   \partial_{t} u^{\sharp} & = \varphi F ( u ) \xi - ( \partial_{t} \varphi )  ( F ( u ) \lpara \vartheta ) - \varphi  ( (\partial_{t} F( u )) \lpara \vartheta ) - \varphi ( F ( u ) \lpara \xi )\\
   & = \varphi \left[ ( F ( u ) \rpara \xi ) +F' ( u )  ( ( u-u_{0} ) \reso \xi ) + \Pi_{F_{u_{0}}} ( u-u_{0} , \xi ) - ( \partial_{t} F ( u ) \lpara \vartheta ) \right] \\
   &\quad - ( \partial_{t} \varphi )  ( F ( u ) \lpara \vartheta ) ,
\end{align*}
where we set $F_{u_{0}} ( x ) =F ( u_{0} +x )$ and used that $( F_{u_{0}} )' (x-u_{0} ) =F' ( x )$ for all $x \in \R^d$. We subtract the contribution of the initial condition,
because this will eventually allow us to solve the equation on a small
interval whose length does not depend on $u_0$. If we plug in
the paracontrolled ansatz for $u$, then $F' ( u ) ( ( u-u_{0} ) \reso \xi )$ becomes
\[
   F' ( u )  ( ( u-u_{0} ) \reso \xi ) =F' ( u ) ( ( \varphi ( F ( u ) \lpara \vartheta ) ) \reso \xi )  +F' ( u ) ( ( u^{\sharp} -u_{0} ) \reso \xi ).
\]
For the first term on the right hand side we can further use that
\[
   ( \varphi ( F ( u ) \lpara \vartheta ) ) \reso \xi = \varphi ( ( F (u ) \lpara \vartheta ) \reso \xi ) + ( F ( u ) \lpara \vartheta ) (\varphi \reso \xi ) + \Pi_{\times} ( \varphi ,F ( u ) \lpara   \vartheta , \xi ),
\]
where we recall that $\Pi_\times$ was defined in Lemma~\ref{lem:para-taylor product}. Introducing the commutator in order to take care of the resonant product $(F(u) \lpara \vartheta) \reso \xi$, we get
\begin{align}\label{eq:ode phi sharp modified def} \nonumber
   \partial_{t} u^{\sharp} & = \varphi \bigg[ ( F ( u ) \rpara \xi ) + \Pi_{F_{u_{0}}} ( u-u_{0} , \xi ) +F' ( u ) ( ( u^{\sharp} -u_{0} ) \reso \xi ) + ( F ( u ) \lpara \vartheta ) ( \varphi \reso \xi ) \\ \nonumber
   &\hspace{9pt} + \Pi_{\times} ( \varphi ,F ( u ) \lpara \vartheta , \xi ) + \varphi C ( F ( u ) , \vartheta , \xi ) +F' ( u ) \varphi F ( u ) ( \vartheta \reso \xi ) - ( \partial_{t} F ( u ) \lpara \vartheta ) \bigg] \\ \nonumber
   &\quad - ( \partial_{t} \varphi )  ( F ( u ) \lpara \vartheta ) \\
   & = \varphi \Phi^{\sharp} - (\partial_{t} \varphi )  ( F ( u ) \lpara \vartheta ),
\end{align}
where $\Phi^\sharp$ is defined to be the term in the large square brackets. Let us summarize our observations so far.

\begin{lemma}
  Let $\xi$ be a smooth path, let $\vartheta$ be such that $\partial_{t}  \vartheta = \xi$, and let $F \in C^{2}_{b}$. Then
  $u$ solves the ODE
  \[
     \partial_{t} u= \varphi F ( u ) \xi, \qquad u ( 0 ) =u_{0},
  \]
  if and only if $u= \varphi ( F (u ) \lpara \vartheta ) +u^{\sharp}$, where $u^{\sharp}$ solves
  \[
     \partial_{t} u^{\sharp} = \varphi \Phi^{\sharp} - ( \partial_{t} \varphi )  ( F ( u ) \lpara \vartheta ), \qquad u^{\sharp} ( 0 ) =u_{0} - \varphi ( F ( u )\lpara \vartheta ) ( 0 ),
  \]
  and where $\Phi^{\sharp}$ is defined in~\eqref{eq:ode phi sharp modified def}. Moreover, for $\alpha \in ( 1/3,1/2 )$ we have the estimate
  \[
     \| \Phi^{\sharp} \|_{2 \alpha -1} \lesssim C_{F} C_{\xi} ( 1+ \| u-u_{0}  \|_{\alpha} + \| u-u_{0} \|^{2}_{\alpha} + \| u^{\sharp} -u_{0} \|_{2 \alpha} ),
  \]
  where
  \[
     C_{\xi} = \| \xi \|_{\alpha -1} + \| \vartheta \|_{\alpha} + \| \vartheta \reso   \xi \|_{2 \alpha -1} + \| \vartheta \|_{\alpha} \| \xi \|_{\alpha -1} \qquad \tmop{and} \qquad C_{F} = \| F \|_{C^{2}_{b}} + \| F \|_{C^{2}_{b}}^{2} .
  \]   
\end{lemma}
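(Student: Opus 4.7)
The statement has two parts, an algebraic reformulation and a quantitative estimate. For the equivalence, nothing new is needed beyond the computation laid out in the paragraphs preceding the lemma: starting from $\partial_t u = \varphi F(u) \xi$ and setting $u^\sharp = u - \varphi(F(u)\prec\vartheta)$, one chains together (i) the Leibniz rule for paraproducts together with $\partial_t \vartheta = \xi$; (ii) Bony's decomposition of $\varphi F(u)\xi$; (iii) Lemma~\ref{lemma:para-taylor} applied to $F_{u_0}(\cdot)=F(u_0+\cdot)$ to expand $F(u)\circ\xi$; (iv) the paracontrolled ansatz $u-u_0=\varphi(F(u)\prec\vartheta)+(u^\sharp-u_0)$; (v) Lemma~\ref{lem:para-taylor product} to expand $(\varphi(F(u)\prec\vartheta))\circ\xi$; and (vi) Lemma~\ref{lemma:commutator} applied to $(F(u)\prec\vartheta)\circ\xi$. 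For smooth $\xi$ every step is a reversible pointwise identity, so both directions of the ``if and only if'' follow simultaneously.

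For the estimate, I would bound each of the eight summands of $\Phi^\sharp$ in $\CC^{2\alpha-1}$. Setting $g:=F'(u)\varphi F(u)$ and using that $F\in C^2_b$ gives composition bounds $\|F(u)\|_\alpha, \|F'(u)\|_\alpha, \|g\|_\alpha\lesssim \|F\|_{C^2_b}(1+\|u-u_0\|_\alpha)$ along with $L^\infty$-bounds of size $\|F\|_{C^2_b}$. Seven of the summands are then controlled by direct application of the results of Section~\ref{sec:paracontrolled}: $F(u)\succ\xi$ via~\eqref{eq:para-2} (with the roles of $f,g$ swapped); $\Pi_{F_{u_0}}(u-u_0,\xi)$ via Lemma~\ref{lemma:para-taylor}, landing in $\CC^{3\alpha-1}\hookrightarrow\CC^{2\alpha-1}$ with quadratic dependence on $\|u-u_0\|_\alpha$; $F'(u)((u^\sharp-u_0)\circ\xi)$ by combining~\eqref{eq:para-3} with multiplication of a $\CC^\alpha$-function and a $\CC^{3\alpha-1}$-distribution, legitimate since $4\alpha>1$; $(F(u)\prec\vartheta)(\varphi\circ\xi)$ by using that $\varphi\in C^\infty$ makes $\varphi\circ\xi$ smooth enough to land in $\CC^{2\alpha-1}$; $\Pi_\times(\varphi,F(u)\prec\vartheta,\xi)$ via Lemma~\ref{lem:para-taylor product}; $\varphi C(F(u),\vartheta,\xi)$ via Lemma~\ref{lemma:commutator}; and $g(\vartheta\circ\xi)$ by multiplying $g\in\CC^\alpha$ with the hypothesized $\vartheta\circ\xi\in\CC^{2\alpha-1}$ (needing $3\alpha>1$). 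Each summand contributes one of the factors $\|\xi\|_{\alpha-1}$, $\|\vartheta\|_\alpha\|\xi\|_{\alpha-1}$, or $\|\vartheta\circ\xi\|_{2\alpha-1}$ from $C_\xi$, multiplied by at most a quadratic polynomial in $\|u-u_0\|_\alpha$ and a linear factor in $\|u^\sharp-u_0\|_{2\alpha}$.

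The remaining summand $\partial_t F(u)\prec\vartheta = (g\,\xi)\prec\vartheta$ is the main technical obstacle: for $\alpha<1/2$ the product $g\,\xi$ is not even well defined by paraproduct estimates, since its resonant part would require $2\alpha>1$. My plan is to avoid forming $g\,\xi$ altogether and instead estimate the combination via a paraproduct-associativity identity
\[
(g\,\xi)\prec\vartheta = g\,(\xi\prec\vartheta) + R(g,\xi,\vartheta),
\]
with a remainder $R$ satisfying $\|R\|_{2\alpha-1}\lesssim \|g\|_\alpha\|\xi\|_{\alpha-1}\|\vartheta\|_\alpha$. The main term $g\,(\xi\prec\vartheta)$ is then bounded in $\CC^{2\alpha-1}$ by $\|g\|_\alpha\|\xi\|_{\alpha-1}\|\vartheta\|_\alpha$, using $\xi\prec\vartheta\in\CC^{2\alpha-1}$ from~\eqref{eq:para-2} and multiplying by a $\CC^\alpha$-function (needing $3\alpha>1$); this is precisely what produces the otherwise puzzling cross-term $\|\vartheta\|_\alpha\|\xi\|_{\alpha-1}$ in $C_\xi$. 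The remainder $R$ is a commutator between multiplication by $g$ and the paraproduct $\cdot\prec\vartheta$, and I would establish the required bound by a direct Littlewood--Paley computation modelled on the proof of Lemma~\ref{lemma:commutator}, using Lemma~\ref{lemma:paraprojection} to analyze $\Delta_j$ acting on a paraproduct. Once this commutator estimate is in place, summing the eight contributions and collecting constants yields the stated inequality.
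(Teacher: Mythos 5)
The algebraic reformulation and seven of the eight estimates in your plan are fine and essentially reproduce what the paper does (the paper only sketches the estimate, noting that one should bound the $L^\infty$ norm rather than the $\CC^{2\alpha-1}$ norm wherever possible; for the term $F'(u)((u^\sharp-u_0)\circ\xi)$ you should indeed use $\|F'(u)(\cdot)\|_{2\alpha-1}\lesssim\|F'\|_{L^\infty}\|(u^\sharp-u_0)\circ\xi\|_{L^\infty}$ rather than the $\CC^\alpha\times\CC^{3\alpha-1}$ product estimate, since the latter produces the mixed term $\|u-u_0\|_\alpha\|u^\sharp-u_0\|_{2\alpha}$, which is not of the form stated in the lemma).

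Your treatment of $\partial_tF(u)\prec\vartheta$, however, contains a genuine gap, and the proposed fix does not work. The claimed identity $(g\xi)\prec\vartheta=g(\xi\prec\vartheta)+R(g,\xi,\vartheta)$ with $\|R\|_{2\alpha-1}\lesssim\|g\|_\alpha\|\xi\|_{\alpha-1}\|\vartheta\|_\alpha$ is false for generic $g\in\CC^\alpha$, $\xi\in\CC^{\alpha-1}$ when $2\alpha<1$: writing $R=\sum_j[S_{j-1},g]\xi\,\Delta_j\vartheta$ and expanding $[\Delta_i,g]\xi=R_i(g,\xi)+\Delta_i(g\succ\xi)+\Delta_i(g\circ\xi)$ via Lemma~\ref{lemma:paraprojection}, the first two pieces are controlled but the third reassembles into $(g\circ\xi)\prec\vartheta$; the resonant product $g\circ\xi$ is exactly the object you were trying to avoid, and commuting with $\prec\vartheta$ does not remove it (e.g.\ its zero mode can diverge while $\|g\|_\alpha\|\xi\|_{\alpha-1}$ stays bounded). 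The resolution is much simpler and uses the specific structure of this term: $F'(u)\varphi F(u)\xi$ is not a generic product but equals $\partial_t(F\circ u)$, the exact derivative of a function whose $\CC^\alpha$ norm is controlled. Since $\|\partial_t h\|_{\alpha-1}\lesssim\|h\|_\alpha$ (the Bernstein consequence $\|\mathD^k u\|_{\alpha-k}\lesssim\|u\|_\alpha$ noted after Lemma~\ref{lemma:Bernstein}), one has $\|\partial_tF(u)\|_{\alpha-1}\lesssim\|F\|_{C^1_b}(1+\|u-u_0\|_\alpha)$, and then \eqref{eq:para-2} immediately gives $\|(\partial_tF(u))\prec\vartheta\|_{2\alpha-1}\lesssim\|F(u)\|_\alpha\|\vartheta\|_\alpha$. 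Relatedly, the cross term $\|\vartheta\|_\alpha\|\xi\|_{\alpha-1}$ in $C_\xi$ does not originate from this term but from the localization terms $(F(u)\prec\vartheta)(\varphi\circ\xi)$ and $\Pi_\times(\varphi,F(u)\prec\vartheta,\xi)$ and from the commutator $C(F(u),\vartheta,\xi)$, each of which is bounded by a constant times $\|\vartheta\|_\alpha\|\xi\|_{\alpha-1}$ up to factors of $\|F(u)\|_\alpha$.
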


The estimate for $\Phi^{\sharp}$ follows from a somewhat lengthy but
elementary calculation based on the decomposition~\eqref{eq:ode phi sharp modified def}, where we estimate the $L^{\infty}$ norm rather than the
$\CC^{2 \alpha -1}$ norm for each term where this is possible.

Plugging in the correct initial condition for $u^{\sharp}$ leads to
\begin{align*}
   u^{\sharp} ( t ) &= u_{0} - ( F ( u ) \lpara \vartheta ) ( 0 ) + \int_{0}^{t}\partial_{s} u^{\sharp} ( s ) \mathd s \\
   & =u_{0} - ( F ( u ) \lpara \vartheta ) ( 0 ) + \int_{0}^{t} (\varphi \Phi^{\sharp}) ( s ) \mathd s- \int_{0}^{t} ( \partial_{s} \varphi ) ( s )  ( F ( u ) \lpara \vartheta ) ( s ) \mathd s.
\end{align*}
Now $\varphi$ is compactly supported, and therefore Lemma~\ref{lemma:integral} gives estimates for the integrals appearing on
the right hand side in terms of distributional norms of the integrands, and we
obtain the bound
\begin{align*}
   \| u^{\sharp} -u_{0} \|_{2 \alpha} & \lesssim \| F ( u ) \lpara \vartheta \|_{2 \alpha -1} + \| \Phi^{\sharp} \|_{2 \alpha -1} \\
   & \lesssim C_{F} C_{\xi} ( 1+ \| u-u_{0} \|_{\alpha} + \| u-u_{0} \|^{2}_{\alpha} + \| u^{\sharp} -u_{0} \|_{2 \alpha} ).
\end{align*}
Using that $u= \varphi (F ( u ) \lpara \vartheta ) +u^{\sharp}$, we moreover have
\[
   \| u-u_{0} \|_{\alpha} \lesssim  \| F \|_{L^{\infty}} \| \vartheta \|_{\alpha} + \| u^{\sharp} -u_{0} \|_{2 \alpha}.
\]
From these two estimates we deduce that if $C_{F}$ is small enough (depending only on $C_{\xi}$ and $\varphi$ but not on $| u_{0} |$), then $\| u^{\sharp} \|_{2 \alpha} \leqslant | u_{0} | +1$. 
This is the required uniform estimate on the problem.

Similarly we can show that if $F \in C^{3}_{b}$ and if $\| F \|_{C^{3}_{b}}$
is small enough, then the map
\[
   (u_0, \xi , \vartheta , \xi \reso   \vartheta) \mapsto ( u,u^{\sharp} )
\]
is locally Lipschitz continuous from $\CC^{\alpha -1} \times \CC^{\alpha}\times \CC^{2 \alpha -1} \times \mathbb{R}^{d}$ to $\CC^{\alpha} \times \CC^{2 \alpha -1}$. To summarize:

\begin{lemma}\label{lemma:ode conditional existence uniqueness}
  Let $a>0$ and let $\| F\|_{C^{3}_{b}}$ be sufficiently small (depending on $a$). Let $\xi$, $\vartheta$,
  and $\varphi$ be smooth functions with $\xi = \partial_{t} \vartheta$ and
  such that $\varphi$ has compact support. If $\alpha >1/3$ and
  \begin{equation}\label{eq:bounds on ode data}
     \max \{ \| \xi \|_{\alpha -1} , \| \vartheta \|_{\alpha} , \| \xi \reso \vartheta \|_{2 \alpha -1} , \| \varphi \|_{C^{1}_{b}} \} \leqslant a,
  \end{equation}
  then for every $u_0 \in \R^d$ there exists a unique global solution $u$ to
  \[
     \partial_{t} u= \varphi F ( u ) \xi, \qquad u(0) = u_0.
  \]
  For fixed $\varphi$ and $F$, $u$ depends Lipschitz continuously on $(u_0, \xi, \vartheta , \xi \reso \vartheta)$ satisfying~\eqref{eq:bounds on ode data}.
\end{lemma}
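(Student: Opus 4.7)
The plan is to formulate the problem as a Banach fixed-point problem for the remainder $u^{\sharp}$ in a ball of $\CC^{2\alpha}$, using the smallness of $\|F\|_{C^3_b}$ to close the contraction estimates globally (independently of $u_0$). Given $u^{\sharp}$ with $\|u^{\sharp} - u_0\|_{2\alpha} \leqslant |u_0| + 1$, I first recover $u$ as the unique fixed point in $\CC^{\alpha}$ of
\[
u = \varphi(F(u) \prec \vartheta) + u^{\sharp};
\]
this is well defined under our smallness assumption on $\|F\|_{C^1_b}$, since the right-hand side is a contraction in $\CC^{\alpha}$ with constant $\lesssim \|\varphi\|_{L^\infty} \|F\|_{C^1_b} \|\vartheta\|_{\alpha}$ by the paraproduct estimate \eqref{eq:para-1}. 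I then define
\[
\Gamma(u^{\sharp})(t) := u_0 - (F(u)\prec\vartheta)(0) + \int_0^t \bigl[(\varphi\,\Phi^{\sharp})(s) - (\partial_s\varphi)(s) (F(u)\prec\vartheta)(s)\bigr]\,ds,
\]
with $\Phi^{\sharp}$ the functional of $(u,u^{\sharp})$ from \eqref{eq:ode phi sharp modified def}. By construction, fixed points $u^{\sharp} = \Gamma(u^{\sharp})$ yield solutions of the ODE with initial value $u_0$.

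Combining the estimate for $\|\Phi^{\sharp}\|_{2\alpha - 1}$ from the preceding lemma with Lemma~\ref{lemma:integral} applied to the compactly supported integrands $\varphi\Phi^{\sharp}$ and $(\partial_t\varphi)(F(u)\prec\vartheta)$ yields
\[
\|\Gamma(u^{\sharp}) - u_0\|_{2\alpha} \lesssim C_F C_\xi \bigl(1 + \|u-u_0\|_{\alpha} + \|u-u_0\|_{\alpha}^2 + \|u^{\sharp} - u_0\|_{2\alpha}\bigr),
\]
while the ansatz together with \eqref{eq:para-1} gives $\|u - u_0\|_{\alpha} \lesssim \|F\|_{L^\infty}\|\vartheta\|_{\alpha} + \|u^{\sharp}-u_0\|_{2\alpha}$. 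Since $C_F \sim \|F\|_{C^2_b} + \|F\|_{C^2_b}^2$ can be made arbitrarily small, choosing $\|F\|_{C^3_b}$ small enough in terms of $a$ (through $C_\xi$) the ball $B := \{u^{\sharp} : \|u^{\sharp} - u_0\|_{2\alpha} \leqslant |u_0| + 1\}$ is mapped into itself by $\Gamma$, with bounds independent of $u_0$.

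For contractivity, I endow $B$ with the $\CC^{2\alpha}$-metric and establish a termwise Lipschitz estimate for $\Gamma$ using (i) the Lipschitz dependence of the auxiliary map $u^{\sharp} \mapsto u$, which again follows from Banach's theorem applied to the implicit equation, (ii) the locally Lipschitz versions of Lemmas~\ref{lemma:paralinearization} and~\ref{lemma:para-taylor} (which require $F \in C^3_b$), and (iii) the trilinear continuity of the commutator $C$ from Lemma~\ref{lemma:commutator} and of $\Pi_{\times}$ from Lemma~\ref{lem:para-taylor product}. Each contribution is proportional to $\|F\|_{C^3_b}$ (up to a factor depending on $a$), so by further reducing this norm one obtains $\|\Gamma(u_1^{\sharp}) - \Gamma(u_2^{\sharp})\|_{2\alpha} \leqslant \tfrac12 \|u_1^{\sharp} - u_2^{\sharp}\|_{2\alpha}$, and Banach's theorem delivers a unique global solution. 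The same termwise estimates, this time with the data $(u_0, \xi, \vartheta, \xi\circ\vartheta)$ also varying within \eqref{eq:bounds on ode data}, give the claimed Lipschitz continuity of the solution map. The main obstacle is the bookkeeping: $\Phi^{\sharp}$ is a sum of many nonlinear functionals of $(u, u^{\sharp})$, each requiring its own contraction/stability bound, and one must track precisely how the constants depend on $a$ versus on the small parameter $\|F\|_{C^3_b}$ in order to close the smallness hypotheses.
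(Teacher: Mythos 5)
Your overall strategy --- recovering $u$ from $u^{\sharp}$ via the implicit ansatz, then running a Banach fixed point for $u^{\sharp}$ built on the estimate for $\Phi^{\sharp}$, Lemma~\ref{lemma:integral}, and the Lipschitz versions of Lemmas~\ref{lemma:commutator}, \ref{lemma:paralinearization}, \ref{lemma:para-taylor}, \ref{lem:para-taylor product} --- is exactly the argument the paper sketches, just organized more explicitly as a nested fixed point; the inner fixed point for $u$ is indeed a contraction on $\CC^{\alpha}$ with constant $\lesssim \|\varphi\|_{C^1_b}\|F\|_{C^1_b}\|\vartheta\|_{\alpha}$, and your termwise bookkeeping is the right way to get both the contraction and the stability in $(u_0,\xi,\vartheta,\xi\circ\vartheta)$.

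The one step that fails as written is the claim that $B=\{u^{\sharp}: \|u^{\sharp}-u_0\|_{2\alpha}\leqslant |u_0|+1\}$ is mapped into itself ``with bounds independent of $u_0$''. The self-mapping estimate is \emph{quadratic} in $\|u^{\sharp}-u_0\|_{2\alpha}$: the term $\|u-u_0\|_{\alpha}^{2}$ (coming from $\Pi_{F_{u_0}}(u-u_0,\xi)$ via Lemma~\ref{lemma:para-taylor} with $\beta=\alpha$), combined with $\|u-u_0\|_{\alpha}\lesssim \|F\|_{L^{\infty}}\|\vartheta\|_{\alpha}+\|u^{\sharp}-u_0\|_{2\alpha}$, produces on your ball a bound of order $C_F C_\xi (1+|u_0|)^{2}$, and asking this to be $\leqslant |u_0|+1$ forces $\|F\|_{C^3_b}\lesssim (1+|u_0|)^{-1}$ --- a smallness condition depending on $u_0$, which is precisely what the lemma (and the subsequent scaling/iteration argument, which needs the interval length $\lambda$ to be uniform in the initial condition) cannot tolerate. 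The same superlinear factors reappear in your contraction constants through the local Lipschitz bounds of Lemmas~\ref{lemma:paralinearization} and~\ref{lemma:para-taylor}. The repair is to take the ball of radius $1$ centered at the constant $u_0$, i.e.\ $\|u^{\sharp}-u_0\|_{2\alpha}\leqslant 1$; this is what the paper's conclusion $\|u^{\sharp}\|_{2\alpha}\leqslant |u_0|+1$ encodes, since $\|u_0\|_{2\alpha}=|u_0|$, and with this choice every constant in your scheme depends only on $a$. Finally, Banach's theorem only gives uniqueness within $B$; here that is harmless because the data are smooth and $\varphi F(\cdot)\xi$ is globally Lipschitz, but the point deserves a sentence.
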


In order to ensure that $\| F \|_{C^{3}_{b}}$ is small enough we can use a
dilation argument. Recall that the scaling operator $\Lambda_\lambda$ is defined for $\lambda > 0$ by $\Lambda_\lambda u = u(\lambda \cdot)$. If we let $u^{\lambda} = \Lambda_{\lambda} u$ and
$\xi^{\lambda} = \lambda^{1- \alpha} \Lambda_{\lambda} \xi$ for $\lambda >0$,
then $u^{\lambda}$ solves
\[
   \partial_{t} u^{\lambda} = \lambda^{\alpha} F ( u^{\lambda} ) \xi^{\lambda}, \qquad u^\lambda(0) = u_0.
\]
The rescaling of $\xi^{\lambda}$ is chosen so that its $\CC^{\alpha}$ norm is
uniformly bounded by that of $\xi$ as $\lambda \rightarrow 0$. Indeed,
Lemma~\ref{lemma:scaling} yields
\[
   \| \xi^{\lambda} \|_{\alpha -1} = \lambda^{1- \alpha} \| \Lambda_{\lambda} \xi \|_{\alpha -1} \lesssim ( 1+ \lambda^{1- \alpha} ) \| \xi \|_{\alpha -1} \lesssim \| \xi \|_{\alpha -1}
\]
for $\lambda \leqslant 1$. If moreover we let $\vartheta^{\lambda} = \lambda^{- \alpha} \Lambda_{\lambda} \vartheta$, then $\| \vartheta^{\lambda} \reso \xi^{\lambda} \|_{2 \alpha -1} \lesssim \| \vartheta \reso   \xi \|_{2 \alpha -1} + \| \vartheta\|_\alpha \|\xi\|_{\alpha-1}$ by Lemma~\ref{lem:scaling commutator}. Thus, we deduce from Lemma \ref{lemma:ode conditional existence uniqueness} that for every $\varphi$
of compact support there exists $\lambda >0$, such that for all $u_{0} \in \mathbb{R}^{d}$ we have a unique
global solution $u^{\lambda}$ to
\[
   \partial_{t} u^{\lambda} = \varphi \lambda^{\alpha} F ( u^{\lambda} ) \xi^{\lambda}, \qquad u^\lambda(0) = u_0.
\]
The rescaled problem is
equivalent to the original one upon the change $F \rightarrow \lambda^{\alpha} F$, $\xi \rightarrow \xi^{\lambda}$ and $\vartheta \reso   \xi \rightarrow \vartheta^{\lambda} \reso   \xi^{\lambda}$. So if we set $u= \Lambda_{\lambda^{-1}} u^{\lambda}$, then $u$ is the unique global solution to
\[
   \partial_{t} u= \varphi_{\lambda} F ( u ) \xi, \qquad u(0) = u_0,
\]
where we set $\varphi_\lambda(t) = \varphi(t/\lambda)$. In particular, if $\varphi \equiv 1$
on $[ -1,1 ]$, then $u$ is the unique solution to the original \textsc{rde} in the
interval $[ - \lambda, \lambda]$.
Since $\lambda$ can be chosen independently of $u_{0}$, we can now iterate on
intervals of length $2\lambda$, and obtain a global solution $u \in \CC^{\alpha}_{\tmop{loc}}$.

This analysis can be summarized in the following statement.

\begin{theorem}\label{theorem:rde existence uniqueness}
  Let $\alpha >1/3$. Assume that $(\xi^{\varepsilon} )_{\varepsilon >0}$ is a family of smooth functions with
  values in $\mathbb{R}^{n}$, $(u^\varepsilon_0)$ is a family of initial conditions in $\R^d$, and $F = (F^1,\dots, F^n)$ is a family of $C^{3}_{b}$ vector
  fields on $\mathbb{R}^{d}$. Suppose that there exist $u_0 \in \R^d$, $\xi \in \CC^{\alpha -1}$ and $\eta \in \CC^{2 \alpha -1}$ such that $(u_0^\varepsilon, \xi^{\varepsilon} , \vartheta^\varepsilon, (\vartheta^{\varepsilon} \reso   \xi^{\varepsilon} ) )$ converges to $(u_0, \xi , \vartheta, \eta )$ in $\CC^{\alpha -1} \times \CC^\alpha \times \CC^{2 \alpha -1}$, where
  $\vartheta^{\varepsilon}$ and $\vartheta$ are solutions to $\partial_{t}  \vartheta^{\varepsilon} = \xi^{\varepsilon}$ and $\partial_t \vartheta = \xi$, respectively. Let for $\varepsilon >0$ the function $u^{\varepsilon}$ be the
  unique global solution to the Cauchy problem
  \[
     \partial_{t} u^{\varepsilon} =F ( u^{\varepsilon} ) \xi^{\varepsilon} , \hspace{2em} u^{\varepsilon} ( 0 ) =u^\varepsilon_{0} .
  \]
  Then there exists $u \in \CC^{\alpha}_\mathrm{loc}$ such that $u^{\varepsilon} \rightarrow u$ in $\CC^{\alpha}_{\tmop{loc}}$ as $\varepsilon \rightarrow 0$. The limit $u$ depends only on $( u_{0} , \xi , \vartheta , \eta )$, and not on the approximating family $(u_0^\varepsilon, \xi^{\varepsilon} , \vartheta^\varepsilon,  ( \vartheta^{\varepsilon} \reso   \xi^{\varepsilon} ) )$.
\end{theorem}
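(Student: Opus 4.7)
The plan is to combine the conditional result Lemma~\ref{lemma:ode conditional existence uniqueness} with the dilation argument already sketched in the text and an elementary time-iteration. By the convergence hypothesis there is a uniform constant $a \geqslant 1$ dominating the Besov norms of $\xi^\varepsilon$, $\vartheta^\varepsilon$, $\vartheta^\varepsilon\circ\xi^\varepsilon$ and of their limits. I would fix a cut-off $\varphi$ equal to $1$ on $[-1,1]$ and supported in $[-2,2]$, and pick $\lambda \in (0,1]$ so small that $\lambda^\alpha \|F\|_{C^3_b}$ satisfies the smallness hypothesis of Lemma~\ref{lemma:ode conditional existence uniqueness} at level (a suitable multiple of) $a$. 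This is possible because $\alpha > 0$. Rescaling by $u^{\varepsilon,\lambda} = \Lambda_\lambda u^\varepsilon$, $\xi^{\varepsilon,\lambda} = \lambda^{1-\alpha}\Lambda_\lambda \xi^\varepsilon$, $\vartheta^{\varepsilon,\lambda} = \lambda^{-\alpha}\Lambda_\lambda \vartheta^\varepsilon$, and invoking the scaling identities together with Lemma~\ref{lem:scaling commutator}, the rescaled data remain bounded by a multiple of $a$.

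Next, I would apply Lemma~\ref{lemma:ode conditional existence uniqueness} to the localized rescaled problem
\[
  \partial_t u^{\varepsilon,\lambda} = \varphi\, \lambda^\alpha F(u^{\varepsilon,\lambda})\,\xi^{\varepsilon,\lambda}, \qquad u^{\varepsilon,\lambda}(0) = u_0^\varepsilon,
\]
which has a unique global solution depending Lipschitz continuously on $(u_0^\varepsilon, \xi^{\varepsilon,\lambda}, \vartheta^{\varepsilon,\lambda}, \vartheta^{\varepsilon,\lambda}\circ\xi^{\varepsilon,\lambda})$. The hypothesis of the theorem, transported through the scaling identities and the commutator bound of Lemma~\ref{lem:scaling commutator}, shows that the rescaled data actually converge in the relevant product space, so that $u^{\varepsilon,\lambda}\to u^\lambda$ in $\CC^\alpha$ for some limit $u^\lambda$. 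Undoing the dilation and using that $\varphi\equiv 1$ on $[-1,1]$ (so that the localized and unlocalized rescaled equations coincide on this interval) gives $u^\varepsilon \to u$ in $\CC^\alpha([-\lambda,\lambda])$ for some $u$.

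Finally, since $\lambda$ is independent of the initial condition, I would iterate: starting at $t=\lambda$ from the data $u^\varepsilon(\lambda)\to u(\lambda)$ (and with the driving distributions translated in time, whose Besov norms are unchanged) the same argument yields convergence on $[\lambda,3\lambda]$, and similarly on every interval of length $2\lambda$. Gluing the pieces produces a global limit $u \in \CC^\alpha_\mathrm{loc}$ with $u^\varepsilon \to u$ in $\CC^\alpha_\mathrm{loc}$. The same Lipschitz dependence shows that two approximating families with the same limit data $(u_0,\xi,\vartheta,\eta)$ produce the same $u$, so $u$ depends only on the limit data. The main obstacle is not really the passage to the limit per se, but the scaling commutator step: one must verify that $\vartheta^{\varepsilon,\lambda}\circ\xi^{\varepsilon,\lambda}$ converges to the correct rescaled version of $\eta$ and not merely stays bounded, which is precisely the point of having Lemma~\ref{lem:scaling commutator} rather than only the naive scaling estimate.
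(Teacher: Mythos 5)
Your proposal is correct and follows essentially the same route as the paper: localize with a cut-off, rescale so that Lemma~\ref{lemma:ode conditional existence uniqueness} applies (using Lemma~\ref{lem:scaling commutator} to transport the convergence of the resonant products through the dilation), pass to the limit on $[-\lambda,\lambda]$ where the cut-off is $1$, and iterate on intervals of length $2\lambda$ since $\lambda$ does not depend on the initial condition. Your closing remark correctly identifies the one point requiring care — that the rescaled area converges rather than merely stays bounded — which is exactly the role the commutator lemma plays in the paper's argument.
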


\begin{proof}
  The only point which remains to be shown is the convergence of $(u^{\varepsilon} )$ to $u$ in $\CC^{\alpha}_{\tmop{loc}}$. A priori,
  we only know that for sufficiently small $\lambda >0$, the solutions
  $\tilde{u}^{\varepsilon}$ to $\partial_{t} \tilde{u}^{\varepsilon} = \varphi_{\lambda} F ( \tilde{u}^{\varepsilon} ) \xi^{\varepsilon}$ with
  $\tilde{u}^{\varepsilon} ( 0 ) =u_{0}$ converge, as $\varepsilon \rightarrow 0$, in $  \CC^{\alpha}$ to a unique limit $\tilde{u}$.
  But since $\varphi_{\lambda} \equiv 1$ on $[ - \lambda, \lambda]$, we
  have $\tilde{u}^{\varepsilon} |_{[ - \lambda, \lambda]} =u^{\varepsilon} |_{[ - \lambda, \lambda ]}$. So if we define $u|_{[ -  \lambda, \lambda ]} = \tilde{u} |_{[ - \lambda, \lambda]}$, then
  $u|_{[ - \lambda, \lambda]}$ does not depend on $\varphi_{\lambda}$.
  Moreover, for every $\psi \in \CD$ with support contained in $[ - \lambda, \lambda]$, we also have that $\| \psi ( u^{\varepsilon} -u ) \|_{\alpha}$ converges to zero as $\varepsilon \rightarrow 0$. Now we can
  iterate this construction of $u$ on intervals of length $2 \lambda$. We
  end up with a distribution $u \in \CS'$, which only depends
  on $( u_{0} ,F, \xi , \vartheta , \eta )$, but not on $\varphi_{\lambda}$
  or on the approximating sequence $(u_0^\varepsilon, \xi^{\varepsilon} , \vartheta^{\varepsilon} , \xi^{\varepsilon} \reso \vartheta^{\varepsilon} )_{\varepsilon >0}$. If $\psi \in \CD$, then it can be written as a
  finite sum of smooth functions with support contained in intervals of length
  $2 \lambda$, and therefore $\psi u= \lim_{\varepsilon \rightarrow} \psi u^{\varepsilon}$, where convergence takes places in $\CC^{\alpha}$.
\end{proof}

\begin{remark}
  By Lemma~\ref{lemma:para-taylor}, it suffices if $F \in C^{2+\beta/\alpha}_b$ for some $\beta >0$ with $2\alpha+\beta>1$ to obtain existence and uniqueness of solutions. If we only suppose $F \in C^{2+\beta/\alpha}$ and not that $F$ and its derivatives are
  bounded, we still obtain local existence and uniqueness of solutions. In
  that case we may consider a function $G \in C^{2+\beta/\alpha}_{b}$ that coincides with
  $F$ on $\{ | x | \leqslant a \}$ for some $a> | u_{0} |$. The Cauchy problem
  \[
     \partial_{t} v=G ( v ) \xi , \hspace{2em} v ( 0 ) =u_{0},
  \]
  then has a unique global solution in the sense of Theorem~\ref{theorem:rde
  existence uniqueness}. If we stop $v$ upon leaving the set $\{ | x | \leqslant a \}$, we obtain a local solution to the \textsc{rde} with vector field $F$.
\end{remark}

\subsection{Interpreting our RDE solutions}

  So far we showed that under the assumptions of Theorem~\ref{theorem:rde existence uniqueness} there exists a unique limit $u$ of the solutions to the
  regularized equations, which does not depend on the particular approximating
  sequence. In that sense, one may formally call $u$ the unique solution to
  \[
     \partial_{t} u=F ( u ) \xi , \hspace{2em} u ( 0 ) =u_{0} .
  \]
  But $u$ is actually a weak solution to the equation if we interpret the product $F(u) \xi$ appropriately. Below we will introduce a map which extends the pointwise product $F(u) \xi$ from smooth $\xi$ to $\xi \in \CC^{\alpha-1}$ by a continuity argument. But first we present an auxiliary result which shows that the considered topologies and operators do not depend on the particular dyadic
partition of unity that we use to describe them.

\begin{lemma}\label{lem:paraproduct difference is bounded operator}
   Let $\alpha, \beta \in \R$. Let $( \chi, \rho )$ and $( \tilde{\chi}, \tilde{\rho} )$ be two dyadic partitions of unity and 
   let $(\lpara, \rpara, \reso)$ and $(\widetilde{\lpara}, \widetilde{\rpara}, \widetilde{\reso})$ denote paraproducts and resonant term defined in terms of $(\chi, \rho)$ and $(\tilde{\chi}, \tilde{\rho})$, respectively.
Then
   \[
      (u,v) \mapsto ( u \lpara v - u \ \widetilde{\lpara} \ v, u \reso v - u \ \widetilde{\reso} \ v, u \rpara v - u \ \widetilde{\rpara} \ v)
   \]
   is a bounded bilinear operator from $\CC^\alpha \times \CC^\beta$ to $(\CC^{\alpha+\beta})^3$.
\end{lemma}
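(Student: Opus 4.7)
My plan is to reduce the three-part statement to a single paraproduct difference estimate, and then control that estimate via Abel summation combined with a key spectral observation about differences of low-pass/band-pass symbols at the same scale.

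\textbf{Reduction.} For smooth $u, v$ the identity $uv = u \prec v + u \circ v + u \succ v = u \widetilde\prec v + u \widetilde\circ v + u \widetilde\succ v$ forces the three target coordinates to sum to zero. Together with the symmetry $u \succ v = v \prec u$ (valid for any partition), this reduces the lemma to proving
\[
\|u \prec v - u \widetilde\prec v\|_{\alpha+\beta} \lesssim \|u\|_\alpha \|v\|_\beta
\]
for smooth $u, v$ and extending by density. The $\succ$-difference bound is then obtained by swapping $u$ and $v$, and the $\circ$-difference bound follows from the vanishing sum.

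\textbf{Key spectral observation.} Both $\chi(2^{-(j-1)}\cdot)$ and $\widetilde\chi(2^{-(j-1)}\cdot)$ equal $1$ on a common ball of radius $\sim 2^{j-1}$ and vanish outside a common slightly larger ball, so their difference is supported in an annulus at scale $2^j$. The same holds for $\rho_j - \tilde\rho_j$ and, by telescoping, for $\Sigma_j := \sum_{k \le j}(\Delta_k - \widetilde\Delta_k) = S_{j+1} - \widetilde S_{j+1}$. Bernstein's inequality then gives, for any $\alpha, \beta \in \R$,
\[
\|(S_{j-1} - \widetilde S_{j-1})u\|_{L^\infty} \lesssim 2^{-j\alpha}\|u\|_\alpha, \quad \|(\Delta_j - \widetilde\Delta_j)v\|_{L^\infty} \lesssim 2^{-j\beta}\|v\|_\beta, \quad \|\Sigma_j v\|_{L^\infty} \lesssim 2^{-j\beta}\|v\|_\beta.
\]

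\textbf{Main estimate.} Split $u \prec v - u \widetilde\prec v = I + II$ with
\[
I = \sum_j (S_{j-1} - \widetilde S_{j-1})u \cdot \Delta_j v, \qquad II = \sum_j \widetilde S_{j-1} u \cdot (\Delta_j - \widetilde\Delta_j)v.
\]
Each summand of $I$ immediately has $L^\infty$-norm $\lesssim 2^{-j(\alpha+\beta)}\|u\|_\alpha\|v\|_\beta$ and Fourier support in a ball of radius $\sim 2^j$. For $II$, Abel summation against $\Sigma_j$ — whose boundary term $\widetilde S_{N-1} u \cdot \Sigma_N v$ vanishes in $\CS'$ because $\Sigma_N v \to 0$ and $\widetilde S_{N-1}u$ stays bounded for smooth $u$ — rewrites
\[
II = - \sum_j \widetilde\Delta_{j-1} u \cdot \Sigma_j v,
\]
whose summands enjoy the same bound $\lesssim 2^{-j(\alpha+\beta)}\|u\|_\alpha\|v\|_\beta$ and ball spectral support at scale $2^j$ thanks to the estimate on $\Sigma_j$. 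The ball version of Lemma~\ref{lem: Besov regularity of series} (applicable in the natural regime $\alpha + \beta > 0$, where $u \circ v$ is itself well-defined) then concludes the proof.

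\textbf{Main obstacle.} The technical crux is the Abel summation for $II$: a direct estimate of $\widetilde S_{j-1} u \cdot (\Delta_j - \widetilde\Delta_j)v$ only yields a $\CC^\beta$-bound with no gain in $\alpha$, reflecting the fact that $\|\widetilde S_{j-1} u\|_{L^\infty}$ does not decay in $j$ when $\alpha > 0$. The rearrangement via $\Sigma_j$ replaces this by a product of two annulus-localized pieces at scale $2^j$, restoring the expected $\alpha + \beta$ decay; without this trick the naive decomposition does not close.
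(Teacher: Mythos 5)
Your reduction (handle only $u\prec v-u\,\widetilde\prec\,v$, get $\succ$ by the symmetry $u\succ v=v\prec u$, and get $\circ$ from the identity $u\circ v=uv-u\prec v-u\succ v$ plus continuity) is exactly the paper's strategy; the paper then simply cites Bony's Theorem~2.1 for the paraproduct difference, whereas you attempt a direct proof. The direct proof, however, has a genuine gap that you half-acknowledge yourself: both your term $I$ and the Abel-summed term $II$ consist of products of two pieces whose Fourier supports are annuli \emph{at the same scale} $2^j$, so the summands are only spectrally supported in balls $2^j\CB$, and you must invoke the ball case of Lemma~\ref{lem: Besov regularity of series}, which requires $\alpha+\beta>0$. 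Calling this ``the natural regime'' is not defensible here: the lemma is stated for all $\alpha,\beta\in\R$, and the paper uses it precisely when $\alpha+\beta<0$ — in Theorem~\ref{theorem:product operator} the pair is $(v,w)\in\CC^\alpha\times\CC^\gamma$ with $\gamma<0$ and typically $\alpha+\gamma<0$ (for \textsc{rde}s, $\gamma=\alpha-1$ and $2\alpha-1<0$). In that regime the whole content of the lemma is that the \emph{differences} still land in $\CC^{\alpha+\beta}$ even though $u\circ v$ itself is undefined, so your proof does not establish the statement that is actually needed.

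The gap is fixable. For $\alpha<0$ the $\prec$-difference is trivial, since \eqref{eq:para-2} bounds each paraproduct separately in $\CC^{\alpha+\beta}$. For $\alpha>0$ you must restore \emph{annular} spectral support of the summands so that the first part of Lemma~\ref{lem: Besov regularity of series} (which needs no sign condition) applies. One way: first compare each paraproduct to a large-gap version $u\prec_N v=\sum_j S_{j-N}u\,\Delta_j v$; the correction $\sum_j\big(\sum_{i=j-N}^{j-2}\Delta_i u\big)\Delta_j v$ has summands with annular support and full $2^{-j(\alpha+\beta)}$ decay. Then run your $I+II$ decomposition with gap $N$: for $N$ large, $(S_{j-N}-\widetilde S_{j-N})u$ and (after Abel summation) $\widetilde\Delta_{j-N-1}u$ live at frequencies $\sim 2^{j-N}$, strictly below the inner radius of the annulus carrying $\Delta_j v$ resp.\ $\Sigma_{j-1}v$, so each product is supported in an annulus $2^j\CA$ and the conclusion holds for all $\alpha,\beta$. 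A minor further point: smooth functions are not dense in $\CC^\alpha=B^\alpha_{\infty,\infty}$, so ``extend by density'' should be replaced by the workaround the paper uses in Lemma~\ref{lemma:commutator} (prove the bound for slightly larger exponents and use the inclusion of $\CC^\alpha\times\CC^\beta$ in the closure of smooth functions there); this matters especially for the $\circ$-difference when $\alpha+\beta\le0$, where the difference is only \emph{defined} through this extension.
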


\begin{proof}
   The statement for $(u,v) \mapsto (u \lpara v - u \ \widetilde{\lpara} \ v)$ (and thus for $(u,v) \mapsto (u \rpara v - u \ \widetilde{\rpara} \ v)$) is shown in Bony~\cite{Bony1981}, Theorem 2.1. But for smooth functions $u$ and $v$ we have $u \reso v = uv - u \lpara v - u \rpara v$, and similarly for $u \ \widetilde{\reso} \ v$. Thus, the bound on $u \reso v - u \ \widetilde{\reso} \ v$ follows from the bounds on $u \lpara v - u \ \widetilde{\lpara} \ v$ and on $u \rpara v - u \ \widetilde{\rpara} \ v$ in combination with a continuity argument.
\end{proof}

Our commutator lemma states that if the product $g \reso h$ is given, then we can unambiguously make sense of the product $(f\lpara g) \reso h$ for suitable $f$. This leads us to the following definition.

\begin{definition}\label{def:paracontrolled}
   Let $\alpha \in \R$, $\beta>0$, and let $v \in \CC^\alpha$. A pair of distributions $(u,u') \in \CC^\alpha \times \CC^{\beta}$ is called \emph{paracontrolled} by $v$ if
   \[
      u^\sharp = u - u'\lpara v \in \CC^{\alpha+\beta}.
   \]
   In that case we abuse notation and write $u \in \CD^\beta = \CD^\beta(v)$, and we define the norm
   \[
     \| u \|_{\CD^\beta} = \|u'\|_\beta + \|u^\sharp\|_{\alpha+\beta}. 
   \]
\end{definition}

According to Lemma~\ref{lem:paraproduct difference is bounded operator}, the space $\CD^\beta$ does not depend on the specific partition of unity used to define it. To construct the product $F(u)\xi$, we could now show that smooth $F$ preserve the paracontrolled structure of $u$. This can be achieved by combining Lemma~\ref{lemma:paralinearization} with another commutator lemma (Theorem~2.3 in~\cite{Bony1981}). But we do not need the full strength of that result, let us just show that if $u$ is paracontrolled by $\vartheta$ and $F$ is smooth enough, then $F(u) \xi$ is well defined.

\begin{theorem}\label{theorem:product operator}
  Let $\alpha \in (0,1)$, $\beta \in (0,\alpha]$, $\gamma < 0$ be such that $\alpha + \beta + \gamma >0$. Let $F \in C^{1+\beta/\alpha}$ and let $v \in \CC^\alpha$, $w \in \CC^\gamma$, $\eta \in \CC^{\alpha+\gamma}$ be such that there exist sequences $(v_n) \subseteq \CS$, $(w_n) \subseteq \CS$, converging to $v$ and $w$ respectively, such that $(v_n \reso w_n)$ converges to $\eta$. Then
  \begin{align}\label{eq:paracontrolled product def}
     \CD^\beta(v) \ni u \mapsto F(u) w  & = F(u)\rpara w + F(u) \lpara w + \Pi_F(u,w) + F'(u) (u^\sharp \reso w) \\ \nonumber
     &\quad + F'(u) C(u', v, w) + F'(u) u' \eta \in \CC^\gamma
  \end{align}
  defines a locally Lipschitz continuous function. If $w \in \CS$ and $\eta = v \reso w$, then $F(u) w$ is simply the pointwise product.
  
  The product $F(u)w$ does not depend on the specific dyadic partition used to construct it: If $(\widetilde{\lpara}, \widetilde{\rpara}, \widetilde{\reso})$ denote paraproducts and resonant term defined in terms of another partition unity, if
  \[
     \tilde \eta = \eta + v \lpara w + v \rpara w - v \ \widetilde \lpara \ w - v \ \widetilde \rpara \ w,
  \]
  and $\tilde{u}^\sharp = u' \ \widetilde \lpara \ v$, then $F(u) w$ is equal to the right hand side of~\eqref{eq:paracontrolled product def} if we replace every operator by the corresponding operator defined in terms of $(\widetilde{\lpara}, \widetilde{\rpara}, \widetilde{\reso})$, and we replace $(u^\sharp,\eta)$ by $(\tilde{u}^\sharp,\tilde \eta)$.
\end{theorem}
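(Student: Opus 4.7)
I would start from the Bony decomposition $F(u)w=F(u)\prec w+F(u)\succ w+F(u)\circ w$, valid pointwise for smooth $w$, and rewrite the resonant piece $F(u)\circ w$ so that the only non-intrinsic quantity left is $v\circ w$, which I then replace by the hypothesised $\eta$. I focus on the non-trivial case $\alpha+\gamma<0$; if $\alpha+\gamma\geq 0$, the product $v\circ w$ is directly defined by Lemma~\ref{thm:paraproduct} and there is nothing to do.

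First I apply Lemma~\ref{lemma:para-taylor} to get $F(u)\circ w = F'(u)(u\circ w)+\Pi_F(u,w)$ with $\Pi_F(u,w)\in\CC^{\alpha+\beta+\gamma}$. Then I insert the paracontrolled ansatz $u=u'\prec v+u^\sharp$ and invoke Lemma~\ref{lemma:commutator} applied to $(u',v,w)\in\CC^\beta\times\CC^\alpha\times\CC^\gamma$ (using $\alpha+\beta+\gamma>0$ and $\alpha+\gamma<0$) to obtain
\[
   u\circ w = u'(v\circ w)+C(u',v,w)+u^\sharp\circ w,
\]
where $u^\sharp\circ w\in\CC^{(\alpha+\beta)+\gamma}$ by~\eqref{eq:para-3}. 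Only $v\circ w$ is not intrinsic, and replacing it by $\eta$ yields exactly~\eqref{eq:paracontrolled product def}.

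Next, each of the six summands in~\eqref{eq:paracontrolled product def} can be estimated by the tools at hand: the two paraproducts by Lemma~\ref{thm:paraproduct}; $\Pi_F(u,w)$ by Lemma~\ref{lemma:para-taylor}; $u^\sharp\circ w$ by~\eqref{eq:para-3}; $C(u',v,w)$ by Lemma~\ref{lemma:commutator}; and $F'(u)\,u'\,\eta$ by the standard Besov product estimate (its three factors have regularities $\alpha,\beta,\alpha+\gamma$ summing to the positive number $2\alpha+\beta+\gamma$). The local Lipschitz dependence on $(u,v,w,\eta)$ will follow by the same pattern applied to differences, using the locally Lipschitz clauses of Lemmas~\ref{lemma:paralinearization} and~\ref{lemma:para-taylor} and the multilinearity of $C$, $\prec$, $\succ$, $\circ$.

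Finally, when $w$ is smooth and $\eta=v\circ w$, every step above is reversible, so the output reduces to the classical pointwise product $F(u)w$. For independence of the dyadic partition, Lemma~\ref{lem:paraproduct difference is bounded operator} gives, for smooth approximants $(v_n,w_n)\to(v,w)$ with $v_n\circ w_n\to\eta$, that $v_n\,\widetilde\circ\,w_n=v_n w_n-v_n\,\widetilde\prec\,w_n-v_n\,\widetilde\succ\,w_n\to\tilde\eta$ in $\CC^{\alpha+\gamma}$; at this approximate level both formulas reduce to the pointwise product $F(u_n)w_n$, so the continuity just proved forces them to agree in the limit. The main technical obstacle will be bookkeeping the exponents so that multiplication by $F'(u)$ against each of $\Pi_F(u,w)$, $C(u',v,w)$, $u^\sharp\circ w$ and $u'\eta$ still lands in $\CC^\gamma$ under the sharp hypothesis $F\in C^{1+\beta/\alpha}$; the upgrade to $F\in C^{2+\beta/\alpha}$ is precisely what activates the locally Lipschitz clauses of the paralinearization and paracomposition lemmas for the difference estimates.
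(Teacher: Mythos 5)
Your derivation of the formula (Bony decomposition, Lemma~\ref{lemma:para-taylor} for the resonant term, Lemma~\ref{lemma:commutator} applied to $(u',v,w)$, substitution of $\eta$ for $v\circ w$), your term-by-term continuity estimates, and your verification that the formula collapses to the pointwise product for smooth $w$ all coincide with the paper's argument.

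The one step that does not go through as written is the independence of the dyadic partition. You claim that, plugging in the smooth approximants $(v_n,w_n,v_n\circ w_n)$, ``both formulas reduce to the pointwise product'' and then continuity closes the argument. But $u$ is a fixed element of $\CD^\beta(v)$, so $u^\sharp=u-u'\prec v$ is defined relative to the limiting $v$, not to $v_n$; with $v$ replaced by $v_n$ in the formula but $u^\sharp$ unchanged, the six terms do \emph{not} recombine into $F(u)w_n$ --- a correction term $F'(u)\bigl((u'\prec(v_n-v))\circ w_n\bigr)$ survives. One cannot sidestep this by redefining $u^\sharp_n=u-u'\prec v_n$ either, because $u^\sharp_n-u^\sharp=u'\prec(v-v_n)$ converges to $0$ only in $\CC^\alpha$, not in $\CC^{\alpha+\beta}$ where the continuity statement requires $u^\sharp$ to live. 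The paper's fix is to keep $u^\sharp$ fixed, write $F(u)w=\lim_n\bigl[F(u)w_n+F'(u)\bigl((u'\prec(v_n-v))\circ w_n\bigr)\bigr]$ and the analogous identity for the tilde operators, and then apply Lemma~\ref{lem:paraproduct difference is bounded operator} twice to show that the two correction terms differ by a quantity bounded by $\|u'\|_\beta\,\|v_n-v\|_\alpha\,\|w_n\|_\gamma\to0$. You already have the right lemma in hand; you just need this explicit bookkeeping of the correction term rather than the claimed reduction to the pointwise product.
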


\begin{proof}
  The local Lipschitz continuity of the product follows from its definition in
  combination with Lemma~\ref{lemma:commutator},
  Lemma~\ref{lemma:para-taylor}, and the paraproduct estimates
  Lemma~\ref{thm:paraproduct}. 
 
  If $w$ is a Schwartz function and $\eta = v \reso w$, then
  \[
     F'(u) C(u', v, w) + F'(u) u' \eta = F'(u) ((u'\lpara v) \reso w),
  \]
  and therefore
  \begin{align*}
     &\Pi_{F} ( u,w ) +F' ( u ) ( u^{\sharp} \reso w ) +F' ( u ) C ( u' ,v,w ) +F' ( u ) u' \eta \\
     &\hspace{100pt} = \Pi_{F} ( u,w ) +F' ( u ) ( u \reso w ) =F ( u ) \reso w,
  \end{align*}
  which shows that we recover $F(u) \lpara w + F(u) \rpara w + F(u) \reso w$, i.e. the pointwise product.
  
  It remains to show that $F(u) w$ does not depend on the specific dyadic partition of unity. By continuity of the operators involved, we have
  \begin{align*}
     F(u) w & = \lim_{n \to \infty} \Big[ F ( u ) \lpara w_n + F ( u ) \rpara w_n + \Pi_{F} ( u,w_n ) +F' ( u ) (u^\sharp \reso w_n) \\
     &\hspace{50pt} +F'(u) C(u', v_n, w_n) + F'(u) u' (v_n \reso w_n) \Big] \\
     & = \lim_{n \to \infty} \Big[ F(u) w_n + F'(u) ((u' \lpara (v_n - v)) \reso w_n) \Big].
  \end{align*}
  Assume now that we defined $F(u) \cdot w$ in terms of another partition of unity, as described above. Then Lemma~\ref{lem:paraproduct difference is bounded operator} implies the convergence of $(v_n \ \widetilde \reso \ w_n)$ to $\tilde \eta$ in $\CC^{\alpha+\gamma}$, and therefore
  \[
     F(u) \cdot w = \lim_{n \to \infty} \Big[ F(u) w_n + F'(u) ((u' \ \widetilde \lpara \ (v_n - v)) \ \widetilde \reso \ w_n) \Big].
  \]
  Another application of Lemma~\ref{lem:paraproduct difference is bounded operator} then yields $F(u) w = F(u) \cdot w$.
\end{proof}

\begin{remark}\label{rmk:product smooth perturbation}
   If in the setting of Theorem~\ref{theorem:product operator} we let $\tilde v = v + f$ for some $f \in \CC^{\alpha+\beta}$, then we have $\CD^\beta(v) = \CD^\beta(\tilde v)$, and it is easy to see that if we set $\tilde \eta = \eta + f \reso w$, $\tilde u^\sharp = u - u'\lpara \tilde v$, and define $\widetilde{F(u)w}$ like $F(u)w$, with $\tilde v$, $\tilde u^\sharp$, $\tilde \eta$ replacing $u^\sharp, v, \eta$, then $\widetilde{F(u)w} = F(u)w$.
\end{remark}

  With this product operator at hand, it is relatively straightforward to show that if $\xi$ has compact support (which in general is necessary to have $u \in \CC^\alpha$ and not just in $\CC^\alpha_{\mathrm{loc}}$), then the solution $u$ that we constructed in Theorem~\ref{theorem:rde existence uniqueness} is the unique element of $\CD^\alpha$ which solves $\partial_t u = F(u) \xi$, $u(0) = u_0$, in the weak sense. Remark~\ref{rmk:product smooth perturbation} explains why we did not fix the initial condition $\vartheta(0)$ in Theorem~\ref{theorem:rde existence uniqueness}: it is of no importance whatsoever.

\subsection{Alternative approach}

We briefly describe an alternative approach to \textsc{rde}s which avoids the
paracontrolled ansatz. The idea is to control $u \reso   \xi$ directly by
exploiting that $u$ solves the differential equation $\partial_{t} u=F ( u ) \xi$. Indeed, let as above $\vartheta$ be a solution to $\partial_{t} \vartheta = \xi$ and observe that the Leibniz rule yields
\[
   u \reso   \xi =u \reso   \partial_{t} \vartheta = \partial_{t} ( u \reso  \vartheta ) - (\partial_{t} u) \reso \vartheta = \partial_{t} ( u \reso   \vartheta ) - ( F ( u ) \xi ) \reso   \vartheta.
\]
Now the second term on the right hand side can be rewritten as
\begin{align*}
   ( F ( u ) \xi ) \reso   \vartheta & = ( F ( u ) \lpara \xi ) \reso   \vartheta + ( F ( u ) \reso \xi ) \reso   \vartheta + ( F ( u ) \rpara \xi ) \reso  \vartheta \\
   & =F ( u ) ( \xi \reso   \vartheta ) +C ( F ( u ) , \xi , \vartheta ) + ( F'( u ) ( u \reso   \xi ) ) \reso   \vartheta + \\
   &\quad + \Pi_{F} ( u, \xi ) \reso  \vartheta + ( F ( u ) \rpara \xi ) \reso   \vartheta.
\end{align*}
Combining these two equations, we see that
\begin{gather*}
   u \reso   \xi = \Phi - ( F' ( u ) ( u \reso   \xi ) ) \reso   \vartheta , \qquad \text{where} \\
   \Phi = \partial_{t} ( u \reso   \vartheta ) -F ( u ) ( \xi \reso \vartheta ) -C ( F ( u ) , \xi , \vartheta ) - \Pi_{F} ( u, \xi ) \reso \vartheta - ( F ( u ) \rpara \xi ) \reso   \vartheta.
\end{gather*}
This is an implicit equation for $u \reso   \xi$ which can be solved by fixed point methods. For example, it is easy to obtain the estimate
\[
   \| u \reso   \xi \|_{2 \alpha -1} \lesssim \| \Phi \|_{2 \alpha -1} +C_{F} \| u \reso   \xi \|_{2 \alpha -1} \| \vartheta \|_{\alpha},
\]
and if $C_{F}$ is small enough this leads to $\| u \reso   \xi \|_{2 \alpha -1} \lesssim \| \Phi \|_{2 \alpha -1}$. Moreover, we have $\| \Phi \|_{2 \alpha -1} \lesssim C_{\xi} [ \| u \|_{\alpha} +C_{F} ( 1+ \| u \|_{\alpha} )^{2} ]$. These estimates can be
reinjected into the equation
\[
   \partial_{t} u=F ( u ) \xi =F ( u ) \lpara \xi +F' ( u ) ( u \reso   \xi ) +F ( u ) \rpara \xi + \Pi_F(u,\xi)
\]
to obtain a local estimate for $u$.

\subsection{Connections to rough paths and existence of the area}

We saw in the previous section that the solution $u$ to an \textsc{rde} of the form
$\partial_{t} u=F ( u ) \xi$ depends on the driving signal in a continuous way, provided
that we not only keep track of $\xi$ but also of $\vartheta \reso \xi$. From
the theory of rough paths it is well known that the same holds true if we keep
track of $\vartheta$ and its iterated integrals $\int \int \mathd \vartheta \mathd \vartheta$. But in fact the convergence of $( \vartheta^{\varepsilon} \reso \xi^{\varepsilon} )$
is equivalent to the convergence of the iterated integrals $\int \int \mathd \vartheta^{\varepsilon} \mathd \vartheta^{\varepsilon}$:

\begin{corollary}\label{corollary:classical rough path}
  Let $( u^{\varepsilon} ,v^{\varepsilon} )_{\varepsilon >0} \subseteq \CS(\R)^2$ and define for every $\varepsilon >0$ the ``area''
  \[
     A^{\varepsilon}_{s,t} = \int_{s}^{t} \int_{s}^{r_{2}} \mathd u^{\varepsilon} ( r_{1} ) \mathd v^{\varepsilon} ( r_{2} ) , \hspace{2em} s<t \in \mathbb{R}.
  \]
  Let $\alpha , \beta \in ( 0,1 )$ with $\alpha + \beta <1$ and let $u \in \CC^\alpha ,v \in \CC^\beta , \eta \in \CC^{\alpha + \beta -1}$. Then $( u^{\varepsilon} ,v^{\varepsilon} ,u^{\varepsilon} \reso \partial_{t} v^{\varepsilon} )$ converges to $( u,v, \eta )$ in $\CC^{\alpha} \times \CC^\beta \times \CC^{\alpha + \beta -1}$ if and only if $( u^{\varepsilon},v^{\varepsilon} )$ converges to $( u,v )$ in $\CC^{\alpha} \times \CC^\beta$, and if moreover
  \begin{equation}\label{eq:classical rough path}
     \lim_{\varepsilon \rightarrow 0} \left( \sup_{s \neq t \in \mathbb{R}, | s-t | \leqslant 1} \frac{| A_{s,t} -A^{\varepsilon}_{s,t} |}{| t-s |^{\alpha + \beta}} \right) =0,
  \end{equation}
  where we set $A_{s,t} = \int_{s}^{t} ( \eta + ( u  \lpara \partial_{t} v ) + ( u \rpara \partial_{t} v ) ) ( r ) \mathd r-u ( s ) ( v ( t ) -v ( s ) ) $ for $s,t \in \mathbb{R}$.
\end{corollary}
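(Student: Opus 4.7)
The plan is to decompose $A^\varepsilon_{s,t}$ algebraically so as to isolate the resonant term $u^\varepsilon \circ \partial_t v^\varepsilon$, and then to match the decomposition with the definition of $A_{s,t}$. For smooth $u^\varepsilon, v^\varepsilon$, using $v^\varepsilon_{s,t} = \int_s^t \partial_t v^\varepsilon(r)\, dr$ one has $A^\varepsilon_{s,t} = \int_s^t u^\varepsilon(r) \partial_t v^\varepsilon(r)\, dr - u^\varepsilon(s) v^\varepsilon_{s,t}$, and plugging in Bony's decomposition of $u^\varepsilon \partial_t v^\varepsilon$ yields
\[
A^\varepsilon_{s,t} = \int_s^t (u^\varepsilon \circ \partial_t v^\varepsilon)(r)\, dr + B(u^\varepsilon, v^\varepsilon)_{s,t},
\]
where I set
\[
B(u,v)_{s,t} := \int_s^t \bigl((u\prec \partial_t v) + (u\succ \partial_t v)\bigr)(r)\, dr - u(s) v_{s,t}.
\]
This bilinear form is well-defined for any $u \in \CC^\alpha$, $v \in \CC^\beta$ with $\alpha+\beta > 0$, since $u \prec \partial_t v$ and $u \succ \partial_t v$ lie in $\CC^{\alpha+\beta-1}$ by Lemma~\ref{thm:paraproduct}. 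The definition of $A_{s,t}$ in the statement reads $A_{s,t} = \int_s^t \eta(r)\, dr + B(u, v)_{s,t}$, so
\[
A^\varepsilon_{s,t} - A_{s,t} = \int_s^t \bigl((u^\varepsilon \circ \partial_t v^\varepsilon) - \eta\bigr)(r)\, dr + \bigl(B(u^\varepsilon, v^\varepsilon) - B(u, v)\bigr)_{s,t}. \quad (\ast)
\]

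The analytic heart of the proof is the commutator-type estimate
\[
|B(u,v)_{s,t}| \lesssim \|u\|_\alpha \|v\|_\beta |t-s|^{\alpha+\beta}, \qquad |t-s| \leq 1. \quad (\ast\ast)
\]
To establish $(\ast\ast)$ I decompose each piece in Littlewood-Paley blocks. The term $\int_s^t (u \succ \partial_t v)(r)\, dr = \sum_j \int_s^t S_{j-1} \partial_t v \cdot \Delta_j u\, dr$ is handled directly: each summand is spectrally supported in an annulus $2^j \CA$, so Bernstein's inequality gives $|\int_s^t (\cdot)\, dr| \lesssim \min(2^{-j}, |t-s|) \cdot \|\cdot\|_\infty \lesssim \|u\|_\alpha \|v\|_\beta \min\bigl(2^{-j(\alpha+\beta)}, 2^{j(1-\alpha-\beta)}|t-s|\bigr)$, and summing over $j$ by splitting at $2^j \sim |t-s|^{-1}$ yields $|t-s|^{\alpha+\beta}$ (the condition $\alpha + \beta < 1$ enters here). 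For $\int_s^t (u \prec \partial_t v)(r)\, dr - u(s) v_{s,t}$, I use the per-block identity $\int_s^t S_{j-1} u \cdot \Delta_j \partial_t v\, dr = S_{j-1} u(s) \Delta_j v_{s,t} + \int_s^t (S_{j-1} u)_{s,r} \Delta_j \partial_t v(r)\, dr$ combined with $\sum_j S_{j-1} u(s) \Delta_j v_{s,t} = u(s) v_{s,t} + \sum_j \bigl(S_{j-1}u(s) - u(s)\bigr) \Delta_j v_{s,t}$. The $u(s) v_{s,t}$ cancels against the boundary term in $B$, and the remaining two sums are controlled by the same dyadic splitting, using that $(S_{j-1} u)_{s,\cdot} \Delta_j \partial_t v$ is again spectrally localized at scale $2^j$.

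For the direct implication, given the assumed convergences I apply $(\ast)$: the first term is bounded via Lemma~\ref{lemma:integral} by $\|u^\varepsilon \circ \partial_t v^\varepsilon - \eta\|_{\alpha+\beta-1} |t-s|^{\alpha+\beta} \to 0$, and the second is handled by bilinearity together with $(\ast\ast)$, writing $B(u^\varepsilon, v^\varepsilon) - B(u,v) = B(u^\varepsilon - u, v^\varepsilon) + B(u, v^\varepsilon - v)$ and using $\|u^\varepsilon - u\|_\alpha, \|v^\varepsilon - v\|_\beta \to 0$. For the converse, starting from the convergence of $A^\varepsilon$ and $(u^\varepsilon, v^\varepsilon)$, $(\ast)$ and $(\ast\ast)$ give $\bigl|\int_s^t (u^\varepsilon \circ \partial_t v^\varepsilon - \eta)(r)\, dr\bigr| = o(1) |t-s|^{\alpha+\beta}$ uniformly on $|t-s| \leq 1$. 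The antiderivative $F^\varepsilon(t) := \int_0^t (u^\varepsilon \circ \partial_t v^\varepsilon - \eta)(r)\, dr$ therefore converges to $0$ locally in the Hölder space $C^{\alpha+\beta}$, hence locally in $\CC^{\alpha+\beta}$ by the standard Hölder--Besov equivalence (valid because $\alpha+\beta \in (0,1)$), and the continuity of $\partial_t\colon \CC^{\alpha+\beta} \to \CC^{\alpha+\beta-1}$ yields $u^\varepsilon \circ \partial_t v^\varepsilon \to \eta$ in the required topology.

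The main obstacle is the commutator estimate $(\ast\ast)$: it is the analytic substance of the corollary and requires careful balancing of dyadic pieces, in particular using the per-block integration-by-parts identity that expresses $\int_s^t S_{j-1} u \cdot \Delta_j \partial_t v\, dr$ in terms of the Riemann-sum-like quantity $S_{j-1} u(s) \Delta_j v_{s,t}$. The cancellation between these boundary contributions and the $-u(s) v_{s,t}$ term in $B$ is what absorbs the otherwise unsummable low-frequency contributions, leaving uniformly controlled residuals that sum to $|t-s|^{\alpha+\beta}$; everything else in the proof is continuity of paraproducts, the integration lemma, and standard Besov function-space theory.
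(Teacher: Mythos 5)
Your forward direction is sound and is essentially the paper's argument under an equivalent regrouping: the paper splits $A_{s,t}-A^{\varepsilon}_{s,t}$ into a resonant-plus-$\succ$ piece handled by Lemma~\ref{lemma:integral} and the paraproduct estimates, and two $\prec$-pieces handled by the integral/paraproduct commutator Lemma~\ref{lemma:controlled old and new}, which is exactly your estimate $(\ast\ast)$ (your per-block integration by parts is the content of that appendix lemma). Exploiting bilinearity of $B$ to write $B(u^{\varepsilon},v^{\varepsilon})-B(u,v)=B(u^{\varepsilon}-u,v^{\varepsilon})+B(u,v^{\varepsilon}-v)$ is the same bookkeeping the paper does term by term.

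The converse direction, however, has a genuine gap at its last step. From $(\ast)$ and $(\ast\ast)$ you correctly obtain that $F^{\varepsilon}(t)=\int_0^t(u^{\varepsilon}\circ\partial_t v^{\varepsilon}-\eta)(r)\,\mathrm{d}r$ has increments over $|t-s|\leqslant 1$ that are $o(1)|t-s|^{\alpha+\beta}$ uniformly on $\R$. But $F^{\varepsilon}$ is in general unbounded on $\R$, so it does not belong to $\CC^{\alpha+\beta}(\R)=B^{\alpha+\beta}_{\infty,\infty}(\R)$ at all, and the ``standard H\"older--Besov equivalence'' plus continuity of $\partial_t\colon\CC^{\alpha+\beta}\to\CC^{\alpha+\beta-1}$ cannot be invoked globally. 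Retreating to local statements, as you do, only yields convergence in $\CC^{\alpha+\beta-1}_{\mathrm{loc}}$ (and even there one must control the commutator $\varphi' F^{\varepsilon}$ coming from the cutoff), whereas the corollary asserts convergence in the global space $\CC^{\alpha+\beta-1}$. The difficulty is real: $\Delta_j\partial_t F^{\varepsilon}=K_j'\ast F^{\varepsilon}$ involves increments of $F^{\varepsilon}$ over arbitrarily large distances because $K_j=\CF^{-1}\rho_j$ is not compactly supported in physical space, so the scale-$\leqslant 1$ increment bound does not directly control the Littlewood--Paley blocks. The paper explicitly flags this as ``not entirely obvious'' and resolves it by switching to the characterization of $\CC^{\alpha+\beta-1}$ via local means (Triebel, Theorem~1.10), i.e.\ compactly supported kernels $k^0$ and $2^jk(2^j\cdot)$ with $\int\partial_t k=0$, which see only increments of $F^{\varepsilon}$ at scales $\leqslant 1$. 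You need this (or an equivalent uniform localization argument) to close the converse.
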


\begin{proof}
  First suppose that $( u^{\varepsilon} ,v^{\varepsilon} ,u^{\varepsilon} \reso \partial_{t} v^{\varepsilon} )$ converges to $( u,v, \eta )$ in
  $\CC^{\alpha} \times \CC^\beta \times \CC^{\alpha + \beta -1}$, and let $s, t \in \mathbb{R}$ with $| s-t | \leqslant 1$.
  We have
  \begin{align}\label{eq:classical rough path pr1} \nonumber
     A_{s,t} -A^{\varepsilon}_{s,t} 
     & = \int_{s}^{t} ( \eta +u \rpara \partial_{t} v-u^{\varepsilon} \reso  \partial_{t} v^{\varepsilon} -u^{\varepsilon} \rpara \partial_{t}  v^{\varepsilon} ) ( r ) \mathd r \\ \nonumber
     &\quad + \int_{s}^{t} ( ( u^{\varepsilon} -u ) \lpara \partial_{t} v^{\varepsilon} ) ( r ) \mathd r- ( u^{\varepsilon} -u ) ( s ) ( v^{\varepsilon} ( t ) -v^{\varepsilon} ( s ) ) \\
     &\quad + \int_{s}^{t} ( u \lpara \partial_{t} ( v^{\varepsilon} -v ) ) ( r ) \mathd r- ( u ) ( s ) ( ( v^{\varepsilon} -v ) ( t ) - ( v^{\varepsilon} -v ) ( s ) ) .
  \end{align}
  The first term on the right hand side can be estimated with the help of
  Lemma~\ref{lemma:integral}, which allows us to bound increments of the integral in terms of Besov norms of the integrand. We get
  \begin{align*}
     &\left| \int_{s}^{t} ( \eta +u \rpara \partial_{t} v-u^{\varepsilon} \reso  \partial_{t} v^{\varepsilon} -u^{\varepsilon} \rpara \partial_{t} v^{\varepsilon} ) ( r ) \mathd r \right| \\
     &\hspace{5pt} \lesssim ( \| \eta -u^{\varepsilon} \reso \partial_{t} v^{\varepsilon} \|_{\alpha + \beta -1} + \| u-u^{\varepsilon} \|_{\alpha} \| \partial_{t}  v \|_{\beta -1} + \| u^{\varepsilon} \|_{\alpha} \| \partial_{t} (v^{\varepsilon} -v ) \|_{\beta -1} ) | t-s |^{\alpha + \beta}.
  \end{align*}
  Since $\| \partial_{t} ( v^{\varepsilon} -v ) \|_{\beta -1} \lesssim \| v^{\varepsilon} -v \|_{\beta}$, the right hand side goes to zero if we divide it by $| t-s |^{\alpha + \beta}$ and let $\varepsilon \rightarrow 0$.
  
  The second term on the right hand side of~{\eqref{eq:classical rough path pr1}} can be estimated using Lemma~\ref{lemma:controlled old and new}, which roughly states that time integral and paraproduct commute with each other, at the price of introducing a smoother remainder term:
  \[
     \left| \int_{s}^{t} ( ( u^{\varepsilon} -u ) \lpara \partial_{t} v^{\varepsilon} ) ( r ) \mathd r- ( u^{\varepsilon} -u ) ( s ) ( v^{\varepsilon} ( t ) -v^{\varepsilon} ( s ) ) \right| \lesssim | t-s |^{\alpha + \beta} \| u^{\varepsilon} -u \|_{\alpha} \| v^{\varepsilon} \|_{\beta},
  \]
  The third term on the right hand side of~{\eqref{eq:classical rough path pr1}} is of the same type as the second term, and therefore the convergence
  in~{\eqref{eq:classical rough path}} follows.
  
  Conversely, assume that $( u^{\varepsilon} ,v^{\varepsilon} )$ converges to
  $( u,v )$ in $\CC^{\alpha} \times \CC^\beta$, and that the
  convergence in~{\eqref{eq:classical rough path}} holds. It follows from the
  representation~{\eqref{eq:classical rough path pr1}} that also
  \[
     \lim_{\varepsilon \rightarrow 0} \Bigg( \sup_{s \neq t \in \mathbb{R}, | s-t | \leqslant 1} \frac{\big| \int_{s}^{t} ( \eta -u^{\varepsilon} \reso \partial_{r} v^{\varepsilon} ) ( r ) \mathd r \big|}{| t-s |^{\alpha + \beta}} \Bigg) =0.
  \]
  Due to the restriction $| s-t | \leqslant 1$, it is not entirely obvious
  that this implies the convergence of $u^{\varepsilon} \reso \partial_{r} v^{\varepsilon}$ to $\eta$ in $\CC^{\alpha + \beta -1}$. However, here
  we can use an alternative characterization of Besov spaces in terms of
  local means. Let $k^{0}$ and $k$ be infinitely differentiable functions on
  $\mathbb{R}$ with support contained in $( -1,1 )$, such that $\mathcal{F}k^{0} (0 ) \neq 0$, and such that there exists $\delta >0$ with $\mathcal{F}k ( z ) \neq 0$ for all $0< | z | < \delta$. Then an equivalent norm on $\CC^{\alpha + \beta -1} ( \mathbb{R} )$ is given by
  \[
     \| w \|_{\alpha + \beta -1} \simeq \max \Big\{ \| k^{0} \ast w \|_{L^{\infty}} , \sup_{j \geqslant 0}  2^{j ( \alpha + \beta -1 )} \| 2^{j} k ( 2^{j}  \cdummy ) \ast w \|_{L^{\infty}} \Big\} ,
  \]
  see~\cite{Triebel2006}, Theorem 1.10. Let us write $f= \int_{0}^{\cdummy} (\eta -u^{\varepsilon} \reso \partial_{r} v^{\varepsilon} ) ( r ) \mathd r$
  and let $t \in \mathbb{R}$ and $j \geqslant 0$. Then
  \begin{align*}
     | 2^{j} k ( 2^{j} \cdummy ) \ast (\partial_t f) ( t ) | & = 2^{2j} \left| \int_{\mathbb{R}} (\partial_{t} k) (2^j (t-s) ) ( f ( t ) -f ( s ) ) \mathd s \right| \\
     & \lesssim 2^{2j} \int_{\mathbb{R}} | ( \partial_{t} k ) ( 2^{j} ( t-s ) ) |   | t-s |^{\alpha + \beta} \mathd s \sup_{| a-b | \leqslant 1} \frac{| f( b ) -f ( a ) |}{| b-a |^{\alpha + \beta}}\\
     & \lesssim 2^{-j ( \alpha + \beta -1 )} \sup_{| a-b | \leqslant 1} \frac{| f ( b ) -f ( a ) |}{| b-a |^{\alpha + \beta}},
  \end{align*}
  where we used that $\int_{\mathbb{R}} \partial_{t} k ( t ) \mathd t=0$,
  and that $k$ is supported in $( -1,1)$. Similarly, we obtain
  \begin{align*}
     | k^{0} \ast ( \partial_{t} f ) ( t ) | & \lesssim \int_{\mathbb{R}} | \partial_{t} k^{0} ( t-s ) |   | t-s |^{\alpha + \beta} \mathd s \sup_{| a-b | \leqslant 1} \frac{| f ( b ) -f ( a ) |}{| b-a |^{\alpha + \beta}} \\
     & \lesssim \sup_{| a-b | \leqslant 1} \frac{| f ( b ) -f ( a ) |}{| b-a |^{\alpha + \beta}},
  \end{align*}
  from where the convergence of $u^{\varepsilon} \reso \partial_{t} v^{\varepsilon}$ to $\eta$ in $\CC^{\alpha + \beta -1}$ follows.
\end{proof}

\begin{corollary}\label{corollary:existence of rough path area}
  Let $X$ be an $n$--dimensional centered Gaussian process with independent components and measurable trajectories, whose covariance function satisfies for some $H \in ( 1/4,1 )$ the inequalities
  \begin{gather}\label{eq:classical rough path covariance} \nonumber
     \mathbb{E} [ | X_{t} -X_{s} |^{2} ] \lesssim | t-s |^{2H} \hspace{2em} \tmop{and} \\
     | \mathbb{E} [ ( X_{s+r} -X_{s} ) ( X_{t+r} -X_{t} ) ] | \lesssim | t-s |^{2H-2}  r^{2}
  \end{gather}
  for all $s,t \in \mathbb{R}$ and all $r \in [ 0, | t-s | )$. Then $\varphi X \in \CC^{\alpha}$ for all $\alpha <H$ and all $\varphi \in \CD$, and there
  exists $\eta \in \CC^{2 \alpha -1}$ such that for every $\psi \in \CS$ with
  $\int \psi \mathd t=1$ and for every $\delta >0$ we have
  \[
     \lim_{\varepsilon \rightarrow 0} \mathbb{P} \left( \| \psi^{\varepsilon}  \ast ( \varphi X ) - ( \varphi X ) \|_{\alpha} + \| ( \psi^{\varepsilon}  \ast ( \varphi X ) ) \reso \partial_{t} ( \psi^{\varepsilon} \ast (\varphi X ) ) - \eta \|_{\CC^{2 \alpha -1}} > \delta \right) =0,
  \]
  where we define $\psi^{\varepsilon} = \varepsilon^{-1} \psi (\varepsilon^{-1} \cdummy )$.
\end{corollary}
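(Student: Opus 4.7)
The plan is to establish the Hölder regularity of $\varphi X$, construct $\eta$ by treating the diagonal and off-diagonal entries of the matrix-valued resonant product separately, and finally check independence of the limit from the choice of mollifier $\psi$.

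For the regularity, since $X$ is centered Gaussian the bound $\E|X_t-X_s|^2 \lesssim |t-s|^{2H}$ upgrades to $\E|X_t-X_s|^{2p} \lesssim_p |t-s|^{2pH}$ for every integer $p$, so Kolmogorov's continuity criterion yields a modification of $X$ that is locally $\alpha'$-Hölder continuous almost surely for every $\alpha' < H$. Combined with the smoothness and compact support of $\varphi$ this gives $\varphi X \in \CC^\alpha$ a.s. for every $\alpha < H$, and standard mollifier estimates then produce $\psi^\varepsilon \ast (\varphi X) \to \varphi X$ in $\CC^\alpha$ in probability (even almost surely along subsequences).

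Set $Y^\varepsilon = \psi^\varepsilon \ast (\varphi X)$ and consider the matrix $\eta^{ij}_\varepsilon = Y^{\varepsilon,i} \circ \partial_t Y^{\varepsilon,j}$ entry by entry. For $i=j$ no probabilistic input is needed: using $fg = f\prec g + f\succ g + f\circ g$ together with the Leibniz rule for the pointwise product and for the paraproducts, a short calculation shows that for smooth $f$ one has $f \circ \partial_t f = \tfrac12 \partial_t(f \circ f)$. Since $Y^{\varepsilon,i} \to \varphi X^i$ in $\CC^\alpha$, the bilinear continuity of $\circ$ from $\CC^\alpha \times \CC^\alpha$ into $\CC^{2\alpha}$ (Lemma~\ref{thm:paraproduct}) yields convergence of $Y^{\varepsilon,i} \circ Y^{\varepsilon,i}$ in $\CC^{2\alpha}$, hence of $\eta^{ii}_\varepsilon$ in $\CC^{2\alpha-1}$ to $\tfrac12 \partial_t\bigl((\varphi X^i) \circ (\varphi X^i)\bigr)$.

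The main obstacle is the off-diagonal case $i \neq j$, where the independence of $X^i$ and $X^j$ places $\eta^{ij}_\varepsilon$ in the second Wiener chaos of the Gaussian pair $(X^i,X^j)$. There Nelson's hypercontractivity makes all $L^p(\Omega)$-norms equivalent with $p$-dependent constants, and combined with the Besov embedding $B^{2\alpha-1+1/p}_{p,p}(\R) \hookrightarrow \CC^{2\alpha-1}$ for $p$ large it suffices to prove a pointwise second-moment estimate of the form
\[
   \E\bigl[|\Delta_k(\eta^{ij}_\varepsilon - \eta^{ij}_{\varepsilon'})(t)|^2\bigr] \lesssim 2^{-2k(2\alpha-1+\kappa)}\,\omega(\varepsilon,\varepsilon'),
\]
uniformly for $t$ in a compact set, with $\kappa>0$ small and $\omega(\varepsilon,\varepsilon')\to 0$. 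This is the heart of the argument. It is carried out by expanding $\Delta_k$ and the resonant product into Littlewood--Paley frequencies, applying the Wick isometry in the second chaos, factoring expectations via independence into products of one-dimensional covariances of $X^i$ and $X^j$, and bounding the resulting double integrals using both covariance bounds from the statement: the first controls the crude diagonal contributions in the chaos expansion, while the finer increment-of-increments bound $|t-s|^{2H-2}r^2$ is what supplies the extra smoothness needed to reach regularity exponent $2H-1$ at frequency scale $2^k$. The assumption $H > 1/4$ enters precisely here as the integrability threshold keeping these estimates summable (it is the analogue of the well-known threshold for existence of the Lévy area of fractional Brownian motion). Once existence of $\eta$ is proved for one fixed choice of $\psi$, independence from $\psi$ follows by running the same chaos comparison on the difference $\psi^\varepsilon\ast(\varphi X) - \tilde\psi^\varepsilon\ast(\varphi X)$ of two admissible mollifications.
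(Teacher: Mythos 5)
Your argument is correct in outline but takes a genuinely different route from the paper. The paper's proof is a two-step reduction: after establishing $\varphi X\in\CC^{\alpha}$ (via Gaussian hypercontractivity, a bound on $\E[\|\varphi X\|_{B^{\alpha}_{2p,2p}}^{2p}]$ and Besov embedding rather than Kolmogorov's criterion, though the outcome is the same), it invokes Theorem 15.45 of \cite{Friz2010}, which asserts that under exactly the covariance hypotheses \eqref{eq:classical rough path covariance} the iterated integrals of the mollified paths converge in $L^p$ in the H\"older sense \eqref{eq:classical rough path}; the equivalence established in Corollary~\ref{corollary:classical rough path} then converts convergence of the areas $A^{\varepsilon}_{s,t}$ into convergence of the resonant products, which is the entire purpose of that corollary. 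You bypass Corollary~\ref{corollary:classical rough path} altogether and build $\eta$ directly: the diagonal entries via $f\circ\partial_t f=\tfrac12\partial_t(f\circ f)$ and the continuity of $\circ$ on $\CC^{\alpha}\times\CC^{\alpha}$ (correct, and this is exactly how the paper handles the diagonal in the Burgers analogue, Lemma~\ref{l:burgers area}), and the off-diagonal entries by a second-chaos Littlewood--Paley computation. This mirrors the paper's treatment of the Burgers and \textsc{pam} areas rather than its treatment of the \textsc{rde} area. Be aware, however, that the second-moment estimate you defer is precisely the content of the Friz--Victoir theorem the paper cites: in the time variable there is no explicit spectral representation of the covariance (unlike Lemmas~\ref{l:heat covariance} and~\ref{l:burgers area}, where everything diagonalizes in Fourier space), so one must write the blocks as convolutions against mean-zero kernels and run a genuine two-dimensional increment argument combining both covariance bounds; this is substantially more work than your sketch suggests, and it is exactly where the threshold $H>1/4$ bites. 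Neither route is wrong; the paper's is shorter but outsources the hard estimate to an external theorem, while yours is self-contained in spirit but leaves that same estimate unproved.
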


\begin{proof}
  Since $\varphi$ is smooth and of compact support, it is easy to see that
  also the Gaussian process $\varphi X$ satisfies the covariance
  condition~{\eqref{eq:classical rough path covariance}}, and using Gaussian hypercontractivity we obtain $\mathbb{E} [ | \varphi(t)X_{t} -\varphi(s)X_{s} |^{2p} ] \lesssim | t-s |^{2Hp}$ for all $p \geqslant 1$. Using the fact that $X$ has measurable trajectories, we can apply this estimate to show that $\E[\| \varphi X\|_{B^\alpha_{2p,2p}}^{2p}] < \infty$ for all $p \geqslant 1$, $\alpha < H$. Now it suffices to apply Besov embedding, Lemma~\ref{l:besov embedding torus}, to obtain that $\varphi X \in \CC^\alpha$.

  Moreover, $\varphi X$ has compact support. So by Theorem 15.45 of~{\cite{Friz2010}}, for every $p \geqslant 1$, the
  iterated integrals $\int_{s}^{t} \int_{s}^{r_{2}} \mathd \psi^{\varepsilon} \ast ( \varphi X ) ( r_{1} ) \mathd \psi^{\varepsilon} \ast ( \varphi X ) ( r_{2} )$ converge in $L^{p}$ in the sense of~{\eqref{eq:classical rough path}}. The statement then follows from Corollary~\ref{corollary:classical rough path}.
\end{proof}

\begin{remark}
  The proof of Corollary~\ref{corollary:classical rough path} actually shows
  more than the equivalence of the convergence of $A^{\varepsilon}$ and of
  $u^{\varepsilon} \reso \partial_{t} v^{\varepsilon}$: it shows that the norm
  of $( u^{\varepsilon} \reso \partial_{t} v^{\varepsilon} - \eta )$ can be
  controlled by a polynomial of the norms of $( A^{\varepsilon} -A )$, $(u^{\varepsilon} -u )$, and $(v^{\varepsilon} -v)$. So in fact we
  have $L^{p}$--convergence in Corollary~\ref{corollary:existence of rough path area}, and not just convergence in probability. Alternatively, the $L^p$--convergence is obtained from the convergence in probability because we are considering random variables living in a fixed Gaussian chaos, see Theorem 3.50 of~\cite{Janson1997}.
\end{remark}

Combining Corollary~\ref{corollary:existence of rough path area} with Theorem~\ref{theorem:rde existence uniqueness}, we obtain the following corollary:

\begin{corollary}
  Let $X$ be a $n$--dimensional centered Gaussian process satisfying the conditions of Corollary~\ref{corollary:classical rough path} for some $H > 1/3$, and let $\varphi \in \CD$ and $F \in C^3_b$. Then there exists a unique solution $u$ to
  \[
     \partial_t u = F(u) \partial_t (\varphi X), \qquad u(0) = u_0,
  \]
  in the following sense: If $\psi \in \CS$ with $\int \psi \mathd t=1$ and if for $\varepsilon > 0$ the function $u^\varepsilon$ solves
  \[
     \partial_t u^\varepsilon = F(u^\varepsilon) \partial_t (\varphi X)^\varepsilon, \qquad u(0) = u_0,
  \]
  where $(\varphi X)^\varepsilon = \varepsilon^{-1} \psi(\varepsilon \cdot) \ast (\varphi X)$, then $u^\varepsilon$ converges to $u$ in probability in $\CC^\alpha$ for all $\alpha < H$.
\end{corollary}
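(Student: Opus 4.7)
The plan is to combine Corollary~\ref{corollary:existence of rough path area}, which produces the enhanced noise associated to $\varphi X$, with Theorem~\ref{theorem:rde existence uniqueness}, the pathwise continuity result for the \textsc{rde} solution map. Setting $\vartheta^\varepsilon := (\varphi X)^\varepsilon$ and $\xi^\varepsilon := \partial_t \vartheta^\varepsilon = \psi^\varepsilon \ast \partial_t(\varphi X)$, both smooth, Corollary~\ref{corollary:existence of rough path area} produces a canonical $\eta \in \CC^{2\alpha-1}$ (independent of $\psi$) such that $(\vartheta^\varepsilon, \vartheta^\varepsilon \circ \xi^\varepsilon)$ converges in probability to $(\varphi X, \eta)$ in $\CC^\alpha \times \CC^{2\alpha-1}$. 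Since $\partial_t \colon \CC^\alpha \to \CC^{\alpha-1}$ is bounded, this also gives $\xi^\varepsilon \to \xi := \partial_t(\varphi X)$ in $\CC^{\alpha-1}$ in probability. Hence the input required by Theorem~\ref{theorem:rde existence uniqueness} converges in probability to $(u_0, \xi, \varphi X, \eta)$.

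The second step is to convert this probabilistic convergence into a pathwise statement. Along any subsequence $(\varepsilon_n)$, I extract a further subsequence $(\varepsilon_{n_k})$ along which the joint convergence of $(\xi^{\varepsilon_{n_k}}, \vartheta^{\varepsilon_{n_k}}, \vartheta^{\varepsilon_{n_k}} \circ \xi^{\varepsilon_{n_k}})$ is almost sure. On this full-measure event, Theorem~\ref{theorem:rde existence uniqueness} applies pathwise (with constant initial data $u_0^{\varepsilon_{n_k}} = u_0$ and with $F \in C^3_b$) and produces a limit $u \in \CC^\alpha_{\mathrm{loc}}$ that depends only on the canonical tuple $(u_0, \xi, \varphi X, \eta)$, not on the extracted subsequence nor on $\psi$. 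Since every subsequence has a further subsequence converging almost surely to the same $u$, the standard subsequence criterion yields convergence in probability of the full sequence $(u^\varepsilon)$ to $u$. Uniqueness is automatic: another mollifier $\tilde\psi$ with $\int \tilde\psi\, \mathd t = 1$ would by Corollary~\ref{corollary:existence of rough path area} produce the same enhanced noise $(\varphi X, \eta)$, and hence the same limit.

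The main technical subtlety is that Theorem~\ref{theorem:rde existence uniqueness} delivers convergence only in $\CC^\alpha_{\mathrm{loc}}$, while the corollary asserts convergence in $\CC^\alpha$. Because $\partial_t(\varphi X)$ is compactly supported and $\psi^\varepsilon$ is only Schwartz (not compactly supported), $\xi^\varepsilon$ does not vanish outside a fixed interval, but it decays rapidly there. Hence $u^\varepsilon$ need not be constant in the tails of $\R$, though the limit $u$ is constant on each unbounded component of the complement of $\mathrm{supp}(\varphi)$ because $\xi$ vanishes there. A direct Gr\"onwall-type estimate using $\|F\|_{L^\infty}$ and the Schwartz decay of $\psi^\varepsilon \ast \partial_t(\varphi X)$ shows that outside a large fixed interval $u^\varepsilon$ differs from its asymptotic constant by a quantity that vanishes faster than any power of $\varepsilon$; together with the local convergence on that fixed interval this upgrades the $\CC^\alpha_{\mathrm{loc}}$ convergence to global $\CC^\alpha$ convergence, completing the proof.
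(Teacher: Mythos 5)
Your argument is correct and is exactly the combination the paper intends: Corollary~\ref{corollary:existence of rough path area} supplies the convergence in probability of $(\vartheta^\varepsilon,\vartheta^\varepsilon\circ\xi^\varepsilon)$ to the canonical, mollifier-independent pair $(\varphi X,\eta)$, and the subsequence criterion reduces the claim to the deterministic continuity statement of Theorem~\ref{theorem:rde existence uniqueness}. Your additional tail estimate upgrading $\CC^\alpha_{\mathrm{loc}}$ to global $\CC^\alpha$ convergence (needed because $\psi^\varepsilon\ast\partial_t(\varphi X)$ is only rapidly decaying, not compactly supported) is a legitimate point that the paper passes over in silence, and your Gr\"onwall-type argument for it is sound.
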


\section{Rough Burgers equation}\label{sec:burgers}

Fix now $\sigma > 5/6$ and consider the following PDE on $[ 0,T ] \times \mathbb{T}$ for some fixed $T>0$:
\begin{equation}\label{eq:burgers-rough}
  L u=G ( u ) \partial_{x} u+ \xi, \qquad u(0) = u_0,
\end{equation}
where $L= \partial_{t} + ( - \Delta )^{\sigma}$. We would like to consider solutions $u$ in the case of a
distributional $\xi$, and in particular we want to allow $\xi$ to be a typical
realization of a space-time white noise. We will see below that in this case
the solution $\vartheta$ to the linear equation $L  \vartheta = \xi$,
$\vartheta ( 0 ) =0$, belongs (locally in time) to $\CC^{\alpha} ( \mathbb{T})$ for any $\alpha < \sigma -1/2$, but it is not better than that. This is
also the regularity to be expected from the solution $u$ of the non-linear
problem~{\eqref{eq:burgers-rough}}, and so for $\sigma \leqslant 1$ the term $G ( u ( t ) ) \partial_{x} u ( t )$ is not well defined since $G ( u ( t ) ) \in \CC^{\alpha} ( \mathbb{T} )$ and $\partial_{x} u ( t ) \in \CC^{\alpha -1} (\mathbb{T} )$, and the sum of their regularities fails to be positive.

For $\sigma =1$ equation~\eqref{eq:burgers-rough} has been solved by
Hairer~{\cite{Hairer2011}}, who used rough path integrals to define the
product $G ( u ) \partial_{x} u$. In the following, we will show how to solve
the equation using paracontrolled distributions.

While in general it is possible to set up the equation in a space-time Besov
space, the fact that the distribution $\xi$ (which is a genuine space-time
distribution) enters the problem linearly allows for a small simplification.
Indeed, if we let $w=u- \vartheta$, then $w$ solves the PDE
\begin{equation}\label{eq:burgers-alt}
  L w=G ( \vartheta +w ) \partial_{x} ( \vartheta +w ),
\end{equation}
which can be studied as an evolution equation for a continuous function of
time with values in a suitable H{\"o}lder-Besov space.

Recall that for $T>0$
and $\beta \in \mathbb{R}$ we defined the spaces $C_{T} \CC^{\beta} =C( [ 0,T ], \CC^{\beta})$ with norm $\| u \|_{C_{T} \CC^{\beta}} = \sup_{0 \leqslant s \leqslant T} \| u ( s ) \|_{\beta}$. By the regularity theory for $L$ we expect $w \in C_{T} \CC^{\alpha -1+2 \sigma}$ whenever $G ( \vartheta +w ) \partial_{x} ( \vartheta +w ) \in C_{T}\CC^{\alpha -1}$ (at least in the sense of uniform estimates as the
regularization goes to zero). The paraproduct allows us to decompose the right
hand side of~{\eqref{eq:burgers-alt}} as
\begin{align*}
   G ( \vartheta +w ) \partial_{x} ( \vartheta +w ) & =G ( \vartheta +w ) \lpara \partial_{x} \vartheta +G ( \vartheta +w ) \reso \partial_{x} \vartheta \\
   &\quad +G( \vartheta +w ) \rpara \partial_{x} \vartheta +G ( \vartheta +w ) \partial_{x} w,
\end{align*}
where we have expanded only the term containing $\partial_{x} \vartheta$ since
the one linear in $\partial_{x} w$ is well defined under the hypothesis that
$w \in C_{T} \CC^{\alpha -1+2 \sigma}$. Note that here we only let the
paraproduct act on the spatial variables, i.e. $G ( \vartheta +w ) \lpara \partial_{x} \vartheta$ should really be understood as
\[
   t \mapsto G (\vartheta ( t ) +w ( t ) ) \lpara \partial_{x} \vartheta ( t ),
\]
an element of $C_{T} \CC^{\alpha -1}$. A simple modification of the proof of
Lemma~\ref{lemma:paralinearization} (see also Lemma~\ref{lem:mod paralin}) shows that, for $\alpha \in (0,1/2)$, we have
\begin{align*}
   \| G ( \vartheta +w ) -G' ( \vartheta +w ) \lpara \vartheta \|_{2 \alpha} & \lesssim \| G \|_{C^{2}_{b}} ( 1+ \| \vartheta \|^{2}_{\alpha})(1 + \| w \|_{2 \alpha} ) \\
   & \lesssim \| G \|_{C^{2}_{b}} ( 1+ \| \vartheta \|^{2}_{\alpha})(1 +  \| w \|_{\alpha -1+2 \sigma} ) ,
\end{align*}
where we used that $\alpha -1+2 \sigma >2 \alpha$, which holds because $\alpha < \sigma -1/2<2 \sigma -1$. The linear dependence on the norm of $w$ will be
crucial for obtaining global solutions. We can now rewrite
\begin{align*}
   G ( \vartheta +w ) \reso \partial_{x} \vartheta & = ( G ( \vartheta +w ) -G'( \vartheta +w ) \lpara \vartheta ) \reso \partial_{x} \vartheta +C ( G' ( \vartheta +w ) , \vartheta , \partial_{x} \vartheta ) \\
   &\quad + G' ( \vartheta +w )( \vartheta \reso \partial_{x} \vartheta ).
\end{align*}
So if we assume that $( \vartheta \reso \partial_{x} \vartheta ) \in C_{T} \CC^{2 \alpha -1}$, then we have a well behaved representation of the resonant
term $G ( \vartheta +w ) \reso \partial_{x} \vartheta$, and
\begin{align}\label{eq:burgers resonant estimate} \nonumber
   \| G ( \vartheta +w ) \reso \partial_{x} \vartheta \|_{2 \alpha -1} & \lesssim \| G \|_{C^{2}_{b}} ( 1+ \| \vartheta \|^{2}_{\alpha})(1 + \| w \|_{\alpha -1+2 \sigma} ) \| \partial_{x} \vartheta \|_{\alpha -1}\\ \nonumber
   &\quad + \| G'( \vartheta +w ) \|_{\alpha} \| \vartheta \|_{\alpha}^{2} + \| G' ( \vartheta +w ) \|_{\alpha} \| \vartheta \reso \partial_{x} \vartheta \|_{2 \alpha -1}\\
   & \lesssim C_{G} C_{\vartheta} ( 1+ \| w \|_{\alpha -1+2 \sigma} ) ,
\end{align}
where we set
\[
   C_{G} = \| G \|_{C^{2}_{b}} \hspace{2em} \tmop{and} \hspace{2em}  C_{\vartheta} = 1+ \| \vartheta \|_{C_T \CC^\alpha}^{3} + \|\vartheta\|_{C_T\CC^\alpha} \| \vartheta \reso \partial_{x} \vartheta \|_{C_T\CC^{2 \alpha -1}}.
\]
Let us now define
\[
   \Phi = G(\vartheta + w) \partial_x \vartheta = G ( \vartheta +w ) \lpara \partial_{x} \vartheta +G ( \vartheta +w ) \rpara \partial_{x} \vartheta +G ( \vartheta +w ) \reso \partial_{x} \vartheta ,
\]
so that~{\eqref{eq:burgers resonant estimate}} and the paraproduct estimates yield
\begin{equation}\label{eq:burgers phi-estimate}
   \| \Phi \|_{\alpha -1} \lesssim C_{G}C_{\vartheta} ( 1+ \| w \|_{\alpha -1+2 \sigma} ) ,
\end{equation}
and $w$ satisfies $L w= \Phi +G ( \vartheta +w ) \partial_{x} w$. So if we denote by $( P_{t} )_{t \geqslant 0}$ the semigroup generated by $- (- \Delta )^{\sigma}$, then
\begin{equation}\label{eq:mild-burgers}
  w ( t ) =P_{t} u_{0} + \int_{0}^{t} P_{t-s} \Phi ( s ) \mathd s+ \int_{0}^{t} P_{t-s} ( G ( \vartheta ( s ) +w ( s ) ) \partial_{x} w ( s ) ) \mathd s,
\end{equation}
where we assumed that $\vartheta(0) = 0$. Applying the Schauder estimates for the fractional Laplacian (Lemma~\ref{lemma:schauder} and Lemma~\ref{lemma:heat flow smoothing}) to~{\eqref{eq:mild-burgers}}, we obtain for all $t>0$ that
\begin{align*}
   &\| w ( t ) \|_{\alpha -1+2 \sigma} \\
   &\hspace{15pt} = \left\| P_{t} u_{0} + \int_{0}^{t} P_{t-s} \Phi ( s ) \mathd s+ \int_{0}^{t} P_{t-s} ( G ( \vartheta ( s ) +w ( s ) ) \partial_{x} w ( s ) ) \mathd s \right\|_{\alpha -1+2 \sigma} \\
   &\hspace{15pt} \lesssim t^{- ( 2 \sigma -1 ) /2 \sigma} \Big( \| u_{0} \|_{\alpha} + \sup_{s \in [ 0,t ]} ( s^{( 2 \sigma -1 ) / ( 2 \sigma )} \| \Phi ( s )\|_{\alpha -1} ) \Big) \\
   &\hspace{15pt} \quad + \int_{0}^{t} \frac{\| G ( \vartheta ( s ) +w ( s ) ) \partial_{x} w ( s ) \|_{L^{\infty}}}{( t-s )^{( \alpha -1+2 \sigma ) / ( 2 \sigma )}} \mathd s.
\end{align*}
But
now recall from {\eqref{eq:burgers phi-estimate}} that $\| \Phi ( s )\|_{\alpha -1} \lesssim C_{G} C_{\vartheta} ( 1+ \| w ( s ) \|_{\alpha -1+2 \sigma} )$. Moreover, if we choose $\alpha \in ( 1/3, \sigma -1/2 )$ close enough to $\sigma -1/2$, then $\alpha +2 \sigma -2>0$ (recall that $\sigma >5/6$), and therefore
\[
   \| G ( \vartheta ( s ) +w ( s ) ) \partial_{x} w ( s ) \|_{L^{\infty}} \lesssim \| G \|_{L^{\infty}} \| \partial_{x} w ( s ) \|_{\alpha -2+2 \sigma} \lesssim \| G \|_{L^{\infty}} \| w ( s ) \|_{\alpha -1+2 \sigma} .
\]
Thus, we get for all $t \in [ 0,T ]$ that
\begin{align*}
   ( t^{1-1/ ( 2 \sigma )} \| w ( t ) \|_{\alpha -1+2 \sigma} ) & \lesssim  \| u_{0} \|_{\alpha} + C_{\vartheta} C_{G} ( 1+ \sup_{s \in [ 0,t ]} ( s^{( 2 \sigma -1 ) / ( 2 \sigma )} \| w ( s ) \|_{\alpha -1+2 \sigma} ) ) \\
   &\quad + C_{G} t^{1-1/ ( 2 \sigma )} \int_{0}^{t} \frac{( s^{1-1/ ( 2 \sigma )} \| w ( s ) \|_{\alpha +1} )}{( t-s )^{( \alpha -1+2 \sigma ) / ( 2 \sigma )} s^{1-1/ ( 2 \sigma )}} \mathd s.
\end{align*}
Since $( \alpha -1+2 \sigma ) / ( 2 \sigma ) <1$, we have
\[
   t^{1-1/ ( 2 \sigma )} \int_{0}^{t} \frac{\mathd s}{( t-s )^{( \alpha -1+2 \sigma ) / ( 2 \sigma )} s^{1-1/ ( 2 \sigma )}} \lesssim t^{1- ( \alpha -1+2 \sigma ) / ( 2 \sigma )} \lesssim 1
\]
for $t \in [ 0,T ]$. Putting everything together, we conclude that
\begin{equation}\label{eq:burgers unscaled w estimate}
   ( t^{1-1/ ( 2 \sigma )} \| w ( t ) \|_{\alpha -1+2 \sigma} ) \lesssim \| u_{0} \|_{\alpha} + C_{\vartheta} C_{G} ( 1+ \sup_{s \in [ 0,t ]} ( s^{( 2 \sigma -1 ) / ( 2 \sigma )} \| w ( s ) \|_{\alpha -1+2 \sigma} ) ) .
\end{equation}
Using similar arguments, we can also show that uniformly in $t \in [0,T]$
\begin{equation}\label{eq:burgers unscaled w alpha estimate}
   \| w ( t ) \|_{\alpha} \lesssim \| u_{0} \|_{\alpha} + C_{\vartheta} C_{G} ( 1+ \sup_{s \in [ 0,t ]} ( s^{( 2 \sigma -1 ) / ( 2 \sigma )} \| w ( s ) \|_{\alpha -1+2 \sigma} ) ) .
\end{equation}

In order to turn~\eqref{eq:burgers unscaled w estimate} into a bound on $\| w \|_{C_{T} \CC^{\alpha -1+2 \sigma}}$, we use again a scaling argument. We extend the scaling
transformation to the time variable in such a way that it leaves the operator
$L$ invariant. More precisely, for $\lambda >0$ we set $\Lambda_{\lambda} u (t,x ) =u ( \lambda^{2 \sigma} t, \lambda x )$, so that $L  \Lambda_{\lambda} =\lambda^{2 \sigma} \Lambda_{\lambda} L$. Now let $u^{\lambda} =\Lambda_{\lambda} u$, $w^{\lambda} = \Lambda_{\lambda} w$, and
$\vartheta^{\lambda} = \Lambda_{\lambda} \vartheta$. Note that $u^{\lambda} \colon [ 0,T/ \lambda^{2 \sigma} ] \times \mathbb{T}_{\lambda} \rightarrow \mathbb{R}$, where $\mathbb{T}_{\lambda} = \R / (2\pi\lambda^{-1}\Z)$ is a
rescaled torus, and that $w^{\lambda}$ solves the equation
\[
   L w^{\lambda} = \lambda^{2 \sigma} \Lambda_{\lambda} L w= \lambda^{2 \sigma} \Lambda_{\lambda} ( \Phi +G ( w+ \vartheta ) \partial_{x} w ) = \lambda^{2 \sigma} \Lambda_{\lambda} \Phi + \lambda^{2 \sigma -1} G ( w^{\lambda} + \vartheta^{\lambda} ) \partial_{x} w^{\lambda}.
\]
The same derivation as above shows that
\[
   \| \Lambda_{\lambda} \Phi ( t ) \|_{\alpha -1} = \| G(\vartheta^\lambda(t) + w^\lambda(t)) \Lambda_\lambda (\partial_x \vartheta)(t) \|_{\alpha -1} \lesssim C_{G} C_{\vartheta^{\lambda}} ( 1+ \| w^{\lambda} ( t ) \|_{\alpha -1+2 \sigma} ),
\]
where we get using Lemma~\ref{lemma:scaling} and Lemma~\ref{lem:scaling commutator}
\[
   C_{\vartheta^{\lambda}} = \sup_{t \in [ 0,T ]} ( 1+ \| \vartheta^{\lambda} ( t ) \|_{\alpha} )^{3} ( 1+ \| \vartheta^\lambda \reso \Lambda_\lambda (\partial_{x} \vartheta) (t) \|_{2 \alpha -1} ) \lesssim \lambda^{2 \alpha -1} C^2_{\vartheta} \leqslant \lambda^{-1} C^2_{\vartheta}
\]
as long as $\lambda \in ( 0,1 ]$. Thus, we finally conclude that
\begin{align*}
   &( t^{1-1/ ( 2 \sigma )} \| w^{\lambda} ( t ) \|_{\alpha -1+2 \sigma} ) \\
   &\hspace{45pt} \lesssim \| \Lambda_{\lambda} u_{0} \|_{\alpha} + \lambda^{2 \sigma -1} C^2_{\vartheta} C_{G} ( 1+ \sup_{s \in [ 0,t ]} ( s^{( 2 \sigma -1 ) / ( 2\sigma )} \| w^{\lambda} ( s ) \|_{\alpha -1+2 \sigma} ) ) \\
   &\hspace{45pt} \lesssim \| u_{0} \|_{\alpha} + \lambda^{2 \sigma -1} C^2_{\vartheta} C_{G} ( 1+ \sup_{s \in [ 0,t ]} ( s^{( 2 \sigma -1 ) / ( 2 \sigma )} \| w^{\lambda} ( s ) \|_{\alpha -1+2 \sigma} ) )
\end{align*}
for all $\lambda \in ( 0,1 ]$. Since $2 \sigma -1>0$, we get for small enough
$\lambda >0$, depending only on $C_{\vartheta}$ and $C_{G}$ but not on
$u_{0}$, that
\[
   \sup_{t \in [ 0,T ]} ( t^{1-1/ ( 2 \sigma )} \| w^{\lambda} ( t ) \|_{\alpha -1+2 \sigma} ) \lesssim  \| u_{0} \|_{\alpha} +1.
\]
Equation~\eqref{eq:burgers unscaled w alpha estimate} then yields $\| w^{\lambda} \|_{C_T \CC^\alpha}\lesssim  \| u_{0} \|_{\alpha} +1$ and since $u= \Lambda_{\lambda^{-1}} ( w^{\lambda} + \vartheta^{\lambda} )$ get get
\[
   \sup_{t \in [ 0, \lambda^{2 \sigma} T ]} \| u ( t ) \|_{\alpha} \lesssim_{\lambda} \| u_{0} \| + C_{\vartheta}.
\]
This provides the key ingredient for obtaining a uniform estimate on the full time interval $[ 0,T ]$, and then the existence of global solutions to the Burgers equation.

Uniqueness in the space of solutions $u$ with decomposition $u= \vartheta +w$
with $w \in C_{T} \CC^{\alpha -1+2 \sigma}$ can be handled easily along the
lines above, and we obtain the following result:

\begin{theorem}\label{theorem:burgers existence uniqueness}
  Let $\sigma > 5/6$, $\alpha \in ( 1/3, \sigma -1/2 )$, let $T>0$,
  and assume that $( \xi^{\varepsilon} )_{\varepsilon >0}$ is a family of
  smooth functions on $[ 0,T ] \times \mathbb{T}$ with values in
  $\mathbb{R}^{n}$, and $G \in C^{3}_{b} ( \mathbb{R}^{n} ,\Lcal (\mathbb{R}^{n} ,\mathbb{R}^{n} ) )$. Suppose that there exist $\vartheta \in C_{T} \CC^{\alpha}$ and $\eta \in C_{T} \CC^{2 \alpha -1}$ such that $(\vartheta^{\varepsilon} , ( \vartheta^{\varepsilon} \reso   \partial_{x} \vartheta^{\varepsilon} ) )$ converges to $( \vartheta , \eta )$ in $C_{T}
  \CC^{\alpha -1} \times C_{T} \CC^{2 \alpha -1}$, where
  $\vartheta^{\varepsilon}$ are solutions to $L \vartheta^{\varepsilon} = \xi^{\varepsilon}$ and $\vartheta^{\varepsilon} ( 0 ) =0$, and where $L= \partial_{t} + ( - \Delta )^{\sigma}$. Let for $\varepsilon >0$ the function $u^{\varepsilon}$ be the unique global solution to the Cauchy problem
  \[
     L u^{\varepsilon} =G ( u^{\varepsilon} ) \partial_{x} u^{\varepsilon} + \xi^{\varepsilon} , \hspace{2em} u^{\varepsilon} ( 0 ) =u_{0} ,
  \]
  where $u_{0} \in \CC^{\alpha}$. Then there exists $u \in C_{T} \CC^{\alpha}$ such that $u^{\varepsilon} \rightarrow u$ in $C_{T} \CC^\alpha$. The limit $u$ depends only on $( u_{0} , \vartheta ,\eta )$, and not on the approximating family $( \vartheta^{\varepsilon} , (\vartheta^{\varepsilon} \reso   \partial_{x} \vartheta^{\varepsilon} ) )$.
\end{theorem}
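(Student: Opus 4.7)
The plan is to recycle the a priori machinery developed in the derivation just above the statement, applying it both to the individual solutions and to pairs of solutions in order to manufacture a Cauchy sequence. For each $\varepsilon>0$, set $w^\varepsilon = u^\varepsilon - \vartheta^\varepsilon$; then $w^\varepsilon$ satisfies $L w^\varepsilon = \Phi^\varepsilon + G(\vartheta^\varepsilon + w^\varepsilon)\partial_x w^\varepsilon$ with $\Phi^\varepsilon = G(\vartheta^\varepsilon + w^\varepsilon)\partial_x \vartheta^\varepsilon$, as in~\eqref{eq:burgers-alt}. Since $(\vartheta^\varepsilon, \vartheta^\varepsilon\circ\partial_x\vartheta^\varepsilon)$ converges in $C_T\CC^\alpha\times C_T\CC^{2\alpha-1}$, the constants $C_{\vartheta^\varepsilon}$ are uniformly bounded, and the scaling/Schauder argument gives some $\lambda\in(0,1]$, depending only on $C_G$ and $\sup_\varepsilon C_{\vartheta^\varepsilon}$, for which
\[
\sup_{t\in[0,\lambda^{2\sigma}T]} \|u^\varepsilon(t)\|_\alpha \;\le\; K_0(\|u_0\|_\alpha, C_G, \sup_\varepsilon C_{\vartheta^\varepsilon}).
\]
Iterating this on successive subintervals of length $\lambda^{2\sigma}T$ (with $\lambda$ possibly being reduced, but always independently of $\varepsilon$ since it depends only on $K_0$ and on $\sup_\varepsilon C_{\vartheta^\varepsilon}$) covers $[0,T]$ in finitely many steps and produces uniform-in-$\varepsilon$ bounds on $\|u^\varepsilon\|_{C_T\CC^\alpha}$ as well as on $\sup_{t\in(0,T]} t^{1-1/(2\sigma)}\|w^\varepsilon(t)\|_{\alpha-1+2\sigma}$.

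Next I would write the equation for the difference $\delta w = w^\varepsilon - w^{\varepsilon'}$,
\[
L\,\delta w \;=\; \bigl(\Phi^\varepsilon - \Phi^{\varepsilon'}\bigr) \;+\; G(\vartheta^\varepsilon+w^\varepsilon)\partial_x w^\varepsilon - G(\vartheta^{\varepsilon'}+w^{\varepsilon'})\partial_x w^{\varepsilon'}, \qquad \delta w(0)=0,
\]
and decompose the right-hand side exactly in the fashion used to obtain~\eqref{eq:burgers resonant estimate}, except that each single term must be rewritten as a telescoping difference so as to extract one of the factors $\vartheta^\varepsilon-\vartheta^{\varepsilon'}$, $\vartheta^\varepsilon\circ\partial_x\vartheta^\varepsilon - \vartheta^{\varepsilon'}\circ\partial_x\vartheta^{\varepsilon'}$, or $\delta w$. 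Using the Lipschitz half of Lemma~\ref{lemma:paralinearization} (which needs $G\in C^3_b$ rather than merely $C^2_b$), the commutator estimate of Lemma~\ref{lemma:commutator}, and the paraproduct estimates of Lemma~\ref{thm:paraproduct}, together with the uniform bounds secured in the previous step, a repetition of the scaling plus Schauder analysis yields
\[
\|\delta w\|_{C_T\CC^\alpha} \;\le\; K\bigl( \|\vartheta^\varepsilon-\vartheta^{\varepsilon'}\|_{C_T\CC^\alpha} + \|\vartheta^\varepsilon\circ\partial_x\vartheta^\varepsilon - \vartheta^{\varepsilon'}\circ\partial_x\vartheta^{\varepsilon'}\|_{C_T\CC^{2\alpha-1}} \bigr),
\]
where $K$ depends only on the ambient uniform bounds. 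The assumed convergence forces the right-hand side to zero, so $(w^\varepsilon)$ is Cauchy in $C_T\CC^\alpha$ and converges to some $w$. Setting $u = w+\vartheta \in C_T\CC^\alpha$, we obtain $u^\varepsilon\to u$ in $C_T\CC^\alpha$. Applying the same difference estimate to two different approximating families with the same limit $(\vartheta,\eta)$ shows that the limit depends only on $(u_0,\vartheta,\eta)$.

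The genuinely delicate step is the difference estimate. Expanding $G(\vartheta^\varepsilon+w^\varepsilon)\circ\partial_x\vartheta^\varepsilon - G(\vartheta^{\varepsilon'}+w^{\varepsilon'})\circ\partial_x\vartheta^{\varepsilon'}$ requires the careful pairing
\[
G'(\vartheta^\varepsilon+w^\varepsilon)\bigl(\vartheta^\varepsilon\circ\partial_x\vartheta^\varepsilon\bigr) - G'(\vartheta^{\varepsilon'}+w^{\varepsilon'})\bigl(\vartheta^{\varepsilon'}\circ\partial_x\vartheta^{\varepsilon'}\bigr) \;=\; \bigl(G'(\vartheta^\varepsilon+w^\varepsilon)-G'(\vartheta^{\varepsilon'}+w^{\varepsilon'})\bigr)\bigl(\vartheta^\varepsilon\circ\partial_x\vartheta^\varepsilon\bigr) + G'(\vartheta^{\varepsilon'}+w^{\varepsilon'})\bigl(\vartheta^\varepsilon\circ\partial_x\vartheta^\varepsilon-\vartheta^{\varepsilon'}\circ\partial_x\vartheta^{\varepsilon'}\bigr),
\]
and analogous rewritings for the commutator $C(G'(\cdot),\vartheta^\cdot,\partial_x\vartheta^\cdot)$ and the paralinearization remainder $R_G$. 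Each of these produces one factor that goes to zero and all others that remain in the uniformly bounded range established in the first step. Everything else in the proof is either a direct replay of the a priori calculation leading up to the statement, or a routine application of the embedding $\CC^{\alpha-1+2\sigma}\hookrightarrow\CC^\alpha$.
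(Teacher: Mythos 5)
Your proposal is correct and takes essentially the same route as the paper: the paper carries out the a priori estimate via the paracontrolled decomposition, Schauder estimates and the parabolic scaling argument in full, and then only asserts that uniqueness and stability "can be handled easily along the lines above." Your telescoping difference estimate for $\delta w$, using the Lipschitz halves of Lemma~\ref{lemma:paralinearization} and Lemma~\ref{lemma:commutator} (which is precisely where $G\in C^3_b$ is needed) together with the uniform bounds from the first step, is exactly the intended completion of that sketch.
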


\begin{remark}
  As for \textsc{rde}s, the limit $u$ of the regularized solutions $u^{\varepsilon}$ actually solves the equation
  \[
     L u=G ( u ) \partial_{x} u+ \xi , \hspace{2em} u ( 0 ) =u_{0}
  \]
  in the weak sense as long as we interpret the product $G ( u ) \partial_{x} u$ correctly. According to Remark~\ref{rmk:product smooth perturbation}, it is not important that $\vartheta(0) = 0$, and we could consider any other initial condition in $\vartheta(0) \in \CC^\alpha$ to obtain the same solution $u$. However, the right choice of $\vartheta(0)$ may facilitate the proof of existence and uniqueness of paracontrolled solutions.
\end{remark}

\begin{remark}
   Of course, the solution $u$ to the fractional Burgers type equation also depends continuously on the initial condition $u_0$.
\end{remark}

\subsection{Construction of the area}

It remains to show that if $\xi$ is a space-time white noise, then the solution $\vartheta$ to $L \vartheta = \xi$, $\vartheta(0) = 0$, is in $C_T\CC^\alpha$ for all $\alpha < \sigma - 1/2$, and that the area $\vartheta \reso \partial_x \vartheta$ is in $C_T \CC^{2 \alpha - 1}$. Some general results on the existence of the area for Gaussian processes indexed by a one-dimensional spatial variable are shown in~\cite{Friz2012}. However, in the present setting it is relatively straightforward to construct the area ``by hand'', using Fourier analytic methods.
%

In this section, we use $\CF$ to denote the spatial Fourier transform, i.e. $\CF u(t,\cdot) (k) = \int_\mathbb{T} e^{-\imath k x} u(t,x) \mathd x$. Recall that $\CF \xi$ is a complex valued, centered Gaussian space-time distribution, whose covariance is formally given by
\[
   \E[\CF \xi^i (t,\cdot)(k) \overline{\CF \xi^{i'} (t',\cdot)(k')}] = 2\pi \mathbf{1}_{i=i'} \mathbf{1}_{k=k'} \delta(t - t')
\]
for $i,i'\in \{1, \dots, n\}$, $t, t' \in [0,T]$, $k, k' \in \Z$, where $\delta$ denotes the Dirac delta. If $(P_t)_{t\geqslant 0} = (e^{-t |\cdot|^{2\sigma}}(\mathD))_{t \geqslant 0}$ denotes the semigroup generated by $-(-\Delta)^\sigma$, then $\vartheta(t,x) = \int_{0}^t (P_{t-s} \xi)(x)\mathd s$, $t \in [0,T]$, from where a straightforward calculation yields the following result:

\begin{lemma}\label{l:heat covariance}
   The spatial Fourier transform $\CF \vartheta$ of $\vartheta$ is a complex-valued Gaussian process with zero mean and covariance
   \begin{align*}
      &\E[\CF{\vartheta}^i(t,\cdot)(k) \overline{\CF{\vartheta}^{i'}(t',\cdot)(k')}] \\
      &\hspace{50pt}= \begin{cases}
                                                                                                       2 \pi \mathbf{1}_{i=i'} \mathbf{1}_{k=k'} (e^{-|t'-t| | k |^{2 \sigma}} - e^{-(t+t')|k|^{2\sigma} })/(2|k|^{2 \sigma}), & k \neq 0,\\
                                                                                                       2\pi \mathbf{1}_{i=i'} \mathbf{1}_{k=k'} t \wedge t', & k = 0,
                                                                                                   \end{cases}
   \end{align*}
   for $i,i' \in \{1, \dots, n\}$, $k, k' \in \mathbb{Z}$, and $t,t' \in [0,T]$. Thus, $\E[\CF{\vartheta}^i_{s,t}(0) \overline{\CF{\vartheta}^{i}_{s,t}(k')}] = 2\pi \mathbf{1}_{i=i'} \mathbf{1}_{k'=0} |t-s|$, and for $k\neq 0$
   \begin{align*}
      &\E[\CF{\vartheta}^i_{s,t}(k) \overline{\CF{\vartheta}^{i'}_{s,t}(k')}] \\
      &\hspace{45pt} = \pi \mathbf{1}_{i=i'} \mathbf{1}_{k=k'} \frac{2 - e^{-2 s | k |^{2 \sigma}}  - e^{-2 t | k |^{2 \sigma}} - 2 e^{- 2 |t-s| |k|^{2\sigma}} + 2 e^{-(s+t)|k|^{2\sigma}}}{|k|^{2 \sigma}},
   \end{align*}
   where we write $\CF{\vartheta}^i_{s,t}(k) = \CF{\vartheta}^i(t,\cdot)(k) - \CF{\vartheta}^i(s,\cdot)(k)$ for all $0 \leqslant s < t \leqslant T$. In particular,
   \begin{equation}
      |\E[\CF{\vartheta}^i_{s,t}(k) \overline{\CF{\vartheta}^{i}_{s,t}(k)}]| \lesssim |t-s|^{\delta} |k|^{-2\sigma(1 - \delta)}
   \end{equation}
   for all $\delta \in [0,1]$ and all $k \neq 0$.
\end{lemma}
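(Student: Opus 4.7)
The plan is to proceed by direct computation starting from the mild-solution representation and exploiting the Gaussianity of $\xi$. Since $\vartheta(t,x) = \int_0^t (P_{t-r}\xi(r,\cdot))(x)\,\mathrm{d}r$ and $P_r$ is the Fourier multiplier with symbol $e^{-r|k|^{2\sigma}}$, taking the spatial Fourier transform yields
\[
   \CF \vartheta^i(t,\cdot)(k) = \int_0^t e^{-(t-r)|k|^{2\sigma}} \CF \xi^i(r,\cdot)(k)\,\mathrm{d}r,
\]
which is a Gaussian process in $(t,k)$ with zero mean. This identity is what everything will rest on.

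For the covariance, I would use the formal relation $\E[\CF \xi^i(r,\cdot)(k)\overline{\CF\xi^{i'}(r',\cdot)(k')}] = 2\pi\mathbf{1}_{i=i'}\mathbf{1}_{k=k'}\delta(r-r')$ (rigorously: this is the Itô-isometry for the stochastic convolution against space-time white noise on $\mathbb{T}$) to compute, for $k\neq 0$ and WLOG $t\le t'$,
\[
   \E\!\left[\CF\vartheta^i(t,\cdot)(k)\overline{\CF\vartheta^{i'}(t',\cdot)(k')}\right] = 2\pi\mathbf{1}_{i=i'}\mathbf{1}_{k=k'}\int_0^{t} e^{-(t+t'-2r)|k|^{2\sigma}}\,\mathrm{d}r,
\]
and the elementary integral evaluates to $(e^{-(t'-t)|k|^{2\sigma}}-e^{-(t+t')|k|^{2\sigma}})/(2|k|^{2\sigma})$. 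For $k=0$ the exponentials are trivial, the integrand is constant, and the expression reduces to $2\pi\mathbf{1}_{i=i'}\mathbf{1}_{k'=0}(t\wedge t')$, exactly the covariance of a scaled Brownian motion as expected.

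The increment covariance then follows by bilinearity: writing $a=|k|^{2\sigma}$ and $K(u,v)$ for the covariance kernel above, we expand
\[
   \E[\CF\vartheta^i_{s,t}(k)\overline{\CF\vartheta^{i}_{s,t}(k)}] = K(t,t)-K(t,s)-K(s,t)+K(s,s),
\]
which after simplification matches the stated formula. For $k=0$ one recovers $2\pi |t-s|$ directly. The key algebraic identity to notice is the rearrangement
\[
   2 - e^{-2sa} - e^{-2ta} - 2e^{-|t-s|a} + 2e^{-(s+t)a} = 2\bigl(1-e^{-|t-s|a}\bigr) - \bigl(e^{-sa}-e^{-ta}\bigr)^{2},
\]
both of whose terms are non-negative. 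This gives the clean bound
\[
   \bigl|\E[|\CF\vartheta^i_{s,t}(k)|^2]\bigr| \le \frac{2\pi(1-e^{-|t-s|\,|k|^{2\sigma}})}{|k|^{2\sigma}} \le 2\pi\,\min\!\bigl(|t-s|,\,|k|^{-2\sigma}\bigr),
\]
using $1-e^{-x}\le \min(1,x)$. The final bound with exponent $\delta$ follows from the standard interpolation inequality $\min(x,y)\le x^{1-\delta}y^{\delta}$ for $x,y\ge 0$ and $\delta\in[0,1]$, applied with $x=|k|^{-2\sigma}$ and $y=|t-s|$.

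No step is a serious obstacle; the only points requiring care are (i) justifying the $\delta$-function covariance computation rigorously (handled by writing $\CF\vartheta(t,\cdot)(k)$ as a genuine Wiener integral against the Gaussian process $r\mapsto \CF\xi(r,\cdot)(k)$ and invoking the Itô isometry, noting that the time-marginals of $\CF\xi(\cdot,\cdot)(k)$ are independent complex Gaussians across $k$), and (ii) the elementary but slightly tedious algebraic rearrangement used to isolate the non-negative pieces in the increment variance, which is what makes the final uniform-in-$k$ bound transparent.
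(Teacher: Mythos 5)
Your computation is correct and is exactly the ``straightforward calculation'' the paper leaves implicit: the stochastic-convolution representation $\CF\vartheta^i(t,\cdot)(k)=\int_0^t e^{-(t-r)|k|^{2\sigma}}\CF\xi^i(r,\cdot)(k)\,\mathrm{d}r$ together with the It\^o isometry for the white noise, followed by bilinear expansion of the increment covariance and the bound $1-e^{-x}\leqslant\min(1,x)$ combined with the interpolation $\min(x,y)\leqslant x^{1-\delta}y^{\delta}$. Note that your (correct) increment formula in fact exposes a typo in the paper's displayed expression: the middle term should read $-2e^{-|t-s|\,|k|^{2\sigma}}$ rather than $-2e^{-2|t-s|\,|k|^{2\sigma}}$, as one checks by setting $s=0$, where the increment variance must reduce to $\pi(1-e^{-2t|k|^{2\sigma}})/|k|^{2\sigma}$; your final estimate, which is all that is used later, is unaffected.
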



Our first concern is to study the H\"older-Besov regularity of the process $\vartheta$.

\begin{lemma}\label{l:stationary besov}
   For any $\alpha < \sigma - 1 / 2$ and any $p\geqslant 1$, the process $\vartheta$ satisfies
   \[
      \E[ \| \vartheta \|_{C_T \CC^{\alpha}(\mathbb{T})}^p] < \infty.
   \]
\end{lemma}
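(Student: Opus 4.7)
My plan is the standard two-step Gaussian regularity argument: a second-moment Littlewood--Paley estimate coming from the explicit covariance in Lemma~\ref{l:heat covariance}, upgraded to all $L^{2p}$ by hypercontractivity, then Kolmogorov continuity in a Besov-valued space, and finally Besov embedding into $\CC^\alpha$.

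First, for fixed $j\ge -1$, $0\le s<t\le T$, $x\in\mathbb{T}$ and $\delta\in[0,1]$, the random variable $\Delta_j\vartheta_{s,t}(x)$ is centered Gaussian, and writing it via its spatial Fourier series and using the covariance bound of Lemma~\ref{l:heat covariance} (the $k=0$ mode being handled separately by the trivial bound $|t-s|$, which is absorbed into the same estimate) one gets
\[
   \E\bigl[|\Delta_j\vartheta_{s,t}(x)|^2\bigr] \lesssim \sum_{|k|\sim 2^j}\rho_j(k)^2\,(|t-s|^\delta\wedge 1)\,|k|^{-2\sigma(1-\delta)} \lesssim (|t-s|^\delta\wedge 1)\,2^{j(1-2\sigma(1-\delta))}.
\]
Gaussian hypercontractivity promotes this to $\E[|\Delta_j\vartheta_{s,t}(x)|^{2p}]\lesssim_p (|t-s|^\delta\wedge 1)^p\,2^{pj(1-2\sigma(1-\delta))}$ for every $p\ge 1$, uniformly in $x$.

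Second, integrating in $x\in\mathbb{T}$ via Fubini and summing the Littlewood--Paley contributions in the definition of $B^\beta_{2p,2p}$ gives
\[
   \E\bigl[\|\vartheta_{s,t}\|_{B^\beta_{2p,2p}}^{2p}\bigr] = \sum_{j\ge -1}2^{2pj\beta}\,\E\bigl[\|\Delta_j\vartheta_{s,t}\|_{L^{2p}}^{2p}\bigr] \lesssim (|t-s|^\delta\wedge 1)^p\sum_{j\ge -1}2^{pj(2\beta+1-2\sigma(1-\delta))},
\]
and the geometric series converges as soon as $\beta<\sigma(1-\delta)-1/2$.

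Third, since $\vartheta(0)=0$, this estimate is precisely the hypothesis of Kolmogorov's continuity criterion for the Banach-valued process $t\mapsto\vartheta(t)$ in $B^\beta_{2p,2p}$: as long as $p\delta>1$ it gives $\E[\|\vartheta\|_{C_T B^\beta_{2p,2p}}^{2p}]<\infty$. The one-dimensional Besov embedding $B^\beta_{2p,2p}(\mathbb{T})\hookrightarrow\CC^{\beta-1/(2p)}(\mathbb{T})$ (a consequence of Lemma~\ref{l:besov embedding torus}) then yields $\E[\|\vartheta\|_{C_T\CC^{\beta-1/(2p)}}^{2p}]<\infty$, and a final application of Jensen's inequality delivers every $p$-th moment.

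Given an arbitrary $\alpha<\sigma-1/2$, it remains to choose the parameters consistently: pick $\delta>0$ so small that $\sigma(1-\delta)-1/2>\alpha$, then $p$ so large that simultaneously $p\delta>1$ and $\alpha+1/(2p)<\sigma(1-\delta)-1/2$, and finally $\beta\in(\alpha+1/(2p),\,\sigma(1-\delta)-1/2)$. There is no serious analytic obstacle; the main care is bookkeeping the four parameters $\alpha,\beta,p,\delta$ so that all inequalities are compatible, and noting that the zero Fourier mode, which does not enjoy the $|k|^{-2\sigma(1-\delta)}$ decay, is a single bounded term that can be absorbed harmlessly into the constants.
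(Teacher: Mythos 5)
Your proposal is correct and follows essentially the same route as the paper: the covariance bound of Lemma~\ref{l:heat covariance} summed over a dyadic block, Gaussian hypercontractivity to pass to $L^{2p}$ moments, Kolmogorov's continuity criterion in $B^\beta_{2p,2p}$, and the Besov embedding of Lemma~\ref{l:besov embedding torus}. Your explicit bookkeeping of the $1/(2p)$ loss from the embedding and the condition $p\delta>1$ is slightly more careful than the paper's, but the argument is the same.
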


\begin{proof}
   Let $s, t \in [ 0, T ]$ and $\ell \geqslant -1$. 
   Using Gaussian hypercontractivity (\cite{Janson1997}, Theorem 3.50), we obtain for $p \geqslant 1$ that
   \begin{equation}\label{e:burgers hypercontractivity}
      \E[\Vert  \Delta_{\ell} \vartheta_{s,t} \Vert_{L^{2 p} (\mathbb{T})}^{2 p}] \lesssim_p \Vert  \E [| \Delta_{\ell} \vartheta_{s,t}( x ) |^2] \Vert_{L^p_x ( \mathbb{T})}^p.
   \end{equation}
   If $\ell \geqslant 0$, then Fourier inversion and Lemma~\ref{l:heat covariance} imply
   \begin{align*}
      \E[| \Delta_{\ell} \vartheta_{s,t} (x)|^2] & = (2 \pi)^{-2} \sum_{k, k' \in \Z} \rho_{\ell} (k) \rho_{\ell}(k') e^{\imath (k - k') x } \E[\CF{\vartheta}_{s,t} ( k ) \overline{\CF{\vartheta}_{s,t} ( k')}] \\
      & \lesssim \sum_{k \in \Z} \rho^2_\ell(k) |t-s|^{\delta} |k|^{2\sigma(\delta - 1)} \lesssim |t-s|^{\delta} \sum_{k \in \tmop{supp} (\rho_{\ell})} |k|^{2\sigma(\delta - 1)} \\
      & \lesssim |t-s|^{\delta} 2^{\ell(1 - 2 \sigma (1-\delta))}
   \end{align*}
   for all $\delta \in (0,1]$. The case $\ell = -1$ can be treated using essentially the same arguments, except that then we need to distinguish the cases $k= 0$ and $k\neq 0$, where $k$ is the argument in the Fourier transform.
%
   Hence, we obtain from \eqref{e:burgers hypercontractivity}
   \begin{align*}
      \E[\lVert \vartheta(t, \cdot) - \vartheta(s,\cdot)\rVert_{B^\alpha_{2p,2p}(\mathbb{T})}^{2p} ] & \lesssim \sum_{\ell \geqslant -1} 2^{\ell \alpha 2p} \E [\lVert \Delta_\ell \vartheta_{s,t}\rVert_{L^{2p}(\mathbb{T})}^{2p} ] \\
      & \lesssim \sum_{\ell \geqslant -1} 2^{\ell \alpha 2p} \left(|t-s|^{\delta} 2^{2\ell(1/2 - \sigma (1-\delta))}\right)^p
   \end{align*}
   for any $\alpha \in \R$ and any $p \geqslant 1$. For $\alpha < \sigma - 1/2$ there exists $\delta \in (0,1]$ small enough so that the series converges. Since we can choose $p$ arbitrarily large, Kolmogorov's continuity criterion 
   implies that $\vartheta$ has a continuous version with $\E[ \|\vartheta \|_{C_T B^{\alpha}_{2 p, 2 p}(\mathbb{T})}^{2p}]< \infty$ for all $\alpha < \sigma - 1/2$. Now we use again that $p$ can be chosen arbitrarily large, so that the Besov embedding theorem, Lemma~\ref{l:besov embedding torus}, shows that this continuous version takes its values in $C_T \CC^{\alpha}(\mathbb{T})$ for all $\alpha < \sigma - 1/2$.
\end{proof}

%

Next, we construct the area $\vartheta \reso \partial_x \vartheta$.

\begin{lemma}\label{l:burgers area}
   Define
   \[
      \vartheta \reso \partial_x \vartheta = (\vartheta^k \reso \partial_x \vartheta^\ell)_{1\leqslant k, \ell \leqslant n} = \bigg(\sum_{|i-j| \leqslant 1} \Delta \vartheta^i \Delta_j \partial_x \vartheta^j \bigg)_{1\leqslant k, \ell \leqslant n}.
   \]
   Then almost surely $\vartheta \reso \partial_x \vartheta \in C_T\CC^{2\alpha-1}(\mathbb{T}; \R^{n \times n})$ for all $\alpha < \sigma - 1/ 2$. Moreover, if $\psi \in \CS$ is such that $\int \psi(x) \mathd x = 1$ and $\vartheta^\varepsilon = \psi^\varepsilon \ast \vartheta$, where $\psi^\varepsilon = \varepsilon^{-1} \psi(\varepsilon^{-1} \cdot)$, then we have for all $p \geqslant 1$ that
   \begin{equation}\label{eq:burgers area lp convergence}
      \lim_{\varepsilon \rightarrow 0} \E[\| \vartheta^\varepsilon \reso \partial_x \vartheta^\varepsilon - \vartheta \reso \partial_x \vartheta \|_{C_T \CC^{2\alpha - 1}}^p] = 0.
   \end{equation}
\end{lemma}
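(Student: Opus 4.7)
The plan is to bound $\E[\|\Delta_q(\vartheta \circ \partial_x \vartheta)_{s,t}\|_{L^{2p}}^{2p}]$ uniformly in $q$ and $s,t$ using Fourier analysis, and then conclude via Kolmogorov's continuity criterion together with the Besov embedding, exactly as in the proof of Lemma~\ref{l:stationary besov}. Since each component $\vartheta^k \circ \partial_x \vartheta^\ell$ lives in the second inhomogeneous Wiener chaos of $\xi$, Gaussian hypercontractivity allows us to reduce everything to second-moment estimates, and it will also give us automatic $L^p$ convergence from $L^2$ convergence.

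First I would expand, for each pair $(k,\ell)$ and each dyadic block $q \geq -1$,
\[
   \Delta_q(\vartheta^k \circ \partial_x \vartheta^\ell) = \sum_{|i-j|\leq 1} \Delta_q\bigl(\Delta_i \vartheta^k \, \Delta_j \partial_x \vartheta^\ell\bigr),
\]
where only the finitely many indices $i,j \gtrsim q$ contribute (and similarly the outer $\Delta_q$ only sees frequencies of size $2^q$). Writing the Littlewood--Paley blocks as Fourier series on $\mathbb{T}$ and using Lemma~\ref{l:heat covariance}, a direct computation gives, for any $\delta \in (0,1]$,
\[
   \E\bigl[|\Delta_q(\vartheta^k \circ \partial_x \vartheta^\ell)_{s,t}(x)|^2\bigr]
   \;\lesssim\; |t-s|^{\delta} \sum_{\substack{i,j \gtrsim q \\ |i-j|\leq 1}} \sum_{\substack{|k_1+k_2|\sim 2^q}} \rho_i^2(k_1) \rho_j^2(k_2) \, |k_1|^{-2\sigma(1-\delta/2)} \, |k_2|^{2-2\sigma(1-\delta/2)},
\]
after applying Wick's formula to the product of four Gaussians (here I use that the off-diagonal covariance between $\vartheta^k$ and $\vartheta^\ell$ vanishes unless $k=\ell$, and in that case the non-trivial pairing produces the displayed bound; the $|k_2|^2$ factor comes from the derivative). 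Summing the geometric series in $|k_1|$, $|k_2|$ and then in $i \sim j \gtrsim q$ with the constraint from the outer $\Delta_q$, I expect
\[
   \E\bigl[|\Delta_q(\vartheta\circ\partial_x\vartheta)_{s,t}(x)|^2\bigr] \;\lesssim\; |t-s|^{\delta} \, 2^{q(2-4\sigma(1-\delta/2)+1)}
   \;=\; |t-s|^{\delta} \, 2^{q(3-4\sigma+2\sigma\delta)},
\]
uniformly in $x$. Hypercontractivity then upgrades this to an $L^{2p}_x$ bound, from which
\[
   \E\bigl[\|(\vartheta\circ\partial_x\vartheta)_{s,t}\|_{B^{\beta}_{2p,2p}(\mathbb{T})}^{2p}\bigr]
   \;\lesssim\; |t-s|^{\delta p} \sum_{q \geq -1} 2^{q 2p \beta} 2^{qp(3-4\sigma+2\sigma\delta)}
\]
converges provided $2\beta + 3 - 4\sigma + 2\sigma\delta < 0$, i.e.\ $\beta < 2\sigma - 3/2 - \sigma\delta$. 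Since we want $\beta$ arbitrarily close to $2\alpha-1$ with $\alpha < \sigma - 1/2$, we have $2\alpha - 1 < 2\sigma - 2$, so we can pick $\delta > 0$ small and $p$ large, apply Kolmogorov's criterion in time and Besov embedding in space (Lemma~\ref{l:besov embedding torus}), and conclude $\vartheta \circ \partial_x \vartheta \in C_T \CC^{2\alpha-1}$ almost surely.

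For the convergence of the mollified area, the key observation is that $\vartheta^\varepsilon \circ \partial_x \vartheta^\varepsilon - \vartheta \circ \partial_x \vartheta$ still lives in the second Wiener chaos of $\xi$, so that $L^p$ convergence follows from $L^2$ convergence as in the remark after Corollary~\ref{corollary:existence of rough path area}. The second-moment bound is obtained by the same Fourier-analytic computation, where the new ingredient is a factor $(1 - \hat\psi(\varepsilon k_1))(1 - \hat\psi(\varepsilon k_2))$ (or its cross-terms) inside the Fourier sum; this factor is uniformly bounded and tends to zero pointwise, so by dominated convergence the $B^{\beta}_{2p,2p}$-norm of the difference vanishes. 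Another application of Besov embedding and Kolmogorov then yields \eqref{eq:burgers area lp convergence}. The main technical obstacle is the careful bookkeeping in the Fourier computation of the variance, in particular tracking the correct power $2^{q(3-4\sigma)}$ from the derivative and the resonant constraint $|i-j|\leq 1$; everything else is routine Gaussian analysis.
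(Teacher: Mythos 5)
Your proposal follows essentially the same route as the paper: Gaussian hypercontractivity to reduce to second moments, a Fourier computation of the variance of each Littlewood--Paley block using the covariance from Lemma~\ref{l:heat covariance}, Kolmogorov plus the Besov embedding Lemma~\ref{l:besov embedding torus} to conclude, and dominated convergence (via the factor involving $\CF\psi(\varepsilon\cdot)$) for the mollified area. Two small points. First, your lattice count is off by a factor $2^q$: for fixed $i\sim j\gtrsim q$ the number of pairs $(k_1,k_2)$ with $k_1\in\tmop{supp}\rho_i$, $k_2\in\tmop{supp}\rho_j$ and $k_1+k_2\in\tmop{supp}\rho_q$ is of order $2^{i+q}$, not $2^i$, so the correct variance bound is $|t-s|^{\delta}\,2^{q(4-4\sigma+2\sigma\delta)}$ rather than $2^{q(3-4\sigma+2\sigma\delta)}$; this still gives regularity $2\sigma-2-\sigma\delta>2\alpha-1$ for small $\delta$, so the conclusion survives, but your stated exponent is better than what the computation yields. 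Second, the paper dispatches the diagonal components $\vartheta^k\circ\partial_x\vartheta^k$ separately via the Leibniz identity $\vartheta^k\circ\partial_x\vartheta^k=\tfrac12\partial_x(\vartheta^k\circ\vartheta^k)$, whereas your Wick computation would have to carry the extra contraction term that appears when the two factors are not independent (it contributes a bound of the same order, so this is a matter of bookkeeping, and you should also replace your heuristic treatment of the time increment by the explicit splitting $a(t)b(t)-a(s)b(s)=a(t)b_{s,t}+a_{s,t}b(s)$ to justify the $|t-s|^{\delta}$ factor).
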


\begin{proof}
   Without loss of generality we can argue for $\vartheta^1 \reso \partial_x \vartheta^2$. The case $\vartheta^1 \reso \partial_x \vartheta^1$ is easy, because Leibniz' rule yields $\vartheta^1 \reso \partial_x \vartheta^1 = \frac{1}{2} \partial_x(\vartheta^1 \reso \vartheta^1)$.
  
   Let $\ell \in \N$. Note that if $i$ is smaller than $\ell - N$ for a suitable $N$, and if $| i - j | \leqslant 1$, then $\Delta_{\ell}(\Delta_i f \Delta_j g) = 0$ for all $f, g \in \CS'$. Hence, the projection of $\vartheta^1\reso \partial_x\vartheta^2$ onto the $\ell$--th dyadic Fourier block is given by
   \begin{align*}
      \Delta_{\ell} (\vartheta^1\reso \partial_x \vartheta^2 ) = \sum_{| i- j | \leqslant 1} \Delta_{\ell}(\Delta_i \vartheta^1 \Delta_j\partial_x \vartheta^2) = \sum_{|i-j|\leqslant 1} \mathbf{1}_{\ell \lesssim i} \Delta_{\ell} (\Delta_i \vartheta^1 \Delta_j \partial_x \vartheta^2).
   \end{align*}
   To avoid case distinctions, we only argue for $\ell \geqslant N$, so that we can always assume $i, j  \geqslant 0$. The case $\ell < N$ can be handled using essentially the same arguments.
      
   We use the equivalence of moments for random variables living in an inhomogeneous Gaussian chaos of fixed degree (\cite{Janson1997}, Theorem~3.50) to obtain
   \begin{gather} \nonumber
      \E [\Vert  (\Delta_{\ell} ( \vartheta^1\reso \partial_x \vartheta^2 - \vartheta^{1,\varepsilon} \reso \partial_x \vartheta^{2,\varepsilon}) )_{s,t} \Vert_{L^{2 p}(\mathbb{T})}^{2 p}] \\ \label{eq:burgers area hypercontractivity}
      \lesssim \biggl\Vert  \E\biggl[\biggl| \sum_{|i-j| \leqslant 1} \mathbf{1}_{\ell \lesssim i} ( \Delta_{\ell} ( \Delta_i \vartheta^1 \Delta_j \partial_x \vartheta^2 -  \Delta_i \vartheta^{1,\varepsilon} \Delta_j \partial_x \vartheta^{2,\varepsilon} )(x))_{s,t}\biggr|^2\biggr] \biggr\Vert_{L^p_x(\mathbb{T})}^p,
   \end{gather}
   where we write $\vartheta^{1, \varepsilon} = \psi^\varepsilon \ast \vartheta$ and similarly for $\vartheta^{2, \varepsilon}$. 
   
   Let us start by estimating
   \begin{align}\label{e:burgers area pr1}
      &\E\biggl[\biggl| \sum_{|i-j| \leqslant 1} \mathbf{1}_{\ell \lesssim i} \Delta_{\ell} ( \Delta_i \vartheta^1(t,\cdot) \Delta_j \partial_x \vartheta^2_{s,t} - \Delta_i \vartheta^{1,\varepsilon} (t,\cdot) \Delta_j \partial_x \vartheta^{2,\varepsilon}_{s,t})(x)\biggr|^2\biggr] \\ \nonumber
      &\hspace{20pt} = \sum_{| i - j | \leqslant 1} \sum_{| i' - j' | \leqslant 1} \mathbf{1}_{\ell \lesssim i} \mathbf{1}_{\ell \lesssim i'} \E\bigg[\Delta_{\ell} (\Delta_i \vartheta^1(t,\cdot) \Delta_j \partial_x \vartheta^2_{s,t} - \Delta_i \vartheta^{1,\varepsilon} (t,\cdot) \Delta_j \partial_x \vartheta^{2,\varepsilon}_{s,t})(x)\\ \nonumber
      &\hspace{110pt} \times \overline{\Delta_{\ell} ( \Delta_{i'} \vartheta^1(t,\cdot) \Delta_{j'} \partial_x \vartheta^2_{s,t} - \Delta_{i'} \vartheta^{1,\varepsilon} (t,\cdot) \Delta_{j'} \partial_x \vartheta^{2,\varepsilon}_{s,t})(x)}\bigg].
   \end{align}
   Taking the infinite sums outside of the expectation can be justified a posteriori, because for every finite partial sum we will obtain a bound on the $L^2$--norm below, which does not depend on the number of terms that we sum up. The Gaussian hypercontractivity \eqref{eq:burgers area hypercontractivity} then provides a uniform $L^p$--bound for all $p \geqslant 2$, which implies that the squares of the partial sums are uniformly integrable, and thus allows us to exchange summation and expectation.
   
   Recall that $\CF(u v)(k) = (2\pi)^{-1} \sum_{k'} \CF u(k') \CF v(k -k')$, and $\CF(\partial_x u)(k) = \imath k \CF(u)(k)$, and therefore 
   \begin{align*}
      &\Delta_{\ell} (\Delta_i \vartheta^1(t,\cdot) \Delta_j \partial_x \vartheta^2_{s,t} - \Delta_i \vartheta^{1,\varepsilon} (t,\cdot) \Delta_j \partial_x \vartheta^{2,\varepsilon}_{s,t})(x) \\
      &\hspace{15pt} = (2\pi)^{-1} \sum_{k \in \Z} \rho_\ell(k) e^{\imath \langle k, x \rangle} \CF(\Delta_i \vartheta^1(t,\cdot) \Delta_j \partial_x \vartheta^2_{s,t} - \Delta_i \vartheta^{1,\varepsilon} (t,\cdot) \Delta_j \partial_x \vartheta^{2,\varepsilon}_{s,t})(k) \\
      &\hspace{15pt} = (2\pi)^{-2} \sum_{k,k' \in \Z} \rho_\ell(k) e^{\imath \langle k, x \rangle} \rho_i(k') \rho_j(k-k') \imath (k-k') \CF \vartheta^1(t,\cdot)(k') \CF\vartheta^2_{s,t}(k-k') \\
      &\hspace{70pt} \times (1 - \CF \psi(\varepsilon k') \CF \psi(\varepsilon(k-k'))).
   \end{align*}
   From this expression it is clear that if we can show $\E[\| \vartheta^\varepsilon \reso \partial_x \vartheta^\varepsilon \|_{C_T \CC^{2\alpha - 1}}^p] < \infty$, then the convergence result in~\eqref{eq:burgers area lp convergence} will follow by dominated convergence, because $\CF \psi$ is bounded and $\CF \psi(0) = 1$ by assumption.
   
   Using the covariance of $\CF \vartheta$ that we calculated in Lemma~\ref{l:heat covariance}, we obtain
   \begin{align*}
      &\E\biggl[\biggl|\sum_{|i-j| \leqslant 1} \mathbf{1}_{\ell \lesssim i} \Delta_{\ell} ( \Delta_i \vartheta^1(t,\cdot) \Delta_j \partial_x \vartheta^2_{s,t})(x)\biggr|^2\biggr] \\
      &\hspace{40pt} \lesssim \sum_{| i - j | \leqslant 1} \sum_{| i' - j' | \leqslant 1} \mathbf{1}_{\ell \lesssim i} \mathbf{1}_{\ell \lesssim i'} \sum_{k, k' \in \mathbb{Z}^d} \rho_{\ell}^2 (k+k') \rho_i ( k ) \rho_{i'} ( k ) \rho_j (k') \rho_{j'}(k')\\
      &\hspace{170pt} \times \frac{1 - e^{-2t|k|^{2\sigma}}}{2 |k|^{2 \sigma}} |k'|^2 |t-s|^{\delta} |k'|^{-2\sigma(1 - \delta)}\\
      &\hspace{40pt} \lesssim \sum_{| i - j | \leqslant 1} \mathbf{1}_{\ell \lesssim i} \sum_{k \in \tmop{supp} ( \rho_i ), k' \in \tmop{supp} ( \rho_j )} \rho_{\ell}^2 ( k + k' ) 2^{2i(1 - 2\sigma + \sigma \delta)} |t-s|^{\delta} \\
      &\hspace{40pt} \lesssim \sum_{i \gtrsim \ell} 2^\ell 2^{2i(1 + 1/2 - 2\sigma + \sigma\delta)} |t-s|^{\delta} 
   \end{align*}
   for all $\delta \in [0,1]$. Since $\sigma > 5/6$, there exists $\delta >0$ small enough so that the sum is finite, and we obtain
   \[
      \E\biggl[\biggl|\sum_{|i-j| \leqslant 1} \mathbf{1}_{\ell \lesssim i} \Delta_{\ell} ( \Delta_i \vartheta^1(t,\cdot) \Delta_j \partial_x \vartheta^2_{s,t})(x)\biggr|^2\biggr]  \lesssim 2^{2i(2 - 2\sigma + \sigma \delta)}  |t-s|^{\delta},
   \]
   and by the same arguments
   \[
      \E\biggl[\biggl|\sum_{|i-j| \leqslant 1} \mathbf{1}_{\ell \lesssim i} \Delta_{\ell} ( \Delta_i \vartheta^1_{s,t} \Delta_j \partial_x \vartheta^2(s,\cdot))(x)\biggr|^2\biggr]  \lesssim 2^{2i(2 - 2\sigma + \sigma \delta)}  |t-s|^{\delta}.
   \]   
%
%
   Noting that
   \begin{align*}
      &\Delta_i \vartheta^1(t,\cdot) \Delta_j \partial_x \vartheta^2(t,\cdot) - \Delta_i \vartheta^1(s,\cdot) \Delta_j \partial_x \vartheta^2(s,\cdot) \\
      &\hspace{80pt} = \Delta_i \vartheta^1(t,\cdot) \Delta_j \partial_x \vartheta^2_{s,t} + \Delta_i \vartheta^1_{s,t} \Delta_j \partial_x \vartheta^2(s,\cdot),
   \end{align*}
   we get for sufficiently small $\delta > 0$ and for arbitrarily large $p\geqslant 1$ that
   \begin{align*}
      \E [\Vert  \Delta_{\ell} (\vartheta^1 \reso \partial_x \vartheta^2)_{s,t} \Vert_{L^{2 p}(\mathbb{T} )}^{2 p}] \lesssim 2^{-2\ell(2\sigma - 2 - \sigma \delta ) p} |t - s|^{\delta p}.
   \end{align*}
   From this point on we use the same arguments as in the proof of Lemma \ref{l:stationary besov} to obtain the required $L^p$-bound for $\| \vartheta^1\reso \partial_x \vartheta^2\|_{C_T \CC^{2\alpha-1}}$ with $\alpha < \sigma - 1 / 2$.
\end{proof}

%

Now Lemma~\ref{l:burgers area} and Theorem~\ref{theorem:burgers existence uniqueness} give us the existence and uniqueness of solutions to the fractional Burgers type equation driven by space-time white noise:

\begin{corollary}
  Let $\sigma > 5/6$, $\alpha \in ( 1/3, \sigma -1/2 )$, $T>0$, $G \in C^3_b$, $u_0 \in \CC^\alpha(\mathbb{T})$, $L = \partial_t + (-\Delta)^\sigma$, and let $\xi$ be a space-time white noise on $[0,T] \times \mathbb{T}$ with values in $\R^n$. Then there exists a unique solution $u$ to
  \[
     L u = G(u) \partial_x u + \xi, \qquad u(0) = u_0,
  \]
  in the following sense: If $\psi \in \CS$ with $\int \psi \mathd t=1$ and if for $\varepsilon > 0$ the function $u^\varepsilon$ solves
  \[
     L u^\varepsilon = G(u^\varepsilon) \partial_x u^\varepsilon + \xi^\varepsilon, \qquad u(0) = u_0,
  \]
  where $\xi^\varepsilon = \varepsilon^{-1} \psi(\varepsilon \cdot) \ast \xi$, then $u^\varepsilon$ converges in probability in $C_T \CC^\alpha$ to $u$.
%
%
\end{corollary}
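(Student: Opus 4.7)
The plan is to deduce this corollary by combining the deterministic continuity statement of Theorem~\ref{theorem:burgers existence uniqueness} with the probabilistic convergence results of Lemmas~\ref{l:stationary besov} and \ref{l:burgers area}. First, observe that because $L$ is a linear operator in $t$ and $x$ and the mollifier $\psi^\varepsilon$ acts in space only, if $\vartheta$ solves $L\vartheta = \xi$ with $\vartheta(0)=0$, then $\vartheta^\varepsilon := \psi^\varepsilon \ast \vartheta$ solves $L \vartheta^\varepsilon = \xi^\varepsilon$ with $\vartheta^\varepsilon(0)=0$. Hence the $u^\varepsilon$ appearing in the statement is exactly the solution to the regularized equation considered in Theorem~\ref{theorem:burgers existence uniqueness}, associated with the smooth noise $\xi^\varepsilon$ and its primitive $\vartheta^\varepsilon$.

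Next, I would check that the convergence hypothesis of Theorem~\ref{theorem:burgers existence uniqueness} is satisfied in probability. Lemma~\ref{l:stationary besov} already gives $\vartheta \in C_T \CC^\alpha$ a.s., and a routine adaptation of its proof (using the same hypercontractivity/Kolmogorov scheme applied to $\vartheta^\varepsilon - \vartheta$, together with dominated convergence based on $\CF\psi(0)=1$ and $|\CF\psi|\le \|\psi\|_{L^1}$) shows
\[
\lim_{\varepsilon\to 0} \E\bigl[\|\vartheta^\varepsilon - \vartheta\|_{C_T \CC^\alpha}^p\bigr] = 0
\]
for every $p\ge 1$ and every $\alpha < \sigma-1/2$. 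Lemma~\ref{l:burgers area} provides the analogous $L^p$ convergence of $\vartheta^\varepsilon \circ \partial_x \vartheta^\varepsilon$ to $\vartheta \circ \partial_x \vartheta$ in $C_T\CC^{2\alpha-1}$. In particular, both convergences hold in probability.

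To conclude, fix $\alpha \in (1/3,\sigma-1/2)$, which is possible since $\sigma>5/6$, and extract a subsequence $(\varepsilon_n)$ along which the pair $(\vartheta^{\varepsilon_n}, \vartheta^{\varepsilon_n}\circ \partial_x \vartheta^{\varepsilon_n})$ converges almost surely to $(\vartheta, \vartheta\circ \partial_x \vartheta)$ in $C_T\CC^\alpha \times C_T\CC^{2\alpha-1}$. Applying Theorem~\ref{theorem:burgers existence uniqueness} pathwise on the full-measure event where this convergence holds and where $\vartheta, \vartheta\circ\partial_x\vartheta$ take their values in the respective H\"older--Besov spaces, we obtain an almost sure limit $u\in C_T\CC^\alpha$ of $u^{\varepsilon_n}$, and this limit depends only on $(u_0,\vartheta, \vartheta\circ\partial_x\vartheta)$, not on $\psi$ nor on the particular subsequence. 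A standard subsequence argument then upgrades this to convergence in probability of the whole family $(u^\varepsilon)_{\varepsilon>0}$ to $u$ in $C_T\CC^\alpha$, and gives uniqueness in the required sense (since any other mollifier $\tilde\psi$ yields the same limiting data $(\vartheta,\vartheta\circ\partial_x\vartheta)$).

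The only non-routine point is the independence of $\vartheta\circ\partial_x\vartheta$ from the regularizing kernel $\psi$, but this is precisely what Lemma~\ref{l:burgers area} asserts: the resonant product defined intrinsically via the dyadic blocks is the common $L^p$-limit of $\vartheta^\varepsilon\circ\partial_x\vartheta^\varepsilon$ for any admissible $\psi$. Everything else is a soft combination of the existing analytic theorem with the probabilistic estimates already established.
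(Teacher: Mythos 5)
Your proposal is correct and follows essentially the same route as the paper, which obtains the corollary by combining Theorem~\ref{theorem:burgers existence uniqueness} with the probabilistic convergence of the data from Lemma~\ref{l:stationary besov} and Lemma~\ref{l:burgers area}. The details you supply (the identity $L(\psi^\varepsilon \ast \vartheta)=\xi^\varepsilon$, the $L^p$-convergence of $\vartheta^\varepsilon$ to $\vartheta$ in $C_T\CC^\alpha$, and the subsequence argument upgrading convergence in probability of the data to convergence in probability of the solutions) are exactly the steps the paper leaves implicit.
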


\begin{remark}
   There is no problem in considering the equation on $\mathbb{T}^d$ rather than on $\mathbb{T}$, 
   and the analysis works exactly as in the one-dimensional case. The proof of Lemma~\ref{l:stationary besov} shows that if $\xi$ is a space-time white noise on $[0,T] \times \mathbb{T}^d$, then the solution $\vartheta$ to $L \vartheta = \xi$, $\vartheta(0) = 0$, will be in $C_T\CC^\alpha(\mathbb{T}^d)$ for every $\alpha < \sigma - d/2$. So as long as $\sigma - d/2 > 1/3$, we can solve the Burgers equation on $\mathbb{T}^d$. For the existence of the area $\vartheta \reso \partial_x \vartheta$ we need the additional condition $2 \sigma - d/2 - 1 > 0$; see~\cite{Perkowski2014Thesis}, Lemma~5.4.3. But if $\sigma - d/2 > 1/3$, then this is always satisfied. 
\end{remark}

\section{A generalized parabolic Anderson model}\label{sec:PAM}

Consider now the following PDE on $[ 0,T ] \times \mathbb{T}^{2}$ for some fixed $T>0$:
\begin{equation}\label{eq:pam}
  L u=F ( u ) \xi, \qquad u(0) = u_0,
\end{equation}
where $L = \partial_{t} - \Delta$, the function $F$ is continuous from
$\mathbb{R}$ to $\mathbb{R}$, $\xi$ is a spatial white noise, and $u_0 \in \CC^\alpha$ for suitable $\alpha \in \R$.

The linear case $F(u) = u$ is the parabolic Anderson model, the discrete version of which has been intensely studied during the past decades~\cite{Carmona1994,Koenig2015}. The continuous version in $d=2$ was solved by Hu~\cite{Hu2002} with the help of Wick products and explicit chaos expansions; however, the renormalization performed by taking the Wick product is not very transparent, and it does not seem easy to show that Hu's solution is the universal continuum limit of the discrete parabolic Anderson model. Here we will carry out a simple renormalization that easily translates to discrete models, and indeed one can show that our solution is the universal continuum limit of the 2d lattice Anderson model with small potential~\cite{Chouk2015}.

The general case seems not to have been studied before, see however~\cite{Hairer2013a} for an alternative but equivalent approach to the same equation. There are several reasons for studying such a nonlinear generalization. First of all it is a generic equation for the nonlinear evolution of particles in a random stationary medium. Moreover, equation~\eqref{eq:pam} is formally very similar to the rough differential equation~{\eqref{eq:rde}} and thus a natural benchmark problem. And if $u$ solves~\eqref{eq:pam} with $F(u) = u$ and if we set $v = \varphi (u)$ for
some invertible $\varphi : \mathbb{R} \rightarrow \mathbb{R}$ such that
$\varphi' > 0$, then formally
\[
   L v = \varphi' (u) L u - \varphi'' (u) | \partial_x u |^2 = \varphi' (u) i \xi - \varphi''
   (u) (\varphi' (u))^{- 2} | \partial_x u |^2
\]
and thus $v$ satisfies the PDE
\[
   L v = F_1 (v) \xi + F_2(v) |\partial_x v|^2,
\]
where $F_1 (x) = \varphi' (\varphi^{- 1} (x)) \varphi^{- 1} (x)$ and $F_2 (x)
= - \varphi'' (\varphi^{- 1} (x)) (\varphi' (\varphi^{- 1} (x)))^{- 2}$. In
the situation we are interested in, the second term in the right hand side is
easier to treat than the first term, so we will drop it
and concentrate on the case $F_2 = 0$.

The regularity of the spatial white noise $\eta$ on $\mathbb{T}^{d}$ is $\eta \in \CC^{-d/2- \varepsilon}$ for all $\varepsilon >0$. Since we are
in dimension $d=2$, we have $\xi \in \CC^{-1- \varepsilon}$. The
Laplacian increases the regularity by 2, so we expect that for fixed $t>0$ we
have $u ( t ) \in \CC^{1- \varepsilon}$, and therefore the product
$F ( u ) \xi$ is ill-defined.

However, let us assume that $\xi \in \CC^{\alpha -2} ( \mathbb{T}^{2} )$ for
some $2/3< \alpha <1$. Since $\xi$ does not depend on time, there exists $\vartheta \in \CC^\alpha$ such that $-\Delta \vartheta = \xi - (2\pi)^{2} \CF \xi(0)$. More precisely, we can take
\begin{equation}\label{eq:pam theta def}
   \vartheta = \int_0^\infty P_t (\xi - (2\pi)^{2} \CF \xi(0)) \mathd t,
\end{equation}
where $(P_t)_{t \geqslant 0}$ denotes the heat flow. In particular we have $L\vartheta - \xi \in C^\infty(\mathbb{T}^2)$ and $\|\vartheta\|_\alpha \lesssim \|\xi\|_{\alpha-2}$. Consider the paracontrolled ansatz
\[
   u=F ( u ) \lpara \vartheta +u^{\sharp}
\]
with $u^{\sharp} \in C_{T} \CC^{2 \alpha}$, and where as in Section~\ref{sec:burgers} the paraproduct $\lpara$ is only acting on the spatial
variables. If $u$ is of this form, then Lemma~\ref{lemma:para-taylor} and
Lemma~\ref{lemma:commutator} imply that
\begin{align*}
   F ( u ) \xi & =F ( u ) \lpara \xi +F ( u ) \rpara \xi +F' ( u ) F ( u )  (
   \vartheta \reso \xi ) +F' ( u ) C ( F ( u ) , \vartheta , \xi ) \\
   &\quad +F' ( u )  ( u^{\sharp} \reso \xi ) + \Pi_{F} ( u, \xi )
\end{align*}
is well defined provided that $( \vartheta \reso   \xi ) \in \CC^{2 \alpha -2}$. Moreover, the algebraic rules for $\partial_{t}$ and $\Delta$
acting on products imply that
\[
   L u= ( L F ( u ) ) \lpara \vartheta +F ( u ) \lpara L  \vartheta - 2 \mathD_{x} F ( u ) \lpara \mathD_{x} \vartheta +L u^{\sharp},
\]
and thus we find the following equation for $u^{\sharp}$:
\begin{align*}
   L u^{\sharp} & = 2 \mathD_{x} F ( u ) \lpara \mathD_{x} \vartheta - ( L F ( u ) ) \lpara \vartheta +F ( u ) \rpara \xi + F' ( u ) F ( u )  ( \vartheta \reso \xi ) \\
    &\quad + F(u)\lpara (\xi - L\vartheta) +F' ( u ) C ( F ( u ) , \vartheta , \xi ) +F' ( u )  ( u^{\sharp} \reso \xi) + \Pi_{F} ( u, \xi ).
 \end{align*}
We would like all the terms on the right hand side to be in $C_{T} \CC^{2 \alpha -2}$. However, it is not easy to estimate $( LF ( u ) ) \lpara \vartheta$ in $C_{T} \CC^{\beta}$ for any $\beta \in \mathbb{R}$: the term $\Delta F ( u )$ can be controlled in $\CC^{\alpha-2}$, but there are no straightforward estimates available for the time derivative $\partial_{t} F ( u )$ appearing in $L F ( u )$. Indeed, it would
be more convenient to treat the generalized parabolic Anderson model in a
space-time parabolic Besov space adapted to the operator $L$ and to use the
natural paraproduct associated to this space. An alternative strategy would be
to stick with the simpler space $C_{T} \CC^{\alpha -2}$ and to observe that
\[
   L F ( u ) =F' ( u ) L u - F'' ( u ) ( \mathD_{x} u )^{2} =F' ( u ) F ( u ) \xi - F'' ( u ) ( \mathD_{x} u )^{2},
\]
and that the terms on the right hand side can be analyzed using the
paracontrolled ansatz. Since this strategy seems to require a lot of
regularity from $F$, we do not pursue it further.

Instead, we keep working on $C_{T} \CC^{\alpha -2}$, but we modify the paraproduct
appearing in the paracontrolled ansatz. Let $\varphi \colon \mathbb{R} \rightarrow \mathbb{R}_{+}$ be a positive smooth function with compact support and total
mass $1$, and for all $i \geqslant -1$ define the operator $Q_{i} \colon C_{T} \CC^{\beta} \rightarrow C_{T} \CC^{\beta}$ by
\[
   Q_{i} f ( t ) = \int_{\mathbb{R}} 2^{2i} \varphi ( 2^{2i} ( t-s ) ) f ( (s \wedge T ) \vee 0 ) \mathd s.
\]
For $Q_{i}$ we have the following standard estimates, which we leave to the reader to prove:
\begin{gather}\label{eq:Q-bounds}
   \| Q_{i} f ( t ) \|_{L^{\infty}} \leqslant \| f \|_{C_{T} L^{\infty}} , \hspace{1em} \| \partial_{t} Q_{i} f ( t ) \|_{L^{\infty}} \leqslant 2^{( 2-2 \gamma ) i} \| f \|_{C_{T}^{\gamma} L^{\infty}}, \\ \nonumber
   \| ( Q_{i} f-f ) ( t ) \|_{L^{\infty}} \leqslant 2^{-2 \gamma i} \| f \|_{C_{T}^{\gamma} L^{\infty}}
\end{gather}
for all $t \in [ 0,T ]$ and all $\gamma \in ( 0,1 )$; for the second estimate
we use that $\int \varphi' ( t ) \mathd t=0$, and for the third estimate we
use that $\varphi$ has total mass 1.
With the help of $Q_{i}$, let us define a modified paraproduct by setting
\begin{equation}\label{eq:modpara}
   f \mpara g= \sum_{i} ( S_{i-1} Q_{i} f ) \Delta_{i} g
\end{equation}
for $f,g \in C_{T} \CS'$. While we were not able to find any references, we think it quite likely that such a modified paraproduct appeared previously in the PDE literature. It is easy to show that for the modified paraproduct we have
essentially the same estimates as for the pointwise paraproduct $f \lpara g$,
only that we have to bound $f$ uniformly in time; for example
\[
   \| ( f \mpara g ) ( t ) \|_{\alpha} \lesssim \| f \|_{C_{T} L^{\infty}} \| g ( t ) \|_{\alpha} .
\]
for all $t \ [0,T]$. For us, the following two estimates are the most useful properties of $\mpara$.

\begin{lemma}\label{lemma:mod-para-est}
  Let $T>0$, $\alpha \in (0,1)$, $\beta \in \R$, and let $u \in C_{T} \CC^{\alpha} \cap C^{\alpha/2}_T L^\infty$ and $v \in C_T \CC^\beta$. Then
  \begin{equation}\label{eq:mod-para-1}
     \| L(u \mpara v ) - u\mpara (Lv) \|_{C_T\CC^{\alpha+\beta -2}} \lesssim ( \| u \|_{C_{T}^{\alpha /2} L^{\infty}} + \| u \|_{C_{T} \CC^\alpha} ) \| v \|_{C_T\CC^\beta},
  \end{equation}
  as well as
  \begin{equation}\label{eq:paraproduct difference}
     \|u \lpara v - u \mpara v\|_{C_T\CC^{\alpha+\beta}} \lesssim \| u \|_{C_{T}^{\alpha /2} L^{\infty}} \| v \|_{C_T\CC^\beta}.
  \end{equation}
\end{lemma}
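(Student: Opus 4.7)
The plan is to prove the two estimates by Littlewood--Paley decomposition, reducing each to a uniform bound on the $L^\infty$ norm of a dyadic building block whose Fourier support lies in an annulus $2^i\CA$, and then applying the Besov summation lemma (Lemma~\ref{lem: Besov regularity of series}). I would begin with the simpler estimate \eqref{eq:paraproduct difference}, observing that
\[
u\prec v - u\precprec v = \sum_{i\geqslant -1} S_{i-1}(u - Q_i u)\, \Delta_i v,
\]
each summand being spectrally supported in an annulus of scale $2^i$. By the third bound in \eqref{eq:Q-bounds} with $\gamma = \alpha/2$, $\|(u-Q_i u)(t)\|_{L^\infty} \lesssim 2^{-\alpha i}\|u\|_{C_T^{\alpha/2}L^\infty}$, and since $S_{i-1}$ is uniformly bounded on $L^\infty$, combining with $\|\Delta_i v\|_{L^\infty}\lesssim 2^{-i\beta}\|v\|_{C_T\CC^\beta}$ gives the desired $2^{-i(\alpha+\beta)}$ decay.

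For \eqref{eq:mod-para-1} I would first expand $L(u\precprec v) - u\precprec(Lv)$ using that spatial operators commute with $Q_i$ and with $S_{i-1}$. The terms with $\Delta_i\partial_t v$ cancel, and the Leibniz rule for $\Delta$ contributes the cross term, yielding
\[
L(u\precprec v) - u\precprec(Lv) = \sum_i S_{i-1}(L Q_i u)\, \Delta_i v \;-\; 2\sum_i \nabla\bigl(S_{i-1}Q_i u\bigr)\cdot \nabla \Delta_i v.
\]
Both summands are again spectrally localized in $2^i\CA$. For the first, I would bound $\|S_{i-1}(LQ_i u)\|_{L^\infty} \leqslant \|\partial_t S_{i-1}Q_i u\|_{L^\infty} + \|\Delta S_{i-1}Q_i u\|_{L^\infty}$. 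The time derivative is controlled using \eqref{eq:Q-bounds} with $\gamma=\alpha/2$, giving $2^{(2-\alpha)i}\|u\|_{C_T^{\alpha/2}L^\infty}$. For the spatial term I would decompose $S_{i-1}Q_i u = \sum_{k<i-1}\Delta_k Q_i u$ and apply Bernstein to each block: $\|\Delta\Delta_k Q_i u\|_{L^\infty}\lesssim 2^{2k}\|\Delta_k u\|_{L^\infty}\lesssim 2^{k(2-\alpha)}\|u\|_{C_T\CC^\alpha}$; the sum $\sum_{k<i-1} 2^{k(2-\alpha)} \lesssim 2^{i(2-\alpha)}$ since $2-\alpha>0$. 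Multiplying by $\|\Delta_i v\|_{L^\infty}\lesssim 2^{-i\beta}\|v\|_{C_T\CC^\beta}$ produces the factor $2^{-i(\alpha+\beta-2)}$.

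The second sum is handled by the same block decomposition: $\|\nabla S_{i-1}Q_i u\|_{L^\infty} \lesssim \sum_{k<i-1} 2^k \|\Delta_k u\|_{L^\infty} \lesssim 2^{i(1-\alpha)}\|u\|_{C_T\CC^\alpha}$, and $\|\nabla\Delta_i v\|_{L^\infty}\lesssim 2^{i(1-\beta)}\|v\|_{C_T\CC^\beta}$, whose product also yields $2^{i(2-\alpha-\beta)}$. Lemma~\ref{lem: Besov regularity of series} applied to each of the two series concludes the proof, uniformly in $t \in [0,T]$.

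The main obstacle I anticipate is the bookkeeping for the Leibniz expansion of $L$ on the modified paraproduct (making sure the $\partial_t \Delta_i v$ terms cancel exactly and that the remaining pieces are indeed spectrally localized in $2^i\CA$, which requires $Q_i$ acting only in time while $S_{i-1}, \Delta_i$ act only in space). Once this structural decomposition is written cleanly, the analytic estimates reduce to straightforward applications of \eqref{eq:Q-bounds}, Bernstein's inequality, and the geometric summation over dyadic blocks.
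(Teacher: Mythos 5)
Your proposal is correct and follows essentially the same route as the paper: both estimates are reduced via the Leibniz expansion $L(fg)=(Lf)g+f(Lg)-2\nabla f\cdot\nabla g$ (respectively the identity $u\prec v-u\precprec v=\sum_i(Q_iS_{i-1}u-S_{i-1}u)\Delta_iv$) to uniform $L^\infty$ bounds on spectrally localized blocks, using \eqref{eq:Q-bounds} and Bernstein, and then summed with Lemma~\ref{lem: Besov regularity of series}. The only difference is that you spell out the gradient cross term, which the paper dismisses as ``easy to estimate''.
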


\begin{proof}
  For~\eqref{eq:mod-para-1}, observe that $L(u \mpara v ) - u\mpara (Lv) = (Lu) \mpara v - 2 \mathD_x u \mpara \mathD_x v$. The second term on the right hand side is easy to estimate. The first term is given by
  \[
     ( L u ) \mpara v = \sum_{i} ( S_{i-1} Q_{i} L u  )  \Delta_{i} v = \sum_{i} ( L S_{i-1} Q_{i} u ) \Delta_{i} v.
  \]
  Observe that, as for the standard paraproduct, $( L S_{i-1} Q_{i}  F ( u )) \Delta_{i} v$ has a spatial Fourier transform localized in an
  annulus $2^{i} \CA$, so that according to Lemma~\ref{lem: Besov regularity of series} it will be sufficient to control its $C_T L^\infty$ norm. But
  \begin{align*}
     \| L S_{i-1} Q_{i} u \|_{C_TL^{\infty}} & \leqslant \| \partial_{t} Q_{i} S_{i-1}  u  \|_{C_TL^{\infty}} + \| Q_{i} \Delta  S_{i-1}  u  \|_{C_TL^{\infty}} \\
     & \lesssim 2^{- ( \alpha -2 ) i} \big( \| S_{i-1}  u  \|_{C_{T}^{\alpha /2} L^{\infty}} + \| u \|_{C_{T} \CC^\alpha}\big),
  \end{align*}
  where we used the bounds~{\eqref{eq:Q-bounds}}. It is easy to see that $ \| S_{i-1}  u  \|_{C_{T}^{\alpha /2}} \lesssim \| u \|_{C_T^{\alpha/2}}$, and therefore we obtain~\eqref{eq:mod-para-1}.
  
  As for~\eqref{eq:paraproduct difference}, we have
  \[
     u \lpara v - u \mpara v = \sum_{i} ( Q_{i} S_{i-1}  u -S_{i-1}  u ) \Delta_{i} v,
  \]
  and again it will be sufficient to control the $C_T L^\infty$ norm of each term of the series. But using once more~{\eqref{eq:Q-bounds}}, we obtain
  \begin{align*}
     \| (Q_{i} S_{i-1}  u -S_{i-1}  u ) \Delta_{i} v \|_{C_T L^{\infty}} & \lesssim 2^{-i \alpha} \| S_{i-1} u \|_{C_{T}^{\alpha /2} L^{\infty}} \| \Delta_{i} v \|_{C_T L^{\infty}} \\
     & \lesssim 2^{-i(\alpha+\beta)} \| u \|_{C_{T}^{\alpha /2} L^{\infty}} \| v \|_{C_T\CC^{\beta}},
  \end{align*}
  and the result is proved.
\end{proof}

Letting
\begin{equation}\label{eq:mod-paracontrolled}
  u=F ( u ) \mpara \vartheta +u^{\sharp}
\end{equation}
and redoing the same computation as above, we end up with
\begin{align}\label{eq:phi sharp def pam} \nonumber
   L u^{\sharp} = \Phi^{\sharp} & = - [ L (F ( u )  \mpara \vartheta) - F(u) \mpara L\vartheta]  + [ F ( u ) \lpara \xi -F ( u ) \mpara L\vartheta ] \\
   &\quad + F ( u ) \rpara \xi + F(u) \reso \xi. 
\end{align}
Lemma~\ref{lemma:mod-para-est} (and the fact that $L\vartheta - \xi \in C^\infty(\mathbb{T}^2)$) takes care of the first two terms on the right hand side. The term $F(u) \rpara \xi$ can be controlled using the paraproduct estimates, so that it remains to control the resonant product $F(u) \reso \xi$. In principle, this can be achieved by combining the decomposition described above with~\eqref{eq:paraproduct difference}, which enables us to switch between the two paraproducts~$\mpara$ and~$\lpara$. However, in that way we pick up a superlinear estimate from Lemma~\ref{lemma:paralinearization}. By being slightly more careful, we can get an estimate which depends linearly on $\|u^\sharp(t)\|_{\alpha+\beta}$ and is quadratic only in $\|u\|_{C_T L^\infty}^2$. This allows us to obtain a ``conditional global existence result'', which shows that there exists a paracontrolled solution up to the explosion time of the $L^\infty$ norm of $u$.

\begin{lemma}\label{lem:pam circ apriori}
   Let $\alpha \in (2/3, 1)$ and $\beta \in (0,\alpha]$ be such that $2\alpha + \beta > 2$. Let $T>0$, $\xi \in C(\mathbb{T}^2, \R)$, let $\vartheta$ be as defined in \eqref{eq:pam theta def}, $u \in C_T \CC^\alpha$, and let $F \in C^{1+\beta/\alpha}_{b}$. Define $u^\sharp = u - F(u) \mpara \vartheta$. Then
  \begin{equation}\label{eq:pam circ apriori 1}
     \|(F(u) \reso \xi) (t) \|_{\alpha+\beta-2} \lesssim C_F C_\xi \big( 1 + \|u\|_{C_T\CC^\alpha}^{1+\beta/\alpha} + \|u\|_{C^{\alpha/2}_TL^\infty} + \|u^\sharp (t)\|_{\alpha+\beta} \big),
  \end{equation}
  for all $t \in [0,T]$, where
  \begin{equation}\label{eq:pam Cxi CF}
     C_{\xi} = (1+\| \xi \|_{\alpha -2})^{2+\beta/\alpha} + \| \vartheta \reso   \xi \|_{C_{T} \CC^{2 \alpha -2}} \quad \text{and} \quad C_{F} = \| F \|_{C^{1+\beta/\alpha}_{b}}+\| F \|_{C^{1+\beta/\alpha}_{b}}^{2+\beta/\alpha}.
  \end{equation}
  If $F$ is in $C^3_b$, then
  \begin{align}\label{eq:pam circ apriori 2} \nonumber
     \|(F(u) \reso \xi) (t) \|_{\alpha+\beta-2} & \lesssim \|F\|_{C^3_b} (1+C_F C_\xi) (1+ \|u\|_{C_T L^\infty}^2) \\
     &\hspace{30pt} \times \big( 1 + \|u\|_{C_T\CC^\alpha} + \|u\|_{C^{\alpha/2}_TL^\infty} + \|u^\sharp (t)\|_{\alpha+\beta} \big).
  \end{align}
\end{lemma}

We pay attention to indicate that, for fixed $t \in [ 0,T ]$, the
estimate depends only on the $\CC^{\alpha + \beta}$ norm of
$u^{\sharp} ( t )$ and not on $\|u^\sharp\|_{C_T\CC^{\alpha+\beta}}$. This will come useful below when introducing the right
norm to control the contribution of the initial condition.

\begin{proof}
   We decompose
   \begin{align}\label{eq:pam circ apriori pr1} \nonumber
      F(u) \reso \xi & = (F(u) - F'(u) \lpara u) \reso \xi + (F'(u) \lpara u^\sharp) \reso \xi + C(F'(u),F(u)\mpara \vartheta, \xi) \\ \nonumber
      &\quad + F'(u) [(F(u) \mpara \vartheta - F(u) \lpara \vartheta)\reso \xi] + F'(u) C(F(u),\vartheta,\xi) \\
      &\quad + F'(u)F(u) (\vartheta \reso \xi),
   \end{align}
   from where we can use Lemma~\ref{lemma:mod-para-est} and the commutator estimate Lemma~\ref{lemma:commutator} to see that
   \begin{align*}
      &\| (F(u) \reso \xi - (F(u) - F'(u) \lpara u) \reso \xi)(t) \|_{\alpha+\beta-2} \\
      &\hspace{80pt} \lesssim C_F C_\xi \big(1 + \|u\|_{C_T\CC^\alpha} + \|u\|_{C^{\alpha/2}_T L^\infty} + \|u^\sharp(t) \|_{\alpha+\beta}\big).
   \end{align*}
   It remains to treat the first term on the right hand side of~\eqref{eq:pam circ apriori pr1}. Lemma~\ref{lemma:paralinearization} shows that
   \begin{align*}
      \| (F(u) - F'(u) \lpara u) \reso \xi \|_{C_T\CC^{2\alpha+\beta-2}} & \lesssim \| F(u) - F'(u) \lpara u \|_{C_T\CC^{\alpha+\beta}} \| \xi\|_{\alpha-2} \\
      & \lesssim \|F\|_{C^{1+\beta/\alpha}_b} ( 1 + \| u \|_{C_T \CC^\alpha}^{1+\beta/\alpha}) \|\xi\|_{\alpha-2},
   \end{align*}
   from where we get~\eqref{eq:pam circ apriori 1}.
   
   If $F$ is in $C^3_b$, then we apply a modified version of the paralinearization lemma, Lemma~\ref{lem:mod paralin}, to obtain
   \begin{align*}
      \| (F(u) - F'(u)\lpara u)(t) \|_{\alpha+\beta} & \lesssim \| F\|_{C^3_b} (1 + \| (F(u) \mpara \vartheta)(t) \|_\alpha^{1+\beta/\alpha}) \\
      &\hspace{30pt} \times (1 + \| u^\sharp(t) \|_{L^\infty}^2)(1 + \| u^\sharp(t) \|_{\alpha+\beta}),
   \end{align*}
   so that~\eqref{eq:pam circ apriori 2} follows.
\end{proof}

Let us summarize our observations so far.

\begin{lemma}
  Let $\alpha \in (2/3, 1)$, $\beta \in (2-2\alpha, \alpha]$, and $T>0$. Let $u_0 \in \CC^\alpha$, $\xi \in C(\mathbb{T}^2,\R)$, let $\vartheta$ be as defined in \eqref{eq:pam theta def}, and let $F \in C^{1+\beta/\alpha}_{b}$. Then
  $u$ solves the PDE
  \[
     L u=F ( u ) \xi , \hspace{2em} u ( 0 ) =u_{0} \in \CC^\alpha
  \]
  on $[ 0,T ]$ if and only if $u=F ( u ) \mpara \vartheta +u^{\sharp}$,
  where $u^{\sharp}$ solves
  \[
     L u^{\sharp} = \Phi^{\sharp} , \hspace{2em} u^{\sharp} ( 0 ) =u_{0} - ( F ( u ) \mpara \vartheta ) ( 0 )
  \]
  on $[ 0,T ]$, for $\Phi^{\sharp}$ as defined in {\eqref{eq:phi sharp def pam}}. Moreover, for all $t \in [ 0,T ]$ we have the estimate
  \begin{equation}\label{eq:phi sharp estimate pam}
     \| \Phi^{\sharp} ( t ) \|_{\alpha +\beta - 2} \lesssim C_F C_\xi \big( 1 + \|u\|_{C_T\CC^\alpha}^{1+\beta/\alpha} + \|u\|_{C^{\alpha/2}_TL^\infty} + \|u^\sharp (t)\|_{\alpha+\beta} \big),
  \end{equation}
  where $C_F$ and $C_\xi$ are as defined in~\eqref{eq:pam Cxi CF}. If $F$ is in $C^3_b$, then
  \begin{align}\label{eq:phi sharp estimate pam 2} \nonumber
     \| \Phi^{\sharp} ( t ) \|_{\alpha+\beta-2} & \lesssim \|F\|_{C^3_b} (1+C_F C_\xi) (1+ \|u\|_{C_T L^\infty}^2) \\
     &\hspace{30pt} \times \big( 1 + \|u\|_{C_T\CC^\alpha} + \|u\|_{C^{\alpha/2}_TL^\infty} + \|u^\sharp (t)\|_{\alpha+\beta} \big).
  \end{align}
\end{lemma}

Next, we would like to close the estimate~\eqref{eq:phi sharp estimate pam},
so that the right hand side only depends on $\Phi^{\sharp}$. In order to
estimate the terms depending on $u$, observe that $u = u^\sharp + F(u) \mpara \vartheta$ and thus
\[
   \|u\|_{C_T\CC^\alpha} + \|u\|_{C^{\alpha/2}_TL^\infty} \lesssim \|u^\sharp\|_{C_T\CC^\alpha} + \|u^\sharp\|_{C^{\alpha/2}_T L^\infty} + \| F(u) \mpara \vartheta \|_{C_T\CC^\alpha} +  \| F(u) \mpara \vartheta \|_{C^{\alpha/2}_TL^\infty}.
\]
To estimate the contribution of $F(u) \mpara \vartheta$, we observe that
\[
   \|L(F(u)\mpara \vartheta)\|_{C_T \CC^{\alpha-2}} \lesssim \|F(u)\|_{C_TL^\infty} \|\xi\|_{\alpha-2} \lesssim \|F\|_{L^\infty} \|\xi\|_{\alpha-2}
\]
(compare also the proof of Lemma~\ref{lemma:mod-para-est}). Thus, we can apply the heat flow estimates Lemma~\ref{lemma:heat flow smoothing}, Lemma~\ref{lemma:semigroup hoelder in time}, and Lemma~\ref{lemma:schauder}, to deduce
\begin{align*}
   \| u \|_{C_{T} \CC^\alpha} + \|u\|_{C^{\alpha/2}_TL^\infty} & \lesssim \| u^\sharp \|_{C_{T} \CC^\alpha} + \|u^\sharp \|_{C^{\alpha/2}_TL^\infty} + \|F(u) \mpara \vartheta(0)\|_\alpha \\
   &\quad + \| L (F(u) \mpara \vartheta) \|_{C_T\CC^{\alpha-2}} \\
   & \lesssim \| u^\sharp \|_{C_{T} \CC^\alpha} + \|u^\sharp \|_{C^{\alpha/2}_TL^\infty} + \|u_0\|_\alpha + \|F\|_{L^\infty}  \|\xi\|_{\alpha-2}.
\end{align*}
We plug this into~\eqref{eq:phi sharp estimate pam} and use $1 + \|u\|_{C_T\CC^\alpha}^{1+\beta/\alpha} + \|u\|_{C^{\alpha/2}_TL^\infty} \lesssim 1 + (\|u\|_{C_T\CC^\alpha} + \|u\|_{C^{\alpha/2}_TL^\infty})^{1+\beta/\alpha}$, which gives
\[
   \| \Phi^{\sharp} ( t ) \|_{\alpha + \beta - 2} \lesssim C_F C_\xi \big( 1 + (C_F C_\xi + \|u_0\|_\alpha + \| u^\sharp \|_{C_{T} \CC^\alpha} + \|u^\sharp \|_{C^{\alpha/2}_TL^\infty})^{1+\beta/\alpha} + \|u^\sharp (t)\|_{\alpha+\beta}\big).
\]
Moreover, since $u^{\sharp} ( 0 ) =u_{0} - ( F ( u ) \mpara \vartheta ) ( 0)$ and $L u^{\sharp} = \Phi^{\sharp}$, Lemma~\ref{lemma:heat flow smoothing} and Lemma~\ref{lemma:schauder} yield
\[
   t^{\beta /2} \| u^{\sharp} ( t ) \|_{\alpha+\beta} \lesssim \| u_{0} \|_{\alpha} + C_{F} C_{\xi} + \sup_{s \in [ 0,t ]} (s^{\beta /2} \| \Phi^{\sharp} ( s ) \|_{\alpha + \beta - 2} ),
\]
so that our new estimate for $\Phi^{\sharp}$ reads
\begin{align*}
   t^{\beta /2} \| \Phi^{\sharp} ( t ) \|_{\alpha+\beta -2} & \lesssim C_F C_\xi \Big( 1 + (C_F C_\xi + \|u_0\|_\alpha + \| u^\sharp \|_{C_{T} \CC^\alpha} + \|u^\sharp \|_{C^{\alpha/2}_TL^\infty})^{1+\beta/\alpha} \\
   &\hspace{55pt} + \sup_{s \in [ 0,t ]} (s^{\beta /2} \| \Phi^{\sharp} ( s ) \|_{\alpha + \beta - 2}) \Big),
\end{align*}
uniformly in $t \in [ 0,T ]$. It remains to control $u^\sharp$ in $C^{\alpha/2}_TL^\infty \cap C_T \CC^\alpha$. For $0\leqslant s < t \leqslant T$, we have
\begin{align*}
   \| u^\sharp(t) - u^\sharp(s)\|_{L^\infty} & \leqslant \|(P_{t-s} - \mathrm{id}) P_s (u^\sharp(0))\|_{L^\infty} + \bigg\| \int_s^t P_{t-s} \Phi^\sharp(r) \mathd r \bigg\|_{L^\infty} \\
   &\quad + \bigg\| \int_0^s (P_{t-s} - \mathrm{id}) P_{s-r} \Phi^\sharp(r) \mathd r \bigg\|_{L^\infty}.
\end{align*}
An application of Lemma~\ref{lemma:semigroup hoelder in time} to the first and third term and Lemma~\ref{lemma:heat flow smoothing} to the second term leads to
\begin{align*}
   \| u^\sharp(t) - u^\sharp(s)\|_{L^\infty} & \lesssim (t-s)^{\alpha/2} \|u^\sharp(0)\|_{\alpha} + \int_s^t (t-s)^{-1+\alpha/2+\beta/2} \|\Phi^\sharp(r)\|_{\alpha+\beta-2} \mathd r \\
   &\quad + (t-s)^{\alpha/2} \int_0^s \| P_{s-r} \Phi^\sharp(r) \|_{\alpha} \mathd r \\
   &\lesssim (t-s)^{\alpha/2} (C_F C_\xi + \|u_0\|_\alpha) \\
   &\quad + (t-s)^{\alpha/2} \int_0^t (t-r)^{-1+\beta/2} r^{-\beta/2} \mathd r \sup_{r \in [ 0,t ]} ( r^{\beta /2} \| \Phi^{\sharp} ( r ) \|_{\alpha + \beta -2} ) \\
   &\quad + (t-s)^{\alpha/2} \int_0^s (s-r)^{-1+\beta/2} r^{-\beta/2}\mathd r \sup_{r \in [ 0,s ]} ( r^{\beta /2} \| \Phi^{\sharp} ( r ) \|_{\alpha + \beta -2} ).
\end{align*}
For the time integrals we have
$\int_{0}^{t} ( t-r )^{-1+ \beta /2} r^{-\beta /2} \mathd r = \int_{0}^{1} ( 1-r )^{1- \beta /2} r^{-\beta /2} \mathd r \lesssim 1$,
so that
\[
   \|u^\sharp\|_{C^{\alpha/2}_TL^\infty} \lesssim C_F C_\xi + \|u_0\|_\alpha + \sup_{s \in [ 0,T]} ( s^{\beta /2} \| \Phi^{\sharp} ( s ) \|_{\alpha + \beta -2} ).
\]
Similar (but easier) arguments can be used to bound the $C_T\CC^\alpha$ norm of $u^\sharp$, and thus we obtain our final estimate for $\Phi^\sharp$:
\begin{align}\label{eq:pam phi sharp final estimate} \nonumber
    &\sup_{t \in [ 0,T]} (t^{\beta /2} \| \Phi^{\sharp} ( t ) \|_{\alpha+\beta -2}) \\
    &\hspace{40pt} \lesssim C_F C_\xi (1+C_FC_\xi) \Big( 1 + \|u_0\|_\alpha + \sup_{t \in [ 0,T ]} (t^{\beta /2} \| \Phi^{\sharp} ( t ) \|_{\alpha + \beta - 2}) \Big)^{1+\beta/\alpha}.
\end{align}
%
%
%
In order to use this estimate to bound $\Phi^{\sharp}$, we will apply the
usual scaling argument. More
precisely, we set $\Lambda_{\lambda} f ( t,x ) = f ( \lambda^{2} t, \lambda x)$, so that $L  \Lambda_{\lambda} = \lambda^{2} \Lambda_{\lambda} L$. Now let
$u^{\lambda} = \Lambda_{\lambda} u$, $u_0^\lambda = \Lambda_\lambda u_0$, $\xi^{\lambda} = \lambda^{2- \alpha} \Lambda_{\lambda} \xi$, and $\vartheta^{\lambda} = \lambda^{-\alpha} \Lambda_{\lambda} \vartheta$. Note that $u^{\lambda} \colon [ 0,T/ \lambda^{2} ] \times \mathbb{T}_{\lambda}^{2} \rightarrow \mathbb{R}$, where
$\mathbb{T}_{\lambda}^{2} = (\R / (2\pi \lambda^{-1} \Z))^{2}$ is a rescaled torus,
and that $u^{\lambda}$ solves the equation
\[
   L u^{\lambda} = \lambda^{2} F ( u^{\lambda} ) \Lambda_{\lambda} \xi = \lambda^{\alpha} F ( u^{\lambda} ) \xi^{\lambda}, \qquad u^\lambda(0) = u^\lambda_0.
\]
The scaling is chosen in such a way that $\| u^\lambda_0 \|_\alpha \lesssim \|u_0\|_\alpha$, $\| \xi^{\lambda} \|_{\CC^{\alpha -2}} \lesssim \| \xi \|_{\CC^{\alpha -2}}$, and according to Lemma~\ref{lem:scaling commutator} also $\| \vartheta^{\lambda} \reso \xi^{\lambda} \|_{2 \alpha -2} \lesssim \| \vartheta \reso \xi \|_{2 \alpha -2} + \|\xi\|_{\alpha-2}^2$, all uniformly in $\lambda \in (0,1]$. In particular, $C_{\xi^{\lambda}} \lesssim C_{\xi}$ and
$C_{\lambda^{\alpha} F} \leqslant \lambda^{\alpha} C_{F}$ for all $\lambda \in (0,1]$. Injecting these estimates into~\eqref{eq:pam phi sharp final estimate}, we obtain
\[
   \sup_{t \in [ 0,T]} (t^{\beta /2} \| \Phi^{\sharp,\lambda} ( t ) \|_{\alpha+\beta -2}) \lesssim 1 + \|u_0^\lambda\|_\alpha
\]
for all sufficiently small $\lambda > 0$ (depending only on $C_\xi$, $C_F$, and $u_0$), where $\Phi^{\sharp,\lambda}$ is defined analogously to $\Phi^\sharp$. From here we easily get the existence of local-in-time paracontrolled solutions to~\eqref{eq:pam}. 
%
Similar arguments show that if $F \in C^{2+\beta/\alpha}_{b}$, then the map $(u_{0} , \xi , \vartheta , \xi \reso \vartheta ) \mapsto u \in C_T \CC^{\alpha}$ is
locally Lipschitz continuous, and in particular there is a unique paracontrolled solution on a small time interval.

If $F \in C^3$, then~\eqref{eq:phi sharp estimate pam 2} allows us to control the paracontrolled norm of the solution $u$ in terms of its $L^\infty$ norm, and in particular for every $C > 0$ there exists a unique paracontrolled solution $u$ on $[0,\tau_C]$, where
\[
   \tau_C = \inf\{ t \geqslant 0: \| u(t) \|_{L^\infty} \ge C\}.
\]
While we are currently not able to establish the existence of global-in-time solutions, this insight allows us to gain a better understanding of the possible blow up, by showing that the only way in which the paracontrolled norm of $u$ can explode is by $u$ diverging to $\pm \infty$.

\subsection{Renormalization}

So far we argued under the assumption that there exist continuous functions $(\xi^\varepsilon)$ such that $(\xi^\varepsilon,\vartheta^\varepsilon, \vartheta^{\varepsilon} \reso \xi^{\varepsilon})$ converges to $(\xi,\vartheta, \vartheta \reso \xi)$ in $\CC^{\alpha-2} \times C_{T} \CC^{2 \alpha -2} \times C_T \CC^{2\alpha-2}$ as $\varepsilon \rightarrow 0$. Note that here the superscript $\varepsilon$ refers to a smooth regularization of the noise, whereas in the previous section the
superscript $\lambda$ referred to a scaling transform. From now on we will no longer
consider scaling transforms, so that no confusion should arise.

One further difficulty is that the resonant product $(\vartheta^{\varepsilon} \reso \xi^{\varepsilon} )$ does not converge in some
relevant cases; in particular, if $\xi$ is a spatial white noise. However, what we will show below is that for the white noise  there
exist constants $c_{\varepsilon} \in \mathbb{R}$ such that $((\vartheta^{\varepsilon} \reso \xi^{\varepsilon} ) -c_{\varepsilon})$ converges in probability
in $C_{T} \CC^{2 \alpha -2}$. In order to make the term $c_{\varepsilon}$
appear in the equation, we can introduce a suitable correction term in the
regularized problems and consider the renormalized PDE
\begin{equation}\label{eq:pam renormalized}
   L u^{\varepsilon} =F ( u^{\varepsilon} ) \xi^{\varepsilon} -c_{\varepsilon} F' ( u^{\varepsilon} ) F ( u^{\varepsilon} ) .
\end{equation}
For this equation we use again the paracontrolled ansatz~\eqref{eq:mod-paracontrolled}. The same derivation as for~\eqref{eq:phi sharp def pam} yields
\begin{align*}
   L u^{\sharp, \varepsilon} & = G(u^\varepsilon, \vartheta^\varepsilon, \xi^\varepsilon) + F(u^\varepsilon) \reso \xi^\varepsilon  - c_{\varepsilon} F' ( u^{\varepsilon} ) F ( u^{\varepsilon} )
\end{align*}
for some bounded functional $G$, and as in Lemma~\ref{lem:pam circ apriori} we decompose
\[
   F(u^\varepsilon) \reso \xi^\varepsilon  - c_{\varepsilon} F' ( u^{\varepsilon} ) F ( u^{\varepsilon} ) = H(u^\varepsilon,u^{\sharp,\varepsilon}, \vartheta^\varepsilon,\xi^\varepsilon) + F'(u^\varepsilon) F(u^\varepsilon) (\vartheta^\varepsilon \reso \xi^\varepsilon - c_\varepsilon)
\]
for another bounded functional $H$. 
We see that $Lu^{\sharp , \varepsilon}$ only depends on $\xi^{\varepsilon}$, $\vartheta^{\varepsilon}$, and $(\vartheta^{\varepsilon} \reso \xi^{\varepsilon} ) -c_{\varepsilon}$. Thus,
the convergence of $(\xi^\varepsilon,\vartheta^\varepsilon, \vartheta^{\varepsilon} \reso \xi^{\varepsilon} - c_\varepsilon)$ to $(\xi,\vartheta, \eta)$ in $\CC^{\alpha-2} \times C_{T} \CC^{2 \alpha -2} \times C_T \CC^{2\alpha-2}$ implies that the solutions $( u^{\varepsilon} )$ to~\eqref{eq:pam renormalized} converge to a limit which only depends on $\xi$, $\vartheta$, and $\eta$,
but not on the approximating family.

\begin{theorem}\label{theorem:pam existence uniqueness}
  Let $\alpha \in ( 2/3,1 )$, $\beta \in (2-2\alpha, \alpha]$ and assume that $( \xi^{\varepsilon})_{\varepsilon >0} \subset C(\mathbb{T}^2,\R)$ and $F \in C^{2+\beta/\alpha}_{b}$. Suppose that there exist $\xi \in \CC^{\alpha -2}$ and $\eta \in C_{T} \CC^{2 \alpha -2}$ such that $( \xi^{\varepsilon}, (\vartheta^{\varepsilon} \reso   \xi^{\varepsilon} ) -c_{\varepsilon} )$
  converges to $( \xi, \eta )$ in $\CC^{\alpha -2} \times \CC^{2 \alpha -2}$, where $\vartheta = \int_0^\infty P_t (\xi - (2\pi)^2 \CF \xi(0))\mathd t$, $\vartheta^{\varepsilon} = \int_0^\infty P_t (\xi^\varepsilon - (2\pi)^2 \CF \xi^\varepsilon(0))\mathd t$, and where $c_{\varepsilon} \in \mathbb{R}$ for all $\varepsilon >0$.
  Let for $\varepsilon >0$ the function $u^{\varepsilon}$ be the unique
  solution to the Cauchy problem
  \[
     L u^{\varepsilon} =F ( u^{\varepsilon} ) \xi^{\varepsilon} -c_{\varepsilon} F' ( u^{\varepsilon} ) F ( u^{\varepsilon} ) , \hspace{2em} u^{\varepsilon} ( 0 ) =u_{0},
  \]
  where $u_{0} \in \CC^{\alpha}$. Then there exists $T^\ast > 0$ such that for all $T < T^\ast$ there is $u \in C_{T} \CC^{\alpha}$ with
  $u^{\varepsilon} \rightarrow u$ in $C_{T} \CC^{\alpha}$. The limit $u$ depends only on $( u_{0} , \xi , \eta )$, and not on the approximating family $( \xi^{\varepsilon} , (\vartheta^{\varepsilon} \reso   \xi^{\varepsilon} ) -c_{\varepsilon} )$. If furthermore $F \in C^3$, then we can take
  \[
     T^\ast = \inf \{ t \geqslant 0: \|u(t) \|_{L^\infty} = \infty\}.
  \]
\end{theorem}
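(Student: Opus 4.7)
The plan is to mimic the existence-and-uniqueness argument that was carried out for the unrenormalized case, observing that the renormalization constant $c_\varepsilon$ is exactly what is needed to absorb the divergent contribution of $\vartheta^\varepsilon \circ \xi^\varepsilon$ in the a priori estimate. First I would fix $\varepsilon>0$ and write $u^\varepsilon = F(u^\varepsilon) \precprec \vartheta^\varepsilon + u^{\sharp,\varepsilon}$. Applying $L$ to both sides and using the modified paraproduct commutator \eqref{eq:mod-para-1} together with the paraproduct estimates leads to
\[
   L u^{\sharp,\varepsilon} = \Phi^{\sharp,\varepsilon} - c_\varepsilon F'(u^\varepsilon) F(u^\varepsilon),
\]
where $\Phi^{\sharp,\varepsilon}$ is given by the same expression as in~\eqref{eq:phi sharp def pam} with $\xi, \vartheta, u$ replaced by $\xi^\varepsilon, \vartheta^\varepsilon, u^\varepsilon$. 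The crucial point is to use the decomposition~\eqref{eq:pam circ apriori pr1} of $F(u^\varepsilon) \circ \xi^\varepsilon$ from the proof of Lemma~\ref{lem:pam circ apriori}: the only term in that decomposition involving a truly resonant bilinear quantity is $F'(u^\varepsilon) F(u^\varepsilon)(\vartheta^\varepsilon \circ \xi^\varepsilon)$, so the $c_\varepsilon$ subtraction replaces $\vartheta^\varepsilon \circ \xi^\varepsilon$ by $\vartheta^\varepsilon \circ \xi^\varepsilon - c_\varepsilon$, and we obtain a modified $\widetilde{\Phi}^{\sharp,\varepsilon}$ which is a bounded functional of $(u^\varepsilon, u^{\sharp,\varepsilon}, \xi^\varepsilon, \vartheta^\varepsilon, \vartheta^\varepsilon \circ \xi^\varepsilon - c_\varepsilon)$ alone.

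Next I would establish uniform a priori bounds. Define, in analogy with~\eqref{eq:pam Cxi CF},
\[
   C_{\xi^\varepsilon} = (1 + \|\xi^\varepsilon\|_{\alpha-2})(1 + \|\vartheta^\varepsilon\|_{C_T\CC^\alpha}^{1+\beta/\alpha}) + \|\vartheta^\varepsilon \circ \xi^\varepsilon - c_\varepsilon\|_{C_T\CC^{2\alpha-2}},
\]
which is uniformly bounded in $\varepsilon$ by convergence of the enhanced data. Lemma~\ref{lem:pam circ apriori}, read with $\vartheta^\varepsilon \circ \xi^\varepsilon - c_\varepsilon$ in place of $\vartheta \circ \xi$, gives the bound~\eqref{eq:phi sharp estimate pam} for $\widetilde\Phi^{\sharp,\varepsilon}$. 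Combined with the heat flow estimates of Lemma~\ref{lemma:heat flow smoothing}, Lemma~\ref{lemma:semigroup hoelder in time} and the Schauder estimate Lemma~\ref{lemma:schauder} applied to $u^{\sharp,\varepsilon}$ and $F(u^\varepsilon) \precprec \vartheta^\varepsilon$, this yields the closed estimate~\eqref{eq:pam phi sharp final estimate}. Invoking the scaling argument already detailed in the text (with $\Lambda_\lambda$ acting on time and space and $F$ getting multiplied by $\lambda^\alpha$), I pick $\lambda$ small enough in terms of $C_\xi$ and $C_F$ only (not $u_0$) to obtain a uniform bound on $\sup_{t\in[0,T/\lambda^2]} t^{\beta/2}\|\widetilde\Phi^{\sharp,\varepsilon}(t)\|_{\alpha+\beta-2}$, and then iterate on intervals of length $T/\lambda^2$ to get a uniform bound on $\|u^\varepsilon\|_{C_T \CC^\alpha}$.

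Once uniform estimates are in hand, convergence follows from a Lipschitz-in-the-data argument. Taking two approximating sequences indexed by $\varepsilon, \varepsilon'$ and forming the difference $v^{\varepsilon,\varepsilon'} = u^\varepsilon - u^{\varepsilon'}$, I apply the Lipschitz versions of Lemma~\ref{lemma:paralinearization} and Lemma~\ref{lemma:para-taylor}, which need $F \in C^{2+\beta/\alpha}_b$ — exactly the hypothesis of the theorem — together with Lemma~\ref{lemma:commutator} and the paraproduct estimates, to show that the analogue of $\widetilde\Phi^{\sharp,\varepsilon} - \widetilde\Phi^{\sharp,\varepsilon'}$ is controlled linearly by the norm differences of $(\xi^\varepsilon - \xi^{\varepsilon'}, \vartheta^\varepsilon - \vartheta^{\varepsilon'}, (\vartheta^\varepsilon\circ\xi^\varepsilon-c_\varepsilon) - (\vartheta^{\varepsilon'}\circ\xi^{\varepsilon'}-c_{\varepsilon'}))$, modulated by a polynomial of the (uniformly bounded) norms of $u^\varepsilon, u^{\varepsilon'}$. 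Another round of the heat-flow/scaling argument converts this into a Lipschitz estimate for the solutions in $C_T \CC^\alpha$, whence $(u^\varepsilon)$ is Cauchy and converges to some $u \in C_T \CC^\alpha$ depending only on $(u_0, \xi, \vartheta, \eta)$.

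The main obstacle I anticipate is bookkeeping: verifying carefully that every single term in the decomposition of $F(u^\varepsilon) \circ \xi^\varepsilon$, and in the counterpart of $\Phi^{\sharp,\varepsilon}$, either is a continuous functional of $(\xi^\varepsilon,\vartheta^\varepsilon,\vartheta^\varepsilon\circ\xi^\varepsilon-c_\varepsilon)$ through operations already shown to be locally Lipschitz (paraproducts, $C$, $\Pi_F$, $R_F$, the modified paraproduct commutator), and producing a linear-in-$\|u^{\sharp,\varepsilon}(t)\|_{\alpha+\beta}$ estimate so that the scaling argument actually closes. The subtle optimization over the truncation parameter $n$ inside the proof of Lemma~\ref{lem:pam circ apriori} is what ensures linearity, and it has to survive the $\varepsilon$-regularization step without picking up constants that blow up as $\varepsilon\to 0$; this is essentially automatic because the estimate is formulated in terms of the renormalized enhanced data, whose norms are uniformly controlled by hypothesis.
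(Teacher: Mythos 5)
Your proposal follows essentially the same route as the paper: the modified paracontrolled ansatz $u^\varepsilon = F(u^\varepsilon)\precprec\vartheta^\varepsilon + u^{\sharp,\varepsilon}$, the observation that in the decomposition of $F(u^\varepsilon)\circ\xi^\varepsilon$ the only genuinely resonant object is $F'(u^\varepsilon)F(u^\varepsilon)(\vartheta^\varepsilon\circ\xi^\varepsilon)$ so that the counterterm $-c_\varepsilon F'(u^\varepsilon)F(u^\varepsilon)$ turns it into the renormalized product, the linear-in-$\|u^{\sharp}\|$ bound of Lemma~\ref{lem:pam circ apriori} closed via Schauder estimates and the parabolic scaling, and finally Lipschitz dependence on the enhanced data $(u_0,\xi,\vartheta,\eta)$ under $F\in C^{2+\beta/\alpha}_b$ to get convergence and uniqueness of the limit. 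This is the paper's argument; the only cosmetic slip is that the iteration after rescaling proceeds on original-time intervals of length $\lambda^2 T$ rather than $T/\lambda^2$, which does not affect the proof.
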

As for the previous equations, $u$ is the unique paracontrolled weak solution to $L u=F ( u )\diamond \xi$ with $u ( 0 ) =u_{0}$ if we interpret the renormalized product $F ( u ) \diamond\xi$ in the right way, and $u$ depends continuously on $u_0$.

\begin{remark}
   In the linear case $F(u) = u$ we can skip the application of the paralinearization theorem. Since this was the only step in which we picked up a superlinear estimate, and all the other estimates that we used were linear in $u$, we then obtain the global-in-time existence of solutions.
\end{remark}

\subsection{Regularity of the area and renormalized products}

It remains to study the regularity of the area $\vartheta \reso \xi$. As already indicated, we will have to renormalize the product by ``subtracting an infinite constant'' in order to obtain a well-defined object.

Let therefore $\xi$ be a white noise on $\mathbb{T}^2$. By definition $(\CF \xi (k))_{k\in \Z^2}$ is a complex valued, centered Gaussian process with covariance
\[
   \E[\CF \xi (k) \CF \xi(k')] = (2\pi)^2 \mathbf{1}_{k = -k'}
\]
and such that $\overline{\CF \xi(k)} = \CF \xi(-k)$ for all $k,k' \in \Z^2$. This yields, using Gaussian hypercontractivity and Besov embedding, that $\E[\| \xi \|_{\CC^{\alpha-2}(\mathbb{T}^2)}^p] < \infty$ for all $\alpha < 1$ and $p \geqslant 1$. Moreover, setting
\[
   \vartheta = \int_0^\infty P_t (\xi - (2\pi)^2\CF\xi(0)) \mathd t,
\]
we have that $(\CF \vartheta(k))$ is a centered, complex valued Gaussian process with covariance
\[
   \E[ \CF \vartheta(k) \CF \vartheta(k')] = (2\pi)^2 \frac{1}{|k|^4} \mathbf{1}_{k = - k'} \mathbf{1}_{k \neq 0}
\]
and such that $\overline{\CF \vartheta(k)} = \CF \vartheta(-k)$ for all $k,k' \in \Z^2$. In the following we define for notational convenience
\[
   \Pi \xi = \xi - (2\pi)^2\CF\xi(0),
\]
so that $\vartheta = \int_0^\infty \Pi \xi \mathd t$. Since $P_t \Pi \xi$ is a smooth function for $t > 0$, the resonant term $P_t \Pi \xi \reso \xi$ is a smooth function, and therefore we could formally set $\vartheta \reso \xi = \int_0^\infty (P_t \Pi \xi \reso \xi) \mathd t$. However, this expression is not well defined:

\begin{lemma}\label{l:anderson area expectation}
   For any $x \in \mathbb{T}^2$ and $t > 0$ we have
   \[
      g_t = \E[(P_t \Pi \xi \reso \xi)(x)] = \E[ \Delta_{- 1} (P_t \Pi \xi \reso \xi ) ( x ) ] = (2\pi)^{-2} \sum_{k \in\mathbb{Z}^2\setminus \{0\}} e^{- t | k |^2}.
   \]
   In particular, $g_t$ does not depend on the partition of unity used to define the $\reso$ operator, and $\int_0^\varepsilon g_t \mathd t = \infty$ for all $\varepsilon > 0$.
\end{lemma}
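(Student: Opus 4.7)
My plan is to compute the expectation directly on the Fourier side, then exploit stationarity and the partition-of-unity identity to simplify.

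First, I will write out the Fourier series for $\Delta_i(P_t\xi)$ and $\Delta_j \xi$: since $P_t$ is the Fourier multiplier $e^{-t|\cdot|^2}$, we have
\[
  \Delta_i(P_t \xi)(x) = (2\pi)^{-2} \sum_{k \in \Z^2} \rho_i(k)\, e^{-t|k|^2}\, e^{\imath \langle k,x\rangle}\, \CF\xi(k), \qquad \Delta_j \xi(x) = (2\pi)^{-2} \sum_{k'} \rho_j(k')\, e^{\imath \langle k',x\rangle}\, \CF\xi(k').
\]
Multiplying, taking expectations, and using the covariance $\E[\CF\xi(k)\CF\xi(k')] = (2\pi)^2 \mathbf{1}_{k'=-k}\mathbf{1}_{k\neq 0}$ together with $\rho_j(-k) = \rho_j(k)$, I get
\[
   \E[\Delta_i(P_t\xi)(x)\, \Delta_j \xi(x)] = (2\pi)^{-2} \sum_{k\neq 0} \rho_i(k)\rho_j(k)\, e^{-t|k|^2}.
\]
Summing over pairs $(i,j)$ with $|i-j|\leq 1$ (for $|i-j|>1$ the supports of $\rho_i$ and $\rho_j$ are disjoint by the definition of the dyadic partition, so those terms vanish), I can extend to an unrestricted double sum and use $\sum_i \rho_i = 1$ to obtain
\[
   \sum_{|i-j|\leq 1} \rho_i(k)\rho_j(k) = \Big(\sum_i \rho_i(k)\Big)^{2} = 1, \qquad k\neq 0,
\]
which yields $\E[(P_t\xi \circ \xi)(x)] = (2\pi)^{-2}\sum_{k\neq 0} e^{-t|k|^2}$ and simultaneously shows independence of the chosen $(\chi,\rho)$.

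Next, the identity $\E[(P_t\xi \circ \xi)(x)] = \E[\Delta_{-1}(P_t \xi \circ \xi)(x)]$ will follow from stationarity. Since $\xi$ is invariant in law under translations, so is $P_t\xi \circ \xi$, hence $x\mapsto \E[\Delta_j(P_t\xi\circ \xi)(x)]$ is a constant $c_j$. By Fubini, $c_j = \Delta_j(\E[P_t\xi\circ\xi])(x) = \Delta_j g_t$, and since $g_t$ is a constant function and $\rho_j(0) = 0$ for $j\geq 0$ (as $\rho$ is supported in an annulus bounded away from the origin), all these vanish except $j=-1$, for which $\chi(0)=1$ gives $\Delta_{-1}g_t = g_t$.

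Finally, for the divergence, a straightforward computation gives
\[
   \int_0^t g_s \mathd s = (2\pi)^{-2} \sum_{k\in\Z^2\setminus\{0\}} \frac{1-e^{-t|k|^2}}{|k|^2},
\]
and since $(1-e^{-t|k|^2})/|k|^2 \sim 1/|k|^2$ as $|k|\to \infty$, this is comparable to $\sum_{|k|\geq 1} |k|^{-2}$, which diverges in dimension two. I do not anticipate any real obstacle here; the only subtle point is the cancellation argument ensuring independence of the partition of unity, which is handled by the partition-of-unity identity above.
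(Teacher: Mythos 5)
Your proposal is correct and follows essentially the same route as the paper: a direct Fourier-side computation of $\E[\Delta_\ell(\Delta_i(P_t\xi)\Delta_j\xi)(x)]$ using the covariance of $\CF\xi$, followed by the observation that the sum $\sum_{|i-j|\le 1}\rho_i(k)\rho_j(k)$ extends to the full square $\big(\sum_i\rho_i(k)\big)^2=1$ because off-diagonal terms vanish. Your stationarity argument for reducing to $\Delta_{-1}$ is just a repackaging of the paper's direct observation that the factor $\rho_\ell(k+k')$ collapses to $\rho_\ell(0)=\mathbf{1}_{\ell=-1}$, so the two proofs are essentially identical.
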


\begin{proof}
   Let $x \in \mathbb{T}^2$, $t > 0$, and $\ell \geqslant - 1$. Then
   \begin{align*}
      \E[\Delta_\ell ( P_t \Pi \xi \reso \xi) ( x ) ] = \sum_{| i - j |\leqslant 1} \E [ \Delta_\ell ( \Delta_i ( P_t \Pi \xi) \Delta_j \xi ) ( x ) ], 
   \end{align*}
   where exchanging summation and expectation is justified because it can be easily verified that the partial sums of $\Delta_\ell ( P_t \Pi \xi \reso \xi ) ( x )$ are uniformly $L^p$--bounded for any $p \geqslant 1$. 
   Now $P_t = e^{-t\lvert \mathD \rvert^2}$, and therefore
   \begin{align*}
      &\E[\Delta_\ell ( \Delta_i ( P_t \Pi \xi )\Delta_j \xi ) ( x ) ] \\
      &\hspace{20pt} = ( 2 \pi )^{-4} \sum_{k \in \mathbb{Z}^2 \setminus \{0\}, k' \in \Z^2} e^{\imath \langle k+k', x \rangle}\rho_\ell ( k + k' ) \rho_i ( k ) e^{- t | k|^2} \rho_j (k') \E [\CF{\xi} ( k ) \CF{\xi}(k')]\\
      &\hspace{20pt} = (2\pi)^{-2} \sum_{k \in \mathbb{Z}^2\setminus \{0\}} \rho_\ell ( 0 ) \rho_i ( k) e^{- t | k |^2} \rho_j ( k ) \\
      &\hspace{20pt} =  (2\pi)^{-2}\mathbf{1}_{\ell=-1} \sum_{k \in \mathbb{Z}^2 \setminus \{0\}} \rho_i(k) \rho_j ( k ) e^{- t | k |^2}.
   \end{align*}
   For $| i - j | > 1$ we have $\rho_i(k) \rho_j(k) = 0$. This implies, independently of $x \in \mathbb{T}^2$, that
   \[
      g_t = \E[ ( P_t \xi \reso \xi)(x)] = \sum_{k \in \mathbb{Z}^2\setminus \{0\}} \sum_{i, j} \rho_i ( k) \rho_j ( k ) e^{- t | k |^2} = (2\pi)^{-2} \sum_{k \in \mathbb{Z}^2 \setminus{\{0\}}} e^{- t | k |^2} ,
   \]
   while $\E[(P_t \xi \reso \xi)(x) - \Delta_{- 1} ( P_t \xi \reso \xi))( x ) ] = 0$.
\end{proof}

\begin{remark}
   The same calculation shows that if $\psi \in \CS$, and if $\xi^\varepsilon = \varepsilon^{-2} \psi (\varepsilon^{-1} \cdot) \ast \xi$, then
   \[
      \E[ (P_t \Pi \xi^\varepsilon \reso \xi^\varepsilon) ( x ) ] = \E[ \Delta_{- 1} (P_t \Pi \xi^\varepsilon \reso \xi^\varepsilon ) ( x ) ] = (2\pi)^{-2} \sum_{k \in\mathbb{Z}^2\setminus \{0\}} e^{- t | k |^2} |\CF \psi (\varepsilon k)|^2.
   \]
\end{remark}

The diverging time integral motivates us to study the renormalized product $\vartheta \reso \xi  - \int_0^\infty g_t \mathd t$, where $\int_0^\infty g_t \mathd t$ is an infinite constant:

\begin{lemma}\label{l:pam area}
   Set
   \[
      (\vartheta \diamond \xi) = \int_0^\infty (P_t \Pi \xi \reso \xi - g_t) \mathd t.
   \]
   Then $\E [\| \vartheta \diamond \xi\|_{2\alpha - 2}^p]<\infty$ for all $\alpha < 1$, $p \geqslant 1$. Moreover, if $\psi \in \CS$ satisfies $\int \psi(x) \mathd x = 1$, and if $\xi^\varepsilon = \varepsilon^{-2} \psi(\varepsilon \cdot) \ast \xi$ for $\varepsilon > 0$, and $\vartheta^\varepsilon = \int_0^\infty P_t \Pi \xi^\varepsilon \mathd t$, then
   \[
      \lim_{\varepsilon \rightarrow 0} \E[ \| \vartheta \diamond \xi - ( \vartheta^\varepsilon \reso \xi^\varepsilon - c_\varepsilon) \|_{2\alpha - 2}^p] = 0
   \]
   for all $p \geqslant 1$, where for $x \in \mathbb{T}^2$
   \begin{align*}
      c_\varepsilon & = \E[\vartheta^\varepsilon(x) \xi^\varepsilon(x)] = \E[\vartheta^\varepsilon \reso \xi^\varepsilon(x)] = \int_0^\infty \E[P_t \Pi \xi^\varepsilon \reso \xi^\varepsilon(x)] \mathd t \\
      & =  (2\pi)^{-2} \sum_{k \in \Z^2 \setminus\{0\}} \frac{|\CF \psi(\varepsilon k)|^2}{|k|^2}.
   \end{align*}
\end{lemma}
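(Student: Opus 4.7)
The strategy is that both $\vartheta\diamond\xi$ and the renormalized approximations $\vartheta^\varepsilon\circ\xi^\varepsilon-c_\varepsilon$ belong to the second homogeneous Wiener chaos of the white noise $\xi$: subtracting $g_t$ (respectively $c_\varepsilon$) is exactly what removes the deterministic contribution coming from the diagonal pairings $k+k'=0$ in the Fourier expansion of the pointwise product. Once this is recognized, Gaussian hypercontractivity (Theorem~3.50 of~\cite{Janson1997}) reduces every $L^p$ estimate for a spatial norm of these objects to a second-moment computation, which can be done explicitly in Fourier.

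Concretely, using $\CF\vartheta(k)=|k|^{-2}\CF\xi(k)$ for $k\neq 0$ and $\int_0^\infty e^{-t|k|^2}\mathd t=|k|^{-2}$, the same computation as in Lemma~\ref{l:anderson area expectation} gives
\[
   \vartheta\diamond\xi(x)=(2\pi)^{-4}\sum_{\substack{k,k'\in\Z^2\setminus\{0\}\\ k+k'\neq 0}} e^{\imath\langle k+k',x\rangle}\,\omega(k,k')\,\frac{\CF\xi(k)\CF\xi(k')}{|k|^2},
\]
with $\omega(k,k')=\sum_{|i-j|\leqslant 1}\rho_i(k)\rho_j(k')$; the analogous formula for $\vartheta^\varepsilon\circ\xi^\varepsilon-c_\varepsilon$ carries the extra factor $\CF\psi(\varepsilon k)\CF\psi(\varepsilon k')$ inside the sum (the stated closed form for $c_\varepsilon$ follows from the identity $\sum_{|i-j|\leqslant 1}\rho_i(k)\rho_j(k)=1$ combined with the same Fourier/Isserlis calculation applied to $\E[\vartheta^\varepsilon\circ\xi^\varepsilon(x)]$). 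Applying $\Delta_\ell$ and using Isserlis' formula with $\E[\CF\xi(k)\overline{\CF\xi(q)}]=(2\pi)^2\mathbf{1}_{k=q,k\neq 0}$, the two admissible non-diagonal pairings yield
\[
   \E\bigl[|\Delta_\ell(\vartheta\diamond\xi)(x)|^2\bigr]\lesssim\sum_{\substack{k,k'\in\Z^2\setminus\{0\}\\ k+k'\neq 0}}\rho_\ell(k+k')^2\,\omega(k,k')^2\,|k|^{-4}.
\]
On the support of $\omega$ we have $|k|\sim|k'|\gtrsim 2^\ell$, and for each admissible $k$ with $|k|\sim 2^j$ there are $O(2^{2\ell})$ choices of $k'$ with $|k+k'|\sim 2^\ell$, so a dyadic counting bounds the right-hand side by $\sum_{j\gtrsim\ell}2^{2j}\cdot 2^{2\ell}\cdot 2^{-4j}\simeq 1$, uniformly in $\ell$ and $x$.

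The conclusion is now routine and mirrors the proof of Lemma~\ref{l:stationary besov}: hypercontractivity upgrades the above to $\E[\|\Delta_\ell(\vartheta\diamond\xi)\|_{L^{2p}(\mathbb{T}^2)}^{2p}]\lesssim_p 1$ for every $p\geqslant 1$, hence $\E[\|\vartheta\diamond\xi\|_{B^{-\delta}_{2p,2p}}^{2p}]<\infty$ for every $\delta>0$, and the Besov embedding on $\mathbb{T}^2$ (Lemma~\ref{l:besov embedding torus}) yields the claimed control in $\CC^{2\alpha-2}$ for every $\alpha<1$ (choosing $\delta$ small and $p$ large). For the convergence, the difference $(\vartheta^\varepsilon\circ\xi^\varepsilon-c_\varepsilon)-\vartheta\diamond\xi$ admits the same Fourier representation with the summand multiplied by $\CF\psi(\varepsilon k)\CF\psi(\varepsilon k')-1$; since $|\CF\psi|\leqslant\|\psi\|_{L^1}=1$ and $\CF\psi(0)=1$, dominated convergence applied to the very same bounding series sends the second moment to $0$ uniformly in $\ell$ and $x$, and the same hypercontractivity-plus-Besov argument upgrades this to the claimed $L^p$ convergence. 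The main technical point is verifying the Wick-type cancellation, namely that subtracting $g_t$ (respectively $c_\varepsilon$) indeed eliminates exactly the diagonal pairings and leaves an object in a fixed chaos whose second moment is summable; this relies crucially on the fact that $\int_0^\infty e^{-t|k|^2}\mathd t$ is finite precisely when $k\neq 0$.
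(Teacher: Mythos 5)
Your route is genuinely different from the paper's: you integrate out the time variable explicitly in Fourier space (via $\int_0^\infty e^{-t|k|^2}\,\mathd t=|k|^{-2}$) and then run a single second-moment and lattice-counting estimate for the time-integrated object, whereas the paper splits $\int_0^\infty=\int_0^1+\int_1^\infty$, treats the large-time part deterministically using the smoothing of $P_t$ (which is uniform in $t$ because $\CF\xi(0)=0$), and for the small-time part proves the $t$-dependent bound $\E[\|P_t\xi\circ\xi-g_t\|_{B^{2\alpha-2}_{2p,2p}}]\lesssim t^{-\alpha}$ before integrating. Your counting $\sum_{j\gtrsim\ell}2^{2j}\cdot 2^{2\ell}\cdot 2^{-4j}\simeq 1$ is correct, and the hypercontractivity-plus-Besov-embedding conclusion matches the paper's; modulo the point below your argument is complete and arguably more direct. (Two minor remarks: the interchange of $\int_0^\infty \mathd t$ with the random Fourier series deserves a word of justification, and $|\CF\psi|\leqslant\|\psi\|_{L^1}$ need not be $\leqslant 1$ since $\psi$ is not assumed nonnegative --- but boundedness of $\CF\psi$ is all you actually use.)

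There is, however, a genuine error in your chaos representation. Subtracting $g_t=\E[(P_t\xi\circ\xi)(x)]$ \emph{centers} the diagonal contribution $k'=-k$ of the double Fourier sum, but does not remove it: the diagonal terms form the random variable $(2\pi)^{-4}\sum_{k\neq 0}e^{-t|k|^2}|\CF\xi(k)|^2$, whose fluctuation around its mean survives the subtraction. Consequently
\[
   \vartheta\diamond\xi(x)=(2\pi)^{-4}\sum_{\substack{k,k'\in\Z^2\setminus\{0\}\\ k+k'\neq 0}}e^{\imath\langle k+k',x\rangle}\,\omega(k,k')\,\frac{\CF\xi(k)\CF\xi(k')}{|k|^2}\;+\;(2\pi)^{-4}\sum_{k\in\Z^2\setminus\{0\}}\frac{|\CF\xi(k)|^2-(2\pi)^2}{|k|^2},
\]
and the second term, which your formula omits, is a nonzero random constant in the second chaos contributing to the block $\Delta_{-1}$. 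It is harmless --- its variance is $\lesssim\sum_{k\neq 0}|k|^{-4}<\infty$ in $d=2$, so it has moments of all orders and lies in $\CC^{0}\subset\CC^{2\alpha-2}$ --- but the proof as written never estimates it, and the same omission recurs in the convergence step, where the diagonal of $\vartheta^\varepsilon\circ\xi^\varepsilon-c_\varepsilon$ carries the extra factor $|\CF\psi(\varepsilon k)|^2$ and converges to the above constant by the same dominated-convergence argument. Note that the paper's variance computation via Wick's theorem automatically keeps this pairing (it is the $m=-1$ contribution with $k_1+k_2=0$). Once this term is added and estimated, your argument closes.
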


\begin{proof}
   We split the time integral into two components, $\int_0^1 \dots \mathd t$ and $\int_1^\infty \dots \mathd t$. The second integral can be treated without relying on probabilistic estimates: Given $x \in \mathbb{T}^2$, we have
   \begin{align*}
      &\bigg\| \int_1^\infty (P_t \Pi \xi \reso \xi - g_t) \mathd t - \int_1^\infty (P_t \Pi \xi^\varepsilon \reso \xi^\varepsilon - \E[P_t \Pi \xi^\varepsilon \reso \xi^\varepsilon(x)]) \mathd t \bigg\|_{2\alpha-2} \\
      &\hspace{20pt}\lesssim \int_1^\infty \| P_t \Pi \xi \reso \xi - P_t \Pi \xi^\varepsilon \reso \xi^\varepsilon \|_{2\alpha} \mathd t + \int_1^\infty \sum_{k \in \Z^2 \setminus\{0\}} e^{-t |k|^2} |1 - |\CF \psi(\varepsilon k)|^2| \mathd t \\
      &\hspace{20pt}\lesssim  \int_1^\infty (\| P_t \Pi(\xi - \xi^\varepsilon) \|_{\alpha + 2} \| \xi \|_{\alpha-2} + \| P_t \Pi \xi^\varepsilon \|_{\alpha + 2} \| \xi - \xi^\varepsilon \|_{\alpha - 2}) \mathd t \\
      &\hspace{20pt}\quad+ \sum_{k \in \Z^2 \setminus\{0\}} \frac{e^{- |k|^2}}{|k|^2} |1 - |\CF \psi(\varepsilon k)|^2|,
   \end{align*}
   Since $\CF \Pi \xi^\varepsilon (0) = 0$, the estimate $\| P_t \Pi \xi^\varepsilon \|_{\alpha + 2} \lesssim t^{-2} \| \xi^\varepsilon \|_{\alpha-2}$ of Lemma~\ref{lemma:heat flow smoothing} holds uniformly over $t > 0$, and thus the time integral is finite. The convergence in $L^p(\mathbb{P})$ now easily follows from the dominated convergence theorem.
   
   We will treat the integral from 0 to 1 using similar arguments as in the proof of Lemma~\ref{l:burgers area}. To lighten the notation, we will only show that $\E[\| \int_0^1 (P_t \Pi \xi \reso \xi - g_t) \mathd t\|_{2\alpha-2}^p]<\infty$. The difference
   \[
      \E\Big[ \Big\| \int_0^1 (P_t \Pi \xi \reso \xi - g_t) \mathd t - \int_0^1 (P_t \Pi \xi^\varepsilon \reso \xi^\varepsilon - \E[P_t \Pi \xi^\varepsilon \reso \xi^\varepsilon(x)]) \mathd t \Big\|_{2\alpha - 2}^p\Big]
   \]
   can be treated with the same arguments, we only have to include some additional factors of the form $|1 - \CF \psi (\varepsilon k)|^2$ in the sums below. The convergence of the expectation can then be shown using dominated convergence.
   
   Let $t \in (0,1]$ and define $\Xi_t = P_t \Pi \xi \reso \xi - g_t$. By the equivalence of moments for random variables living in an inhomogeneous Gaussian chaos of fixed degree, we obtain for $p\geqslant 1$ and $m \geqslant - 1$ that
   \begin{equation}\label{e:anderson area hyper}
      \E [\Vert  \Delta_m \Xi_t \Vert_{L^{2 p} ( \mathbb{T}^2 )}^{2 p}] \lesssim_p \lVert \E[|\Delta_m \Xi_t ( x ) |^2] \rVert ^p_{L^p_x(\mathbb{T}^2)}.
   \end{equation}
   By Lemma~\ref{l:anderson area expectation} we have
   \begin{equation}\label{e:anderson area m-block}
      \E[| \Delta_m \Xi_t(x)|^2] = \tmop{Var} ( \Delta_m ( P_t \xi\reso \xi ) ( x ) ),
   \end{equation}
   for all $m \ge -1$, where $\tmop{Var}(\cdot)$ denotes the variance.
   Now
   \begin{align*}
      \Delta_m ( P_t \xi\reso \xi ) (x) & = ( 2 \pi )^{- 4} \sum_{k_1\in \mathbb{Z}^2 \setminus \{0\}, k_2 \in \Z^2}\sum_{| i - j | \leqslant 1} e^{\imath \langle k_1 + k_2, x \rangle} \rho_m ( k_1 + k_2 ) \rho_i ( k_1 )  \\
      &\hspace{120pt} \times e^{- t | k_1 |^2} \CF{\xi} ( k_1 ) \rho_j (k_2) \CF{\xi} ( k_2 ),
   \end{align*}
   and therefore
   \begin{align*}
      &\tmop{Var} ( \Delta_m ( P_t \xi\reso \xi ) ( x ) )\\
      &\hspace{10pt} = ( 2 \pi )^{- 8} \sum_{k_1, k'_1 \in \mathbb{Z}^2 \setminus \{0\}} \sum_{k_2, k'_2 \in \mathbb{Z}^2} \sum_{| i - j | \leqslant 1} \sum_{| i' - j' | \leqslant 1} e^{\imath \langle k_1 + k_2, x \rangle} \rho_m ( k_1 + k_2 ) \rho_i ( k_1) e^{- t | k_1 |^2} \rho_j ( k_2 )\\
      &\hspace{160pt} \times e^{\imath \langle k'_1 + k'_2, x \rangle} \rho_m (k'_1 + k'_2 ) \rho_{i'} ( k'_1 ) e^{- t | k'_1 |^2}\rho_{j'} ( k'_2 ) \\
      &\hspace{160pt} \times \tmop{cov}(\CF{\xi} ( k_1 )\CF{\xi} ( k_2 ), \CF{\xi} ( k'_1 ) \CF{\xi}( k'_2 ) ),
   \end{align*}
   where the exchange of summation and expectation can again be justified a posteriori by the uniform $L^p$--boundedness of the partial sums, and where $\tmop{cov}$ denotes the covariance. Since $( \widehat{\xi} (k) )_{k \in \mathbb{Z}^2}$ is a centered Gaussian process, we can apply Wick's theorem (\cite{Janson1997}, Theorem~1.28) to deduce
   \begin{align*}
      &\tmop{cov} ( \widehat{\xi} ( k_1 ) \widehat{\xi} (k_2), \widehat{\xi} ( k'_1 ) \widehat{\xi} (k'_2)) = ( 2 \pi )^4 ( \mathbf{1}_{k_1=-k_1'} \mathbf{1}_{k_2 = - k'_2} + \mathbf{1}_{k_1 = - k'_2} \mathbf{1}_{k_2 = - k_1'} ),
   \end{align*}
   and therefore
   \begin{align*}
      &(2\pi)^{4} \tmop{Var} ( \Delta_m ( P_t \xi\reso \xi ) ( x ) ) \\
      &\hspace{15pt}=  \sum_{k_1 \neq 0, k_2} \sum_{| i - j|\leqslant 1} \sum_{| i' - j'| \leqslant 1} \Big[ \mathbf{1}_{m \lesssim i} \mathbf{1}_{m \lesssim i'} \rho^2_m(k_1 + k_2) \rho_i ( k_1 ) \rho_j ( k_2 )\rho_{i'} ( k_1 ) \rho_{j'} ( k_2 ) e^{- 2 t |k_1 |^2}\\
      &\hspace{80pt} + \mathbf{1}_{m \lesssim i} \mathbf{1}_{m \lesssim i'} \rho^2_m (k_1 + k_2 ) \rho_i ( k_1 ) \rho_j ( k_2 ) \rho_{i'} ( k_2 ) \rho_{j'} ( k_1 ) e^{- t |k_1 |^2 - t | k_2 |^2} \Big].
   \end{align*}
   There exists $c > 0$ such that $e^{- 2 t | k|^2} \lesssim e^{- t c 2^{2 i}}$ for all $k \in \tmop{supp} ( \rho_i )$ and for all $i \geqslant -1$. In the  remainder of the proof the value of this strictly positive $c$ may change from line to line. If $| i - j | \leqslant 1$, then we also have $e^{- t | k |^2} \lesssim e^{- t c 2^{2 i}}$ for all $k \in \tmop{supp} ( \rho_j )$. Thus
   \begin{align}\label{e:anderson area var}\nonumber
      & \tmop{Var} ( \Delta_m  ( P_t \xi \reso \xi )) ( x ) )\\ \nonumber
      &\hspace{35pt} \lesssim \sum_{i, j, i', j'} \mathbf{1}_{m \lesssim i} \mathbf{1}_{i \sim j \sim i' \sim j'} \sum_{k_1, k_2} \mathbf{1}_{\tmop{supp}(\rho_m)}(k_1 + k_2) \mathbf{1}_{\tmop{supp}( \rho_i )}(k_1) \mathbf{1}_{\tmop{supp} ( \rho_j)}(k_2) e^{- 2 t c 2^{2 i}} \\
      &\hspace{35pt} \lesssim \sum_{i:i \gtrsim m} 2^{2 i} 2^{2 m} e^{- t c 2^{2 i}} \lesssim \frac{2^{2 m}}{t} \sum_{i: i \gtrsim m} e^{- t c 2^{2 i}} \lesssim \frac{2^{2 m}}{t} e^{- t c 2^{2 m}},
   \end{align}
   where we used that $t 2^{2 i} \lesssim e^{t ( c - c' ) 2^{2 i}}$ for any $c' < c$.
   
   Now let $\alpha < 1$. We apply Jensen's inequality and combine \eqref{e:anderson area hyper}, \eqref{e:anderson area m-block}, and \eqref{e:anderson area var} to obtain
   \begin{align*}
      \E[\Vert  \Xi_t \Vert_{B^{2 \alpha - 2}_{2 p, 2 p}}] & \lesssim \biggl( \sum_{m \geqslant - 1} 2^{(2 \alpha - 2) m 2 p} \E[\Vert  \Delta_m \Xi_t \Vert_{L^{2 p} (\mathbb{T}^2 )}^{2 p} ] \biggr)^{\frac{1}{2 p}} \\
      & \lesssim t^{- 1 / 2} \biggl( \sum_{m \geqslant - 1} 2^{(2 \alpha - 2) m 2 p} 2^{2 m p} e^{- t c p 2^{2 m}} \biggr)^{\frac{1}{2 p}} \\
      & \lesssim t^{- 1 / 2} \biggl( \int_{- 1}^{\infty} ( 2^x )^{2 p ( 2 \alpha - 1 )} e^{- c t p ( 2^x )^2} \mathd x \biggr)^{\frac{1}{2 p}}.
   \end{align*}
   The change of variables $y = \sqrt{t} 2^x$ then yields
   \begin{align*}
      \E[\Vert  \Xi_t \Vert_{B^{2 \alpha - 2}_{2 p, 2 p}}] \lesssim t^{- 1 / 2} \biggl( t^{- p ( 2 \alpha -1)} \int_0^{\infty} y^{2 p ( 2 \alpha - 1 ) - 1} e^{- c p y^2} \mathd y \biggr)^{\frac{1}{2 p}}.
   \end{align*}
   If $\alpha > 1/2$, the integral is finite for all sufficiently large $p$, and therefore $\E[\Vert  \Xi_t \Vert_{B^{2 \alpha - 2}_{2 p, 2 p}}] \lesssim_p t^{- \alpha}$, so that $\int_0^1 \E[\Vert  \Xi_t \Vert_{B^{2 \alpha - 2}_{2 p, 2 p}}] \mathd t < \infty$ for all $\alpha < 1$. The equivalence of moments for $\int_0^1 \Xi_t \mathd t$ allows us to conclude that also
   \[
      \E \Big[\Big\| \int_0^1 \Xi_t \mathd t\Big\|^p_{B^{2\alpha - 2}_{2p, 2p}} \Big] < \infty
   \]
   for all $p \geqslant 1$. The result now follows from the Besov embedding theorem, Lemma~\ref{l:besov embedding torus}.
\end{proof}


Combining the construction of the renormalized product $\vartheta \diamond \xi$ with Theorem~\ref{theorem:pam existence uniqueness}, we obtain the existence and uniqueness of solutions to the generalized parabolic Anderson model:

\begin{corollary}
  Let $\alpha \in ( 2/3, 1)$, $\beta\in (2-2\alpha,\alpha]$, $F \in C^{2+\beta/\alpha}_b$, $u_0 \in \CC^\alpha$, $L = \partial_t-\Delta$, and let $\xi$ be a spatial white noise on $\mathbb{T}^2$. Then there exists a unique solution $u$ to
  \[
     L u = F(u)\diamond \xi, \qquad u(0) = u_0,
  \]
  in the following sense: For $\psi \in \CS$ with $\int \psi \mathd t=1$ and for $\varepsilon > 0$ consider the solution $u^\varepsilon$ to
  \[
     L u^{\varepsilon} =F ( u^{\varepsilon} ) \xi^{\varepsilon} -c_{\varepsilon} F' ( u^{\varepsilon} ) F ( u^{\varepsilon} ) , \hspace{2em} u^{\varepsilon} ( 0 ) =u_{0},
  \]
  on $[0, \infty)\times\mathbb{T}^2$, where $\xi^\varepsilon = \varepsilon^{-1} \psi(\varepsilon \cdot) \ast \xi$, and where $c_\varepsilon$ is as defined in Lemma~\ref{l:pam area}. Then there exists a $(u_0, \xi)$--measurable random time $\tau$ such that $\P(\tau>0) = 1$ and such that $\| u^\varepsilon - u \|_{C_\tau \CC^\alpha}$ converges to 0 in probability.
\end{corollary}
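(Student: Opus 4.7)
The plan is to compose Theorem~\ref{theorem:pam existence uniqueness}, which gives pathwise continuity of the paracontrolled solution map in the enhanced noise $(u_0, \xi, \vartheta, \eta)$, with the $L^p$ convergence of the renormalized area supplied by the immediately preceding corollary (on $\tilde\vartheta \diamond \tilde\xi$). The argument is then a standard extraction--of--subsequences combined with the observation that the limit is deterministic as a function of the enhanced noise.

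First I would fix $T>0$ and take as space--time lifting $\vartheta^\varepsilon := \tilde\vartheta^\varepsilon = \psi^\varepsilon \ast \tilde\vartheta$, i.e.\ the smooth lifting from the preceding corollary (here I identify the $\xi$ of the final statement with the $\tilde\xi$ of the previous subsection). By the remark following Theorem~\ref{theorem:pam existence uniqueness}, the initial value $\vartheta^\varepsilon(0)$ is irrelevant for the paracontrolled solution map, so it does not matter that $\tilde\vartheta^\varepsilon(0) \neq 0$. Setting $\eta := \tilde\vartheta \diamond \tilde\xi$, the preceding corollary yields
\[
   \E\bigl[\lVert \vartheta^\varepsilon - \tilde\vartheta \rVert_{C_T \CC^\alpha}^p\bigr] + \E\bigl[\lVert \vartheta^\varepsilon \circ \xi^\varepsilon - c_\varepsilon - \eta \rVert_{C_T \CC^{2\alpha-2}}^p\bigr] \xrightarrow[\varepsilon \downarrow 0]{} 0
\]
for every $p \geqslant 1$, and the convergence $\xi^\varepsilon \to \xi$ in $L^p(\Omega; \CC^{\alpha-2})$ follows from exactly the same Gaussian hypercontractivity plus Besov embedding argument used in the proof of Lemma~\ref{l:stationary besov}.

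Along any subsequence $\varepsilon_n \downarrow 0$, $L^p$ convergence permits the extraction of a further subsequence along which all three convergences hold almost surely. On the corresponding event of full measure, Theorem~\ref{theorem:pam existence uniqueness} applies pathwise and produces a limit $u \in C_T \CC^\alpha$ such that $u^{\varepsilon_n} \to u$ in $C_T \CC^\alpha$, with $u$ a deterministic function of $(u_0, \xi, \tilde\vartheta, \eta)$ alone. In particular, $u$ does not depend on the chosen subsequence, so a standard subsequence principle upgrades the convergence to convergence in probability of the full family $(u^\varepsilon)$. Uniqueness of $u$ in the sense prescribed by the corollary is immediate from the uniqueness clause in Theorem~\ref{theorem:pam existence uniqueness}: any two such limits must be produced by the same deterministic solution map applied to the same data $(u_0, \xi, \tilde\vartheta, \eta)$ and hence coincide almost surely.

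The only mildly delicate point, and the one I would double--check carefully, is the matching of the renormalization constant: that the $c_\varepsilon$ subtracted in the regularized Cauchy problem is precisely the $c_\varepsilon = (2\pi)^{-2} \sum_{k \neq 0} |\CF\psi(\varepsilon k)|^2 / |k|^2$ appearing in Lemma~\ref{l:pam area}. This is however already built into the ``Renormalization'' subsection preceding Theorem~\ref{theorem:pam existence uniqueness}, where the correction term $-c_\varepsilon F'(u^\varepsilon) F(u^\varepsilon)$ is introduced exactly so that the equation for $u^{\sharp,\varepsilon}$ depends on the driving data only through the renormalized combination $\vartheta^\varepsilon \circ \xi^\varepsilon - c_\varepsilon$, and the diverging expectation $\E[\vartheta^\varepsilon \circ \xi^\varepsilon(x)]$ computed in Lemma~\ref{l:anderson area expectation} is exactly this $c_\varepsilon$. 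With this identification in hand, no further computation is needed and the two ingredients combine directly into the stated convergence.
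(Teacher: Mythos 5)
Your proposal is correct and follows exactly the route the paper intends: the paper's own ``proof'' is the single sentence that the corollary follows by combining the renormalized area construction (Lemma~\ref{l:pam area} and the corollary on $\tilde\vartheta\diamond\tilde\xi$) with Theorem~\ref{theorem:pam existence uniqueness}, and your write-up supplies precisely that combination, including the standard subsequence argument to pass from $L^p$-convergence of the enhanced data to convergence in probability of $(u^\varepsilon)$, and the (correct) observation that the initial condition of $\vartheta^\varepsilon$ and the identification of $c_\varepsilon$ are already handled by the remarks preceding the statement.
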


\begin{remark}
Concerning the convergence of $( \vartheta^{\varepsilon} \reso
\xi^{\varepsilon} )$, let us make the following remark: Since $-\Delta \vartheta^{\varepsilon} = \xi^{\varepsilon} +C^\infty$ (with a $C^\infty$ remainder that can be controlled uniformly in $\varepsilon > 0$), we have
\[
   \vartheta^{\varepsilon} \reso \xi^{\varepsilon} = \vartheta^{\varepsilon} \reso (-\Delta) \vartheta^{\varepsilon} + C^\infty = \frac{1}{2} (-\Delta) ( \vartheta^{\varepsilon} \reso   \vartheta^{\varepsilon} ) + (\mathD_{x} \vartheta^{\varepsilon} \reso \mathD_{x}  \vartheta^{\varepsilon} ) + C^\infty,
\]
from which we see that the only problem in passing to the limit is given by
the second term on the right hand side. This integration by parts formula is the crucial difference with what happens in the \textsc{rde} case, which otherwise shares many structural properties with the \textsc{pam} model. The fact that $-\Delta$ is a second order operator generates the term $(\mathD_{x} \vartheta^{\varepsilon} \reso \mathD_{x}  \vartheta^{\varepsilon} )$ in the above computation, which is absent in case of the operator $\partial_t$. This term, whose convergence is equivalent to the convergence of the positive term $|\mathD_x \vartheta^\varepsilon|^2$, cannot have simple cancellation properties and it is the origin for the need of introducing an additive renormalization when considering \textsc{pam}.

Our previous analysis easily implies  that the solutions to the modified
problem
\[
   L u^{\varepsilon} =F ( u^{\varepsilon} ) \xi^{\varepsilon} - F'( u^{\varepsilon} ) F (u^{\varepsilon} ) | \mathD_{x} \vartheta^{\varepsilon}  |^2
\]
will converge as soon as $\xi^{\varepsilon} \rightarrow \xi$ in $\CC^{\alpha
-2}$, without any requirements on the bilinear term $\vartheta^{\varepsilon} \reso
\xi^{\varepsilon}$ . 

\end{remark}

\section{Relation with regularity structures}\label{sec:regularity}

In~{\cite{Hairer2013a}} Hairer introduces a general framework that allows to
describe distributions which locally behave like a linear combination of a set
of basic distributions. He calls this set a \emph{model}. A \emph{modelled distribution} is the result of
patching up in a coherent fashion the local models according to a set of
coefficients. At the core of his theory of
regularity structures is the \emph{reconstruction map} $\mathcal{R}$
which, for a given set of coefficients, delivers a modelled distribution that
has the required local behavior up to small errors. In this section we review
the concepts of model and modelled distribution and we use paracontrolled
techniques to explicitly identify modelled distributions as distributions that are 
paracontrolled by a given model, and thus partially bridge the gap between the two
theories. We conjecture that there is a complete correspondence between paracontrolled and modelled distributions, however for now this remains an open problem.

\medskip

We denote by $( K_{i} )_{i \geqslant -1}$ the convolution kernels corresponding to the family of Littlewood--Paley projectors $( \Delta_{i} )_{i \geqslant -1}$, and we write $K_{<i} = \sum_{j<i} K_j$ and $K_{\leqslant i} = \sum_{j \leqslant i} K_j$. For any integral kernel $V$ denote $V_{x} (
y ) =V ( x-y )$ so for example $K_{i,x} ( y ) =K_{i} ( x-y )$.

Let us briefly recall the basic setup of regularity structures. For more
details the reader is referred to Hairer's original paper~{\cite{Hairer2013a}}.

\begin{definition}
  Let $A \subset \R$ be bounded from below and without accumulation
  points except possibly at $\infty$, and let $T =  \oplus_{\alpha \in A} T_{\alpha}$ be a vector space
  graded by $A$ and such that $T_{\alpha}$ is a Banach space for all $\alpha
  \in A$. Let $G$ be a group of continuous operators on $T$ such that for all
  $\tau \in T_{\alpha}$ and $\Gamma \in G$ we have $\Gamma \tau - \tau \in
  \oplus_{\beta < \alpha} T_{\beta}$. The triple $\mathcal{T}= ( A,T,G )$ is
  called a {\emph{regularity structure}} with \emph{model space} $T$ and \emph{structure
  group} $G$.
\end{definition}

For $\tau \in T$ we write $\| \tau \|_{\alpha}$ for the norm of the component of
$\tau$ in $T_{\alpha}$. We assume also that $0 \in A$ and $T_{0} \simeq
\R$ and that $T_{0}$ is invariant under $G$. We will often write
$\varphi^{\lambda}_{x} ( y ) = \lambda^{-d} \varphi ( ( y-x ) / \lambda )$.

\begin{definition}
  Given a regularity structure $\mathcal{T}$ and an integer $d \geqslant 1$, a
  model for $\mathcal{T}$ on $\R^{d}$ consists of maps
  \[ \begin{array}{ccc}
       \Pi :\R^{d} \rightarrow \mathcal{L} ( T,\CS' (
       \R^{d} ) ) &  & \Gamma :\R^{d} \times \R^{d}
       \rightarrow G\\
       x \mapsto \Pi_{x} &  & ( x,y ) \mapsto \Gamma_{x,y}
     \end{array} \]
  such that $\Gamma_{x,y} \Gamma_{y,z} = \Gamma_{x,z}$ and $\Pi_{x}
  \Gamma_{x,y} = \Pi_{y}$. Furthermore, given $r> | \min  A |$, 
  $\gamma >0$, there exists a constant $C$ such that the bounds
  \[
     | ( \Pi_{x} \tau ) ( \varphi^{\lambda}_{x} ) | \leqslant C \lambda^{\alpha} \| \tau \|_{\alpha} , \hspace{2em} \| \Gamma_{x,y} \tau  \|_{\beta} \leqslant C | x-y |^{\alpha - \beta} \| \tau \|_{\alpha}
  \]
  hold uniformly over $\varphi \in C^{r}_b ( \R^{d} )$ with $\| \varphi
  \|_{C^{r}_b} \leqslant 1$ and with support in the unit ball of $\R^{d}$,
  $x,y  \in \R^d$, $0< \lambda \leqslant 1$ and $\tau \in
  T_{\alpha}$ with $\alpha \leqslant \gamma$ and $\beta < \alpha$.
\end{definition}

In~\cite{Hairer2013a}, these conditions are only required to hold locally uniformly, that is for $x,y$ contained in a compact subset of $\R^d$. To simplify the presentation and to facilitate the comparison with the paracontrolled approach, we will work here under global assumptions. In that case we can extend the bounds on the model from compactly supported smooth functions to rapidly decaying smooth functions:

\begin{lemma}\label{lem:Pi acting on Schwartz}
   Let $\varphi$ be a Schwartz function, let $\gamma > 0$, and $r > |\min A|$. Then there exists $C_\varphi > 0$ such that
   \[
      |(\Pi_x \tau)(\varphi^\lambda_x)| \leqslant C_\varphi \lambda^{\alpha} \| \tau\|_\alpha
   \]
   holds uniformly over $0< \lambda \leqslant 1$ and $\tau \in T_{\alpha}$ with $\alpha \leqslant \gamma$. The constant $C_\varphi$ can be chosen proportional to
   \[
      \sup_{|\mu|\le \lceil r\rceil}\sup_{x \in \R^d} (1+|x|)^{d+r+\gamma} |\partial^\mu \varphi(x)|.
   \]
\end{lemma}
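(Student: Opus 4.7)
The idea is to decompose the test function $\varphi^{\lambda}_{x}$ into a Schwartz-weighted sum of bumps supported in unit balls around shifted centers, to which the given model bound can be applied after re-basing through the action of $\Gamma$.

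First I would fix a smooth partition of unity $(\chi_{k})_{k \in \Z^{d}}$ with $\chi_{k}(y)=\chi_{0}(y-k)$ and $\tmop{supp}(\chi_{0})\subset\CB(0,1)$, and set $\psi_{k}(u):=\varphi(u+k)\chi_{0}(u)$, so that every $\psi_{k}$ is smooth with support in the unit ball. A direct change of variable gives the key identity $(\varphi\chi_{k})^{\lambda}_{x}=\psi^{\lambda}_{k,\,x+\lambda k}$ and hence
\[
   \varphi^{\lambda}_{x} = \sum_{k\in\Z^{d}} \psi^{\lambda}_{k,\,x+\lambda k}.
\]

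Next, using $\Pi_{x}=\Pi_{x+\lambda k}\Gamma_{x+\lambda k,x}$ and the fact that $\Gamma_{x+\lambda k,x}\tau$ decomposes as $\tau$ plus components in $T_{\beta}$ for $\beta<\alpha$, I would rewrite each term as
\[
   (\Pi_{x}\tau)(\psi^{\lambda}_{k,\,x+\lambda k}) = \sum_{\beta\in A,\,\beta\leqslant\alpha}\bigl(\Pi_{x+\lambda k}\,[\Gamma_{x+\lambda k,x}\tau]_{\beta}\bigr)(\psi^{\lambda}_{k,\,x+\lambda k}).
\]
Because $A$ has no accumulation points and $\alpha\leqslant\gamma$, the inner sum has a number of terms bounded in terms of $\mathcal{T}$ and $\gamma$ only. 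Normalizing $\psi_{k}$ by its $C^{r}_{b}$-norm and invoking the given model bound on $\Pi$, together with $\|[\Gamma_{x+\lambda k,x}\tau]_{\beta}\|_{\beta}\lesssim(\lambda|k|)^{\alpha-\beta}\|\tau\|_{\alpha}$ for $\beta<\alpha$ (and $[\Gamma_{x+\lambda k,x}\tau]_{\alpha}=\tau$), I obtain
\[
   \bigl|(\Pi_{x}\tau)(\psi^{\lambda}_{k,\,x+\lambda k})\bigr| \lesssim \lambda^{\alpha}\,\|\tau\|_{\alpha}\,\|\psi_{k}\|_{C^{r}_{b}}\sum_{\beta\leqslant\alpha}(1+|k|)^{\alpha-\beta},
\]
with the $k=0$ contribution treated separately using only the surviving $\beta=\alpha$ term.

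The last step is a Leibniz-plus-Schwartz-decay estimate on $\psi_{k}$: setting $\|\varphi\|_{*}=\sup_{|\mu|\leqslant\lceil r\rceil}\sup_{z\in\R^{d}}(1+|z|)^{d+r+\gamma}|\partial^{\mu}\varphi(z)|$, then for $|u|\leqslant 1$ one has
\[
   |\partial^{\mu}\psi_{k}(u)| \lesssim \|\chi_{0}\|_{C^{\lceil r\rceil}_{b}}\,(1+|k|)^{-(d+r+\gamma)}\,\|\varphi\|_{*},
\]
using $1+|k|\leqslant 2(1+|u+k|)$. Since the largest admissible value of $\alpha-\beta$ is at most $\gamma+|\min A|$, the series
\[
   \sum_{k\in\Z^{d}}(1+|k|)^{-(d+r+\gamma)+(\gamma+|\min A|)} = \sum_{k\in\Z^{d}}(1+|k|)^{-d-(r-|\min A|)}
\]
converges precisely because $r>|\min A|$, and summing the previous pointwise estimate in $k$ yields the claimed bound with $C(\varphi)$ proportional to $\|\varphi\|_{*}$.

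There is no genuine analytic obstacle here; the main technical point is the scaling–shifting identity $(\varphi\chi_{k})^{\lambda}_{x}=\psi^{\lambda}_{k,\,x+\lambda k}$, which converts a generic Schwartz test function rescaled around $x$ into a sum of unit-ball bumps located at shifted base points $x+\lambda k$, thereby allowing the hypothesized $C^{r}_{b}$-type bound on $\Pi$ to be applied term by term with the shift costs absorbed by the $\Gamma$-estimate.
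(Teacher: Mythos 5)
Your proof is correct and follows essentially the same route as the paper's: both decompose $\varphi$ into lattice-translated bumps supported in fixed-radius balls, re-base each far bump at its own center via the $\Gamma$ action, apply the model bound to each homogeneous component, and sum using the Schwartz decay of $\varphi$, with convergence of the lattice sum guaranteed precisely by $r>|\min A|$. The only cosmetic difference is that you apply the model bound at scale $\lambda$ around the shifted base points $x+\lambda k$, whereas the paper works at scale $1$ around $x-k$; your bookkeeping of the powers of $\lambda$ and $|k|$ is if anything the cleaner of the two.
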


\begin{proof}
   We can decompose $\varphi = \sum_{k \in \Z^d} \varphi_k$, where every $\varphi_k \in C^\infty_c$ is supported in the ball with radius $\sqrt{d}$, centered at $k \in \Z^d$. Then $\psi = \sum_{|k|\leqslant \sqrt{d}+1} \varphi_k$ is a compactly supported smooth function, and therefore
   \[
      |(\Pi_x \tau)(\psi^\lambda_x)| \lesssim_\varphi \lambda^{\alpha} \| \tau\|_\alpha.
   \]
   For $|k|>\sqrt{d}+1$ we have $(\varphi_k)^\lambda_x = (\widetilde{\varphi}_k)^\lambda_{x-k}$ for $\widetilde{\varphi}_k$ supported in a ball centered at 0. Using that $\varphi$ is a Schwartz function, we can estimate $\|(\widetilde\varphi_k)^\lambda\|_{C^r_b}\lesssim_{\varphi} \lambda^{-r-d} (|k|/\lambda)^{-(d+r+\alpha)}$. Therefore,
   \begin{align*}
      \sum_{|k| > \sqrt{d} + 1}  |(\Pi_x \tau)((\varphi_k)^\lambda_x)| & \lesssim \sum_{|k| > \sqrt{d} + 1}  |(\Pi_{x-k} \Gamma_{x-k,x} \tau)((\widetilde \varphi_k)^\lambda_{x-k})| \\
      &\lesssim_{\varphi,m} \sum_{|k| > \sqrt{d} + 1} \sum_{\beta\leqslant\alpha} |k|^{\alpha-\beta} \|\tau\|_\alpha |k|^{-(d+r+\alpha)} \lambda^{-r-d+(d+r+\alpha)} \\
      &\lesssim \|\tau\|_\alpha \lambda^\alpha.
   \end{align*}
\end{proof}

In the theory of regularity structures, the usual spaces of regular functions are replaced by spaces of ``modelled distributions''.

\begin{definition}
  For $\gamma \in \R$, the space of {\em modelled distributions} $\mathcal{D}^{\gamma} ( \mathcal{T}, \Gamma )$ consists of all
  functions $f^{\pi}\colon \R^{d} \rightarrow \oplus_{\alpha < \gamma}
  T_{\alpha}$ such that for every $\alpha  < \gamma$ there exists a constant $C$ with
  \[
     \| f^{\pi}_{x} - \Gamma_{x,y} f^{\pi}_{y} \|_{\alpha} \leqslant C | x-y |^{\gamma - \alpha}, \qquad \|f^\pi_x \|_\alpha \leqslant C,
  \]
  uniformly over $x,y \in \R^d$.
\end{definition}

One of the key difficulties is to show that for every modelled distribution $f^\pi$ there exists an associated element of $\CS'$ whose local description is given by $f^\pi$. This is achieved with the help of Hairer's reconstruction operator, for which we give an alternative construction based on paraproducts below.

\subsection{The reconstruction operator}

\begin{definition}\label{def:reconstruction}
   Let $\gamma \in \R$ and $r> | \min  A |$. A reconstruction $\mathcal{R}f^{\pi}$ of $f^{\pi} \in \mathcal{D}^{\gamma} ( \mathcal{T}, \Gamma )$ is a distribution such
  that
  \begin{equation}\label{eq:uniq-rec-gen}
    | \mathcal{R}f^{\pi} ( \varphi^{\lambda}_{x} ) - \Pi_{x} f^{\pi}_{x} (\varphi^{\lambda}_{x} ) | \lesssim \lambda^{\gamma}
  \end{equation}
  for all $0< \lambda \leqslant 1$, uniformly in $x \in
  \R^{d}$ and uniformly over $\varphi \in C^{r+\gamma}_b ( \R^{d} )$ with $\| \varphi
  \|_{C^{r+\gamma}_b} \leqslant 1$ and with support in the unit ball of $\R^{d}$.
\end{definition}

In~\cite{Hairer2013a} inequality~\eqref{eq:uniq-rec-gen} is assumed to hold for all $\varphi \in C^{r}_b ( \R^{d} )$ with $\| \varphi
  \|_{C^{r}_b} \leqslant 1$ and with support in the unit ball of $\R^{d}$. It should be possible to show that this follows from~\eqref{eq:uniq-rec-gen} and the definition of $\Pi$ and $\mathcal{D}^{\gamma} ( \mathcal{T}, \Gamma )$. But for our purposes Definition~\ref{def:reconstruction} will be sufficient.

\begin{lemma}
  \label{lemma:rec-equivalence}Property~{\eqref{eq:uniq-rec-gen}} is
  equivalent to
  \begin{equation}
    | \mathcal{R}f^{\pi} ( K_{< i,x} ) - \Pi_{x} f^{\pi}_{x} (
    K_{< i,x} ) | \lesssim 2^{-i \gamma} \label{eq:uniq-rec-LP}
  \end{equation}
  for all $i \geqslant 0$ and $x \in \R^{d}$. 
\end{lemma}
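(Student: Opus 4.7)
The key identity driving both directions is the telescoping $\chi(\xi)+\sum_{0\le j<i}\rho(2^{-j}\xi)=\chi(2^{-i}\xi)$ valid for any dyadic partition of unity, which yields $K_{<i,x}(y)=\lambda^{-d}\psi((y-x)/\lambda)$ with $\lambda=2^{-i}$ and $\psi=\CF^{-1}\chi$ a single Schwartz function (taken even by choosing $\chi$ radial, which we may do WLOG). Hence both (\ref{eq:uniq-rec-gen}) and (\ref{eq:uniq-rec-LP}) express that $T_x := \mathcal{R}f^\pi-\Pi_xf^\pi_x$ has ``size $\lambda^\gamma$'' when tested against a $\lambda$-scale mollifier centered at $x$, and the two directions amount to passing between compactly supported and Schwartz test functions.

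For (\ref{eq:uniq-rec-gen}) $\Rightarrow$ (\ref{eq:uniq-rec-LP}) I decompose $\psi=\sum_{k\ge 0}\psi_k$ via a smooth partition of unity with $\mathrm{supp}\,\psi_k\subset B(0,2^{k+1})$. Schwartz decay yields $\|\psi_k\|_{C^{r+\gamma}_b}\lesssim_N 2^{-kN}$ for every $N$. Rescaling, $\lambda^{-d}\psi_k((y-x)/\lambda)=\mu_k^{-d}\phi_k((y-x)/\mu_k)$ with $\mu_k=2^{k+1}\lambda$ and $\phi_k$ supported in the unit ball satisfying $\|\phi_k\|_{C^{r+\gamma}_b}\lesssim_M 2^{-kM}$ for any $M$ (the scaling factor $2^{k(d+r+\gamma)}$ picked up by the change of variable is absorbed into the Schwartz decay by taking $N$ large). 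For $k\le i-1$ (so $\mu_k\le 1$), (\ref{eq:uniq-rec-gen}) gives $|T_x(\mu_k^{-d}\phi_k((\cdot-x)/\mu_k))|\lesssim 2^{-kM}(2^k\lambda)^\gamma$, and choosing $M>\gamma$ and summing yields $O(\lambda^\gamma)=O(2^{-i\gamma})$. The finitely many tail terms $k\ge i$ are handled by further localizing $\phi_k$ into $O((2^k\lambda)^d)$ pieces of radius $\le 1$ and reapplying (\ref{eq:uniq-rec-gen}); the combinatorial factor is dominated by $2^{-kN}$.

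For (\ref{eq:uniq-rec-LP}) $\Rightarrow$ (\ref{eq:uniq-rec-gen}) I pick $i$ with $2^{-i}\le\lambda<2^{-i+1}$ and split $\varphi^\lambda_x=K_{<i}\ast\varphi^\lambda_x+\sum_{j\ge i}\Delta_j\varphi^\lambda_x$. For the convolved part, Fubini (using evenness of $K_{<i}$) gives
\[
T_x(K_{<i}\ast\varphi^\lambda_x)=\int\varphi^\lambda_x(z)\,T_x(K_{<i,z})\,dz,
\]
and I decompose $T_x(K_{<i,z})=T_z(K_{<i,z})+\Pi_z(f^\pi_z-\Gamma_{z,x}f^\pi_x)(K_{<i,z})$ via $\Pi_x=\Pi_z\Gamma_{z,x}$. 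The first summand is $O(2^{-i\gamma})$ by (\ref{eq:uniq-rec-LP}); the second is bounded by $\sum_{\alpha<\gamma}2^{-i\alpha}|z-x|^{\gamma-\alpha}$, using the $\mathcal{D}^\gamma$-bound $\|f^\pi_z-\Gamma_{z,x}f^\pi_x\|_\alpha\lesssim|z-x|^{\gamma-\alpha}$ together with Lemma~\ref{lem:Pi acting on Schwartz} applied to the Schwartz kernel $K_{<i,z}$ at scale $2^{-i}$. Integration against $\varphi^\lambda_x$ supported in $B(x,\lambda)$ collapses each $2^{-i\alpha}\lambda^{\gamma-\alpha}$ to $O(\lambda^\gamma)$, and the finiteness of $A\cap(-\infty,\gamma)$ seals the bound.

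The hard part is the high-frequency tail $\sum_{j\ge i}T_x(\Delta_j\varphi^\lambda_x)$: repeating the Fubini argument with $K_j=K_{<j+1}-K_{<j}$ in place of $K_{<i}$ gives the per-block bound $|T_x(\Delta_j\varphi^\lambda_x)|\lesssim 2^{-j\gamma}+\sum_{\alpha<\gamma}2^{-j\alpha}\lambda^{\gamma-\alpha}$, which fails to be summable in $j$ when $\alpha<0$ since $2^{-j\alpha}=2^{j|\alpha|}$ grows. The resolution exploits the extra smoothness $\varphi\in C^{r+\gamma}_b$ with $r>|\min A|$: Bernstein's inequality yields $\|\Delta_j\varphi^\lambda_x\|_{L^\infty}\lesssim(\lambda 2^j)^{-(r+\gamma)}$ (and analogous estimates for derivatives after rescaling), so that $\Delta_j\varphi^\lambda_x$ can be recast as a rescaled test function whose effective $C^{r+\gamma}_b$-norm carries an extra damping $(\lambda 2^j)^{-(r+\gamma)}$. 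Reinserting this damping into the Fubini-based estimate absorbs $2^{-j\alpha}$ for every $\alpha\in A$ since $r+\gamma+\alpha>0$, restores summability in $j$, and collapses the grand total to the desired $O(\lambda^\gamma)$.
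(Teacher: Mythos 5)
Your argument is correct and follows essentially the same route as the paper: the implication \eqref{eq:uniq-rec-gen} $\Rightarrow$ \eqref{eq:uniq-rec-LP} by chopping the Schwartz kernel $K_{<i}$ into rescaled compactly supported pieces exactly as in the proof of Lemma~\ref{lem:Pi acting on Schwartz}, and the converse by splitting off $S_i\varphi^\lambda_x$ and combining Fubini with the recentering $\Pi_x=\Pi_z\Gamma_{z,x}$ and the $\mathcal{D}^\gamma$ bound. The only difference is that you treat the high-frequency tail blockwise with the Bernstein damping $(2^j\lambda)^{-(r+\gamma)}$, which is a correct and more explicit rendering of the step the paper compresses into a one-line appeal to Lemma~\ref{lem:Pi acting on Schwartz}.
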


\begin{proof}
  Start by assuming~\eqref{eq:uniq-rec-LP}. Lemma~\ref{lem:Pi acting on Schwartz} yields $|\Pi_{x} f^{\pi}_{x} (K_{< i,x} )| \lesssim 2^{-i \alpha_0}$, where $\alpha_0 = \min A$, and therefore $| \mathcal{R}f^{\pi} ( K_{< i,x} ) | \lesssim 2^{-i \alpha_0}$. In particular, $\mathcal R f^\pi \in \CC^{\alpha_0}$ and $|\mathcal R f^\pi (\psi)| \lesssim \|\psi\|_{C^r_b}$ for all $\psi \in C^r_b$. If now $\varphi \in C_b^{\gamma+r}$ is supported in the unit ball and if $i \geqslant 0$ is such that $2^{-i}\simeq \lambda$, then Lemma~\ref{lem:Pi acting on Schwartz} yields
  \[
     | ( \mathcal{R}f^{\pi} - \Pi_{x} f^{\pi}_{x} ) ( \varphi^{\lambda}_{x} - S_i \varphi^{\lambda}_{x} ) | \lesssim 2^{-i \gamma} \|\varphi\|_{C^{\gamma+r}_b} \lesssim \lambda^{\gamma} \|\varphi\|_{C^{\gamma+r}_b}.
  \]
  Next, observe that
  \begin{align*}
     ( \mathcal{R}f^{\pi} - \Pi_{x} f^{\pi}_{x} ) ( S_i \varphi^{\lambda}_{x} ) &= \int \mathd z ( \mathcal{R}f^{\pi} - \Pi_{x} f^{\pi}_{x} ) ( K_{< i,z} ) \lambda^{-d} \varphi ( \lambda^{-1} ( x-z ) ) \\
     & = \int \mathd z ( \mathcal{R}f^{\pi} - \Pi_{z} f^{\pi}_{z} ) ( K_{< i,z} ) \lambda^{-d} \varphi ( \lambda^{-1} ( x-z ) ) \\
     &\quad + \int \mathd z \Pi_{z} ( f^{\pi}_{z} - \Gamma_{z,x} f^{\pi}_{x} ) (K_{< i,z} ) \lambda^{-d} \varphi ( \lambda^{-1} ( x-z ) ).
  \end{align*}
  In the second term of this sum we can estimate $| \Pi_{z} ( f^{\pi}_{z} - \Gamma_{z,x} f^{\pi}_{x} ) ( K_{< i,z} ) | \lesssim \sum_{\beta < \gamma} 2^{-i \beta} | x-z |^{\gamma - \beta}$, where we used that $f^{\pi} \in \mathcal{D}^{\gamma}$. The first term in the sum is estimated using~{\eqref{eq:uniq-rec-LP}}, which gives
  \[
     |( \mathcal{R}f^{\pi} - \Pi_{x} f^{\pi}_{x} ) ( S_i \varphi^{\lambda}_{x} )| \lesssim 2^{-i \gamma} + \sum_{\beta < \gamma}  2^{-i \beta} \int \mathd z | x-z |^{\gamma - \beta} \lambda^{-d} \varphi     ( \lambda^{-1} ( z-x ) ) \lesssim 2^{-i \gamma}.
  \]
  So requiring~{\eqref{eq:uniq-rec-LP}} is sufficient to have the general
  bound~{\eqref{eq:uniq-rec-gen}}. To see that~{\eqref{eq:uniq-rec-gen}} implies~{\eqref{eq:uniq-rec-LP}} we can use similar arguments as in the proof of Lemma~\ref{lem:Pi acting on Schwartz}.
\end{proof}

The characterization of the reconstruction given by~{\eqref{eq:uniq-rec-LP}}
is better suited for us, so we will stick with it in the following.

\begin{lemma}
  If $\gamma > 0$, the reconstruction operator is unique.
\end{lemma}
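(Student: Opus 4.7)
The plan is to exploit Lemma~\ref{lemma:rec-equivalence} directly: the characterization in terms of Littlewood--Paley blocks turns the difference of two reconstructions into something that can be estimated pointwise in $L^\infty$, after which positivity of $\gamma$ will force the difference to vanish. Let me spell out the steps.

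First I would let $\mathcal{R}f^\pi$ and $\widetilde{\mathcal{R}}f^\pi$ be two reconstructions of the same $f^\pi \in \mathcal{D}^\gamma(\mathcal{T},\Gamma)$, and set $D = \mathcal{R}f^\pi - \widetilde{\mathcal{R}}f^\pi$. Applying Lemma~\ref{lemma:rec-equivalence} to each of them and subtracting the local model $\Pi_x f^\pi_x$ from both bounds yields
\[
  | D ( K_{<i,x} ) | \leqslant | \mathcal{R}f^\pi ( K_{<i,x} ) - \Pi_x f^\pi_x ( K_{<i,x} ) | + | \widetilde{\mathcal{R}} f^\pi ( K_{<i,x} ) - \Pi_x f^\pi_x ( K_{<i,x} ) | \lesssim 2^{-i \gamma}
\]
for every $i \geqslant 0$ and $x \in \mathbb{R}^d$, where the implicit constant is independent of $x$ and $i$.

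Next I would identify the quantity $D(K_{<i,x})$ with the low-frequency projection of $D$ evaluated at $x$. Since $K_{<i,x}(y) = K_{<i}(x-y)$ and the Littlewood--Paley kernels are even (or at worst we pick up a harmless sign), we have $D(K_{<i,x}) = (S_i D)(x)$. Therefore the previous estimate reads
\[
  \| S_i D \|_{L^\infty} \lesssim 2^{-i \gamma},
\]
and since $\gamma > 0$ this tends to zero as $i \to \infty$.

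To conclude I would invoke the fact that for any tempered distribution $u$ one has $S_i u \to u$ in $\mathcal{S}'$ (this is standard and follows from the properties of the dyadic partition of unity recalled in Appendix~\ref{sec:para}). Applying this to $D$ gives $S_i D \to D$ in $\mathcal{S}'$, while the uniform bound above shows $S_i D \to 0$ in $\mathcal{S}'$. By uniqueness of the limit, $D = 0$, i.e.\ $\mathcal{R}f^\pi = \widetilde{\mathcal{R}}f^\pi$. The only potential subtlety is that the definition of reconstruction given above does not explicitly require $\mathcal{R}f^\pi \in \mathcal{S}'$, but the estimate $|\mathcal{R}f^\pi(K_{<i,x})| \lesssim 2^{-i \alpha_0}$ established in the proof of Lemma~\ref{lemma:rec-equivalence} (with $\alpha_0 = \min A$) places $\mathcal{R}f^\pi$ in $\mathcal{C}^{\alpha_0}$, in particular in $\mathcal{S}'$, so no real obstacle arises.
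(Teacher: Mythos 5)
Your argument is correct and is essentially the paper's own proof: both take the difference $D$ of two reconstructions, use the Littlewood--Paley characterization \eqref{eq:uniq-rec-LP} to get $\|S_i D\|_{L^\infty} \lesssim 2^{-i\gamma}$, and conclude from $S_i D \to D$ in $\CS'$ together with $\gamma>0$ that $D=0$. The extra remarks about evenness of the kernel and membership of $\mathcal{R}f^\pi$ in $\CS'$ are harmless elaborations of details the paper leaves implicit.
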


\begin{proof}
  Indeed, for the difference of two reconstructions $\mathcal{R}f^{\pi}$ and
  $\tilde{\mathcal{R}} f^{\pi}$ we have
  \[
     \| S_{i} (\mathcal{R}f^{\pi} - \tilde{\mathcal{R}} f^{\pi}) \|_{L^\infty} \lesssim 2^{-i \gamma},
  \]
  and therefore $0 = \lim_{i\to \infty}  S_{i} (\mathcal{R}f^{\pi} - \tilde{\mathcal{R}} f^{\pi}) = \mathcal{R}f^{\pi} - \tilde{\mathcal{R}} f^{\pi}$.
\end{proof}

\subsection{Paraproducts and modelled distributions}

We are now going to generalize the paraproduct defined previously in order to
apply it to a given model. Fix a model $\Pi$ and for every $i \geqslant 0$ and $\gamma \in \R$
define the operator $P_{i} :\mathcal{D}^{\gamma} ( \mathcal{T}, \Gamma ) \rightarrow \mathcal{S}' (
\R^{d} )$ by
\[
   P_{i}  f^{\pi} ( x ) = \int \mathd zK_{< i-1,x} ( z ) \Pi_{z} f^{\pi}_{z} ( K_{i,x} ) .
\]
Note that
\begin{align*}
   P_{i} f^{\pi} ( x ) & = \int \mathd zK_{< i-1,x} ( z ) \Pi_{x} f^{\pi}_{x} ( K_{i,x} ) + \int \mathd zK_{< i-1,x} ( z ) \Pi_{x} (\Gamma_{x,z} f^{\pi}_{z} -f^{\pi}_{x} ) ( K_{i,x} ) \\
   & = \Pi_{x} f^{\pi}_{x} ( K_{i,x} ) +O ( 2^{-i \gamma} )
\end{align*}
for all $i \geqslant 1$, where we used that $\int \mathd z K_{<i-1,x}(z) =1$, and where the estimate for the second integral follows from arguments similar to
those used in Lemma~\ref{lemma:rec-equivalence}. Now define the operator
\[
   P f^{\pi} = P(f^\pi, \Pi) = \sum_{i \geqslant 0} P_{i} f^{\pi}
\]
and note that this always gives a well defined distribution since every $P_{i} f^{\pi}$ is spectrally supported in an annulus $2^{i} \CA$. In the particular case where $\Pi_{z} f^{\pi}_{z} ( z' ) =u(z) v ( z' )$, we get $P_{i} ( f^{\pi} ) = S_{i-1} u \Delta_{i} v$ and $P f^{\pi} =u \lpara v$, which justifies the claim that $P$ is a generalization of
the usual paraproduct.

The following lemma links $P f^{\pi}$ with the local behavior of the
distribution $\Pi_{x} f^{\pi}_{x}$ around the point $x$.

\begin{lemma}\label{lem:Ti-bound}
  Let $\gamma \in \R$ and $f^\pi \in \mathcal{D}^{\gamma} ( \mathcal{T}, \Gamma )$ and set
  \[ T_{i} f^{\pi} ( x ) =P f^{\pi} ( K_{i,x} ) - \Pi_{x} f^{\pi}_{x} (
     K_{i,x} ) \]
  for all $i \geqslant 0$. Then $\| T_{i} f^{\pi} \|_{L^{\infty}} \lesssim 2^{-i \gamma}$.
\end{lemma}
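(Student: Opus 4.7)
The plan is to expand $P_j f^\pi$ around the base point $x$ via $\Pi_z = \Pi_x \Gamma_{x,z}$, separating a $z$-independent piece that recombines after spectral cancellations to exactly $\Pi_x f^\pi_x(K_{i,x})$ from a remainder controlled by the $\mathcal{D}^\gamma$-regularity of $f^\pi$. Using $\int K_{<j-1}(u)\,\mathrm{d}u = 1$ for $j \geq 1$ (and $P_0 f^\pi = 0$ since $K_{<-1}=0$), I first write
\[
  P_j f^\pi(y) = \Pi_x f^\pi_x(K_{j,y}) + \int K_{<j-1}(y-z)\,\Pi_x(\Gamma_{x,z} f^\pi_z - f^\pi_x)(K_{j,y})\,\mathrm{d}z.
\]
Convolving with $K_i(x-\cdot)$ and summing over $j \geq 1$, the first piece yields $\Pi_x f^\pi_x\bigl(\sum_{j \geq 1}(K_i \ast K_j)_x\bigr)$. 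For $i \geq i_0$ large enough, spectral disjointness forces $K_i \ast K_{-1} = K_i \ast K_0 = 0$, so the partition-of-unity identity $\sum_{j \geq -1} K_j = \delta_0$ collapses the sum to $K_{i,x}$, which exactly cancels $\Pi_x f^\pi_x(K_{i,x})$ in $T_i f^\pi(x)$. For the finitely many $i < i_0$ the bound $|T_i f^\pi(x)| \lesssim 1 \lesssim 2^{-i\gamma}$ is immediate from Lemma~\ref{lem:Pi acting on Schwartz} applied to $K_{i,x}$ together with the fact that only finitely many low-frequency blocks are in play.

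For $i \geq i_0$ it then suffices to control the remainder
\[
R_i(x) = \sum_{j \geq 1} \iint K_i(x-y)\, K_{<j-1}(y-z)\, \Pi_x(\Gamma_{x,z} f^\pi_z - f^\pi_x)(K_{j,y})\,\mathrm{d}z\,\mathrm{d}y,
\]
where spectral localization of $P_j f^\pi$ in $2^j \CA$ restricts the $j$-sum to $j \sim i$. Since $K_{j,y}$ is a rescaled Schwartz function based at $y$, I would rewrite $\Pi_x = \Pi_y \Gamma_{y,x}$ so that Lemma~\ref{lem:Pi acting on Schwartz} applies at the base point $y$. Combining this with the transport estimate $\|\Gamma_{y,x}\tau\|_\alpha \lesssim \sum_{\beta \geq \alpha}|y-x|^{\beta-\alpha}\|\tau\|_\beta$ and the $\mathcal{D}^\gamma$-bound $\|\Gamma_{x,z} f^\pi_z - f^\pi_x\|_\beta \lesssim |x-z|^{\gamma-\beta}$ (itself a consequence of the identity $\Gamma_{x,z} f^\pi_z - f^\pi_x = \Gamma_{x,z}(f^\pi_z - \Gamma_{z,x} f^\pi_x)$) yields
\[
|\Pi_x(\Gamma_{x,z} f^\pi_z - f^\pi_x)(K_{j,y})| \lesssim \sum_{\alpha \leq \beta < \gamma} 2^{-j\alpha}\,|y-x|^{\beta-\alpha}\,|x-z|^{\gamma-\beta}.
\]
Using the triangle inequality $|x-z|^{\gamma-\beta} \lesssim |x-y|^{\gamma-\beta} + |y-z|^{\gamma-\beta}$ together with the Schwartz-tail estimates for $K_i$ and $K_{<j-1}$ at scales $2^{-i}$ and $2^{-j}$, each resulting integral is bounded by $2^{-j\alpha}\bigl(2^{-i(\gamma-\alpha)} + 2^{-i(\beta-\alpha)}2^{-j(\gamma-\beta)}\bigr)$, which collapses to $O(2^{-i\gamma})$ since $j \sim i$ turns every $(i-j)$-exponent into a multiplicative constant.

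The main obstacle will be the combinatorial bookkeeping of exponents after the triangle inequality: one must check, for every pair $\alpha \leq \beta < \gamma$ drawn from $A$, that the resulting term really does collapse to $2^{-i\gamma}$ once the factor $2^{-j\alpha}$ is restored and $|i-j|$ is bounded. Once this arithmetic is in place, the rest of the proof is a routine assembly of ingredients already established: the partition-of-unity cancellation, the $\mathcal{D}^\gamma$ estimate on $\Gamma_{x,z} f^\pi_z - f^\pi_x$, and the recentering that brings Lemma~\ref{lem:Pi acting on Schwartz} into play at the base point $y$.
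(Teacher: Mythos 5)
Your proposal is correct and follows essentially the same route as the paper's proof: extract $\Pi_x f^\pi_x(K_{j,y})$ from $\Pi_z f^\pi_z(K_{j,y})$ via the normalization $\int K_{<j-1}=1$ and the identity $\Pi_z=\Pi_x\Gamma_{x,z}$, cancel the main term using the partition-of-unity identity $\sum_j K_i\ast K_j=K_i$, recenter the remainder at $y$ as $\Pi_y\Gamma_{y,z}(f^\pi_z-\Gamma_{z,x}f^\pi_x)$, and estimate with the $\mathcal D^\gamma$ bound, the triangle inequality, and moment bounds on $K_i$, $K_{<j-1}$ at scales $2^{-i}\simeq 2^{-j}$. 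The only (harmless) cosmetic differences are the explicit separate treatment of the finitely many low frequencies $i<i_0$ and the order in which the cancellation is performed; the exponent bookkeeping you flag as the main obstacle indeed closes exactly as you indicate.
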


\begin{proof}
  Observe that
  \[ 
     P f^{\pi} ( K_{i,x} ) = \sum_{j} ( P_{j} f^{\pi} ) ( K_{i,x} ) = \sum_{j: j \sim i} \int \mathd y \mathd zK_{i,x} ( y ) K_{< j-1,y} ( z ) \Pi_{z} f^{\pi}_{z} ( K_{j,y} )
  \]
  and also that, since $\sum_{j:j \sim i} K_{i} \ast K_{j} =K_{i}$,
  \[
     \Pi_{x} f^{\pi}_{x} ( K_{i,x} ) = \sum_{j: j \sim i} \int \mathd y K_{i,x} ( y ) \Pi_{x} f^{\pi}_{x} ( K_{j,y} ).
  \]
  Using the decomposition $\Pi_{z} f^{\pi}_{z} ( K_{j,y} ) - \Pi_{x}
  f^{\pi}_{x} ( K_{j,y} ) = \Pi_{y} \Gamma_{y,z} ( f^{\pi}_{z} - \Gamma_{z,x} f^{\pi}_{x} )
  ( K_{j,y} )$, we further have
  \begin{align*}
     T_{i} f^{\pi} ( x ) & =P f^{\pi} ( K_{i,x} ) - \Pi_{x} f^{\pi}_{x} (K_{i,x} ) \\
     & = \sum_{j:j \sim i} \int \mathd y \mathd zK_{i,x} ( y ) K_{< j-1,y} ( z ) \Pi_{y} \Gamma_{y,z} ( f^{\pi}_{z} - \Gamma_{z,x} f^{\pi}_{x} ) ( K_{j,y} )
  \end{align*}
  from which the claimed bound can be shown to hold. Indeed, using the fact that $f^{\pi} \in \mathcal{D}^{\gamma}( \mathcal{T}, \Gamma )$ we obtain
  \begin{align*}
     &\sum_{j:j \sim i} \left\lvert \int \mathd y \mathd z K_{i,x} ( y ) K_{< j-1,y} ( z ) \Pi_{y} \Gamma_{y,z} ( f^{\pi}_{z} - \Gamma_{z,x}f^{\pi}_{x} ) ( K_{j,y} ) \right\rvert \\
     &\hspace{50pt} \lesssim \sum_{j:j \sim i} \sum_{\beta<\gamma} \int \mathd y \mathd z \left\lvert K_{i,x} ( y ) K_{< j-1,y} ( z ) \right\rvert \|\Gamma_{y,z} ( f^{\pi}_{z} - \Gamma_{z,x}f^{\pi}_{x} )\|_\beta 2^{-j\beta} \\
     &\hspace{50pt} \lesssim \sum_{j:j \sim i} \sum_{\beta<\gamma} \sum_{\alpha: \beta<\alpha<\gamma} \int \mathd y \mathd z \left\lvert K_{i,x} ( y ) K_{< j-1,y} ( z ) \right\rvert |y-z|^{\alpha-\beta} |z-x|^{\gamma-\alpha} 2^{-j\beta}.
  \end{align*}
  Now it suffices to note that $|z-x|^{\gamma-\alpha} = |(z-y)+ (y-x)|^{\gamma-\alpha}$ to complete the proof.
\end{proof}

\begin{lemma}\label{lem:T-hoelder}
  Let $\gamma >0$ and $f^\pi \in \mathcal{D}^{\gamma} ( \mathcal{T}, \Gamma )$ and define
  \[
     T f^{\pi} ( x ) = \sum_{i} T_{i} f^{\pi} ( x ) = \sum_{i} [P f^{\pi} ( K_{i,x} ) -  \Pi_{x} f^{\pi}_{x} ( K_{i,x} )].
  \]
  Then $T f^{\pi} \in \CC^{\gamma}$.
\end{lemma}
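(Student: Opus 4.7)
The goal is to control the Littlewood--Paley blocks of $Tf^\pi$: by the standard characterization of $\CC^\gamma = B^\gamma_{\infty,\infty}$, it suffices to show $\|\Delta_j Tf^\pi\|_{L^\infty} \lesssim 2^{-j\gamma}$ for all $j \geqslant -1$. The starting point is the decomposition $T_i f^\pi(x) = \Delta_i(Pf^\pi)(x) - u_i(x)$ with $u_i(x) := \Pi_x f^\pi_x(K_{i,x})$. Since Lemma \ref{lem:Ti-bound} gives $\|T_i f^\pi\|_{L^\infty} \lesssim 2^{-i\gamma}$ with $\gamma > 0$, the series $\sum_i T_i f^\pi$ converges uniformly, which handles the case $j=-1$ and ensures $Tf^\pi \in L^\infty$.

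For $j \geqslant 0$, I write $\Delta_j Tf^\pi = \sum_i \Delta_j T_i f^\pi$ and split the sum into a near-diagonal part $\{i : |i-j| \leqslant C\}$ (for some fixed $C$ determined by the partition of unity) and an off-diagonal part. The near-diagonal contribution is handled by the crude estimate $\|\Delta_j T_i f^\pi\|_{L^\infty} \leqslant \|T_i f^\pi\|_{L^\infty} \lesssim 2^{-i\gamma}$, whose geometric sum is of order $2^{-j\gamma}$.

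For the off-diagonal part, $\Delta_i(Pf^\pi)$ is spectrally supported in $2^i\CA$ so $\Delta_j \Delta_i(Pf^\pi) = 0$, reducing the analysis to $\Delta_j u_i$. The key computation is
\[
   \Delta_j u_i(x_0) = \int K_j(x_0 - x)\, \Pi_x f^\pi_x(K_{i,x})\, \mathd x,
\]
in which I substitute $\Pi_x = \Pi_{x_0}\Gamma_{x_0,x}$ and decompose $\Gamma_{x_0,x} f^\pi_x = f^\pi_{x_0} + (\Gamma_{x_0,x}f^\pi_x - f^\pi_{x_0})$. The constant piece produces $(\Delta_j\Delta_i\Pi_{x_0}f^\pi_{x_0})(x_0)$, which vanishes for $|i-j|>1$. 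The remainder $E_{i,j}(x_0)$ is bounded by a second application of $\Pi_{x_0} = \Pi_x \Gamma_{x,x_0}$ to transfer the test function $K_{i,x}$ onto the model based at $x$, then using the model bounds $|(\Pi_x\sigma)(K_{i,x})| \lesssim 2^{-i\beta}\|\sigma\|_\beta$ (via Lemma \ref{lem:Pi acting on Schwartz}) together with the $\mathcal{D}^\gamma$ estimate $\|\Gamma_{x_0,x}f^\pi_x - f^\pi_{x_0}\|_\beta \lesssim |x-x_0|^{\gamma-\beta}$.

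The main technical obstacle will be the combinatorial bookkeeping of the resulting pointwise bound $|E_{i,j}(x_0)| \lesssim \sum_{\beta\in A, \beta<\gamma} 2^{-i\beta - j(\gamma - \beta)}$ after summing over the off-diagonal range of $i$ and the finitely many grades $\beta$ in the support of $f^\pi$. For $i \ll j$, the geometric series in $i$ for each $\beta$ must be combined with the vanishing moments of $K_j$ (which follow at all orders from $\rho$ being supported away from the origin) to avoid the loss of $2^{j\beta}$ for $\beta > 0$; for $i \gg j$, one uses instead the direct estimate $\|u_i\|_{L^\infty} \leqslant \|\Delta_i Pf^\pi\|_{L^\infty} + \|T_i f^\pi\|_{L^\infty}$ together with the regularity of $Pf^\pi$ inherited from the model to make the tail sum geometrically convergent. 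Assembling the near-diagonal, small-$i$ and large-$i$ pieces then yields $\|\Delta_j Tf^\pi\|_{L^\infty} \lesssim 2^{-j\gamma}$ uniformly in $j$, concluding $Tf^\pi \in \CC^\gamma$.
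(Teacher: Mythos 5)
Your overall architecture (estimate $\|\Delta_j Tf^\pi\|_{L^\infty}$, split the sum over $i$ into near-diagonal and off-diagonal ranges, use Lemma~\ref{lem:Ti-bound} for the convergent part and the model bounds for the rest) matches the paper's, and your observation that $\Delta_j\Delta_i(Pf^\pi)=0$ and $\Delta_j\Delta_i\Pi_{x_0}f^\pi_{x_0}=0$ off the diagonal is correct. The gap is in the low-frequency off-diagonal range $i\ll j$, exactly where you flag "the main technical obstacle". Your term-by-term bound $|E_{i,j}|\lesssim\sum_{\beta<\gamma}2^{-i\beta}2^{-j(\gamma-\beta)}$ is sharp for each fixed $i$, and summing it over $i<j-1$ gives $2^{-j(\gamma-\beta)}$ for every positive homogeneity $\beta\in A\cap(0,\gamma)$, i.e.\ a genuine loss of $2^{j\beta}$. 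The fix you propose --- vanishing moments of $K_j$ --- is not the right tool here: those moments let you subtract a Taylor polynomial of $x\mapsto u_i(x)=\Pi_xf^\pi_x(K_{i,x})$ around $x_0$, but $u_i$ is not classically smooth and its only available local expansion is the model-based one, whose leading term you have already subtracted. Pushing the expansion to higher order would require the polynomial/structure-group machinery of the full reconstruction theorem, which is precisely what this lemma is meant to avoid.

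The paper closes this gap by never summing the low-frequency terms separately: it writes $\sum_{i\leqslant j+1}T_if^\pi(x)=Pf^\pi(K_{\leqslant j+1,x})-\Pi_xf^\pi_x(K_{\leqslant j+1,x})$ and uses $K_j\ast K_{\leqslant j+1}=K_j$, so that after applying $\Delta_j$ the only error term is
\[
   \int \mathd y\, K_{j,x}(y)\,\Pi_y\bigl(f^{\pi}_{y}-\Gamma_{y,x}f^{\pi}_{x}\bigr)(K_{\leqslant j+1,y}),
\]
in which the model is tested against the \emph{single} kernel $K_{\leqslant j+1,y}$ at scale $2^{-j}$. This produces $2^{-j\beta}|y-x|^{\gamma-\beta}$ for every grade $\beta$ (including positive ones), and integrating against $K_{j,x}$ gives $2^{-j\gamma}$ outright. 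In other words, the cancellations between the blocks $K_{i,y}$ for different $i\leqslant j+1$ must be exploited before taking absolute values; your bookkeeping takes absolute values too early. (A minor secondary point: for $i\gg j$ you should keep $T_if^\pi$ intact and use $\|\Delta_jT_if^\pi\|_{L^\infty}\leqslant\|T_if^\pi\|_{L^\infty}\lesssim2^{-i\gamma}$ rather than splitting off $\|\Delta_iPf^\pi\|_{L^\infty}$, since the latter is only $O(2^{-i\min A})$ and the tail sum diverges when $\min A<0$.)
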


\begin{proof}
  According to Lemma~\ref{lem:Ti-bound}, the series converges in $L^\infty$. Let us analyze its regularity. Consider $\Delta_{j} T f^{\pi} =
  \sum_{i} \Delta_{j} T_{i} f^{\pi}$ and split the sum into two contributions,
  $\Delta_{j} T f^{\pi} = \Delta_{j} T_{\leqslant j+1}  f^{\pi} + \Delta_{j}
  T_{>j+1}  f^{\pi}$, where $T_{\leqslant j+1}  f^{\pi} = \sum_{i \leqslant
  j+1} T_{i}  f^{\pi}$ and $T_{>j+1}  f^{\pi} =T f^{\pi}  -T_{\leqslant j+1}
  f^{\pi}$. For the second term we have
  \[ \| \Delta_{j} T_{>j+1}  f^{\pi} \|_{L^{\infty}} \leqslant \sum_{i>j+1} \|
     \Delta_{j} T_{i} f^{\pi} \|_{L^{\infty}} \lesssim \sum_{i>j+1} \| T_{i}
     f^{\pi} \|_{L^{\infty}} \lesssim 2^{-j \gamma} . \]
  For the first one we proceed as follows. Note that $T_{\leqslant j+1}
  f^{\pi} ( x ) = P f^{\pi} (K_{\leqslant j+1,x} ) - \Pi_{x} f^{\pi}_{x} ( K_{\leqslant j+1,x} )$, so that using $K_{j} \ast K_{\leqslant
  j+1} =K_{j}$ we get
  \begin{align*}
     \Delta_{j} T_{\leqslant j+1}  f^{\pi}(x) & = P f^{\pi} ( K_{j,x} ) - \int \mathd y K_{j,x} ( y ) \Pi_{y} f^{\pi}_{y} ( K_{\leqslant j+1,y} ) \\
     & = P f^{\pi} ( K_{j,x} )  - \Pi_{x} f^{\pi}_{x} ( K_{j,x} ) - \int \mathd y K_{j,x} ( y ) \Pi_{y} ( f^{\pi}_{y} - \Gamma_{y,x} f^{\pi}_{x} ) ( K_{\leqslant j+1,y} ) \\
     & =T_{j} f^{\pi} ( x ) - \int \mathd y K_{j,x} ( y ) \Pi_{y} ( f^{\pi}_{y} - \Gamma_{y,x} f^{\pi}_{x} ) ( K_{\leqslant j+1,y} ),
  \end{align*}
  where in the last line we have used the definition of $T_{j} f^{\pi}$. Now
  \[
     | \Pi_{y} ( f^{\pi}_{y} - \Gamma_{y,x} f^{\pi}_{x} ) ( K_{\leqslant j+1,y} ) | \lesssim \sum_{\beta < \gamma} | y-x |^{\gamma - \beta} 2^{-j \beta},
  \]
  so that $\| \Delta_{j} T f^{\pi} -T_{j} f^{\pi} \|_{L^{\infty}} \lesssim 2^{-j
  \gamma}$. By Lemma~\ref{lem:Ti-bound} this implies that $\| \Delta_{j} T f^{\pi} \|_{L^{\infty}}
  \lesssim 2^{-j \gamma}$ and thus the proof is complete.
\end{proof}

Finally we are able to recover (under stronger assumptions and in the setting of Euclidean scaling) the reconstruction theorem \cite{Hairer2013a}, Theorem~3.10, one of the main results of the theory of regularity structures:

\begin{theorem}
  The reconstruction operator $\mathcal R$ exists for all $\gamma \in \R \setminus \{0\}$. If
  $\gamma >0$ we have $\mathcal R=P-T$ while if $\gamma <0$ we can take $\mathcal R=P$.
\end{theorem}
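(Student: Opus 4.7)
The plan is to use the characterization~\eqref{eq:uniq-rec-LP} established in Lemma~\ref{lemma:rec-equivalence}, so we only need to control $|\mathcal{R} f^\pi(K_{<i,x}) - \Pi_x f^\pi_x(K_{<i,x})|$ by $2^{-i\gamma}$ uniformly in $x$. The key preliminary computation is to observe that by the definition of $T_j f^\pi$,
\[
P f^\pi(K_{<i,x}) - \Pi_x f^\pi_x(K_{<i,x}) = \sum_{j<i} \bigl[P f^\pi(K_{j,x}) - \Pi_x f^\pi_x(K_{j,x})\bigr] = \sum_{j<i} T_j f^\pi(x),
\]
so everything reduces to analyzing the partial sums of the $T_j f^\pi$.

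For $\gamma<0$ I would simply take $\mathcal R = P$. Using the bound $\|T_j f^\pi\|_{L^\infty}\lesssim 2^{-j\gamma}$ from Lemma~\ref{lem:Ti-bound}, and the fact that the geometric series $\sum_{j<i} 2^{-j\gamma}$ is dominated by its last term when $-\gamma>0$, one immediately obtains
\[
\bigl|P f^\pi(K_{<i,x}) - \Pi_x f^\pi_x(K_{<i,x})\bigr| \leq \sum_{j<i} \|T_j f^\pi\|_{L^\infty} \lesssim 2^{-i\gamma},
\]
which is what is required.

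For $\gamma>0$ the series $\sum_{j<i} T_j f^\pi$ now diverges, which is exactly why a correction term is needed. I would set $\mathcal{R}= P-T$ with $T f^\pi = \sum_j T_j f^\pi$; by Lemma~\ref{lem:T-hoelder} this sum converges in $L^\infty$ and belongs to $\CC^\gamma$, so pairing $T f^\pi$ against $K_{<i,x}$ just gives $S_i(T f^\pi)(x)$. Splitting $S_i(T f^\pi)(x) = T f^\pi(x) - (T f^\pi - S_i T f^\pi)(x)$, one gets
\[
(P-T) f^\pi(K_{<i,x}) - \Pi_x f^\pi_x(K_{<i,x}) = \sum_{j<i} T_j f^\pi(x) - S_i(T f^\pi)(x) = -\sum_{j\geq i} T_j f^\pi(x) + (T f^\pi - S_i T f^\pi)(x).
\]
The first piece is bounded by $\sum_{j \geq i} 2^{-j\gamma} \lesssim 2^{-i\gamma}$ (this time the geometric series is dominated by its first term, since $\gamma>0$), and the second piece is $O(2^{-i\gamma})$ by the standard Besov estimate $\|u - S_i u\|_{L^\infty} \lesssim 2^{-i\gamma}\|u\|_\gamma$ applied to $Tf^\pi \in \CC^\gamma$.

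The only subtle point, and the one I would want to double-check, is the interplay between the two formulations in Lemma~\ref{lemma:rec-equivalence}: the above argument directly verifies~\eqref{eq:uniq-rec-LP}, and that lemma then promotes it to the general bound~\eqref{eq:uniq-rec-gen}. Apart from this bookkeeping, the proof is essentially a rearrangement identity combined with the two estimates already packaged into Lemmas~\ref{lem:Ti-bound} and~\ref{lem:T-hoelder}; no genuinely new analytical obstacle arises.
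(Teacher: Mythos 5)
Your proposal is correct and follows essentially the same route as the paper: both reduce to verifying~\eqref{eq:uniq-rec-LP} via Lemma~\ref{lemma:rec-equivalence}, and the identity you write, $(P-T)f^\pi(K_{<i,x}) - \Pi_x f^\pi_x(K_{<i,x}) = -\sum_{j\geqslant i} T_j f^\pi(x) + \sum_{j \geqslant i}\Delta_j (T f^\pi)(x)$, is exactly the paper's rearrangement, estimated with the same two inputs (Lemma~\ref{lem:Ti-bound} for the first sum, Lemma~\ref{lem:T-hoelder} for the second), and the $\gamma<0$ case is handled identically by summing the geometric series the other way.
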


\begin{proof}
  In case $\gamma >0$ set $\mathcal R f^{\pi} =P f^{\pi} -T f^{\pi}$ and observe
  that
  \begin{align*}
     \mathcal R f^{\pi} ( K_{< i,x} ) - \Pi_{x} f^{\pi}_{x} ( K_{< i,x} ) & = P f^{\pi} ( K_{< i,x} ) - \Pi_{x} f^{\pi}_{x} ( K_{< i,x} ) -T f^{\pi} ( K_{< i,x} ) \\
     & =T f^{\pi} ( x ) - \sum_{j \geqslant i} T_j f^\pi(x) -T f^{\pi} ( K_{< i,x} ) \\
     & = \sum_{j\geqslant i} ( \Delta_{j} T f^{\pi} ( x ) -T_{j} f^{\pi} ( x) ).
  \end{align*}
  With the bounds of Lemma~\ref{lem:Ti-bound} and Lemma~\ref{lem:T-hoelder} we conclude that
  \[
     | \mathcal R f^{\pi} ( K_{< i,x} ) - \Pi_{x} f^{\pi}_{x} ( K_{< i,x} ) | \lesssim 2^{-i \gamma},
  \]
  which implies that $\mathcal R$ is the reconstruction operator. If $\gamma <0$, just set $\mathcal R=P$ and observe that
  \[
     | \mathcal R f^{\pi} ( K_{< i,x} ) - \Pi_{x} f^{\pi}_{x} ( K_{<i,x} ) | \lesssim \sum_{j < i} | T_{j} f^{\pi} ( x ) | \lesssim  \sum_{j < i} 2^{-j \gamma} \lesssim 2^{-i \gamma},
  \]
  which shows that also in this case $\mathcal R$ is an admissible reconstruction operator.
\end{proof}

For $\gamma > 0$, we could say that a distribution $f$ is \emph{paracontrolled} by $\Pi$ if there exist
$f^{\pi} \in \mathcal{D}^{\gamma}( \mathcal{T}, \Gamma )$ and $f^{\sharp} \in \mathcal{C}^{\gamma}$
such that
\[
   f=P ( f^{\pi} , \Pi ) +f^{\sharp};
\]
in that case we write $f \in \mathcal{Q}^{\gamma}$. In particular, every
modelled distribution is a paracontrolled distribution since the
reconstruction operator $\mathcal{R}$ delivers a map
\[
   f^{\pi} \in \mathcal{D}^{\gamma}( \mathcal{T}, \Gamma ) \longmapsto \mathcal{R}f^{\pi} =P (f^{\pi} , \Pi ) -T f^{\pi} \in \mathcal{Q}^{\gamma}.
\]
Moreover, every paracontrolled distribution can be decomposed into ``slices'', each of which has its natural regularity. More precisely, let us write $\tau^\alpha$ for the component of $\tau \in T$ in $T_\alpha$, for $\alpha < \gamma$. Then the distribution $P(f^\pi, \Pi)$ is given as
\begin{align*}
   P(f^\pi, \Pi) & = \sum_{i \geqslant 0} P_i f^\pi =\sum_{i \geqslant 0} \int \mathd zK_{< i-1,x} ( z ) \Pi_{z} f^{\pi}_{z} ( K_{i,x} ) \\
   & = \sum_{\alpha < \gamma} \Big( \sum_{i \geqslant 0} \int \mathd zK_{< i-1,x} ( z ) \Pi_{x} (\Gamma_{x,z} f^{\pi}_z)^\alpha ( K_{i,x} ) \Big).
\end{align*}
Now
\[
   \|\Gamma_{x,z} f^{\pi}_z \|_\alpha \lesssim \sum_{\beta: \alpha \leqslant \beta < \gamma} |x-z|^{\beta-\alpha} \|f^\pi_x\|_\beta \lesssim 1 + |x-z|^{\gamma-\alpha},
\]
and Lemma~\ref{lem:Pi acting on Schwartz} shows that $|\Pi_x \tau^\alpha(K_{i,x} )| \lesssim 2^{-i\alpha} \|\tau\|_\alpha$ for all $\tau \in T$, $i \geqslant -1$. Combining these estimates with the fact that $\int \mathd zK_{< i-1,x} ( z ) \Pi_{z} f^{\pi,\alpha}_{z} ( K_{i,x} )$ is spectrally supported in an annulus $2^i \CA$, we deduce that
\[
   \sum_{i \geqslant 0} \int \mathd zK_{< i-1,x} ( z ) \Pi_{x} (\Gamma_{x,z} f^{\pi}_z)^\alpha ( K_{i,x} ) \in \CC^\alpha.
\]
In particular, if $r = |\inf A|$, then every paracontrolled distribution is in $\CC^{-r}$.

Note also
that the paraproduct vanishes on constant and polynomial components of the model. Indeed, if $\tau$ is such that $\Pi_{x} \tau ( y ) = ( y-x )^{\mu}$ for some $\mu \in \N^d$, then $P ( \cdot , \tau ) =0$ since $( \Pi_{x} \tau ) ( K_{i,x}) =0$ for any $i \geqslant 0$.

\appendix

\section{Besov spaces and paraproducts}\label{sec:para}

\subsection{Littlewood-Paley theory and Besov spaces}\label{sec:littlewood}

In the following, we describe the concepts from Littlewood--Paley theory which are necessary for our analysis, and we recall the definition and some properties of Besov spaces. For a general introduction to Littlewood--Paley theory, Besov spaces, and paraproducts, we refer to the nice book of Bahouri, Chemin, and Danchin {\cite{Bahouri2011}}.

Littlewood--Paley theory allows for an efficient way of characterizing the regularity of functions and distributions. It relies on the decomposition of an arbitrary distribution into a series of smooth functions whose Fourier transforms have localized support.

Let $\chi \nocomma , \rho \in \CD$ be nonnegative radial functions on $\mathbb{R}^{d}$, such that
\begin{enumerateroman}
  \item the support of $\chi$ is contained in a ball and the support of $\rho$ is contained in an annulus;
  
  \item $\chi ( z ) + \sum_{j \geqslant 0} \rho ( 2^{-j} z ) =1$ for all $z \in \mathbb{R}^{d}$;
  
  \item \label{def:dyadic partition3}$\tmop{supp} ( \chi ) \cap \tmop{supp} (\rho ( 2^{-j} \cdummy ) ) = \emptyset$ for $j \geqslant 1$ and $\tmop{supp}( \rho ( 2^{-i} \cdummy ) ) \cap \tmop{supp} ( \rho ( 2^{-j} \cdummy ) ) =  \emptyset$ for $| i-j | >1$.
\end{enumerateroman}
We call such $( \chi , \rho )$ \tmtextit{dyadic partition of unity}, and we frequently employ the notation
\[
   \rho_{-1} = \chi \qquad \text{and} \qquad \rho_{j} = \rho (2^{-j} \cdummy ) \text{ for } j \geqslant 0.
\]
For the existence of dyadic partitions of unity see {\cite{Bahouri2011}}, Proposition 2.10. The Littlewood--Paley blocks are now defined as
\[
   \Delta_{-1} u= \CF^{-1} \left( \chi \CF u \right) = \CF^{-1} \left( \rho_{-1} \CF u \right) \quad \text{and} \quad \Delta_{j} u= \CF^{-1} \left( \rho_{j} \CF u \right) \text{ for }  j \geqslant 0. 
\]
Then $\Delta_{j} u=K_{j} \ast u$, where $K_{j} = \CF^{-1} \rho_{j}$, and in particular all $\Delta_{j} u$, $j \geqslant -1$, are smooth functions. We also use the notation
\[
   S_{j} u= \sum_{i \leqslant j-1} \Delta_{i} u.
\]
It is easy to see that $u =  \sum_{j \geqslant -1} \Delta_{j} u= \lim_{j \rightarrow \infty} S_{j} u$ for every $u \in \CS'$.

For $\alpha \in \mathbb{R}$, the H{\"o}lder-Besov space $\CC^{\alpha}$ is given by $\CC^{\alpha} =B^{\alpha}_{\infty , \infty} (\mathbb{R}^{d},\mathbb{R}^{n} )$, where for $p,q \in [1, \infty ]$ we define
\[
   B^{\alpha}_{p,q} (\mathbb{R}^{d} ,\mathbb{R}^{n} ) = \bigg\{ u \in \CS'(\mathbb{R}^{d} ,\mathbb{R}^{n} ):\|u\|_{B^{\alpha}_{p,q}} = \bigg(\sum_{j \geqslant -1} (2^{j \alpha} \| \Delta_{j} u\|_{L^{p}} )^{q} \bigg)^{1/q} < \infty \bigg\},
\]
with the usual interpretation as $\ell^{\infty}$ norm in case $q= \infty$. The $\lVert \cdummy \rVert_{L^{p}}$ norm is taken with respect to Lebesgue measure on $\mathbb{R}^{d}$. While the norm $\lVert \cdummy \rVert_{B^{\alpha}_{p,q}}$ depends on the dyadic partition of unity $( \chi , \rho )$, the space $B^{\alpha}_{p,q}$ does not, and any other dyadic partition of unity corresponds to an equivalent norm. We write $\lVert \cdummy \rVert_{\alpha}$ instead of $\lVert \cdummy \rVert_{B^{\alpha}_{\infty , \infty}}$.

If $\alpha \in (0, \infty ) \backslash \mathbb{N}$, then $\CC^{\alpha}$ is the space of $\lfloor \alpha \rfloor$ times differentiable functions, whose partial derivatives up to order $\lfloor \alpha \rfloor$ are bounded, and whose partial derivatives of order $\lfloor \alpha \rfloor$ are ($\alpha - \lfloor \alpha \rfloor$)-H{\"o}lder continuous (see p. 99 of {\cite{Bahouri2011}}). Note however that for $k \in \mathbb{N}$ the H{\"o}lder-Besov space $\CC^{k}$ is strictly larger than $C^{k}_{b}$.

We will use without comment that $\lVert \cdummy \rVert_{\alpha} \leqslant \lVert \cdummy \rVert_{\beta}$ for $\alpha \leqslant \beta$, that $\lVert \cdummy \rVert_{L^{\infty}} \lesssim \lVert \cdummy \rVert_{\alpha}$ for $\alpha >0$, and that $\lVert \cdummy \rVert_{\alpha} \lesssim \lVert \cdummy \rVert_{L^{\infty}}$ for $\alpha \leqslant 0$. We will also use that $\|S_{j} u\|_{L^{\infty}} \lesssim 2^{j \alpha} \|u\|_{\alpha}$ for $\alpha <0$ and $u \in \CC^{\alpha}$.

We denote by $\CC^{\alpha}_{\text{\tmop{loc}}}$ the set of all distributions $u$ such that $\varphi u \in \CC^{\alpha}$ for all $\varphi \in \CD$. If the difference $\varphi(u_n - u)$ converges to $0$ in $\CC^\alpha$ for all $\varphi \in \CD$, then we say that $(u_n)$ converges to $u$ in $\CC^\alpha_{\tmop{loc}}$.

The following Bernstein inequalities are tremendously useful when dealing with functions with compactly supported Fourier transform.

\begin{lemma}[Lemma 2.1 of {\cite{Bahouri2011}}]\label{lemma:Bernstein}
   Let $\CA$ be an annulus and let $\CB$ be a ball. For any $k \in \mathbb{N}$, $\lambda >0$, and $1 \leqslant p \leqslant q \leqslant \infty$ we have that
   \begin{enumeratenumeric}
      \item if $u \in L^{p} ( \mathbb{R}^{d} )$ is such that $\tmop{supp} ( \CF u ) \subseteq \lambda \CB$, then
         \[
            \max_{\mu \in \mathbb{N}^{d} : | \mu | = k} \| \partial^{\mu} u \|_{L^{q}} \lesssim_{k} \lambda^{k+d \left( \frac{1}{p} - \frac{1}{q} \right)} \| u \|_{L^{p}};
         \]
      \item if $u \in L^{p} ( \mathbb{R}^{d} )$ is such that $\tmop{supp} (\CF u ) \subseteq \lambda \CA$, then
         \[
            \lambda^{k} \| u \|_{L^{p}} \lesssim_{k} \max_{\mu \in \mathbb{N}^{d}: | \mu | = k} \| \partial^{\mu} u \|_{L^{p}} .
         \]
  \end{enumeratenumeric}
\end{lemma}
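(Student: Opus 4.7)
The plan is a standard Fourier-analytic argument in both cases, reducing each assertion to Young's convolution inequality applied to a kernel whose scaling in $\lambda$ produces exactly the exponent that appears. The guiding observation is that whenever $\tmop{supp} ( \CF u ) \subseteq \lambda \CK$ for some compact $\CK$, and $\phi$ is smooth with $\phi = 1$ on $\CK$, multiplication by $\phi ( \cdot / \lambda )$ on the Fourier side is the identity on $\CF u$, so $u$ can be recovered (possibly after first moving a Fourier multiplier onto it) by convolution with $\lambda^d ( \CF^{-1} \phi ) ( \lambda \cdot )$. The whole lemma is an exercise in choosing the right $\phi$.

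For part 1, I would take $\phi \in \CD ( \R^{d} )$ equal to $1$ on $\CB$, so that $u = \lambda^{d} ( \CF^{-1} \phi ) ( \lambda \cdot ) \ast u$, and hence $\partial^{\mu} u = \lambda^{d + | \mu |} ( \partial^{\mu} \CF^{-1} \phi ) ( \lambda \cdot ) \ast u$. Young's inequality with the exponent $r$ determined by $1 + 1/q = 1/p + 1/r$ yields
\[
   \| \partial^{\mu} u \|_{L^{q}} \lesssim \lambda^{d + | \mu |} \| ( \partial^{\mu} \CF^{-1} \phi ) ( \lambda \cdot ) \|_{L^{r}} \| u \|_{L^{p}} = \lambda^{| \mu | + d ( 1/p - 1/q )} \| \partial^{\mu} \CF^{-1} \phi \|_{L^{r}} \| u \|_{L^{p}},
\]
since the scaling identity $\| g ( \lambda \cdot ) \|_{L^{r}} = \lambda^{-d/r} \| g \|_{L^{r}}$ combined with $1 - 1/r = 1/p - 1/q$ produces the correct power of $\lambda$. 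The implicit constant depends only on finitely many Schwartz-type norms of $\phi$.

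For part 2, the main obstacle is to \emph{invert} a collection of derivatives: given $\partial^{\mu} u$ for various $| \mu | = k$, I must reconstruct $u$. The idea is to exploit that, thanks to spectral localization in an annulus, dividing by $| \xi |^{2k}$ is harmless. Concretely, choose $\psi \in \CD ( \R^{d} )$ equal to $1$ on $\CA$ and vanishing near the origin, and use the multinomial identity $| \xi |^{2k} = \sum_{| \nu | = k} \binom{k}{\nu} \xi^{2 \nu}$ together with $\xi^{2 \nu} = ( -1 )^{k} [ ( i \xi )^{\nu} ]^{2}$ to obtain the factorization
\[
   \psi ( \xi ) = \sum_{| \nu | = k} ( i \xi )^{\nu} h_{\nu} ( \xi ), \qquad h_{\nu} ( \xi ) = ( -1 )^{k} \binom{k}{\nu} ( i \xi )^{\nu} \frac{\psi ( \xi )}{| \xi |^{2k}},
\]
with each $h_{\nu}$ smooth and compactly supported. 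Substituting $\CF u = \psi ( \cdot / \lambda ) \CF u$ and moving each factor $(i \xi)^{\nu}$ onto $\CF u$ yields the representation $u = \lambda^{-k} \sum_{| \nu | = k} \lambda^{d} ( \CF^{-1} h_{\nu} ) ( \lambda \cdot ) \ast \partial^{\nu} u$, after which Young's inequality with $r = 1$ (so that the scaling gives $\lambda^{d} \cdot \lambda^{-d} = 1$ for the $L^{1}$ norm of the kernel) produces $\lambda^{k} \| u \|_{L^{p}} \lesssim \max_{| \mu | = k} \| \partial^{\mu} u \|_{L^{p}}$, completing the argument.
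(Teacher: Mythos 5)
The paper states this lemma without proof, citing Lemma~2.1 of Bahouri--Chemin--Danchin, and your argument is exactly the standard proof given there: recover $u$ by convolution with a rescaled kernel whose Fourier transform is $1$ on the spectral support, then apply Young's inequality, with the annulus case handled by dividing by $|\xi|^{2k}$ away from the origin. Both parts are correct, including the scaling bookkeeping; the only cosmetic refinement is to take $\phi$ (resp.\ $\psi$) equal to $1$ on a \emph{neighborhood} of $\CB$ (resp.\ $\CA$), which is needed to justify $\phi(\cdot/\lambda)\CF u = \CF u$ at the level of distributions and costs nothing.
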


For example, it is a simple consequence of the Bernstein inequalities that $\| \mathD^k u \|_{\alpha-k} \lesssim \| u \|_\alpha$ for all $\alpha \in \R$ and $k \in \N$.

We point out that everything above and everything that follows can (and will) be applied to distributions on the torus. More precisely, let $\CD'(\mathbb{T}^{d} )$ be the space of distributions on $\mathbb{T}^{d}$. Any $u \in \CD' (\mathbb{T}^{d} )$ can be interpreted as a periodic tempered distribution on $\mathbb{R}^{d}$, with frequency spectrum contained in $\mathbb{Z}^{d}$ -- and vice versa. For details see~\cite{Schmeisser1987}, Chapter 3.2. In particular, $\Delta_{j} u$ is a periodic smooth function, and therefore $\| \Delta_{j} u\|_{L^{\infty}} = \| \Delta_{j} u\|_{L^{\infty} (\mathbb{T}^{d} )}$. In other words, we can define
\[
    \CC^\alpha(\mathbb{T}^d) = \{ u \in \CC^\alpha: u \text{ is } (2\pi)-\text{periodic}\}
\]
for $\alpha \in \R$. However, for $p \neq \infty$ this definition is not very useful, because no nontrivial periodic function is in $L^p$ for $p < \infty$. Therefore, general Besov spaces on the torus are defined as
\[
   B^\alpha_{p,q}(\mathbb{T}^d) = \biggl\{ u \in \mathcal{D}' (\mathbb{T}^d) : \lVert u\rVert_{B^{\alpha}_{p, q}(\mathbb{T}^d)} = \biggl( \sum_{j \geqslant - 1} (2^{j \alpha} \lVert \Delta_j u \rVert_{L^p(\mathbb{T}^d)})^q\biggr)^{1/q} < \infty \biggr\},
\]
where we set
\[
   \Delta_j u =  (2\pi)^{-d} \sum_{k \in \Z^d} e^{\imath \langle k, x\rangle} \rho_j(k) (\CF_{\mathbb{T}^d}{u})(k) = \CF_{\mathbb{T}^d}^{-1} (\rho_j \CF_{\mathbb{T}^d}{u}),
\]
and where $\CF_{\mathbb{T}^d}$ and $\CF_{\mathbb{T}^d}^{-1}$ denote Fourier transform and inverse Fourier transform on the torus. The two definitions are compatible: we have $\CC^\alpha(\mathbb{T}^d) = B^\alpha_{\infty, \infty}(\mathbb{T}^d)$. 
Strictly speaking we will not work with $B^\alpha_{p,q}(\mathbb{T}^d)$ for $(p,q) \neq (\infty, \infty)$. But we will need the Besov embedding theorem on the torus.
\begin{lemma}\label{l:besov embedding torus}
   Let $1\leqslant p_1 \leqslant p_2 \leqslant \infty$ and $1\leqslant q_1 \leqslant q_2 \leqslant \infty$, and let $\alpha \in \R$. Then $B^{\alpha}_{p_1,q_1}(\mathbb{T}^d)$ is continuously embedded in $B^{\alpha - d(1/p_1 - 1/p_2)}_{p_2,q_2}(\mathbb{T}^d)$, and $B^{\alpha}_{p_1,q_1}(\mathbb{R}^d)$ is continuously embedded in $B^{\alpha - d(1/p_1 - 1/p_2)}_{p_2,q_2}(\mathbb{R}^d)$.
\end{lemma}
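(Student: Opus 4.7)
The plan is to reduce both embeddings to a single block-wise Bernstein estimate
\begin{equation}\label{eq:besov embedding bernstein}
\|\Delta_j u\|_{L^{p_2}} \lesssim 2^{jd(1/p_1 - 1/p_2)} \|\Delta_j u\|_{L^{p_1}}, \qquad j \geqslant -1,
\end{equation}
combined with the standard inclusion $\ell^{q_1} \hookrightarrow \ell^{q_2}$ for $q_1 \leqslant q_2$. Granting \eqref{eq:besov embedding bernstein}, multiplying by $2^{j(\alpha - d(1/p_1 - 1/p_2))}$ gives
\[
2^{j(\alpha - d(1/p_1-1/p_2))} \|\Delta_j u\|_{L^{p_2}} \lesssim 2^{j\alpha} \|\Delta_j u\|_{L^{p_1}},
\]
after which taking the $\ell^{q_2}$ norm on the left, using $\|\cdot\|_{\ell^{q_2}} \leqslant \|\cdot\|_{\ell^{q_1}}$, and recognizing the right-hand side as $\|u\|_{B^\alpha_{p_1,q_1}}$ yields the desired embedding.

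On $\R^d$, estimate \eqref{eq:besov embedding bernstein} is immediate from Lemma~\ref{lemma:Bernstein}~(1) with $k=0$, since each $\Delta_j u$ has Fourier support in a ball of radius $\lesssim 2^j$ for $j\geqslant 0$, while $\Delta_{-1} u$ has Fourier support in a fixed ball (so the constant for $j=-1$ is absorbed into $\lesssim$). On $\mathbb{T}^d$ the same estimate holds, but it requires a periodic version of Bernstein, which I would obtain by a direct convolution argument. Fix a radial $\tilde\rho \in \CD(\R^d)$ with $\tilde\rho \equiv 1$ on $\tmop{supp}(\chi) \cup \tmop{supp}(\rho)$ and set $\tilde\rho_{-1}=\tilde\rho$, $\tilde\rho_j = \tilde\rho(2^{-j}\cdot)$ for $j\geqslant 0$, so that on the torus one has $\Delta_j u = K^j \ast \Delta_j u$, where $K^j$ is the periodization of $\CF_{\R^d}^{-1}\tilde\rho_j$. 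Using Poisson summation and the Schwartz decay of $\CF_{\R^d}^{-1}\tilde\rho$, one checks that $\|K^j\|_{L^r(\mathbb{T}^d)} \lesssim \|\CF_{\R^d}^{-1}\tilde\rho_j\|_{L^r(\R^d)} = 2^{jd(1-1/r)}\|\CF_{\R^d}^{-1}\tilde\rho\|_{L^r(\R^d)}$ uniformly in $j \geqslant -1$. Young's convolution inequality on $\mathbb{T}^d$ with $1 + 1/p_2 = 1/r + 1/p_1$ then gives \eqref{eq:besov embedding bernstein}.

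The main obstacle is the periodic Bernstein step, and in particular the uniform control of $\|K^j\|_{L^r(\mathbb{T}^d)}$: the principal contribution comes from the single image of $\CF_{\R^d}^{-1}\tilde\rho_j$ near the origin of the fundamental cell, which already has $L^r(\R^d)$ norm $\sim 2^{jd(1-1/r)}$, and the contributions from the other periodic images are tamed by the Schwartz decay. Everything else in the proof is bookkeeping.
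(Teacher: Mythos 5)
Your proof is correct and follows exactly the route the paper indicates. The paper does not actually write out a proof of this lemma: it cites \cite{Bahouri2011} (Proposition 2.71) for the case of $\R^d$ and \cite{Chemin2006} for the torus, remarking only that both rest on the Bernstein inequalities of Lemma~\ref{lemma:Bernstein}. Your argument is precisely that standard one --- the blockwise estimate $\|\Delta_j u\|_{L^{p_2}} \lesssim 2^{jd(1/p_1-1/p_2)}\|\Delta_j u\|_{L^{p_1}}$ combined with $\ell^{q_1}\hookrightarrow\ell^{q_2}$ --- and the one step the paper delegates entirely to the references, namely the periodic Bernstein inequality, you supply correctly via periodization of a fattened kernel, the uniform bound $\|K^j\|_{L^r(\mathbb{T}^d)}\lesssim 2^{jd(1-1/r)}$, and Young's inequality on $\mathbb{T}^d$.
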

For the embedding theorem on $\R^d$ see~\cite{Bahouri2011}, Proposition~2.71. The result on the torus can be shown using the same arguments, see for example~\cite{Chemin2006}. In both cases, the proof is based on the Bernstein inequalities, Lemma~\ref{lemma:Bernstein}.

The following characterization of Besov regularity for functions which can be decomposed into pieces that are well localized in Fourier space will be useful below.

\begin{lemma}\label{lem: Besov regularity of series}(Lemmas~2.69 and~2.84 of~{\cite{Bahouri2011}})
   \begin{enumeratenumeric}
      \item Let $\CA$ be an annulus, let $\alpha \in \mathbb{R}$, and let $(u_{j})$ be a sequence of smooth functions such that $\CF u_{j}$ has its support in $2^{j} \CA$, and such that $\| u_{j} \|_{L^{\infty}} \lesssim 2^{-j \alpha}$ for all $j$. Then
         \[
            u= \sum_{j \geqslant -1} u_{j} \in \CC^{\alpha} \qquad  \tmop{and} \qquad \| u \|_{\alpha} \lesssim \sup_{j \geqslant -1} \{ 2^{j \alpha} \| u_{j} \|_{L^{\infty}} \} .
         \]
      \item Let $\CB$ be a ball, let $\alpha >0$, and let $( u_{j} )$ be a sequence of smooth functions such that $\CF u_{j}$ has its support in $2^{j} \CB$, and such that $\| u_{j} \|_{L^{\infty}} \lesssim 2^{-j \alpha}$ for all $j$. Then
         \[
            u= \sum_{j \geqslant -1} u_{j} \in \CC^{\alpha} \qquad \tmop{and} \qquad \| u \|_{\alpha} \lesssim \sup_{j \geqslant -1} \{ 2^{j \alpha} \| u_{j} \|_{L^{\infty}} \} .
         \]
   \end{enumeratenumeric}
\end{lemma}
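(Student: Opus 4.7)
The two parts are proved by the same strategy: use the Fourier support hypothesis to see that $\Delta_i u_j$ vanishes for most pairs $(i,j)$, then bound $\|\Delta_i u\|_{L^\infty}$ by resumming the surviving terms. The only subtlety is where the assumption $\alpha > 0$ enters in part 2.

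For part 1, I would first observe that since $\CF u_j$ is supported in $2^j \CA$ and $\CF(\Delta_i u_j) = \rho_i \CF u_j$ is supported in the intersection $2^j\CA \cap \tmop{supp}(\rho_i)$, property~\ref{def:dyadic partition3} of the dyadic partition (together with the geometry of the annulus $\CA$) implies the existence of a fixed integer $N \geqslant 1$ such that $\Delta_i u_j \equiv 0$ whenever $|i - j| > N$. Hence
\[
   \|\Delta_i u\|_{L^\infty} \leqslant \sum_{j : |i-j| \leqslant N} \|\Delta_i u_j\|_{L^\infty} \lesssim \sum_{j : |i-j| \leqslant N} \|u_j\|_{L^\infty} \lesssim 2^{-i\alpha} \sup_{j \geqslant -1} \{2^{j\alpha} \|u_j\|_{L^\infty}\},
\]
where in the last step I use that there are only $O(1)$ terms in the sum, each comparable to $2^{-i\alpha}$ up to the supremum factor. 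Multiplying by $2^{i\alpha}$ and taking the supremum over $i$ gives the claim, and simultaneously shows that the partial sums of $\sum_j u_j$ converge in $\CS'$ to an element of $\CC^\alpha$.

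For part 2, the support condition $\tmop{supp}(\CF u_j) \subseteq 2^j \CB$ only gives a one-sided restriction: there is a constant $N$ (depending on $\CB$ and the partition) such that $\Delta_i u_j = 0$ whenever $i > j + N$, but no constraint for $i \ll j$. Therefore
\[
   \|\Delta_i u\|_{L^\infty} \leqslant \sum_{j \geqslant i - N} \|u_j\|_{L^\infty} \lesssim \sup_{j \geqslant -1} \{2^{j\alpha}\|u_j\|_{L^\infty}\} \sum_{j \geqslant i - N} 2^{-j\alpha}.
\]
The main obstacle — really the only place where the hypotheses bite — is controlling this tail sum. Precisely here I would invoke $\alpha > 0$: the geometric series $\sum_{j \geqslant i-N} 2^{-j\alpha}$ converges and is bounded by a constant multiple of $2^{-i\alpha}$. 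This yields $2^{i\alpha}\|\Delta_i u\|_{L^\infty} \lesssim \sup_j\{2^{j\alpha}\|u_j\|_{L^\infty}\}$, and the $\CS'$-convergence of $\sum_j u_j$ follows by the same argument applied to Littlewood--Paley blocks. Note that for $i = -1$, the block $\Delta_{-1} u$ is handled by exactly the same geometric series bound after using that $\|u_j\|_{L^\infty} \lesssim 2^{-j\alpha}$ to ensure absolute convergence in $L^\infty$, again relying on $\alpha > 0$.
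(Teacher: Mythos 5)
Your proof is correct and follows essentially the same route as the paper's: in both cases one uses the Fourier support condition to reduce $\Delta_i u$ to the terms with $i\sim j$ (annulus case) or $i\lesssim j$ (ball case), and then the resulting sum is either finite or a convergent geometric series, the latter being exactly where $\alpha>0$ is needed. Your additional remarks on the convergence of the partial sums in $\CS'$ are a small but welcome supplement to what the paper leaves implicit.
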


\begin{proof}
  It $\CF u_{j}$ is supported in $2^{j} \CA$, then $\Delta_{i} u_{j} \neq 0$ only for $i \sim j$. Hence, we obtain
  \[
     \| \Delta_{i} u \|_{L^{\infty}} \leqslant \sum_{j:j \sim i} \| \Delta_{i} u_{j} \|_{L^{\infty}} \leqslant \sup_{k \geqslant -1} \{ 2^{k \alpha} \| u_{k} \|_{L^{\infty}} \} \sum_{j:j \sim i} 2^{-j  \alpha} \simeq \sup_{k \geqslant -1} \{ 2^{k \alpha} \| u_{k} \|_{L^{\infty}} \} 2^{-i \alpha}.
  \]
  If $\CF u_{j}$ is supported in $2^{j} \CB$, then $\Delta_{i} u_{j} \neq 0$ only for $i \lesssim j$. Therefore,
   \[
      \| \Delta_{i} u \|_{L^{\infty}} \leqslant \sum_{j:j \gtrsim i} \| \Delta_{i} u_{j} \|_{L^{\infty}} \leqslant \sup_{k \geqslant -1} \{ 2^{k \alpha} \| u_{k} \|_{L^{\infty}} \} \sum_{j:j \gtrsim i} 2^{-j  \alpha}  \lesssim \sup_{k \geqslant -1} \{ 2^{k \alpha} \| u_{k} \|_{L^{\infty}} \} 2^{-i \alpha},
   \]
   using $\alpha >0$ in the last step.
\end{proof}

\subsection{Linear operators acting on Besov spaces}

Here we discuss the action of some important linear operators on Besov spaces. We start with the rescaling of the spatial variable:

\begin{lemma}\label{lemma:scaling}
  For $\lambda >0$ and $u \in \CS'$ we define the scaling transformation $\Lambda_{\lambda} u ( \cdot ) =u ( \lambda \cdot )$. Then
  \[
     \| \Lambda_{\lambda} u \|_{\alpha} \lesssim \max\{1, \lambda^{\alpha} \} \| u \|_{\alpha}
  \]
  for all $\alpha \in \mathbb{R} \setminus \{ 0 \}$ and all $u \in \CC^{\alpha}$.
\end{lemma}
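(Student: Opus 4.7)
I would prove the lemma by analysing each Littlewood--Paley block $\Delta_j \Lambda_\lambda u$ directly in Fourier space. The starting observation is that, by a change of variables in the Fourier integral,
\[
   \Delta_j (\Lambda_\lambda u) = \Lambda_\lambda \bigl[ \rho_j(\lambda \mathD) u \bigr],
\]
so $\|\Delta_j \Lambda_\lambda u\|_{L^\infty} = \|\rho_j(\lambda \mathD) u\|_{L^\infty}$. Writing $\rho_j(\lambda \mathD)$ as convolution with $\CF^{-1}(\rho_j(\lambda \cdot)) = \lambda^{-d}(\CF^{-1}\rho_j)(\cdot/\lambda)$, Young's inequality shows that $\|\rho_j(\lambda \mathD) f\|_{L^\infty} \lesssim \|f\|_{L^\infty}$ uniformly in $j$ and $\lambda$. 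Decomposing $u = \sum_i \Delta_i u$ and inserting this bound, I get
\[
   \|\Delta_j \Lambda_\lambda u\|_{L^\infty} \lesssim \sum_{i \in I_{j,\lambda}} \|\Delta_i u\|_{L^\infty},
\]
where $I_{j,\lambda} = \{i \ge -1 : \operatorname{supp}(\rho_j(\lambda \cdot)) \cap \operatorname{supp}(\rho_i) \ne \emptyset\}$.

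Next I would carry out a support analysis to describe $I_{j,\lambda}$. For $j \ge 0$, the support of $\rho_j(\lambda \cdot)$ is an annulus at scale $2^j/\lambda$: if $2^j/\lambda \gtrsim 1$ this annulus intersects only boundedly many $\rho_i$'s, all with $2^i \simeq 2^j/\lambda$, i.e.\ $i \simeq j - \log_2 \lambda$; if on the other hand $2^j/\lambda \ll 1$, the annulus is contained in $\operatorname{supp}(\chi)$ and only $i = -1$ contributes. For $j=-1$, the support of $\chi(\lambda \cdot)$ is a ball of radius $\sim 1/\lambda$, so $I_{-1,\lambda} = \{i \ge -1 : 2^i \lesssim 1/\lambda\}$; when $\lambda \ge 1$ this reduces to $\{-1\}$, but when $\lambda \le 1$ it grows to include roughly all $i$ up to $-\log_2 \lambda$.

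From here I just combine the four cases $\{\alpha > 0, \alpha < 0\} \times \{\lambda \le 1, \lambda \ge 1\}$. When $I_{j,\lambda}$ is of bounded size, $\|\Delta_j \Lambda_\lambda u\|_{L^\infty} \lesssim 2^{-\alpha i^*}\|u\|_\alpha \simeq \lambda^\alpha 2^{-\alpha j}\|u\|_\alpha$ for the relevant index $i^* \simeq j - \log_2 \lambda$, and one checks directly that $\lambda^\alpha \le \max\{1, \lambda^\alpha\}$ in each sign combination. When $I_{j,\lambda}$ is large --- essentially the case $j = -1$ with $\lambda \le 1$, and the case $j \ge 0$ with $\lambda \ge 1$ and $2^j \ll \lambda$ --- I sum a geometric series of the form $\sum_{i=-1}^{N} 2^{-\alpha i}$ with $N \simeq -\log_2 \lambda$ or $N \simeq j$. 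For $\alpha > 0$ this sum is dominated by the smallest index and yields an absolute constant times $\|u\|_\alpha$; for $\alpha < 0$ it is dominated by its largest term and yields a factor $\sim 2^{-\alpha N}$ which in every sub-case is bounded by $\max\{1,\lambda^\alpha\} 2^{-\alpha j}$.

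The main obstacle is not any single estimate but rather the bookkeeping: one must verify that the four regimes $\{\alpha > 0, \alpha < 0\} \times \{\lambda \le 1, \lambda \ge 1\}$ all collapse into the single concise bound $\max\{1,\lambda^\alpha\}$, and that the summation over $i \in I_{j,\lambda}$ is handled with the right endpoint whenever $\alpha$ and $\log\lambda$ have opposing signs. The excluded case $\alpha = 0$ appears only through the divergence of these $\alpha$-dependent geometric sums at the $\Delta_{-1}$ block, and is indeed the reason it must be excluded.
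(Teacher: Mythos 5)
Your proof is correct and follows essentially the same route as the paper's: commute the dilation with the Littlewood--Paley projections, observe via support analysis that $\Delta_j \Lambda_\lambda \Delta_i u$ vanishes unless $2^i \simeq 2^j/\lambda$ (or $i=-1$), and sum the resulting geometric series, with the $\Delta_{-1}$ block for $\lambda \leqslant 1$ producing the $\max\{1,\lambda^{\alpha}\}$. One small bookkeeping slip: for $j \geqslant 0$ with $2^j \ll \lambda$ the index set $I_{j,\lambda}$ is just $\{-1\}$ rather than large, but the resulting single term $\lesssim \|u\|_\alpha$ is still dominated by $\max\{1,\lambda^\alpha\}2^{-j\alpha}\|u\|_\alpha$ in both sign cases, so nothing breaks.
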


\begin{proof}
  Let $u \in \CC^{\alpha}$ and let $\Lambda_{\lambda} u ( x ) =u ( \lambda x )$ for some $\lambda >0$. Note that $\Lambda_{\lambda} \mathD =  \lambda^{-1} \mathD \Lambda_{\lambda}$, and therefore $\Lambda_{\lambda} \Delta_{j} u= \Lambda_{\lambda} \rho ( 2^{-j} \mathD ) u= \rho ( 2^{-j} \lambda^{-1} \mathD ) \Lambda_{\lambda} u$, which implies that the Fourier transform of $\Lambda_{\lambda} \Delta_{j} u$ is supported in the annulus $\lambda 2^{j} \CA$ (where $\CA$ is the annulus in which $\rho$ is supported). In particular, if $k \geqslant 0$, we have $\Delta_{k} \Lambda_{\lambda} \Delta_{j} u \neq 0$ only if $2^{k} \sim \lambda 2^{j}$. Thus, there exist $a,b>0$ such that
  \begin{align*}
     \| \Delta_{k} \Lambda_{\lambda} u \|_{L^{\infty}} & \lesssim \sum_{j:a2^{k} \leqslant \lambda 2^{j} \leqslant b2^{k}} \| \Delta_{k} \Lambda_{\lambda} \Delta_{j} u \|_{L^{\infty}} \lesssim \sum_{j:a2^{k} \leqslant \lambda 2^{j} \leqslant b2^{k}} \| \Delta_{j} u \|_{L^{\infty}} \\
     & \lesssim \| u \|_{\alpha} \sum_{j:a2^{k} \leqslant \lambda 2^{j} \leqslant b2^{k}} 2^{- \alpha j} \lesssim \| u \|_{\alpha} \lambda^{\alpha} 2^{- \alpha k} 
  \end{align*}
  for all $k \geqslant 0$. For $k=-1$ we can simply bound
  \[
     \| \Delta_{-1} \Lambda_{\lambda} u \|_{L^{\infty}} \lesssim \sum_{j: \lambda 2^{j} \lesssim 1} \| \Delta_{k} \Lambda_{\lambda} \Delta_{j} u \|_{L^{\infty}} \lesssim \| u \|_{\alpha} \sum_{j: \lambda 2^{j} \lesssim 1} 2^{- \alpha j} \lesssim \| u \|_{\alpha} \max\{1, \lambda^{\alpha} \}.
  \]
\end{proof}

Next, we are concerned with the action of Fourier multipliers on Besov spaces.

\begin{lemma}\label{lemma:fourier multiplier smoothing}
  Let $\varphi$ be a continuous function, such that $\varphi$ is infinitely differentiable everywhere except possibly at 0, and such that $\varphi$ and all its partial derivatives decay faster than any rational function at infinity. Assume also that $\CF \varphi \in L^{1}$. Then
  \[
     \| \varphi ( \varepsilon \mathD ) u \|_{\alpha + \delta} \lesssim \varepsilon^{- \delta} \| u \|_{\alpha} \hspace{2em} \tmop{and} \hspace{2em} \| \varphi ( \varepsilon \mathD ) u \|_{\delta}  \lesssim \varepsilon^{- \delta} \| u \|_{L^{\infty}} .
  \]
  for all $\varepsilon \in (0,1]$, $\delta \geqslant 0$, $\alpha \in \mathbb{R}$, and $u \in \CS'$.
\end{lemma}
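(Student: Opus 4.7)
The plan is to reduce both estimates to a uniform $L^{1}$ bound on the convolution kernel $K_{j}^{\varphi} = \CF^{-1}(\rho_{j}\,\varphi(\varepsilon\,\cdummy))$ associated to $\Delta_{j}\varphi(\varepsilon \mathD)$. Since Fourier multipliers commute, one has $\Delta_{j}\varphi(\varepsilon \mathD)u = K_{j}^{\varphi}\ast u$, and the key claim I would establish is
\[
   \|K_{j}^{\varphi}\|_{L^{1}} \lesssim_{\delta} (2^{j}\varepsilon)^{-\delta}
   \qquad \text{for all } j \geqslant -1,\ \delta \geqslant 0.
\]
Once this is in hand, the second inequality follows immediately from $\|K_{j}^{\varphi}\ast u\|_{L^{\infty}}\leqslant \|K_{j}^{\varphi}\|_{L^{1}}\|u\|_{L^{\infty}}$ and then taking a supremum over $j$ with weight $2^{j\delta}$; the factor $2^{j\delta}(2^{j}\varepsilon)^{-\delta}=\varepsilon^{-\delta}$ is independent of $j$. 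For the first inequality I would use the localization identity $\rho_{j}=\rho_{j}(\rho_{j-1}+\rho_{j}+\rho_{j+1})$ (with the obvious modification at $j=-1,0$ since $\chi$ is disjoint from $\rho_{k}$ for $k\geqslant 1$), so that
\[
   \Delta_{j}\varphi(\varepsilon \mathD)u = K_{j}^{\varphi}\ast\!\!\sum_{k\sim j}\Delta_{k}u,
\]
and the bound on $K_{j}^{\varphi}$ combined with $\|\Delta_{k}u\|_{L^{\infty}}\lesssim 2^{-k\alpha}\|u\|_{\alpha}$ yields $2^{j(\alpha+\delta)}\|\Delta_{j}\varphi(\varepsilon \mathD)u\|_{L^{\infty}}\lesssim \varepsilon^{-\delta}\|u\|_{\alpha}$.

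For $j\geqslant 0$ I would prove the kernel bound by a standard Bernstein-type / integration-by-parts argument. After the change of variables $z=2^{j}w$, one has $\|K_{j}^{\varphi}\|_{L^{1}} = \|\CF^{-1}(\rho(\cdummy)\,\varphi(2^{j}\varepsilon\,\cdummy))\|_{L^{1}}$. The product $\rho(\cdummy)\,\varphi(2^{j}\varepsilon\,\cdummy)$ is smooth and compactly supported (the singularity of $\varphi$ at $0$ lies outside $\tmop{supp}\rho$), and on $\tmop{supp}\rho$ one has $|w|\geqslant a>0$, so the decay assumption on $\varphi$ and all its derivatives gives
\[
   \bigl|\partial^{\mu''}\varphi(2^{j}\varepsilon w)\bigr| \lesssim_{N}(1+2^{j}\varepsilon)^{-N}
\]
for any $N$. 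Applying Leibniz and integrating by parts $|\mu|$ times in the Fourier integral yields $\||y|^{|\mu|}\tilde K_{j}(y)\|_{L^{\infty}}\lesssim_{N}(1+2^{j}\varepsilon)^{|\mu|-N}$, and taking $|\mu|=0$ and $|\mu|=d+1$ bounds $(1+|y|^{d+1})\tilde K_{j}$ in $L^{\infty}$ by $(1+2^{j}\varepsilon)^{d+1-N}$. Choosing $N$ sufficiently large depending on $\delta$ gives $\|\tilde K_{j}\|_{L^{1}}\lesssim_{\delta}(2^{j}\varepsilon)^{-\delta}$ in the regime $2^{j}\varepsilon\geqslant 1$, while the bound is trivially $\lesssim 1\leqslant (2^{j}\varepsilon)^{-\delta}$ in the regime $2^{j}\varepsilon\leqslant 1$.

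The case $j=-1$ is the main obstacle, because $\tmop{supp}(\chi)$ contains the origin, where $\varphi$ need not be smooth, so the derivative estimate above breaks down. Here the hypothesis $\CF\varphi\in L^{1}$ becomes essential. I would write
\[
   K_{-1}^{\varphi} = \CF^{-1}\chi \ast \CF^{-1}\bigl(\varphi(\varepsilon\,\cdummy)\bigr) = (\CF^{-1}\chi) \ast \bigl[\varepsilon^{-d}(\CF^{-1}\varphi)(\cdummy/\varepsilon)\bigr],
\]
so that by Young's inequality and scale invariance of the $L^{1}$ norm,
\[
   \|K_{-1}^{\varphi}\|_{L^{1}} \leqslant \|\CF^{-1}\chi\|_{L^{1}}\,\|\CF^{-1}\varphi\|_{L^{1}} \lesssim 1
\]
uniformly in $\varepsilon$. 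Since $\varepsilon\leqslant 1$ and $\delta\geqslant 0$ imply $1\leqslant \varepsilon^{-\delta}\simeq (2^{-1}\varepsilon)^{-\delta}$, this yields the required bound for $j=-1$ and completes the proof.
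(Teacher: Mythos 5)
Your proof is correct and follows essentially the same route as the paper's: both reduce to an $L^{1}$ bound on the block kernels, obtained from the decay of $\varphi$ away from the origin (via integration by parts, the paper's $(1+\Delta)^{d}$ device) when $2^{j}\varepsilon \gtrsim 1$, and from $\CF\varphi \in L^{1}$ via Young's inequality otherwise. The only point worth spelling out is that the ``trivial'' bound $\|K^{\varphi}_{j}\|_{L^{1}} \lesssim 1$ in the regime $2^{j}\varepsilon \leqslant 1$ for $j \geqslant 0$ also requires the Young's-inequality argument you give for $j=-1$, since the integration-by-parts estimate degenerates when the argument of $\partial^{\mu}\varphi$ approaches the possible singularity at the origin.
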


\begin{proof}
  Let $\psi \in \CD$ with support in an annulus be such that $\psi \rho = \rho$, where $( \chi , \rho )$ is our dyadic partition of unity. Then we have for $j \geqslant 0$ that
  \[
     \varphi ( \varepsilon \mathD ) \Delta_{j} u= \left[ \CF^{-1} ( \varphi (\varepsilon \cdummy ) \psi ( 2^{-j} \cdummy ) ) \right] \ast \Delta_{j}  u,
  \]
  and therefore Young's inequality implies
  \begin{align*}
     \| \varphi ( \varepsilon \mathD ) \Delta_{j} u \|_{L^{\infty}} & \lesssim \left\| \CF^{-1} ( \varphi ( \varepsilon \cdummy ) \psi ( 2^{-j} \cdummy ) ) \right\|_{L^{1}} 2^{-j \alpha} \| u \|_{\alpha} \\
     & = \left\| \CF^{-1} ( \varphi ( 2^{j} \varepsilon \cdummy ) \psi ) \right\|_{L^{1}} 2^{-j \alpha} \| u \|_{\alpha}.
  \end{align*}
  Hence, it suffices to show that $\left\| \CF^{-1} ( \varphi ( 2^{j} \varepsilon \cdummy ) \psi ) \right\|_{L^{1}} \lesssim \varepsilon^{- \delta} 2^{-j \delta}$. But
  \begin{align*}
     \left\| \CF^{-1} ( \varphi ( 2^{j} \varepsilon \cdummy ) \psi ) \right\|_{L^{1}} & \lesssim \left\| ( 1+ | \cdot |^{2} )^{d} \CF^{-1} ( \varphi ( 2^{j} \varepsilon \cdummy ) \psi ) \right\|_{L^{\infty}}  \\
     & \lesssim \left\| \CF^{-1} ( ( 1+ \Delta )^{d} ( \varphi ( 2^{j} \varepsilon \cdummy ) \psi ) ) \right\|_{L^{\infty}} \\
     & \lesssim \| ( 1+ \Delta )^{d} ( \varphi ( 2^{j} \varepsilon \cdummy ) \psi ) \|_{L^{1}} \\
     & \lesssim ( 1+2^{j} \varepsilon )^{2d} \max_{\mu \in \mathbb{N}^{d} : | \mu | \leqslant 2d} \| \partial^{\mu} \varphi ( 2^{j}  \varepsilon \cdot ) \|_{L^{\infty} ( \tmop{supp} ( \psi ) )} .
  \end{align*}
  By assumption, $\varphi$ is smooth away from 0, and $\varphi$ and all its partial derivatives decay faster than any rational function at infinity. Thus, we get
  \[
     \sup_{|\mu| \leqslant 2d} \sup_{x \geqslant 1} ( 1+ | x | )^{\delta +2d} | \partial^{\mu} \varphi ( x ) | \lesssim 1.
  \]
  Since $\tmop{supp} ( \psi )$ is bounded away from $0$, there exists a minimal $j_{0} \in \mathbb{N}$, such that $2^{j_{0}} \varepsilon | x | \geqslant 1$ for all $x \in \tmop{supp} ( \psi )$, and therefore
  \[
     \left\| \CF^{-1} ( \varphi ( 2^{j} \varepsilon \cdummy ) \psi ) \right\|_{L^{1}} \lesssim ( 1+2^{j} \varepsilon )^{2d} ( 1+2^{j}\varepsilon )^{- \delta -2d} = ( 1+2^{j} \varepsilon )^{- \delta} \leqslant 2^{-j \delta} \varepsilon^{- \delta}
  \]
  for all $j \geqslant j_{0}$. On the other side, we get for $j \leqslant j_{0}$
  \begin{align*}
     \| \varphi ( \varepsilon \mathD ) \Delta_{j} u \|_{L^{\infty}} &\lesssim \| \CF^{-1} ( \varphi ( \varepsilon \cdummy ) ) \|_{L^{1}} \| \Delta_{j} u \|_{L^{\infty}} \lesssim 2^{-j \alpha} \| u \|_{\alpha} = ( \varepsilon 2^{j} )^{\delta} \varepsilon^{- \delta} 2^{-j ( \alpha +  \delta )} \| u \|_{\alpha} \\
     & \leqslant ( \varepsilon 2^{j_{0}} )^{\delta} \varepsilon^{- \delta} 2^{-j ( \alpha + \delta )} \| u \|_{\alpha} \lesssim \varepsilon^{- \delta} 2^{-j ( \alpha + \delta )} \| u \|_{\alpha},
  \end{align*}
  where we used that $\delta \geqslant 0$. The estimate for $u \in L^{\infty}$ follows from the same arguments.
\end{proof}

\begin{remark}
   If the support of $\CF u$ has a ``hole'' at 0, that is if there exists a ball $\CB$ centered at 0 such that $\CF u$ is supported outside of $\CB$, then the estimates of Lemma~\ref{lemma:fourier multiplier smoothing} hold uniformly in $\varepsilon > 0$ and not just for $\varepsilon \in (0,1]$. This is an immediate consequence of the previous proof.
\end{remark}

As an application, we derive the smoothing properties of the heat kernel generated by the fractional Laplacian.

\begin{lemma}\label{lemma:heat flow smoothing}
   Let $\sigma \in ( 0,1 ]$, let $- ( - \Delta )^{\sigma}$ be the fractional Laplacian with periodic boundary conditions on $\mathbb{T}^{d} \nosymbol$, and let $( P_{t} )_{t \geqslant 0}$ be the semigroup generated by $- ( - \Delta )^{\sigma}$. Then for all $T>0$, $t \in (0,T]$, $\alpha \in \R$, $\delta \geqslant 0$, and $u \in \CS'$ we have
  \[
     \|P_{t} u\|_{\alpha + \delta} \lesssim_T t^{- \delta / ( 2 \sigma )} \|u\|_{\alpha} \hspace{2em} \tmop{and} \hspace{2em} \| P_{t} v \|_{\delta} \lesssim_T t^{- \delta / ( 2 \sigma )} \| v \|_{L^{\infty}} .
  \]
  If $\CF u$ is supported outside of a ball centered at 0, then these estimates are uniform in $t > 0$ and not just in $t \in (0,T]$.
\end{lemma}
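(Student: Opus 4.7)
The plan is to realize $P_t$ as a rescaled Fourier multiplier and invoke Lemma~\ref{lemma:fourier multiplier smoothing}. Set $\varphi(\xi)=e^{-|\xi|^{2\sigma}}$ for $\xi\in\R^d$ and $\varepsilon=t^{1/(2\sigma)}$; then $e^{-t|\xi|^{2\sigma}}=\varphi(\varepsilon\xi)$, so $P_t=\varphi(\varepsilon\mathD)$ after identifying periodic distributions on $\mathbb{T}^d$ with tempered distributions on $\R^d$ whose spectrum lies in $\mathbb{Z}^d$. I must then check the hypotheses of Lemma~\ref{lemma:fourier multiplier smoothing} on $\varphi$: continuity on $\R^d$ (clear, with $\varphi(0)=1$); $C^\infty$-regularity on $\R^d\setminus\{0\}$ (since $|\xi|^{2\sigma}=(|\xi|^2)^\sigma$ is smooth away from the origin as a composition of smooth maps); faster-than-polynomial decay of $\varphi$ and of each of its partial derivatives at infinity (a direct induction shows that $\partial^\mu\varphi$ equals $\varphi$ times a finite sum of rational functions of $\xi$, so the super-polynomial decay of the factor $\varphi$ dominates any polynomial growth of the rational pieces); and $\CF\varphi\in L^1$. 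The last point is the only substantive verification: for $\sigma=1$, $\CF\varphi$ is Gaussian, and for $\sigma\in(0,1)$ one recognises $\varphi$ as the characteristic function of a rotationally symmetric $2\sigma$-stable distribution, so $\CF\varphi$ is (a constant times) the corresponding stable density, a nonnegative integrable function.

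Granted these hypotheses, for $t\in(0,1]$ we have $\varepsilon\in(0,1]$ and Lemma~\ref{lemma:fourier multiplier smoothing} directly gives both claimed bounds with constant $\varepsilon^{-\delta}=t^{-\delta/(2\sigma)}$. To extend them to $t\in[1,T]$, I use the semigroup property $P_t=P_1\,P_{t-1}$: on $\mathbb{T}^d$, $P_s$ is convolution with the periodisation of the $\R^d$-stable density, hence is a contraction on $L^\infty(\mathbb{T}^d)$, and since it commutes with every Littlewood--Paley block we get $\|P_s w\|_\beta\leqslant\|w\|_\beta$ for every $\beta\in\R$. Applying the $t=1$ instance of the bound to $P_{t-1}u$ then yields
\[
   \|P_t u\|_{\alpha+\delta}\leqslant \|P_1(P_{t-1}u)\|_{\alpha+\delta}\lesssim \|P_{t-1}u\|_\alpha\leqslant \|u\|_\alpha\leqslant T^{\delta/(2\sigma)}\,t^{-\delta/(2\sigma)}\|u\|_\alpha,
\]
which absorbs the $T$-dependence into the implicit constant. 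The $L^\infty$ estimate on $[1,T]$ is handled identically.

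For the uniform statement, if $\CF u$ vanishes on a ball centered at $0$ then by the remark immediately following Lemma~\ref{lemma:fourier multiplier smoothing} its bounds hold uniformly in $\varepsilon>0$; translating via $\varepsilon=t^{1/(2\sigma)}$ gives both estimates uniformly in $t>0$, bypassing the semigroup argument entirely. The only genuinely non-routine input in the whole argument is the integrability of the stable density $\CF\varphi$ for $\sigma<1$; everything else is bookkeeping around Lemma~\ref{lemma:fourier multiplier smoothing}.
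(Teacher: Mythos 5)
Your proposal is correct and follows essentially the same route as the paper: write $P_t=\varphi(t^{1/(2\sigma)}\mathD)$ with $\varphi(z)=e^{-|z|^{2\sigma}}$, verify the hypotheses of Lemma~\ref{lemma:fourier multiplier smoothing} (with $\CF\varphi\in L^1$ via the symmetric $2\sigma$-stable density), and conclude, using the remark after that lemma for the uniform case. Your explicit semigroup-contraction argument for $t\in[1,T]$ is a small extra piece of care that the paper leaves implicit, but it is not a different method.
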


\begin{proof}
  The semigroup is given by $P_{t} = \varphi ( t^{1/ ( 2 \sigma )} \mathD )$ with $\varphi ( z ) =e^{- | z |^{2 \sigma}}$. Now $\varphi$ and its derivatives decay faster than any rational function at $\infty$. For $\sigma \leqslant 1$, $\CF \varphi$ is the density of a symmetric $2 \sigma$-stable random variable, and therefore in $L^{1}$. For $\sigma > 1$ it is easily shown that $(1+|\cdot|^{d+1}) \CF \varphi$ is bounded, and therefore in $L^1$. Thus, the estimates follow from Lemma~\ref{lemma:fourier multiplier smoothing}.
\end{proof}


\begin{lemma}\label{lemma:semigroup hoelder in time}
   Let $\sigma$ and $(P_t)_{t \geqslant 0}$ be as in Lemma~\ref{lemma:heat flow smoothing}. Let $\alpha \in \mathbb{R}$, $\beta \in (0,1)$, and let $u \in \CC^{\alpha}$. Then we have for all $t \geqslant 0$
  \[
     \| ( P_{t} - \tmop{Id} ) u \|_{L^{\infty}} \lesssim t^{\beta / ( 2 \sigma )} \| u \|_{\beta}.
  \]
\end{lemma}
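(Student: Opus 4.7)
The operator $P_t - \tmop{Id}$ is the Fourier multiplier $[e^{-t\lvert\cdot\rvert^{2\sigma}} - 1](\mathD)$. My plan is to decompose $(P_t - \tmop{Id})u = \sum_{j \geqslant -1} \Delta_j (P_t - \tmop{Id}) u$, derive two complementary $L^\infty$ bounds on each block, and then optimize by choosing a frequency threshold at the natural scale $2^{j_0} \simeq t^{-1/(2\sigma)}$. One may assume $t \in (0,1]$ throughout, since for $t \geqslant 1$ the claim reduces to $\lVert P_t u\rVert_{L^\infty} \lesssim \lVert u\rVert_{L^\infty}$ together with $t^{\beta/(2\sigma)} \geqslant 1$.

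First I would establish the two block-level estimates. The first is a straightforward triangle inequality combined with the contraction property $\lVert P_t v\rVert_{L^\infty} \leqslant \lVert v\rVert_{L^\infty}$, which holds for $\sigma \in (0,1]$ because $P_t$ is then convolution with the probability density of a symmetric $2\sigma$-stable L\'evy process:
\[
\lVert \Delta_j (P_t - \tmop{Id}) u\rVert_{L^\infty} \leqslant 2 \lVert \Delta_j u\rVert_{L^\infty} \lesssim 2^{-j\beta}\lVert u\rVert_\beta.
\]
The second uses the fundamental theorem of calculus. Since $\partial_s P_s = -(-\Delta)^\sigma P_s$ and $P_s$ commutes with $\Delta_j$,
\[
\Delta_j (P_t - \tmop{Id})u = -\int_0^t (-\Delta)^\sigma P_s \Delta_j u \, \mathd s.
\]
Because $P_s \Delta_j u$ is spectrally supported in $2^j \CA$ (or in a fixed ball for $j = -1$), the Bernstein inequality of Lemma~\ref{lemma:Bernstein}, combined again with the $L^\infty$-contraction of $P_s$, yields $\lVert (-\Delta)^\sigma P_s \Delta_j u\rVert_{L^\infty} \lesssim 2^{2\sigma j}\lVert \Delta_j u\rVert_{L^\infty}$, hence
\[
\lVert \Delta_j (P_t - \tmop{Id}) u\rVert_{L^\infty} \lesssim t\, 2^{j(2\sigma - \beta)}\lVert u\rVert_\beta.
\]

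Finally I would sum over $j$: using the second estimate for $j < j_0$ and the first for $j \geqslant j_0$, the two geometric series in $j$ are each dominated by their endpoint at $j = j_0$, so that
\[
\lVert (P_t - \tmop{Id}) u\rVert_{L^\infty} \leqslant \sum_{j \geqslant -1} \lVert \Delta_j (P_t - \tmop{Id}) u\rVert_{L^\infty} \lesssim \bigl(t\, 2^{j_0(2\sigma - \beta)} + 2^{-j_0 \beta}\bigr)\lVert u\rVert_\beta \simeq t^{\beta/(2\sigma)} \lVert u\rVert_\beta.
\]

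The main obstacle is ensuring both geometric sums converge at the advertised rate, which requires $0 < \beta < 2\sigma$; the case $\beta = 0$ is trivial from the contraction property, and in all applications of this lemma in the paper (where $\sigma \geqslant 5/6$ and $\beta \leqslant 1$) the condition $\beta < 2\sigma$ is automatic. A minor point is that one must verify that the derivative representation above is well-defined, which is immediate since $\Delta_j (-\Delta)^\sigma P_s u$ is a smooth, spectrally-localized object for each $j$ and $s > 0$.
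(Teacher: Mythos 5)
Your proof is correct, and it takes a genuinely different route from the paper's. The paper argues entirely in physical space: writing $K=\CF^{-1}(e^{-|\cdot|^{2\sigma}})$ and using $\int K=1$, it expresses $(P_t-\tmop{Id})u(x)$ as $t^{-d/(2\sigma)}\int K\bigl((x-y)/t^{1/(2\sigma)}\bigr)\,(u(y)-u(x))\,\mathd y$, identifies $\CC^\beta$ with the $\beta$-H\"older functions, and concludes from the moment bound $\int K(z)\,|z|^\beta\,\mathd z<\infty$. You instead work block by block on the Fourier side, interpolating between the trivial bound $2^{-j\beta}\|u\|_\beta$ and the generator bound $t\,2^{j(2\sigma-\beta)}\|u\|_\beta$, and optimize at $2^{j_0}\simeq t^{-1/(2\sigma)}$ --- the same scheme the paper itself uses for Lemma~\ref{lemma:fourier multiplier smoothing} and for the first half of Lemma~\ref{lemma:schauder}. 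The two arguments carry the same hidden hypothesis: the paper's moment integral converges only for $\beta<2\sigma$, since the $2\sigma$-stable density has tails of order $|z|^{-d-2\sigma}$, and this is precisely the condition your low-frequency geometric series needs; your explicit remark that the advertised range $\beta\in[0,1]$, $\sigma\in(0,1]$ really requires $\beta<2\sigma$ is therefore a genuine observation that the paper's one-line proof obscures (harmless in all applications, where $\sigma>5/6$). Your version also has the advantage of never leaving the Besov framework, so it is not restricted to $\beta<1$. Two small points to tighten: Lemma~\ref{lemma:Bernstein} as stated covers only integer-order derivatives, so the bound $\|(-\Delta)^\sigma v\|_{L^\infty}\lesssim 2^{2\sigma j}\|v\|_{L^\infty}$ for $v$ spectrally supported in $2^j\CA$ is a \emph{fractional} Bernstein inequality that deserves a word (the symbol $|\xi|^{2\sigma}\psi(2^{-j}\xi)$ rescales to a fixed smooth annulus-supported symbol with integrable kernel); and for $j=-1$ the symbol $|\xi|^{2\sigma}\chi(\xi)$ is not smooth at the origin when $2\sigma\notin 2\mathbb{N}$, so the uniform bound on $(-\Delta)^\sigma P_s\Delta_{-1}$ requires checking separately that $\CF^{-1}(|\xi|^{2\sigma}\chi)$ is integrable (it is, with decay of order $|x|^{-d-2\sigma}$).
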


\begin{proof}
  For the uniform estimate of $( P_{t} - \tmop{Id} ) u$, we write $P_{t} - \tmop{Id}$ as convolution operator: if $\varphi ( z ) =e^{- | z |^{2\sigma}}$ and $K ( x ) =\mathcal{F}^{-1} \varphi$, then
  \begin{align*}
     | ( P_{t} - \tmop{Id} ) u ( x ) | & = \left| t^{-d/ ( 2 \sigma )} \int K \left( \frac{x-y}{t^{1/ ( 2 \sigma )}} \right) ( u ( y ) -u ( x ) ) \mathd y \right| \\
     & \lesssim t^{-d/ ( 2 \sigma )} \int K \left( \frac{x-y}{t^{1/ ( 2 \sigma )}} \right) | y-x |^{\beta} \| u \|_{\beta} \mathd y \lesssim t^{\beta / ( 2 \sigma )} \| u \|_{\beta},
  \end{align*}
  where we identified $\CC^{\beta}$ with the space of H{\"o}lder continuous functions.
\end{proof}

Based on Lemma~\ref{lemma:heat flow smoothing} and Lemma~\ref{lemma:semigroup hoelder in time}, we derive the following Schauder estimates:

\begin{lemma}\label{lemma:schauder}
  Let $\sigma$ and $(P_t)_{t \geqslant 0}$ be as in Lemma~\ref{lemma:heat flow smoothing}. Assume that $v \in C_{T} \CC^{\beta}$ for some $\beta \in \mathbb{R}$ and $T>0$. Letting $V ( t ) = \int_{0}^{t} P_{t-s} v ( s ) \mathd s$, we have
  \begin{equation}\label{eq:schauder-heat}
     t^{\gamma} \| V ( t ) \|_{\beta +2 \sigma} \lesssim \sup_{s \in [ 0,t ]} ( s^{\gamma} \| v ( s ) \|_{\beta} )
  \end{equation}
  for all $\gamma \in [ 0,1 )$ and all $t \in [ 0,T ]$. If $ \beta \in (-2\sigma,0)$, then we also have
  \begin{equation}\label{eq:holder-est-heat}
     \| V \|_{C_{T}^{( \beta +2\sigma ) / ( 2 \sigma )} L^{\infty}} \lesssim \sup_{s \in [ 0,t ]} \| v ( s ) \|_{\beta} .
  \end{equation}
\end{lemma}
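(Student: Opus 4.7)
Proof plan. Both estimates are established block-wise, exploiting spectral localization. The key point for \eqref{eq:schauder-heat} is that on an annulus $2^j\CA$ containing the Fourier support of $\Delta_j P_{t-s} v(s)$ (for $j\geqslant 0$), the multiplier $e^{-(t-s)|\xi|^{2\sigma}}$ is bounded by $e^{-c(t-s)2^{2\sigma j}}$ for some $c>0$. A minor variant of Lemma~\ref{lemma:fourier multiplier smoothing} — or a direct bound on the $L^1$-norm of the convolution kernel of $P_{t-s}\Delta_j$ — then yields
\[
   \|\Delta_j P_{t-s} v(s)\|_{L^\infty} \lesssim e^{-c(t-s)2^{2\sigma j}} \|\Delta_j v(s)\|_{L^\infty} \lesssim e^{-c(t-s)2^{2\sigma j}} 2^{-\beta j} \|v(s)\|_\beta.
\]
Writing $M = \sup_{s\in[0,t]}(s^\gamma\|v(s)\|_\beta)$, multiplying by $2^{(\beta+2\sigma)j}$ and integrating in $s$ reduces matters to controlling $2^{2\sigma j}\int_0^t e^{-c(t-s)2^{2\sigma j}} s^{-\gamma}\mathd s$. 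Here the obstacle is the competition between the singular weight $s^{-\gamma}$ and the heat-kernel spike near $s=t$. I would split the integral at $t/2$: on $[0,t/2]$ the exponential factor contributes $e^{-ct2^{2\sigma j}/2}$ and the remaining $\int_0^{t/2} s^{-\gamma}\mathd s \lesssim t^{1-\gamma}$ is finite thanks to $\gamma<1$; on $[t/2,t]$ bound $s^{-\gamma}\leqslant (t/2)^{-\gamma}$ and integrate the exponential to get $2^{-2\sigma j}$. Combined, the bracket is $\lesssim e^{-ct2^{2\sigma j}/2}t^{1-\gamma} + t^{-\gamma}2^{-2\sigma j}$, and the elementary inequality $xe^{-x}\lesssim 1$ absorbs the factor $2^{2\sigma j}$ in front to give the uniform bound $\lesssim t^{-\gamma}M$. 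The block $j=-1$ is handled separately: its Fourier support lies in a ball where the multiplier is $\leqslant 1$, so $\|\Delta_{-1}V(t)\|_{L^\infty}\lesssim \int_0^t \|v(s)\|_\beta\mathd s \lesssim t^{1-\gamma}M \lesssim_T t^{-\gamma}M$. Taking the supremum in $j$ delivers \eqref{eq:schauder-heat}.

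For the Hölder estimate \eqref{eq:holder-est-heat}, the natural split for $0\leqslant s<t\leqslant T$ is
\[
   V(t)-V(s) = \int_s^t P_{t-r}v(r)\mathd r + (P_{t-s}-\mathrm{Id})V(s).
\]
For the first summand, $\|P_{t-r}v(r)\|_{L^\infty}\lesssim (t-r)^{\beta/(2\sigma)}\|v(r)\|_\beta$ by Lemma~\ref{lemma:heat flow smoothing} (using $\delta=-\beta$ when $\beta<0$, and $\|P_{t-r}v(r)\|_{L^\infty}\leqslant\|v(r)\|_\beta$ otherwise), and integrating on $[s,t]$ produces a bound proportional to $(t-s)^{(\beta+2\sigma)/(2\sigma)}\sup_{r}\|v(r)\|_\beta$ (using $1+\beta/(2\sigma)>0$, which follows from the hypothesis $\beta+2\sigma>0$). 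For the second summand, Lemma~\ref{lemma:semigroup hoelder in time} applied with H\"older exponent $\beta+2\sigma\in(0,1)$ gives
\[
   \|(P_{t-s}-\mathrm{Id})V(s)\|_{L^\infty} \lesssim (t-s)^{(\beta+2\sigma)/(2\sigma)}\|V(s)\|_{\beta+2\sigma},
\]
and $\|V(s)\|_{\beta+2\sigma}$ is controlled by $\sup_r\|v(r)\|_\beta$ via the first half of the lemma applied with $\gamma=0$. Combining the two contributions completes the Hölder bound.

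The only real delicate point is the first paragraph: the precise balancing at scale $s\simeq t$, $2^{2\sigma j}(t-s)\simeq 1$, which is where all the singularities accumulate; once the dyadic splitting plus the inequality $xe^{-x}\lesssim 1$ dispatches this scale, the rest is assembly from the already-proven kernel estimates. Note finally that the time-Hölder exponent written in the statement should read $(\beta+2\sigma)/(2\sigma)$ (it reduces to the standard parabolic exponent $(\beta+2)/2$ only for $\sigma=1$); this is the exponent produced by both terms of the decomposition above.
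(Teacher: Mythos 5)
Your proof is correct. For the Hölder bound \eqref{eq:holder-est-heat} you use exactly the paper's decomposition $V(t)-V(s)=(P_{t-s}-\mathrm{Id})V(s)+\int_s^t P_{t-r}v(r)\,\mathd r$ together with Lemma~\ref{lemma:semigroup hoelder in time} and the first half of the lemma, so there is nothing to compare there (and you are right that the exponent in the statement should read $(\beta+2\sigma)/(2\sigma)$; the paper's own proof carries the same $2$-versus-$2\sigma$ slip). For \eqref{eq:schauder-heat}, however, your route is genuinely different from the paper's. You rely on the exponential decay of the localized heat multiplier, $\|\Delta_j P_{\tau}w\|_{L^\infty}\lesssim e^{-c\tau 2^{2\sigma j}}\|\Delta_j w\|_{L^\infty}$ for $j\geqslant 0$, split the time integral at $t/2$, and absorb the prefactor $2^{2\sigma j}$ via $xe^{-x}\lesssim 1$. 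The paper instead never leaves the polynomial smoothing rates of Lemma~\ref{lemma:heat flow smoothing}: it splits $\int_0^t=\int_0^\delta+\int_\delta^t$ (after reversing time), bounds the near-singular piece crudely by $M2^{-q\beta}t^{-\gamma}\delta$ and the far piece by $M2^{-q(\beta+2\sigma)}(2^{2\sigma q}\delta)^{-\varepsilon}t^{-\gamma}$ using the rate $s^{-1-\varepsilon}2^{-2\sigma(1+\varepsilon)q}$, and then optimizes the split point to $\delta=2^{-2\sigma q}$. Both arguments land at the same estimate; yours is the more standard Besov--Schauder argument (cf.\ Lemma 2.4 of Bahouri--Chemin--Danchin) and is arguably cleaner, but it does require the exponential kernel bound, which is not literally contained in Lemma~\ref{lemma:fourier multiplier smoothing} (that lemma only yields polynomial rates $\varepsilon^{-\delta}$). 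As you note, it follows from the same computation used there — bounding $\|(1+\Delta)^d(\rho\, e^{-\lambda|\cdot|^{2\sigma}})\|_{L^1}$ on an annulus by $e^{-c\lambda}$ — so this is a small, standard addition rather than a gap; the paper's optimization trick is precisely what lets it avoid proving that extra estimate. Your separate treatment of the block $j=-1$ and the use of $\gamma<1$ for $\int_0^{t/2}s^{-\gamma}\,\mathd s$ are both in order.
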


\begin{proof}
  Consider $\Delta_{q} V$ for some $q \geqslant 0$ and let $\delta \in [ 0,t/2 ]$. We decompose the integral into two parts:
  \[
     \Delta_{q} V ( t ) = \int_{0}^{t} P_{t-s} ( \Delta_{q} v ) ( s ) \mathd s= \int_{0}^{\delta} P_{s} ( \Delta_{q} v ) ( t-s ) \mathd s+  \int_{\delta}^{t} P_{s} ( \Delta_{q} v ) ( t-s ) \mathd s. 
  \]
  Letting $M= \sup_{s \in [ 0,t ]} ( s^{\gamma} \| v ( s ) \|_{\beta} )$, we estimate the first term by
  \begin{align*}
     \left\| \int_{0}^{\delta} P_{s} ( \Delta_{q} v ) ( t-s ) \mathd s \right\|_{L^{\infty}} & \leqslant \int_{0}^{\delta} 2^{-q \beta} \| v ( t-s ) \|_{\beta} \mathd s \leqslant 2^{-q \beta} M \int_{0}^{\delta} ( t-s)^{- \gamma} \mathd s \\
     & =M2^{-q \beta} t^{1- \gamma} \int_{0}^{\delta /t} \frac{\mathd s}{( 1-s )^{\gamma}} \lesssim M2^{-q \beta} t^{- \gamma} \delta ,
  \end{align*}
  using $| 1- ( 1- \delta /t )^{1- \gamma} | \lesssim \delta /t$ in the last step. On the other side, we can use Lemma~\ref{lemma:heat flow smoothing} to
  estimate the second term for $\varepsilon >0$ by
  \begin{align*}
     \left\| \int_{\delta}^{t} P_{s} ( \Delta_{q} v ) ( t-s ) \mathd s \right\|_{L^{\infty}} & \lesssim \int_{\delta}^{t} s^{-1- \varepsilon} 2^{-q ( \beta +2 \sigma ( 1+ \varepsilon ) )} \| v ( t-s ) \|_{\beta} \mathd s \\
     & \lesssim M2^{-q ( \beta +2 \sigma ( 1+ \varepsilon ) )} \int_{\delta}^{t} \frac{\mathd s}{s^{1+ \varepsilon} ( t-s )^{\gamma}} \\
     & =M2^{-q ( \beta +2 \sigma ( 1+ \varepsilon ) )} t^{- \varepsilon - \gamma} \int_{\delta /t}^{1} \frac{\mathd s}{s^{1+ \varepsilon} ( 1-s )^{\gamma}} \\
     & \lesssim M2^{-q ( \beta +2 \sigma ( 1+ \varepsilon ) )} t^{- \gamma} \delta^{- \varepsilon} =M2^{-q ( \beta +2 \sigma )} ( 2^{q2 \sigma} \delta  )^{- \varepsilon} t^{- \gamma} .
  \end{align*}
  If $2^{-q2 \sigma} \leqslant t/2$, we can take $\delta =2^{-q2 \sigma}$ to
  obtain $\| \Delta_{q} V ( t ) \|_{L^{\infty}} \lesssim Mt^{- \gamma} 2^{-q (\beta +2 \sigma )}$. If $2^{-q2 \sigma} >t/2$, we have $\| \Delta_{q} V ( t ) \|_{L^{\infty}} \leqslant M2^{-q \beta} t^{1- \gamma} \lesssim Mt^{- \gamma} 2^{-q ( \beta +2 \sigma )}$, and the first claim follows.
  
  As for the second claim, note that for $0 \leqslant s<t \leqslant T$ we have
  \[
      V ( t ) -V ( s ) = ( P_{t-s} - \tmop{Id} ) V ( s ) + \int_{s}^{t} P_{t-r} v ( r ) \mathd r,
  \]
  and therefore we can apply Lemma~\ref{lemma:semigroup hoelder in time} to obtain
  \begin{align*}
     \| V ( t ) -V ( s ) \|_{L^{\infty}} & \lesssim \| ( P_{t-s} - \tmop{Id} ) V( s ) \|_{L^{\infty}} + \int_{s}^{t} \| P_{t-r} v ( r ) \|_{L^{\infty}} \mathd r \\
     & \lesssim | t-s |^{( \beta +2\sigma ) / ( 2 \sigma )} \| V ( s ) \|_{\beta +2\sigma} \\
     &\quad +  \int_{s}^{t} \| v ( r ) \|_{\beta} \mathd r \lesssim_{T} | t-s |^{( \beta +2\sigma ) / ( 2 \sigma )} \sup_{r \in [ 0,t ]} \| v ( r ) \|_{\beta} ,
  \end{align*}
  where we used that $( \beta +2\sigma )/2\sigma \in ( 0,1 )$ and that $| t-s | \leqslant T$. This yields the second claim.
\end{proof}

When dealing with \textsc{rde}s, the convolution with the (fractional) heat kernel has a natural correspondence in the integral map.

\begin{lemma}\label{lemma:integral}
  Let $u \in \CC^{\alpha -1} ( \mathbb{R} )$ for some $\alpha \in ( 0,1 )$. Then there exists a unique $U \in \CC^{\alpha}_{\tmop{loc}} ( \mathbb{R} )$ such that $\mathD U=u$ and $U ( 0
  ) =0$. This antiderivative $U$ satisfies
  \begin{equation}\label{eq:integral estimate 1}
     | U ( t ) -U ( s ) | \lesssim | t-s |^{\alpha} \| u \|_{\alpha -1}
  \end{equation}
  for all $s,t \in \mathbb{R}$ with $| s-t | \leqslant 1$.We will use the notation $U ( t ) = \int_{0}^{t} u ( s ) \mathd s$ to denote this map, which is an extension of the usual definite integral. If the support of $u$ is contained in $[ -T,T ]$ for some $T>0$, then $U \in \CC^{\alpha}$ and
  \[
     \| U \|_{\alpha} \lesssim T \| u \|_{\alpha -1}.
  \]
\end{lemma}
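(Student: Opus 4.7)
The plan is to construct $U$ explicitly via the Littlewood--Paley decomposition of $u$, and derive both the local H\"older estimate and the compactly-supported global bound from block-by-block estimates. First I decompose $u = \sum_{j \geqslant -1} \Delta_j u$ and build a primitive for each block. For $j \geqslant 0$, the fact that $\rho_j$ vanishes in a neighbourhood of $0$ makes the symbol $m_j(z) = \rho_j(z)/(\imath z)$ a well-defined function in $\CD$, supported in the same annulus $2^j \CA$ as $\rho_j$. I set
\[
   V_j := m_j(\mathD) u = \CF^{-1}(m_j \CF u),
\]
a smooth function spectrally supported in $2^j \CA$, satisfying $\mathD V_j = \Delta_j u$ directly on the Fourier side. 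For $j = -1$, $\Delta_{-1} u$ is a bounded smooth function with $\|\Delta_{-1} u\|_{L^\infty} \lesssim \|u\|_{\alpha-1}$, so I let $U_{-1}(t) = \int_0^t (\Delta_{-1} u)(s)\,\mathd s$. The candidate primitive is
\[
   U(t) := U_{-1}(t) + \sum_{j \geqslant 0} \bigl( V_j(t) - V_j(0) \bigr),
\]
which formally satisfies $U(0) = 0$ and $\mathD U = u$.

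Second, I obtain two complementary bounds on each $V_j$, $j \geqslant 0$. Bernstein's inequality (Lemma~\ref{lemma:Bernstein}) applied to $V_j$ in the annulus $2^j \CA$ gives
\[
   \|V_j\|_{L^\infty} \lesssim 2^{-j}\,\|\mathD V_j\|_{L^\infty} = 2^{-j}\,\|\Delta_j u\|_{L^\infty} \lesssim 2^{-j\alpha}\,\|u\|_{\alpha-1},
\]
while the derivative bound is simply $\|V_j'\|_{L^\infty} = \|\Delta_j u\|_{L^\infty} \lesssim 2^{j(1-\alpha)}\|u\|_{\alpha-1}$. Hence
\[
   |V_j(t) - V_j(s)| \lesssim \min\bigl( 2^{-j\alpha},\ |t-s|\,2^{j(1-\alpha)} \bigr)\,\|u\|_{\alpha-1}.
\]
For $|t-s| \leqslant 1$, the two bounds coincide at the index $j_\ast$ with $2^{j_\ast} \sim |t-s|^{-1}$; summing the first bound for $j > j_\ast$ and the second for $0 \leqslant j \leqslant j_\ast$ each yields a geometric series of size $\lesssim |t-s|^\alpha \|u\|_{\alpha-1}$ (using $\alpha, 1-\alpha > 0$). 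The $j = -1$ term contributes $|U_{-1}(t) - U_{-1}(s)| \leqslant |t-s|\,\|\Delta_{-1} u\|_{L^\infty} \lesssim |t-s|^\alpha\,\|u\|_{\alpha-1}$ for $|t-s| \leqslant 1$. This proves the uniform absolute convergence of the defining series on each compact set, the identity $\mathD U = u$ in $\CS'$ by term-by-term differentiation, and the local H\"older estimate~\eqref{eq:integral estimate 1}; in particular $U \in \CC^\alpha_{\mathrm{loc}}$.

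Third, uniqueness follows at once: any two elements $U, \tilde U \in \CC^\alpha_{\mathrm{loc}}$ with $\mathD U = \mathD \tilde U = u$ satisfy $\mathD(U - \tilde U) = 0$ as distributions, and since $\alpha > 0$ both are continuous, so $U - \tilde U$ is a continuous function that is constant; the condition at $0$ forces that constant to be zero. For the compactly supported case, if $\mathrm{supp}(u) \subseteq [-T, T]$ then $U$ is constant on each component of $\R \setminus [-T, T]$, and chaining the local H\"older estimate across $\lceil T \rceil$ unit subintervals of $[-T, T]$ yields $\|U\|_{L^\infty(\R)} \lesssim T\,\|u\|_{\alpha-1}$. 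For $j \geqslant 0$, Bernstein's inequality applied to $\Delta_j U$, together with $\mathD \Delta_j U = \Delta_j u$, gives $\|\Delta_j U\|_{L^\infty} \lesssim 2^{-j}\|\Delta_j u\|_{L^\infty} \lesssim 2^{-j\alpha}\|u\|_{\alpha-1}$, and $\|\Delta_{-1} U\|_{L^\infty} \lesssim \|U\|_{L^\infty} \lesssim T\|u\|_{\alpha-1}$. Weighting by $2^{j\alpha}$ and taking the supremum gives $\|U\|_\alpha \lesssim T\,\|u\|_{\alpha-1}$. The main obstacle is the correct handling of the $j = -1$ block, which is the sole source of potential unboundedness: $U_{-1}$ is a primitive of a bounded smooth function and in general grows linearly at infinity, so the global Besov bound genuinely relies on the compact-support hypothesis through the constancy of $U$ outside $[-T, T]$, with the linear factor $T$ produced by chaining unit-step H\"older estimates.
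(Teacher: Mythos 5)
Your proof is correct and takes essentially the same route as the paper's: your $V_j = \CF^{-1}\bigl(\rho_j(z)/(\imath z)\,\CF u\bigr)$ is exactly the paper's $\mathD^{-1}\Delta_j u$, and the two complementary block bounds $\min\bigl(2^{-j\alpha},\,|t-s|\,2^{j(1-\alpha)}\bigr)$ with the split at $2^{j_0}\sim |t-s|^{-1}$ reproduce the paper's estimate of $\int_s^t \Delta_j u$. The uniqueness and compactly supported cases also match, the latter being spelled out in more detail than in the paper's one-line remark.
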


\begin{proof}
  The second statement about compactly supported $u$ follows from the first statement by identifying $\CC^{\alpha}$ with the space of bounded H{\"o}lder continuous functions.
  
  As for the first statement, we define
  \[
     U ( t ) = \sum_{j \geqslant -1} \int_{0}^{t} \Delta_{j} u ( s ) \mathd s.
  \]
  If we can show~{\eqref{eq:integral estimate 1}}, then $U$ is indeed in $\CC^{\alpha}_{\tmop{loc}}$ and therefore in particular in $\CS'$. Since the derivative $\mathD$ is a continuous operator on $\CS'$, we then conclude that $\mathD U= \sum_{j} \Delta_{j} u=u$. Let therefore $s,t \in \mathbb{R}$ with $| s-t | \leqslant 1$. We have
  \[
     \left| \int_{s}^{t} \Delta_{j} u ( r ) \mathd r \right| \leqslant 2^{j (1- \alpha )} \| u \|_{\alpha -1} | t-s | .
  \]
  If $j \geqslant 0$, then $\Delta_{j} u= \mathD \mathD^{-1} ( \Delta_{j} u)$, where $\mathD^{-1}$ is the Fourier multiplier with symbol $1/ ( \iota  z )$, and therefore
  \[
     \left| \int_{s}^{t} \Delta_{j} u ( r ) \mathd r \right| = | \mathD^{-1} \Delta_{j} u ( t ) - \mathD^{-1} \Delta_{j} u ( s ) | \lesssim 2^{-j} \| \Delta_{j} u \|_{L^{\infty}} \lesssim 2^{-j \alpha} \| u \|_{\alpha -1} ,
  \]
  where we used the Bernstein inequality, Lemma~\ref{lemma:Bernstein}. If $j_{0}$ is such that $2^{-j_{0}} \leqslant | t-s | <2^{-j_{0} +1}$, then we use the first estimate for $j \leqslant j_{0}$ and the second estimate for $j>j_{0}$, and obtain
  \begin{align*}
     | U ( t ) -U ( s ) | &\leqslant \sum_{j \geqslant -1} \left| \int_{s}^{t} \Delta_{j} u ( r ) \mathd r \right| \lesssim \sum_{j \leqslant j_{0}} 2^{j ( 1- \alpha )} \| u \|_{\alpha -1} | t-s | + \sum_{j>j_{0}} 2^{-j \alpha} \| u \|_{\alpha -1} \\
     & \lesssim ( 2^{j_{0} ( 1- \alpha )} | t-s | +2^{-j_{0} \alpha} ) \| u \|_{\alpha -1} \simeq | t-s |^{\alpha} \| u \|_{\alpha -1} .
  \end{align*}
  Uniqueness is easy since every distribution with zero derivative is a constant function.
\end{proof}

\section{More commutator estimates}\label{sec:convolution commutators}

When applying the scaling argument to solve equations, we need to control the resonant product of the rescaled data. This can be done by relying on the following commutator estimate.

\begin{lemma}\label{lem:scaling commutator}
   Let $\alpha, \beta \in \R$ and $f, g \in \CS$. Then we have uniformly in $\lambda \in (0,1]$
   \[
      \| \Lambda_\lambda(f\reso g) - (\Lambda_\lambda f) \reso (\Lambda_\lambda g) \|_{\alpha+\beta} \lesssim \max\{\lambda^{\alpha+\beta}, 1\} \| f \|_\alpha \|g\|_\beta,
   \]
   and thus $ \Lambda_\lambda(\cdot\reso \cdot) - (\Lambda_\lambda \cdot) \reso (\Lambda_\lambda \cdot)$ extends to a bounded bilinear operator from $\CC^\alpha \times \CC^\beta$ to $\CC^{\alpha+\beta}$.
\end{lemma}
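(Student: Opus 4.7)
My plan is to reduce the resonant commutator to a difference of paraproducts associated to two distinct dyadic partitions, and then to invoke Lemma~\ref{lem:paraproduct difference is bounded operator} together with Lemma~\ref{lemma:scaling}.

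First I use the pointwise Bony identity $fg=f\prec g+f\circ g+f\succ g$, valid for $f,g\in\CS$, together with the fact that $\Lambda_\lambda$ commutes with pointwise multiplication, $\Lambda_\lambda(fg)=(\Lambda_\lambda f)(\Lambda_\lambda g)$. This yields
\[
\Lambda_\lambda(f\circ g)-(\Lambda_\lambda f)\circ(\Lambda_\lambda g)=-\sum_{\star\in\{\prec,\succ\}}\bigl[\Lambda_\lambda(f\star g)-(\Lambda_\lambda f)\star(\Lambda_\lambda g)\bigr],
\]
and by the symmetry $f\succ g=g\prec f$ it suffices to estimate $T_\lambda^\prec(f,g):=\Lambda_\lambda(f\prec g)-(\Lambda_\lambda f)\prec(\Lambda_\lambda g)$ in $\CC^{\alpha+\beta}$.

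Next I apply the Fourier multiplier intertwining identity $\Lambda_\lambda\phi(\mathD)=\phi(\lambda^{-1}\mathD)\Lambda_\lambda$ termwise to each block $S_{j-1}f\,\Delta_j g$ and sum over $j$. The outcome is the identification $\Lambda_\lambda(f\prec g)=(\Lambda_\lambda f)\,\widetilde\prec\,(\Lambda_\lambda g)$, where $\widetilde\prec$ denotes the paraproduct built from the dilated dyadic partition of unity $(\chi(\lambda^{-1}\cdot),\rho(\lambda^{-1}\cdot))$. Consequently
\[
T_\lambda^\prec(f,g)=(\Lambda_\lambda f)\bigl[\widetilde\prec-\prec\bigr](\Lambda_\lambda g),
\]
which is exactly a difference of paraproducts of the type handled by Lemma~\ref{lem:paraproduct difference is bounded operator}. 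Combining its bilinear bound with Lemma~\ref{lemma:scaling} applied to $\Lambda_\lambda f$ and $\Lambda_\lambda g$ delivers the claimed estimate, once the scaling factors are absorbed into the $\max\{\lambda^{\alpha+\beta},1\}$ prefactor. The extension from $\CS\times\CS$ to $\CC^\alpha\times\CC^\beta$ is then a standard continuity/density argument from the uniform-in-$\lambda$ estimate on smooth pairs.

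The main obstacle is verifying that Lemma~\ref{lem:paraproduct difference is bounded operator} is quantitatively uniform in $\lambda\in(0,1]$, since the constant in Bony's statement a priori depends on both dyadic partitions and here the second one concentrates near the origin as $\lambda\to0$. I would address this by a direct argument: for the dyadic subsequence $\lambda=2^{-N}$, a telescoping of the two defining sums gives the clean identity
\[
u\prec v-u\,\widetilde\prec\,v=\sum_{j=1}^{N}S_{j-1}u\,\Delta_j v,
\]
that is, the low-frequency portion of $u\prec v$ truncated at scale $\lambda^{-1}$. The Fourier support of this partial sum lies in a ball of radius $\lesssim\lambda^{-1}$, so after applying $\Lambda_\lambda$ one obtains an object whose Fourier support sits inside a fixed bounded set; a direct block-by-block bound, followed by a case analysis on the signs of $\alpha$, $\beta$ and $\alpha+\beta$, then produces the required prefactor. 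Non-dyadic $\lambda\in(2^{-N-1},2^{-N}]$ is handled by absorbing a bounded perturbation of the partition into the implicit constants.
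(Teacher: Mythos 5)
Your reduction is genuinely different from the paper's argument, and it is worth recording the difference. The paper never converts to paraproducts: it writes $\lambda=\lambda'2^{-k}$ with $\lambda'\in(1/2,1]$, splits the resonant double sum defining $\Lambda_\lambda(f\circ g)$ at the threshold $i,j\leqslant k$, bounds the low--frequency part directly (it is spectrally supported in a fixed ball and, crucially, still carries the resonance constraint $|i-j|\leqslant 1$, so each term is of size $2^{-i\alpha-j\beta}\simeq 2^{-i(\alpha+\beta)}$), and identifies the high--frequency part with the resonant product for the boundedly distorted partition $(\chi(\lambda'^{-1}\cdot),\rho(\lambda'^{-1}\cdot))$, to which Lemma~\ref{lem:paraproduct difference is bounded operator} is applied. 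Your low/high splitting is correct in substance once the bookkeeping is fixed: the telescoping identity should be stated in the unscaled variables, i.e.\ $\Lambda_\lambda(f\prec g)-(\Lambda_\lambda f)\prec(\Lambda_\lambda g)=\Lambda_\lambda\bigl(\sum_{m=1}^{N}S_{m-1}f\,\Delta_m g\bigr)$ for $\lambda=2^{-N}$ (equivalently, $(\Lambda_\lambda f)\prec(\Lambda_\lambda g)$ is $\Lambda_\lambda$ applied to the paraproduct of $f,g$ built from the \emph{expanded} partition $(\chi(\lambda\cdot),\rho(\lambda\cdot))$; as literally written, with $u=\Lambda_\lambda f$, your formula picks out the wrong frequency range). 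After this correction the spectral--support observation is right and the remaining task is an $L^\infty$ bound on $\sum_{m\leqslant N}S_{m-1}f\,\Delta_m g$ and its mirror image.

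The gap is in that last estimate. Passing through the paraproduct destroys the resonance constraint: $\|S_{m-1}f\|_{L^\infty}$ can only be bounded by $2^{-m(\alpha\wedge 0)}\|f\|_\alpha$, so the block-by-block bound yields the prefactor $\max\{1,\lambda^{(\alpha\wedge 0)+\beta},\lambda^{\alpha+(\beta\wedge 0)}\}$ rather than $\max\{1,\lambda^{\alpha+\beta}\}$. These agree when $\alpha$ and $\beta$ have the same sign, but for $\alpha>0>\beta$ your bound degrades to $\lambda^{\beta}$, and no case analysis on signs repairs this, because the loss is genuine for these terms: take $\hat f$ concentrated near frequency $1$ and $\hat g$ near frequency $\lambda^{-1}$ with $\|f\|_\alpha\simeq\|g\|_\beta\simeq 1$; then $f\circ g=0$ while $(\Lambda_\lambda f)\circ(\Lambda_\lambda g)\approx\Lambda_\lambda(fg)$ is supported in a fixed ball and has $L^\infty$ (hence $\CC^{\alpha+\beta}$) norm of order $\lambda^{\beta}$, and this contribution is exactly the single term $m=N$ of your $\prec$--commutator. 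So your scheme proves the stated bound only for $\alpha,\beta$ of equal sign; in the mixed--sign regime you must either restrict the statement or carry the weaker constant through. Finally, your instinct about the uniformity of Lemma~\ref{lem:paraproduct difference is bounded operator} is sound and not cosmetic: the same two--bump example lives in the boundary blocks $i'\in\{-1,0\}$ of the distorted partition, which is precisely the point the paper's own one-line appeal to that lemma leaves implicit.
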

\begin{proof}
   We have $\Lambda_\lambda \Delta_j = \Lambda_\lambda \rho_j(\mathD) = \rho_j(\lambda^{-1} \mathD) \Lambda_\lambda$ for all $j \geqslant -1$. Let $k \in \N$ and $\lambda' \in (1/2,1]$ be such that $\lambda = \lambda' 2^{-k}$. Then
   \begin{align}\label{eq:scaling commutator pr1} \nonumber
      \Lambda_\lambda (f \reso g) & = \sum_{\substack{|i-j| \leqslant 1 \\ i,j \leqslant k}} \Lambda_\lambda ( \Delta_i f \Delta_j g) \\
      &\quad + \sum_{\substack{|i-j| \leqslant 1 \\ i,j > k}} \rho(2^{-i+k} \lambda'^{-1} \mathD) \Lambda_\lambda f \rho(2^{-j+k} \lambda'^{-1} \mathD) \Lambda_\lambda g.
   \end{align}
   The first sum is spectrally supported in a ball centered at zero (which does not depend on $k$ or $\lambda$), and therefore
   \[
      \bigg\|  \sum_{\substack{|i-j| \leqslant 1 \\ i,j \leqslant k}} \Lambda_\lambda ( \Delta_i f \Delta_j g) \bigg\|_{\alpha + \beta} \lesssim \sum_{\substack{|i-j| \leqslant 1 \\ i,j \leqslant k}} 2^{-i \alpha -j\beta} \|f\|_\alpha \|g\|_\beta \lesssim \max\{\lambda^{\alpha+\beta},1\} \|f\|_\alpha \|g\|_\beta.
   \]
   The second sum is the resonant paraproduct $(\Lambda_\lambda f \, \widetilde{\reso} \Lambda_\lambda g)$ with respect to the dyadic partition of unity $(\chi(\lambda'^{-1} \cdot), \rho(\lambda'^{-1}\cdot))$, except that the sum only starts in $i,j =1$. By Lemma~\ref{lem:paraproduct difference is bounded operator} we can therefore bound
   \[
      \bigg\| \sum_{\substack{|i-j| \leqslant 1 \\ i,j > k}} \rho(2^{-i+k} \lambda'^{-1} \mathD) \Lambda_\lambda f \rho(2^{-j+k} \lambda'^{-1} \mathD) \Lambda_\lambda g - (\Lambda_\lambda f) \reso (\Lambda_\lambda g) \bigg\|_{\alpha+\beta} \lesssim \|f \|_\alpha \|g\|_\beta.
   \]
\end{proof}

Next, we prove that it is possible to exchange paraproduct and time integration, at the price of introducing a smoother correction term:

\begin{lemma}\label{lemma:controlled old and new}
  Let $\alpha , \beta \in ( 0,1 )$ with $\alpha + \beta <1$. Let $u \in \CC^\alpha(\R, \R^{d \times n})$ and $v \in \CC^{\beta}(\R, \R^n)$. Then
  \[
     \left| \int_{s}^{t} ( u \lpara \partial_{t} v ) ( r ) \mathd r-u ( s ) ( v ( t ) -v ( s ) ) \right| \lesssim | t-s |^{\alpha + \beta} \| u  \|_{\alpha} \| v \|_{\beta} ,
  \]
  for all $s,t \in \mathbb{R}$ with $| t-s | \leqslant 1$, where we write $\int_{s}^{t} f ( r ) \mathd r= \int_{0}^{t} f ( r ) \mathd r- \int_{0}^{s} f ( r ) \mathd r$.
\end{lemma}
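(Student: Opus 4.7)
The plan is to exploit the temporal Leibniz rule for the paraproduct in order to reduce the estimate to (a) a Young-type integral bound via Lemma~\ref{lemma:integral} and (b) a clean telescoping estimate on a paraproduct increment.

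\emph{Step 1 (Leibniz identity).} For smooth $u,v$, the frequency-wise identity
\[
\partial_{r}(u \prec v) = (\partial_{r} u) \prec v + u \prec (\partial_{r} v)
\]
follows from $\partial_r S_{j-1} = S_{j-1} \partial_r$ and $\partial_r \Delta_j = \Delta_j \partial_r$, applied to each term of the series $u \prec v = \sum_j S_{j-1} u \cdot \Delta_j v$. Integrating from $s$ to $t$ and subtracting $u(s)(v(t) - v(s))$ gives
\[
\int_{s}^{t} (u \prec \partial_{r} v)(r)\,\mathd r - u(s) \bigl(v(t) - v(s)\bigr) = \mathrm{I}(s,t) - \int_{s}^{t} \bigl((\partial_{r} u) \prec v\bigr)(r)\,\mathd r,
\]
where $\mathrm{I}(s,t) := (u \prec v)(t) - (u \prec v)(s) - u(s)(v(t) - v(s))$. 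A density/continuity argument extends the identity to $u \in \CC^{\alpha}$, $v \in \CC^{\beta}$.

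\emph{Step 2 (Young-type term).} Since $\partial_{r} u \in \CC^{\alpha - 1}$ with $\alpha - 1 < 0$, the paraproduct estimate \eqref{eq:para-2} applied with regularities $(\alpha - 1, \beta)$ yields $\lVert (\partial_{r} u) \prec v \rVert_{\alpha + \beta - 1} \lesssim \lVert u \rVert_{\alpha} \lVert v \rVert_{\beta}$. Because $\alpha + \beta \in (0,1)$, Lemma~\ref{lemma:integral} applied with index $\alpha + \beta$ immediately gives
\[
\Bigl| \int_{s}^{t} \bigl((\partial_{r} u) \prec v\bigr)(r)\,\mathd r \Bigr| \lesssim |t - s|^{\alpha + \beta} \lVert u \rVert_{\alpha} \lVert v \rVert_{\beta}.
\]

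\emph{Step 3 (Paraproduct increment).} I would rearrange $\mathrm{I}(s,t)$ block-by-block (adding and subtracting $u(s) \Delta_j v(t)$ in each term) to obtain the telescoping decomposition
\[
\mathrm{I}(s,t) = \sum_{j \geqslant -1} \bigl[ S_{j-1} u(t) - S_{j-1} u(s) \bigr] \Delta_{j} v(t) + \sum_{j \geqslant -1} \bigl[ S_{j-1} u(s) - u(s) \bigr] \bigl[ \Delta_{j} v(t) - \Delta_{j} v(s) \bigr].
\]
Both series are estimated by splitting at the dyadic threshold $J$ with $2^{J} \sim |t-s|^{-1}$. On the low-frequency side $j \leqslant J$, I invoke the Bernstein-type Lipschitz bounds $|S_{j-1} u(t) - S_{j-1} u(s)| \lesssim 2^{j(1-\alpha)} |t-s| \lVert u \rVert_{\alpha}$ and $|\Delta_{j} v(t) - \Delta_{j} v(s)| \lesssim 2^{j(1-\beta)} |t-s| \lVert v \rVert_{\beta}$, paired with the size bounds $|\Delta_j v| \lesssim 2^{-j\beta} \|v\|_\beta$ and $|S_{j-1}u(s) - u(s)| \lesssim 2^{-j\alpha} \|u\|_\alpha$; on the high-frequency side $j > J$ I instead use the Hölder bound $|S_{j-1}u(t) - S_{j-1}u(s)| \lesssim |t-s|^{\alpha}\|u\|_\alpha$ combined with $|\Delta_j v| \lesssim 2^{-j\beta}\|v\|_\beta$. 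In each regime the resulting geometric series (with ratios $2^{1-\alpha-\beta} > 1$ below, $2^{-(\alpha + \beta)} < 1$ above) balances at $j = J$ and contributes exactly $|t-s|^{\alpha+\beta} \|u\|_\alpha \|v\|_\beta$.

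The only delicate point is the algebraic rearrangement yielding the clean decomposition of $\mathrm{I}(s,t)$; after that, the quantitative bounds are routine dyadic bookkeeping and, crucially, no oscillatory cancellation beyond what is already encoded in \eqref{eq:para-2} is required, which is what allows the argument to go through in the sub-Young regime $\alpha + \beta < 1$.
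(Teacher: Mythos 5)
Your proposal is correct, and while it rests on the same two pillars as the paper's argument (integration by parts in time inside the paraproduct, and a dyadic splitting at the scale $2^{J}\sim|t-s|^{-1}$), the organization is genuinely different. The paper never forms your identity: it works directly with $\sum_{j}\int_{s}^{t}[S_{j-1}u(r)-u(s)]\,\partial_{r}\Delta_{j}v(r)\,\mathd r$ and treats each block in two ways --- for $j\leqslant j_{0}$ a crude bound using $\|\partial_{r}\Delta_{j}v\|_{L^{\infty}}\lesssim 2^{j(1-\beta)}\|v\|_{\beta}$, and for $j>j_{0}$ a block-wise integration by parts whose middle term $\int_{s}^{t}\partial_{r}S_{j-1}u\,\Delta_{j}v\,\mathd r$ is estimated with an auxiliary parameter $\varepsilon\in[0,\alpha+\beta)$ producing the bound $|t-s|^{\alpha+\beta-\varepsilon}2^{-j\varepsilon}$. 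Your global Leibniz identity cleanly separates exactly that middle contribution into the single Young-type integral $\int_{s}^{t}((\partial_{r}u)\prec v)(r)\,\mathd r$, which is dispatched in one stroke by \eqref{eq:para-2} and Lemma~\ref{lemma:integral} with no $\varepsilon$-loss, and reduces the remainder to the purely pointwise increment $\mathrm{I}(s,t)$, whose two telescoping series are precisely the two boundary terms the paper obtains block by block. The only point to make explicit is the justification of the identity for non-smooth $u,v$: rather than a density argument (smooth functions are not dense in $\CC^{\alpha}$), it is cleaner to note that $\partial_{r}(u\prec v)=(\partial_{r}u)\prec v+u\prec(\partial_{r}v)$ holds term by term in $\CS'$ and invoke the uniqueness statement of Lemma~\ref{lemma:integral} (applied with exponent $\beta$, since $u\prec v\in\CC^{\beta}$ and the right-hand side lies in $\CC^{\beta-1}$); with that caveat your proof is complete and arguably tidier than the paper's.
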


\begin{proof}
  Fix $s,t \in \mathbb{R}$ with $| s-t | \leqslant 1$. We can rewrite
  \[
     \int_{s}^{t} ( u \lpara \partial_{t} v ) ( r ) \mathd r-u ( s ) ( v ( t ) -v ( s ) ) = \sum_{j} \int_{s}^{t} [ S_{j-1} u ( r ) -u ( s ) ] \partial_{r} \Delta_{j} v ( r ) \mathd r.
  \]
  We will use two different estimates, one for large $j$ and one for small $j$. First note that
  \begin{align*}
     \left| \int_{s}^{t} [ S_{j-1} u ( r ) -u ( s ) ] \partial_{r} \Delta_{j} v ( r ) \mathd r \right| & \leqslant \left| \int_{s}^{t} [ S_{j-1} u ( r ) -S_{j-1} u ( s ) ]  \partial_{r} \Delta_{j} v ( r ) \mathd r \right| \\
     &\quad + \left| \int_{s}^{t} [ S_{j-1} u ( s ) -u ( s ) ] \partial_{r} \Delta_{j} v ( r ) \mathd r  \right| .
  \end{align*}
  Now $| S_{j-1} u ( r ) - S_{j-1} u ( s ) | \lesssim | r-s |^\alpha \| u \|_{\alpha}$, 
  and therefore
  \begin{align}\label{eq:controlled old and new pr1} \nonumber
     &\left| \int_{s}^{t} [ S_{j-1} u ( r ) -u ( s ) ] \partial_{r} \Delta_{j} v ( r ) \mathd r \right| \\ \nonumber
     &\hspace{50pt} \lesssim \left( \int_{s}^{t}  | r-s |^\alpha 2^{j ( 1- \beta )} \mathd r+ \int_{s}^{t} 2^{-j \alpha} 2^{j ( 1- \beta )} \mathd r \right)\! \| u \|_{\alpha} \| v \|_{\beta} \\
    &\hspace{50pt} \lesssim ( 2^{j ( 1- \beta)} | t-s |^{1+\alpha} +2^{j ( 1- \alpha - \beta )} | t-s | ) \| u \|_{\alpha} \| v \|_{\beta} .
  \end{align}
  On the other side, it follows from integration by parts that
  \begin{align}\label{eq:controlled old and new pr2} \nonumber
     &\left| \int_{s}^{t} [ S_{j-1} u ( r ) - u ( s ) ] \partial_{r}  \Delta_{j} v ( r ) \mathd r \right| \\ \nonumber
     &\hspace{15pt} \leqslant \left| \int_{s}^{t} [ S_{j-1} u ( r ) -S_{j-1} u ( s ) ] \partial_{r}  \Delta_{j} v ( r ) \mathd r \right|  + \left| \int_{s}^{t} [ S_{j-1} u ( s ) - u ( s ) ] \partial_{r}  \Delta_{j} v ( r ) \mathd r \right|\\ \nonumber
     &\hspace{15pt} \leqslant | (S_{j-1} u ( t ) - S_{j-1} u(s))  \Delta_{j} v ( t ) | + \left| \int_{s}^{t} \partial_{r} S_{j-1} u ( r )    \Delta_{j} v ( r ) \mathd r \right|\\ \nonumber
     &\hspace{15pt}\quad + |(S_{j-1} u ( s ) - u ( s ))(\Delta_j v(t) - \Delta_j v(s))| \\
     &\hspace{15pt}\lesssim \big( |t-s|^\alpha 2^{-j\beta} + |t-s|^{\alpha+\beta-\varepsilon} 2^{-j\varepsilon} + 2^{-j(\alpha+\beta)}\big)  \|u\|_\alpha \|v\|_\beta,
  \end{align}
  for all $\varepsilon \in [0,\alpha+\beta)$, where for the middle term we applied Lemma~\ref{lemma:integral}, which gives us
  \begin{align*}
      \left| \int_{s}^{t} \partial_{r} S_{j-1} u ( r ) \Delta_{j} v ( r ) \mathd r \right| & \lesssim |t-s|^{\alpha+\beta-\varepsilon} \|\partial_{r} S_{j-1} u ( r ) \Delta_{j} v ( r )\|_{\alpha+\beta-\varepsilon-1}\\
      & \lesssim |t-s|^{\alpha+\beta-\varepsilon} 2^{j(\alpha+\beta-\varepsilon-1)} \|\partial_{r} S_{j-1} u ( r ) \Delta_{j} v ( r )\|_{L^\infty} \\
      & \lesssim |t-s|^{\alpha+\beta-\varepsilon} 2^{-j\varepsilon} \|u\|_\alpha \|v\|_\beta.
  \end{align*}
%
%
  Let now $j_{0} \in \mathbb{N}$ be such that $2^{-j_{0}} \leqslant | t-s | <2^{-j_{0} +1}$. We use estimate~{\eqref{eq:controlled old and new pr1}} for
  $j \leqslant j_{0}$ and~{\eqref{eq:controlled old and new pr2}} for
  $j>j_{0}$ to obtain
  \begin{align*}
     & \left| \int_{s}^{t} ( u \lpara \partial_{t} v ) ( r ) \mathd r-u ( s ) ( v ( t ) -v ( s ) ) \right| \\
     &\hspace{50pt} \lesssim \sum_{j \leqslant j_{0}} ( 2^{j ( 1 - \beta )} | t-s |^{1+\alpha} +2^{j ( 1- \alpha - \beta )} | t-s | ) \| u \|_{\alpha} \| v \|_{\beta} \\
     &\hspace{50pt}\qquad + \sum_{j>j_{0}} ( |t-s|^\alpha 2^{-j\beta} + | t-s |^{\alpha + \beta - \varepsilon} 2^{-j \varepsilon} +2^{-j (\alpha+\beta)} ) \| u \|_{\alpha} \| v \|_{\beta} \\
     &\hspace{50pt} \simeq \| u \|_{\alpha} \| v \|_{\beta} | t-s |^{\alpha + \beta},
  \end{align*}
  where we used that $\alpha + \beta <1$.
\end{proof}

\section{A modified paralinearization theorem}\label{sec:paralin mod}

When solving singular PDEs with general nonlinearity, it is often useful to take the paracontrolled structure of the solution into account in the paralinearization theorem, as this allows us to obtain better bounds. Here we prove the result that we needed when solving the parabolic Anderson model.

\begin{lemma}\label{lem:mod paralin}
   Let $\alpha \in (0,1)$ and $\beta \in (0,\alpha]$ be such that $\alpha + \beta > 1$. Let $f \in \CC^\alpha$, $g \in \CC^{\alpha+\beta}$, and $F \in C^{3}_b$. Then
   \begin{equation}\label{eq:mod paralin 2}
      \| F(f+g) - F'(f+g)\lpara (f+g) \|_{\alpha+\beta} \lesssim \| F\|_{C^3_b} (1 + \|f \|_\alpha^{1+\beta/\alpha} + \| g \|_{L^\infty}^2)(1 + \| g \|_{\alpha+\beta}).
   \end{equation}
\end{lemma}
\begin{proof}
   Since $\|F'(f+g) \lpara g \|_{\alpha+\beta} \lesssim \| F\|_{C^1_b} \| g \|_{\alpha+\beta}$, it suffices to control $F(f+g) - F'(f+g) \lpara f$. We use the same decomposition as in the proof of Lemma~\ref{lemma:paralinearization}:
   \[
      F(f+g) - F'(f+g)\lpara f = \sum_{i \geqslant -1} [\Delta_i F(f+g) - S_{i-1} F'(f+g) \Delta_i f] = \sum_{i \geqslant -1} u_i
   \]
   with
  \[
     u_i ( x ) = \int K_{i} ( x-y ) K_{<i-1} ( x-z ) [F ( f ( y) + g(y) ) - F'(f(z) + g(z)) f(y)] \mathd y \mathd z
  \]
  and since $K_i(x-y)$ integrates to zero, we can replace the term in the square brackets by
  \[
     \{F ( f ( y ) + g(y) ) - F ( f (z) + g(y)) - F'(f(z)+g(z)) (f(y) - f(z))\} + F(f(z) + g(y)).
  \]
  Applying a first order Taylor expansion and using the fact that $g \in \CC^{\alpha+\beta}$ is Lipschitz continuous, the first term can be bounded by
  \begin{align*}
     &|F ( f ( y ) + g(y) ) - F ( f (z) + g(y)) - F'(f(z)+g(z)) (f(y) - f(z))| \\
     &\hspace{50pt}\lesssim \|F\|_{C^{1+\beta/\alpha}_b} \|f \|_\alpha  |z-y|^\alpha (\|f \|_\alpha^{\beta/\alpha} + \| g \|_{\alpha+\beta}^{\beta/\alpha}) ( |z-y|^{\beta/\alpha} + |x-y|^{\beta}).
  \end{align*}
  This leads to
  \begin{align}\label{eq:mod paralin pr1} \nonumber
     |u_i ( x )| & \lesssim \|F\|_{C^{1+\beta/\alpha}_b} \|f \|_\alpha (\|f \|_\alpha^{\beta/\alpha} + \| g \|_{\alpha+\beta}^{\beta/\alpha}) 2^{-i(\alpha+\beta)} \\
     &\quad + \Big| \int K_{i} ( x-y ) K_{<i-1} ( x-z ) F(f(z) + g(y)) \mathd y \mathd z \Big|.
  \end{align}
  To estimate the remaining integral, note that
  \[
     \Big| \int K_i(x-y) F(f(z) + g(y)) \mathd y \Big| \leqslant \| y \mapsto F(f(z) + g(y)) \|_{\alpha+\beta} 2^{-i(\alpha+\beta)}.
  \]
  Since the $C^3_b$ norm of $F(f(z) + \cdot)$ is bounded by $\|F \|_{C^3_b}$, we can apply Theorem~2.87 of \cite{Bahouri2011} to obtain that
  \[
     \| y \mapsto F(f(z) + g(y)) \|_{\alpha+\beta} \lesssim \|F \|_{C^3_b} (1 + \|g\|_{L^\infty}^2) (1 + \|g\|_{\alpha+\beta}),
  \]
  which yields~\eqref{eq:mod paralin 2}. Since \cite{Bahouri2011} deals with a more general situation, there the estimate is stated in a weaker form: it is only shown that
  \[
     \| y \mapsto F(f(z) + g(y)) \|_{\alpha+\beta} \le C(F,\|g\|_{L^\infty},\alpha+\beta)(1 + \|g\|_{\alpha+\beta}).
  \]
  But by reducing the proof to our special case we get the claimed form of $C(F,\|g\|_{L^\infty},\alpha+\beta)$.
\end{proof}

\paragraph{Acknowledgments.} 
During an Oberwolfach workshop in the summer of 2012, M. Hairer discussed with one of us (M.G.) his approach to extend rough path theory and  we would like to thank M. Hairer for suggesting the application to the two-dimensional non-linear parabolic Anderson model discussed in this paper.

The main part of the research was carried out while N.P. was employed by Humboldt-Universit\"at zu Berlin. M.G. is supported by a Junior fellowship of the Institut Universitaire de France (IUF) and  by the ANR Project ECRU (ANR-09-BLAN-0114-01). N.P. is supported by the Fondation Sciences Math\'ematiques de Paris (FSMP) and by a public grant overseen by the French National Research Agency (ANR) as part of the ``Investissements d'Avenir'' program (reference: ANR-10-LABX-0098).

\bibliographystyle{alpha}

\end{document}